\newcounter{cnstcnt}
\newcommand{\newconstant}{%
\refstepcounter{cnstcnt}%
\ensuremath{c_{\thecnstcnt}}}
\newcommand{\oldconstant}[1]{\ensuremath{c_{\ref*{#1}}}}
\newtheorem{thm}{Theorem}[section]
\newtheorem{lem}[thm]{Lemma}
\newtheorem{cor}[thm]{Corollary}
\newtheorem{rmk}[thm]{Remark}
\newtheorem{prop}[thm]{Proposition}
\newtheorem{defi}[thm]{Definition}
\numberwithin{equation}{section}
\def\mbb#1{\mathbb{#1}}
\def\mcc#1{\mathcal{#1}}
\def\mss#1{\mathscr{#1}}
\def\pari{\intB}
\def\eps{\epsilon}
\def\veps{\varepsilon}
\def\lamn{\Lambda_N}
\def\P{\mathbb{P}}
\def\cA{\mathcal{A}}
\newcommand{\1}{\mathbf{1}}
\newcommand{\cC}{\mathcal{C}}
\newcommand{\cH}{\mathcal{H}}
\newcommand{\cV}{\mathcal{V}}
\newcommand{\cZ}{\mathcal{Z}}
\newcommand{\cB}{\mathcal{B}}
\newcommand{\cE}{\mathcal{E}}
\def\G{G}
\newcommand{\extB}{\partial_{\mathrm{ext}}}
\newcommand{\intB}{\partial}
\newcommand{\edge}{E}
\newcommand{\E}{{\mathbb E}}
\def\0{\textbf{0}}
\newcommand{\R}{{\mathbb R}}
\def \fkh#1{\phi_{p,\lamn,\epsilon h}^{{#1}}}
\def \fk#1{\phi_{p,\lamn,0}^{#1}}
\def \vfk#1{\varphi_{p,\lamn,0}^{#1}}
\def \vfkh#1{\varphi_{p,\lamn,\epsilon h}^{#1}}
\def\sB {\mathscr B}
\def\cD {\mathcal D}
\def\rc {\mathtt{Cross}}
\def\hc {\mathtt{Hcross}}
\def\aro{\mathtt{Around}}
\def\daro{\mathtt{DAround}}
\def\ne {\mathtt{NoEx}}
\def\cV {\mathcal V}
\def\cR {\mathcal R}
\def\cB {\mathcal B}
\def\cL {\mathcal L}
\def\con{\mathtt{Con}}
\def\cont{\mathtt{Con_2}}
\def\cRp {\mathcal R^{\prime}}
\def\fQ {\mathfrak Q}
\renewcommand{\vert}{V}
\def\cT{\mathcal{T}}
\newcommand{\nc}{\newconstant}
\newcommand{\oc}{\oldconstant}
\DeclareSymbolFont{sfoperators}{OT1}{ptm}{m}{n}
\def\operator@font{\mathgroup\symsfoperators}
\title{A phase transition and critical phenomenon for the two-dimensional random field Ising model}
\author{Jian Ding\\Peking University\and Fenglin Huang \\Peking University \and Aoteng Xia\\Peking University}
\date{}
\begin{document}
\maketitle

\begin{abstract}
We study the random field Ising model in a two-dimensional box with side length $N$ where the external field is given by independent normal variables with mean $0$ and variance $\epsilon^2$. Our primary result is the following phase transition at $T = T_c$: for $\epsilon \ll N^{-7/8}$ the boundary influence (i.e., the difference between the spin averages at the center of the box with the plus and the minus boundary conditions) decays as $N^{-1/8}$ and thus the disorder essentially has no effect on the boundary influence; for $\epsilon \gg N^{-7/8}$, the boundary influence decays as $N^{-\frac{1}{8}}e^{-\Theta(\epsilon^{8/7} N)}$ (i.e., the disorder contributes a factor of $e^{-\Theta(\epsilon^{8/7} N)}$ to the decay rate). For a natural notion of the correlation length, i.e., the minimal size of the box where the boundary influence shrinks by a factor of $2$ from that with no external field, we also prove the following: as $\epsilon\downarrow 0$ the correlation length transits from $\Theta(\epsilon^{-8/7})$ at $T_c$ to $e^{\Theta(\epsilon^{-4/3})}$ for $T < T_c$. 
\end{abstract}

\section{Introduction}

For $d\geq 2$, let $\mathbb Z^d$ be the $d$-dimensional lattice  where $u, v\in \mathbb Z^d$ are adjacent (which we denote as $u\sim v$) if their $\ell_1$-distance is $1$.
For $N\geq 1$, let $\lamn=[-N,N]^d \cap \mathbb Z^d$ be the box of side length $2N$ centered at the origin $o$. For $v\in \mathbb Z^d$, let $h_v$'s be independent normal variables with mean $0$ and variance $1$ (in what follows we denote by $\mathbb P$ and $\mathbb E$ the measure and the expectation with respect to  $h=\{h_v: v\in \mathbb Z^d\}$, respectively). For $\epsilon \geq 0$, we define the random field Ising model (RFIM) Hamiltonian $H^{\pm}_{ \lamn, \epsilon h}$ with the plus (respectively minus) boundary condition and the external field  $\eps h=\{ \epsilon h_v :v \in {\mbb Z^d} \}$ by
%\huangcomment{Shall we change $\lamn$ into $\Lambda_{N-1}$?}
\begin{equation}
\label{def_h}
H^{\pm}_{\lamn, \epsilon h} (\sigma)= - \left(\sum_{\substack{u,v \in \Lambda_{N-1},\\\, u\sim v}} \sigma_u \sigma_v \pm \sum_{\substack{ u \in \Lambda_{N-1},\\ v\in \Lambda_{N-1}^c,\\\, u\sim v}} \sigma_u + \sum_{u \in \Lambda_{N-1}} \epsilon h_u \sigma_u\right) \mbox{ for } \sigma \in \{-1, 1\}^{\Lambda_{N-1}}\,.
\end{equation}
For $T>0$, we define $\mu^{\pm}_{T, \lamn, \epsilon h} $ to be the Gibbs measure on $\{-1,1\}^{\Lambda_{N-1}}$ at temperature $T$ by
\begin{equation}
\label{def_mu}
\mu^{\pm}_{T, \lamn, \epsilon h} (\sigma)= \frac{1}{\mathcal{Z}^{\pm}_{T, \lamn,\eps h}} e^{-\frac{1}{T} H^{\pm}_{ \lamn, \epsilon h} (\sigma)} \mbox{ for } \sigma\in \{-1, 1\}^{\Lambda_{N-1}}\,,
\end{equation}
where  $\mathcal{Z}^{\pm}_{T, \lamn,\eps h}$ is the partition function given by
\begin{equation}
\label{def_z}
\mathcal{Z}^{\pm}_{T, \lamn,\eps h} = \sum_{\sigma \in \{-1,1\}^{\Lambda_{N-1}}} e^{-\frac{1}{T} H^{\pm}_{ \lamn,\epsilon h} (\sigma)}.
\end{equation}
(Note that $\mu^{\pm}_{T, \lamn, \epsilon h}$ and $\mathcal Z^{\pm}_{T, \lamn,\eps h} $ are random variables depending on  the external field $\eps h$.) We will denote by  $\langle \cdot \rangle^{\pm}_{T,\lamn,\eps h}$ the average taken with respect to $\mu^{\pm}_{T, \lamn,\eps h}$.  The key quantity we are interested in is the \textbf{boundary influence}, defined as
\begin{equation}\label{eq-def-boundary-influence}
m(T,N,\eps)=\mbb E(\mathbf m(T, N, \epsilon h)) \mbox{ where } \mathbf m(T, N, \epsilon h)  = \frac{1}{2}\Big(\langle\sigma_o\rangle^{+}_{T,\lamn,\eps h}-\langle\sigma_o\rangle^{-}_{T,\lamn,\eps h}\Big)\,.
\end{equation}
In the case of no disorder (i.e., when $\epsilon = 0$),  there exists $T_c = T_c(d)\in (0, \infty)$ (for $d\geq 2$) with the following phase transition. 
\begin{itemize}
\item The boundary influence stays above a {positive} constant for $T<T_c$ \cite{Peierls1936}.
\item The boundary influence decays exponentially for $T > T_c$ \cite{ABF87} (see also \cite{DT16, DRT19}).
\item For $T=T_c,$ the behavior of the boundary influence remains partially understood:
\begin{itemize}
    \item For $d=2$, the boundary influence decays as $N^{-\frac{1}{8}}$; see $\cite{Onsager44, Yang52, MW73, DHN11, CHI15, KS19, GW20}$.
    \item For $d\geq 3$, using  the so-called reflection positivity property,  it was proved in \cite{AF86, ADS15,FSS76} that the boundary influence decays to $0$ with a polynomial lower bound. 
    \item For more general Ising models without reflection {positivity}, 
    the same result is only proven for sufficiently large $d$ using the lace expansion; see \cite{Sak07, CS15,Sak22}. Furthermore, the one-arm exponent is proved to be at most $1$ in the same setting in \cite{HHS19}.
\end{itemize}
\end{itemize}

In this work, we focus on the disordered case for $d=2$. Our primary result is the following two theorems, which provide a detailed description of the phase transition as $\epsilon$ varies at the critical temperature $T_c = T_c(2)$. We declare that all our statements hold for all $N\geq 1$.
\begin{thm}\label{thm-critical-temperature}
Fix $d=2$. For any constant $\newconstant\label{1}>0$, there exists a constant
$ \newconstant\label{2}=\oc{2}(\oc{1})>0$ such that for {$\eps\geq\oldconstant{1}N^{-\frac{7}{8}}$},
\begin{equation}\label{eq-main-theorem}
   \oldconstant{2}N^{-\frac{1}{8}}  \exp\Big(-\oldconstant{2}^{-1}\epsilon^{\frac{8}{7}} N\Big) \leq  m(T_c,N,\eps) \leq \oldconstant{2}^{-1}N^{-\frac{1}{8}} \exp\Big(-\oldconstant{2} \epsilon^{\frac{8}{7}} N\Big)\,. 
   %\mbox{ for all } N\geq 1\, .
\end{equation}
Furthermore, there exist constants $\nc\label{-1},\nc\label{-2}>0$ such that for {$\eps\geq\oldconstant{-1}N^{-\frac{7}{8}}$} the following holds with $\mathbb P$-probability at least {$1-\oc{-2}^{-1}\exp\Big(-\oc{-2}(\eps^{\frac{8}{7}}N)^{\frac{1}{10}}\Big)$}: 
\begin{align}
    \oldconstant{-2}N^{-\frac{1}{8}}\exp(-\oldconstant{-2}^{-1}\epsilon^{\frac{8}{7}} N)\le \mathbf m(T_c,N,\eps h)&\le \oldconstant{-2}^{-1}N^{-\frac{1}{8}}  \exp(-\oldconstant{-2}\epsilon^{\frac{8}{7}} N).\label{eq-main-theorem-lower-bound}
\end{align}
\end{thm}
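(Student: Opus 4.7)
Plan. The critical scale is $L_\eps \eqdef \eps^{-8/7}$, the Imry--Ma correlation length at $T_c$; it arises by balancing the disorder-induced fluctuation of the central spin, which is $\eps$ times the square root of the critical susceptibility $\chi(L) \asymp L^{7/4}$, against the intrinsic boundary influence $L^{-1/8}$. The hypothesis $\eps > \oc{1} N^{-7/8}$ is precisely $L_\eps < \oc{1}^{-8/7} N$, i.e., the box $\lamn$ contains $\Theta(\eps^{8/7} N) \gg 1$ Imry--Ma blocks. I would work with the random-cluster / Edwards--Sokal representation with a ghost vertex carrying the external field (the measure $\varphi_{p,\lamn,\eps h}$ in the authors' notation), so that $\mathbf m(T_c,N,\eps h)$ is comparable up to constants to a quenched connection probability from $\partial \lamn$ to $o$ in a random environment, and then perform a multiscale analysis across the $\Theta(\eps^{8/7} N)$ dyadic annuli of scale $L_\eps$ between $o$ and $\partial \lamn$.

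Upper bound. The main input is a per-scale decoupling lemma: for every annulus $A$ of inner radius $r$ and outer radius $2r$ with $L_\eps \leq r \leq N$, there exist $\delta>0$ and an event $G_A$ measurable in $h|_A$ with $\mathbb P(G_A)\geq \delta$, on which the quenched random-cluster connection probability across $A$ under plus boundary conditions is at most $(1-\delta)$ times its $\eps=0$ value. I would prove this by combining the 2D critical RSW/arm toolbox, which produces an $\Omega(1)$-density of dual circuits in the pure model, with Gaussian anti-concentration, which converts an $\Omega(1)$-standardized fluctuation of $\sum_{v\in A} h_v \langle \sigma_v \rangle$ into an $\Omega(1)$ bias against the plus phase at scale $L_\eps$. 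Once the decoupling lemma is in place, FKG monotonicity together with the domain Markov property of the random-cluster measure allows the lemma to be chained along $\Theta(\eps^{8/7} N)$ disjoint dyadic annuli, producing a factor $(1-\delta)^{\Theta(\eps^{8/7} N)} = e^{-\Theta(\eps^{8/7} N)}$; combining with the baseline critical bound $N^{-1/8}$ on the innermost scale then yields the upper bound in \eqref{eq-main-theorem}.

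Lower bound. Here I would construct an explicit ``good disorder'' event as a product over the same $\Theta(\eps^{8/7} N)$ annuli of scale-$L_\eps$ sub-events on which $h$ is atypically favorable to the plus phase; each has $\mathbb P$-probability at least $e^{-C}$ by Gaussian lower tails, so the composite event has probability at least $e^{-\Theta(\eps^{8/7} N)}$. Conditionally on this event, critical RSW produces, with quenched positive probability, a plus random-cluster crossing in each annulus, and FKG concatenates these into a connection from $\partial \lamn$ to $o$; combined with the innermost $N^{-1/8}$ arm bound this gives the lower bound in \eqref{eq-main-theorem}. The quenched statement \eqref{eq-main-theorem-lower-bound} then follows from the annealed bounds together with a block-by-block concentration argument for $\log \mathbf m$; the weak stretched exponent $1/10$ in the tail probability suggests that only a coarse concentration is achievable, perhaps via Chebyshev-type estimates on the number of ``atypical'' Imry--Ma blocks rather than a sharp Gaussian concentration inequality for $\log \mathbf m$.

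Main obstacle. The heart of the argument is the per-scale decoupling lemma. At criticality there is no Peierls contour expansion, so the lemma must quantitatively combine two genuinely different tools --- the nonperturbative 2D critical Ising RSW/arm machinery for the pure component and Gaussian disorder analysis for the random component --- with $\Omega(1)$ gain at each of $\Theta(\eps^{8/7} N)$ scales. Ensuring that the exact $8/7$ matching of critical and disorder exponents survives the composition through the ghost-vertex random-cluster measure, without accumulating polynomial losses that would wash out the exponent, is where the bulk of the technical effort will go.
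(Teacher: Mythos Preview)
Your proposal identifies the correct correlation scale $L_\eps=\eps^{-8/7}$, but the upper-bound scheme contains a geometric counting error that prevents it from reaching exponential decay. You state the decoupling lemma for annuli of inner radius $r$ and outer radius $2r$ with $L_\eps\le r\le N$, and then claim to chain it along ``$\Theta(\eps^{8/7}N)$ disjoint dyadic annuli''. But there are only $\log_2(N/L_\eps)=\Theta(\log(\eps^{8/7}N))$ disjoint dyadic annuli between $L_\eps$ and $N$; chaining a $(1-\delta)$ factor over that many scales yields only a polynomial correction $(N/L_\eps)^{-c}$, not $e^{-\Theta(\eps^{8/7}N)}$. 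If instead you meant concentric annuli of fixed width $L_\eps$ (so that there really are $N/L_\eps$ of them), those have diverging aspect ratio and are crossed with probability $1-o(1)$ already at $\eps=0$, so a bounded-below anti-concentration event cannot produce a uniform $(1-\delta)$ drop there either. The paper avoids concentric geometry entirely: it coarse-grains $\lamn$ into $L_\eps$-boxes and runs a two-dimensional Peierls/percolation argument on the renormalized lattice (following \cite{DX21,AHP20}), so that a disagreement path from $o$ to $\partial\lamn$ must traverse $\Theta(N/L_\eps)$ boxes. The single-box input (Proposition~\ref{prop:fast power law}) is not proved by Gaussian anti-concentration but by combining three ingredients absent from your sketch: an a~priori bound $m(T_c,N,\eps)\le C/(\eps N)$ coming from a surface-tension identity, the Aizenman--Burchard tortuosity of disagreement crossings, and an RSW estimate for the disagreement percolation \emph{with disorder} (Theorem~\ref{thm: RSW for dis with external field}), itself proved by chaos expansion of the partition and crossing functionals.

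Your lower-bound plan also inverts the mechanism relative to the paper. You make the good-disorder event atypical (probability $e^{-C}$ per block, so the product event has $\mathbb P$-probability $e^{-\Theta(\eps^{8/7}N)}$), which can yield the annealed lower bound but not the quenched statement \eqref{eq-main-theorem-lower-bound} without a separate, and not obviously available, concentration step for $\log\mathbf m$. The paper does the opposite: a ``good'' $L_\eps$-box is one where the disorder is \emph{typical} enough that the disagreement-RSW bound holds; by chaos-expansion estimates such boxes have $\mathbb P$-probability close to $1$ and hence percolate. The factor $e^{-\Theta(\eps^{8/7}N)}$ then comes entirely from the quenched Ising side, via FKG for the disagreement model (Lemma~\ref{lem:FKG}) applied to $\Theta(\eps^{8/7}N)$ crossing/circuit events, each of quenched probability at least a fixed constant. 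This delivers the quenched lower bound directly, with the $(\eps^{8/7}N)^{1/10}$ tail arising from the percolation of good boxes rather than from concentration of $\log\mathbf m$. Finally, the paper works throughout with the disagreement percolation of \cite{AHP20} (product of two extended Ising copies), not the ghost-vertex random-cluster measure; the former is what makes the needed FKG and RSW tools available.
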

\begin{thm}\label{thm-critical-temperature-small-perturbation}
    Fix $d=2$. For any constant $\theta\in(0,1)$, there exists a constant $\newconstant\label{4}>0$ depending only on $\theta$ such that the following holds. If $0<\eps\le \oldconstant{4}N^{-\frac{7}{8}}$, then
     $$(1-\theta)\cdot m(T_c,N,0)\le m(T_c,N,\eps)\leq m(T_c,N,0)\,.$$ 
Furthermore, there exist constants $\nc\label{-3},\nc\label{-4}>0$ such that for $0<\eps\le\oldconstant{-3}N^{-\frac{7}{8}}$ 
the following holds with $\mathbb P$-probability at least $1-\oc{-4}\exp\left(-\oc{-4}^{-1}\sqrt{\eps^{-1} N^{-\frac{7}{8}}}\right)$: 
%\begin{equation}\label{eq-small-perturbation}
%    \P\Big((1-\theta) m(T_c,N,0))\le \mathbf m(T_c,N,\eps h)\le m(T_c,N,0) \Big)\ge  1-\oc{-4}\exp\left(-\oc{-4}^{-1}\sqrt{\eps^{-1} N^{-\frac{7}{8}}}\right).
%\end{equation}
\begin{equation}\label{eq-small-perturbation}
    (1-\theta)\cdot m(T_c,N,0)\le \mathbf m(T_c,N,\eps h)\le m(T_c,N,0)\,.
\end{equation}
\end{thm}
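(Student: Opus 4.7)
The plan is to reduce both assertions to a second-order analysis of $\langle\sigma_o\rangle^{+}_{T_c,\lamn,\eps h}$ around $\eps=0$, combined with Gaussian concentration in the field $h$. By the distributional symmetry $h\stackrel{d}{=}-h$ together with the spin-flip/boundary-flip identity $\langle\sigma_o\rangle^{-}_{T_c,\lamn,\eps h}=-\langle\sigma_o\rangle^{+}_{T_c,\lamn,-\eps h}$, one has $\mathbf m(T_c,N,\eps h)=\tfrac12(\langle\sigma_o\rangle^{+}_{T_c,\lamn,\eps h}+\langle\sigma_o\rangle^{+}_{T_c,\lamn,-\eps h})$, which is an even function of $h$, and $m(T_c,N,\eps)=\mathbb E[\langle\sigma_o\rangle^{+}_{T_c,\lamn,\eps h}]$. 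Throughout, write $m_0:=m(T_c,N,0)\asymp N^{-1/8}$.

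For the expectation bound, set $g(s):=\mathbb E[\langle\sigma_o\rangle^{+}_{T_c,\lamn,sh}]$, so $g(0)=m_0$. A Gaussian integration by parts (Stein's lemma) together with the identity $\partial_{h_v}^{2}\langle\sigma_o\rangle^{+}=-\tfrac{2}{T_c^{2}}\langle\sigma_v\rangle^{+}\langle\sigma_o;\sigma_v\rangle^{+}$ yields
$$g'(s)=-\frac{2s}{T_c^{2}}\sum_{v\in\lamn}\mathbb E\bigl[\langle\sigma_v\rangle^{+}_{T_c,\lamn,sh}\,\langle\sigma_o;\sigma_v\rangle^{+}_{T_c,\lamn,sh}\bigr].$$
At $+$ boundary the Griffiths/GHS inequalities make the integrand non-negative at $s=0$, delivering the one-sided upper bound $g(\eps)\le m_0$; a crude two-sided estimate using $|\langle\sigma_v\rangle^{+}|\le 1$ and the classical critical two-point decay $\langle\sigma_o;\sigma_v\rangle^{+}\lesssim|v|^{-1/4}$ then gives $|g(\eps)-m_0|\lesssim \eps^{2}\sum_{v\in\lamn}|v|^{-1/4}\lesssim \eps^{2}N^{7/4}$. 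Since $\eps\le \oc{4}N^{-7/8}$, this is $\lesssim \oc{4}^{2}\,N^{-1/8}$, which is at most $\tfrac{\theta}{2}\,m_0$ after fixing $\oc{4}$ small in terms of $\theta$. This establishes the expectation version of the theorem.

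For the high-probability statement, Gaussian concentration is applied to the Lipschitz map $h\mapsto\mathbf m(T_c,N,\eps h)$. Its squared gradient is
$$\sum_{v\in\lamn}\bigl(\partial_{h_v}\mathbf m\bigr)^{2}=\frac{\eps^{2}}{4T_c^{2}}\sum_{v}\bigl(\langle\sigma_o;\sigma_v\rangle^{+}_{T_c,\lamn,\eps h}-\langle\sigma_o;\sigma_v\rangle^{-}_{T_c,\lamn,\eps h}\bigr)^{2}\lesssim \eps^{2}\sum_{v}|v|^{-1/2}\lesssim \eps^{2}N^{3/2},$$
on the event that the critical two-point bound $\langle\sigma_o;\sigma_v\rangle^{\pm}_{T_c,\lamn,\eps h}\lesssim|v|^{-1/4}$ persists under the perturbation. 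Gaussian isoperimetry (Borell--Tsirelson--Ibragimov--Sudakov) then gives $\mathbb P(|\mathbf m-\mathbb E\mathbf m|>t)\le 2\exp\bigl(-ct^{2}/(\eps^{2}N^{3/2})\bigr)$; choosing $t\asymp \theta N^{-1/8}$ and combining with the mean estimate from the previous paragraph delivers \eqref{eq-small-perturbation} on the stated high-probability event.

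The main obstacle is precisely the stability of the critical two-point bound $\langle\sigma_o;\sigma_v\rangle^{\pm}_{T_c,\lamn,\eps h}\lesssim|v|^{-1/4}$ uniformly in the disorder $\eps h$. The pure-model estimates (Onsager, McCoy--Wu, Chelkak--Hongler--Izyurov) are classical, but transferring them to $\eps h\neq 0$ requires a robustness argument: for $\eps\ll N^{-7/8}$ the random field must be shown not to disrupt critical correlations at any scale $\le N$. The plan is to exploit the Edwards--Sokal/random-cluster representation of the RFIM, in which the field acts through weighted ghost edges of strength $O(\eps|h_v|)$, and to combine a pointwise maximal bound on $\|h\|_{\infty}$ restricted to $\lamn$ with an FKG/coupling comparison to the pure random-cluster measure at $p_c$. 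This robustness step is what ultimately dictates the weaker failure probability $\exp(-c\sqrt{\eps^{-1}N^{-7/8}})$ in the statement, rather than the $\exp(-c(\eps N^{7/8})^{-2})$ tail that Gaussian concentration would yield if the Lipschitz bound held deterministically.
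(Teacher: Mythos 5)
Your route (Gaussian integration by parts for the mean plus Borell--TIS concentration) is genuinely different from the paper's chaos-expansion argument, but it has two gaps I do not see how to close. First, the arithmetic in the mean estimate is off by exactly the critical exponent: with the crude bounds $|\langle\sigma_v\rangle^{+}|\le 1$ and $\langle\sigma_o;\sigma_v\rangle\lesssim|v|^{-1/4}$ you get $|g(\eps)-m_0|\lesssim\eps^{2}\sum_{v\in\lamn}|v|^{-1/4}\asymp\eps^{2}N^{7/4}$, and under $\eps\le cN^{-7/8}$ this is $O(c^{2})$ --- a constant, not $O(c^{2}N^{-1/8})$ as you claim. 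Since $m_0\asymp N^{-1/8}$, an $O(1)$ error bound proves nothing. To rescue the computation you must show that every term carrying the origin contributes an extra factor $\asymp N^{-1/8}$ (e.g.\ by replacing $|\langle\sigma_v\rangle^{+}|\le1$ with $\langle\sigma_v\rangle^{+}\lesssim d(v,\partial\lamn)^{-1/8}$, uniformly over the disordered measures $\langle\cdot\rangle^{+}_{sh}$ for all $s\in[0,\eps]$); this relative-error structure is precisely what estimate \eqref{eq: upper-bound for the sum of squares of k point function2} of Lemma~\ref{lem: upper-bound for the sum of squares of k point function} is engineered to capture. Relatedly, your justification of $g(\eps)\le m_0$ is incomplete: non-negativity of the integrand at $s=0$ is vacuous (the prefactor $s$ already forces $g'(0)=0$); you need $\mathbb E\bigl[\langle\sigma_v\rangle^{+}_{sh}\langle\sigma_o;\sigma_v\rangle^{+}_{sh}\bigr]\ge0$ for all $s\in(0,\eps]$, where $\langle\sigma_v\rangle^{+}_{sh}$ may be negative; this is essentially the correlation inequality of \cite{DSS22}, which is how the paper gets the upper bound.

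Second, and more seriously, the ``robustness'' step you defer --- that $\langle\sigma_o;\sigma_v\rangle^{\pm}_{\eps h}\lesssim|v|^{-1/4}$ uniformly in the disorder --- cannot be obtained by the method you sketch, and without it both your mean estimate and your Lipschitz bound are unsupported. A pointwise bound on $\|h\|_{\infty}$ plus a comparison to the pure measure amounts to a Radon--Nikodym estimate, but for $\eps\asymp N^{-7/8}$ one has $\eps\sum_{v\in\lamn}|h_v|\asymp\eps N^{2}\asymp N^{9/8}\gg1$, so the density between the disordered and pure measures is enormous and the two are essentially mutually singular in the relevant scaling (as the paper notes, the continuum RFIM at $\eps=N^{-7/8}$ is singular to the pure continuum Ising model). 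FKG or monotone couplings do not help either, since the truncated two-point function is not monotone in the external field. The paper avoids quenched correlation bounds with disorder altogether: it expands the disordered observable as a polynomial chaos in $\tanh(\eps h_v)$ whose coefficients are \emph{pure-model} correlation functions, and controls the chaos via \cite{FSZ16} and \cite{AL12}. This is also where the failure probability $\exp(-c\sqrt{\eps^{-1}N^{-7/8}})$ comes from (tails of order-$k$ Gaussian chaoses summed over $k$) --- a rate that your Borell--TIS step, which in any case requires a deterministic Lipschitz constant rather than one valid only on an unspecified event, would not produce.
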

From Theorems~\ref{thm-critical-temperature} and \ref{thm-critical-temperature-small-perturbation}, we see that at $T_c$ there is a  phase transition around $\epsilon \asymp N^{-7/8}$, above which the disorder contributes a factor of $e^{-\Theta(\epsilon^{8/7} N)}$ to the decay rate of the boundary influence and below which the disorder essentially has no effect on the boundary influence.

In order to describe our result for $T<T_c$, we define (a notion of) the \textbf{correlation length} by
    \begin{equation}
\label{def_psi}
\psi(T, \mathsf m, \epsilon) =\min \{N:  m(T,N,\eps) \leq \mathsf m\}, \mbox{ for }\mathsf m\in (0, 1).
\end{equation}
\begin{thm}\label{thm-supercritical-temperature}
Fix $d=2$. For  $0<T<T_c$ and $0<\mathsf m<m(T,\infty,0)$,
there exist constants $\nc\label{5} = \oc{5}(T,\mathsf m)>0$ and $\eps_0=\eps_0(T,\mathsf m)>0$ such that for { $0 <  \eps \leq \eps_0$, we have }
$$
\exp{(\oc{5} \epsilon^{-4/3})} \leq  \psi(T, \mathsf m, \epsilon) \leq \exp{(\oc{5}^{-1}\epsilon^{-4/3})}.
$$
\end{thm}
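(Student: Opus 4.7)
My plan is to pass to the FK random-cluster representation of the Ising model, in which the boundary influence $m(T,N,\eps)$ is bounded in terms of the connection probability $\phi^+_{p,\lamn,\eps h}(o\leftrightarrow\partial\lamn)$ in the wired FK measure at parameter $p=p(T)$. At $T<T_c$ the FK model is supercritical with strictly positive Wulff surface tension $\tau=\tau(T)>0$, and the dual FK model is subcritical, which is the foundation for both bounds. I split the theorem into the lower bound $\psi\geq\exp(\oc{5}\eps^{-4/3})$ and the upper bound $\psi\leq\exp(\oc{5}^{-1}\eps^{-4/3})$ and attack them with complementary contour-based strategies.

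For the lower bound, I need $m(T,N,\eps)>\mathsf m$ for all $N\leq \exp(\oc{5}\eps^{-4/3})$. At $\eps=0$ this follows from Peierls' theorem; the task is to show the random field cannot create a dual FK circuit around $o$ at the relevant scales. I would use a Peierls-type expansion in which each contour of length $\ell$ carries pure weight $e^{-\tau\ell}$, perturbed by a log-normal factor whose exponent is a mean-zero Gaussian of variance controlled by the enclosed area. Summing over contours around $o$ (with entropy $e^{C\ell}$ in 2D) and applying Gaussian concentration gives a first bound valid up to some scale $\eps^{-\alpha}$; pushing to the target $\eps^{-4/3}$ requires iterating this in a renormalization scheme over dyadic scales, where smaller-scale disorder is conditioned on ``good'' events before one bounds the contour weight at the next scale, and where FKG and finite-energy arguments transfer estimates between scales.

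For the upper bound, I would construct, with high disorder-probability, a family of nested dual FK circuits around $o$ at dyadic scales $L_k=2^k L^*$ inside $\lamn$, where $L^*=\exp(\oc{5}^{-1}\eps^{-4/3})$. In each dyadic annulus, a greedy-lattice-animal / maximum-of-Gaussians style estimate (in the spirit of Ding--Wirth) produces a contour separating inner and outer boundaries whose random-field contribution overcomes the surface tension cost $\tau\ell$. By FKG and RSW-type estimates at $T<T_c$, each such favorable contour suppresses $\phi^+(o\leftrightarrow\partial\lamn)$ by a multiplicative factor strictly less than $1$, so concatenating a sufficient number of successful scales brings $\phi^+(o\leftrightarrow\partial\lamn)$, and hence $\mathbf m(T,N,\eps h)$, below $\mathsf m$. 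The quenched-to-annealed step needed to deduce $m=\mathbb E[\mathbf m]\leq \mathsf m$ is handled by concentration over $h$ together with the symmetry $h\mapsto -h$.

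The main obstacle is the sharp exponent $4/3$ in both directions. Elementary balances between the surface tension cost $\tau\ell$ and the maximum random-field gain $\eps\ell^{3/2}$ (from $e^{C\ell}$ contours of length $\ell$ with enclosed variance $\lesssim\eps^2\ell^2$) give a crossover only at $\ell\sim\eps^{-2}$, which translates to weaker correlation length bounds. Improving to $\eps^{-4/3}$ should emerge from a carefully tuned multi-scale renormalization that simultaneously updates the effective surface tension and the effective random field at each scale, uses isoperimetric constraints sharper than the bare entropy $e^{C\ell}$, and propagates RSW-type crossing probabilities across the recursion. Controlling the non-product structure of the FK measure under conditioning on random-field events, and maintaining FKG monotonicity throughout the iteration, is where I expect the bulk of the technical work to lie.
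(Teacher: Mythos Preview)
Your upper-bound plan is essentially what the paper does: it simply cites \cite{DW20}, whose greedy-lattice-animal argument you correctly sketch. There is nothing to add there.

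For the lower bound, however, your route diverges from the paper's and contains a genuine gap. The paper does \emph{not} run a Peierls expansion with a multi-scale renormalization of the disorder. Instead it imports the \cite{DLX22} coarse-graining framework: Pisztora boxes at a fixed scale $\mathsf q$, a notion of good boxes (via FK crossings, using the supercritical input Proposition~\ref{prop-good box probability}), auxiliary Bernoulli variables, and a decomposition of $\mu^+_{\lamn,\eps h}(\sigma_o=-1)$ according to the outmost blue boundary $(\mathsf B,\mathsf B')$. The proof then reduces to a direct comparison of $\vfkh+(\Pi_{\mathsf B,\mathsf B'})$ with $\vfk+(\Pi_{\mathsf B,\mathsf B'})$ (Proposition~\ref{prop-large-surface-estimation}), with no iteration over scales.

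The one place the dimension and the exponent $4/3$ enter is Lemma~\ref{lemma:upper partition}: with high $\mathbb P$-probability, simultaneously for \emph{every} connected $A\subset\lamn$,
\[
\frac{\mathcal Z_\phi^+(\eps h^A)}{\mathcal Z_\phi^+(\eps h)}\le e^{c|\partial A|/T},
\]
provided $N\le e^{c'\eps^{-4/3}}$. This is exactly the same Gaussian-process/chaining input (from \cite{DW20}, sharpened via \cite{Talagrand14}) that you invoke for the upper bound. In other words, the mechanism that gives $4/3$ is a uniform bound on $\eps|h_A|$ in units of $|\partial A|$, and it drives \emph{both} directions. Your proposal recognizes this for the upper bound but, for the lower bound, replaces it by a hoped-for multi-scale renormalization with ``isoperimetric constraints sharper than the bare entropy $e^{C\ell}$''. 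That hope is where the gap lies: you yourself compute that the naive contour balance gives only $\eps^{-2}$, and nothing in your scheme explains how iteration would improve this to $\eps^{-4/3}$. The paper sidesteps the issue entirely by feeding the chaining estimate directly into a one-scale comparison argument, so no renormalization---and no separate mechanism for the exponent on the lower-bound side---is needed.
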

We may also consider the following notion of correlation length which is well-defined at all $T$ (and in particular is consistent with \eqref{def_psi} up to changing some constants):
$$ {\psi_\star(T, \epsilon)} = \min\{N: m(T, N, \eps) \leq m(T, N, 0)/2\}\,.$$
Then, as a particular consequence of Theorems~\ref{thm-critical-temperature},  \ref{thm-critical-temperature-small-perturbation} and \ref{thm-supercritical-temperature}, for $d=2$ the following holds for some constants ${c=c(T)}>0$:
\begin{equation}\label{eq-correlation-length-critical}
\begin{split}
c^{-1}\epsilon^{-8/7} &\leq  {\psi_\star(T_c, \epsilon)} \leq c \epsilon^{-8/7} \mbox{ as }\epsilon \to 0\,,\\
c^{-1} e^{c^{-1}\epsilon^{-4/3}}& \leq  {\psi_\star(T, \epsilon)} \leq c e^{c\epsilon^{-4/3}} \mbox{ as }\epsilon \to 0 \mbox{ for } T < T_c\,.
\end{split}
\end{equation}
The bounds in \eqref{eq-correlation-length-critical} illustrate an essentially complete description on how this correlation length transits from $T_c$ to $T<T_c$. For the notation of correlation length which is the inverse of the rate for the exponential decay, we do not establish a phase transition as in \eqref{eq-correlation-length-critical}. This is because while Theorem \ref{thm-critical-temperature} computes it up to constant at $T_c$, currently the method for $T < T_c$ falls short of bounding this correlation length.
%We remark here that if we use a different notion for the correlation length which is the inverse of the rate in the exponential decay, the result at $T_c$ is still $\Theta(N^{-7/8})$. However at temperature $T<T_c$, the notion fails since we can not obtain an explicit rate in the exponential decay. 
Finally, we remark that we did not consider the case for $T>T_c$ since it is known that the boundary influence decays exponentially with an arbitrary external field by \cite{ABF87} and \cite[Theorem 1.1]{DSS22}\footnote{In March 2023, Aizenman informed J.D. during J.D.'s visit at IAS, that he proved \cite[Theorem 1.1]{DSS22} using the random current model in his unpublished notes back to 1980's.}.

\subsection{Previous works}
While our results lie under the umbrella of the general Imry--Ma \cite{IM75} phenomenon on the effect of
disorder on phase transitions in two-dimensional physical systems, it seems fair to say that by providing a rather detailed description at criticality our Theorems~\ref{thm-critical-temperature} and \ref{thm-critical-temperature-small-perturbation} may have even gone beyond what  Imry--Ma predicted (which, to our best knowledge, seemed to mainly concern the low temperature regime). While physicists have made attempts on critical or near-critical RFIM (see, e.g., \cite{FVP19, FPS18, RY99}), we are unable to find in the literature predictions regarding Theorems \ref{thm-critical-temperature} and \ref{thm-critical-temperature-small-perturbation}. In what follows, we describe mathematical progress on the RFIM.

We will only focus our discussions on the case of weak disorder (that is, when $\epsilon$ is small, either a small fixed constant or vanishing at a certain rate), as this is most relevant to our results. For $d\geq 3$, it was predicted by \cite{IM75} that at low temperatures for small (and fixed) $\epsilon$, the boundary influence stays above a certain constant as $N \to \infty$. This was proved by \cite{Imbrie85} (for $T = 0$) and by \cite{BK88} (for small $T>0$) using the renormalization group theoretic approach. Recently, a new and simple proof (without renormalization group) was provided in \cite{DZ21} (with critical inputs from \cite{Chalker83, FFS84}), building on which this result was extended to the entire low temperature regime in \cite{DLX22}. 

For $d=2$, it was predicted by \cite{IM75} that the boundary influence vanishes for any fixed $\epsilon$, and this was established in \cite{AW89, AW90}. Recent progress in this direction mainly concerned quantitative bounds on the decay rate and in particular on the existence of the so-called BKT transition (i.e., a transition from polynomial decay to exponential decay). Progressively improved bounds were obtained in \cite{Cha18, AP19, DX21, AHP20}, and exponential decay  was finally proved in \cite{DX21, AHP20} (which implies no BKT transition). Afterwards, it was established in \cite{DW20} that  the correlation length scales as $e^{\Theta(\epsilon^{-4/3})}$, which seems unexpected from the point of view of predictions in physics. The upper bound on the correlation length was proved for all temperatures in \cite{DW20}, while the lower bound was initially only proved at $T = 0$ in \cite{DW20}. This was later extended to small $T>0$ in \cite{DZ21}, and now to the entire low temperature regime as incorporated in Theorem~\ref{thm-supercritical-temperature}. At $T_c$, a continuum RFIM was constructed in \cite{BS22} via scaling limits of the magnetization field (which in particular is singular to the two-dimensional continuum pure Ising model constructed in \cite{CGN15}), and an exponential decay was proved in \cite{CJN23} for the near-critical Ising model with vanishing but constant external field. In a direction somewhat different from ours, spin glass features for the RFIM were studied in \cite{Cha15, Cha23}.

\subsection{Proof ingredients}

We first briefly make a remark on Theorem~\ref{thm-supercritical-temperature}: this extends the results of \cite{DW20, DZ21} to the entire low temperature regime, and the extension essentially follows the framework developed in \cite{DLX22}. Now we emphasize that the main novelty of this work is on the critical phenomenon for the RFIM, as incorporated in Theorems \ref{thm-critical-temperature} and \ref{thm-critical-temperature-small-perturbation}. Our proofs of these two theorems combine in a novel way many interesting ingredients as we elaborate as follows.
\begin{itemize}
\item The critical exponent of $7/8$ was hinted in \cite{BS22}, from which we also learned the method of chaos expansion employed in \cite{FSZ16, BS22}. The method of chaos expansion allows us to show that when the disorder is weak it `essentially' has no effect (here essentially refers to the fact that the notion of no effect is much weaker than absolute continuity of the measures). This is not only used in the proof for the lower bound on the boundary influence,  but also used in a sophisticated manner in the proof for the upper bound and perhaps most prominently in the proof of the Russo-Seymour-Welsh (RSW) estimate with disorder (as in Theorem~\ref{thm: RSW for dis with external field}). 

\item The proof of the (stretched) exponential decay is inspired by \cite{DX21, AHP20} which proves exponential decay for fixed $\epsilon$ at any temperature. In particular, the crucial application of \cite{AB99} (initiated in the proof for $T = 0$ in \cite{DX21}) remains an important ingredient in our proof. In addition,  proofs in both \cite{DX21} and \cite{AHP20} employ a relation between  the boundary influence and the percolation of disagreements: the proof in \cite{DX21} employs a particular monotone coupling between Ising measures with the plus and the minus boundary conditions, and the proof in \cite{AHP20} works with the product measure for two \emph{extended} Ising measures. It turns out that the disagreement percolation as in \cite{AHP20} (see Section~\ref{sect:disagreement percolation} for a review) is more convenient for the current work, and this is what we will employ. 

\item Another crucial ingredient in our proof is to take advantage of \emph{criticality}. One way to capture the critical temperature is through the RSW estimate (without disorder) for the FK-Ising model  \cite{DHN11} (see also e.g. \cite{BDC12, Tassion16, KT23} for remarkable further development{s} on the RSW theory), and this is exactly how we access the criticality. For instance, this is how we verify the tortuous condition (as formulated in \cite{AB99}) for the disagreement percolation (see Lemmas \ref{lem:crossing} and \ref{lem:criterion}) and this step is very different from that of \cite{DX21, AHP20}. 

\item At times, we need to prove that some bounds hold with arbitrary disorder assuming they hold with no disorder. To this end, we apply a general Ising inequality proved recently in \cite{DSS22}; see, e.g., \eqref{eq-apply-DSS} for an instance of such an application.
\end{itemize} 
Of course, the discussions above only offer a glance at various ingredients employed in our work, and it remains highly non-trivial to assemble them into a working proof. After reviewing various preliminaries in Section~\ref{sec:prelim}, we outline our proofs (in a rather elaborative manner) for Theorems \ref{thm-critical-temperature} and \ref{thm-critical-temperature-small-perturbation} in Section~\ref{sec:outline}. In Section~\ref{sec: RSW}, we prove the important RSW estimates with disorder, and in Section~\ref{sec:postponed} we supply postponed proofs in Section~\ref{sec:outline}. In Section~\ref{sect:correlation length}, we prove Theorem~\ref{thm-supercritical-temperature}, and this is rather separate from Sections~\ref{sec:outline}, \ref{sec: RSW} and \ref{sec:postponed}.

\section{Preliminaries}\label{sec:prelim}
In this section, we review various existing (mostly well-known) results on the Ising model and some closely related models such as the FK-Ising model. We always consider an arbitrary temperature $T$ unless otherwise stated and we might drop it from the subscript without further notice.
\subsection{Notations}\label{sec: notations}
In this subsection, we describe the notations used in the following sections. We use $c,C,C_i$ to represent positive constants whose actual values may vary from line to line. In addition, we use $c_i$ for constants whose values are fixed {throughout the paper}.

For $\Gamma\subset \mathbb{Z}^2$, we use the notation $\intB\Gamma$ to denote its interior boundary. That is, $\intB \Gamma=\{u\in\Gamma\mid u\sim v \text{ for some } v\in \Gamma^c \}$. For any configuration $\omega\in\mathbb{R}^{\Gamma}$ {and $\Gamma'\subset\Gamma$} we use the notation $\omega|_{\Gamma'}$ to denote its restriction on $\Gamma'$, i.e., $\omega|_{\Gamma'}(u)=\omega_u$ for any $u\in \Gamma'$.

We will use the notation $\mu_{G,h}^{A,\xi}$ to denote the Ising measure on a graph $G$ with the boundary condition $\xi$ on $A\subset G$ and with the external field $h$. This is a natural extension of \eqref{def_mu} except that the Hamiltonian is modified to the following: \begin{equation*}
H^{{A},\xi}_{G, h} (\sigma)= - \left(\sum_{\substack{u,v \in G\setminus A,\\\, u\sim v}} \sigma_u \sigma_v {+} \sum_{\substack{ u \in G\setminus A,\\ v\in A,\\\, u\sim v}} \sigma_u\xi_v + \sum_{u \in G\setminus A} h_u \sigma_u\right) \mbox{ for } \sigma \in \{-1, 1\}^{G\setminus A}\,.
\end{equation*}
%For the FK-Ising measure, we use the notation $\phi_{G,h}^{A,\xi}$. 
%(Here is a slight abuse of notation since the boundary set $A$ should be vertices for Ising and edges for FK-Ising, and both for extend Ising, but readers can always clarify this from the context. The boundary condition $\xi$ can be understood in the same way.) When the graph $G$ and the boundary set $A$ are clear from the context, we will drop them in notation.  
%For the extended Ising measure, we use $\bar \mu_{ G,h}^{A,\xi}$ to denote the measure on the extended graph $\bar G= {V}(G)\cup {E}(G)$. For either $\mu_{G,h}^{A,\xi}$, $\phi_{G,h}^{A,\xi}$ or $\bar \mu_{ G,h}^{A,\xi}$, 
We may drop $G$ and $A$ in the notation when they are clear from the context. For any subset $A$ and any external field $h$, let $h_A = \sum_{v\in A} h_v$.

Recall that the $\ell^{\infty}$-distance in $\mbb Z^2$ is defined to be $\|u-v\|_{\infty}=\max\{|x_u-x_v|,|y_u-y_v|\}$ for $u=(x_u,y_u)$ and $v=(x_v,y_v)$, and we will write $dist(u,v)=\Vert u-v\Vert_{\infty}$.

We call $\cR\subset\mathbb Z^2$ an $M$-box if $\cR$ is a translation of $\Lambda_M$ in $\mathbb Z^2$.
And we also denote by $\Lambda_M(x)$ the $M$-box centered at $x$. For integers $0<M<N,$ we use $\Lambda_{M,N}$ to denote the annulus $\Lambda_N\setminus\Lambda_M$. We also denote by $\Lambda_{M,N}(x)$ the annulus centered at $x$. For convenience of exposition, we modify the definition of $\partial \Lambda_{M, N}$ by $$\intB\Lambda_{M,N}=\intB\Lambda_N\cup \intB \Lambda_{M+1}.$$ (Note that the above modification adds four corner points to the boundary and this is immaterial.) We will refer to $\partial \Lambda_N$ and $\partial \Lambda_{M+1}$ as the outer and inner boundaries of the annulus $\Lambda_{M, N}$, respectively.

For any region $\Gamma_1\subset\Gamma_2$, we define a circuit $\cC$ in $\Gamma_1\setminus\Gamma_2$ to be a subset of $\Gamma_1\setminus\Gamma_2$ such that for any $x\in\intB \Gamma_1$, $y\in\intB \Gamma_2$ and any path $\gamma$ connecting $x$ and $y$, we have $\gamma$ intersects with $\cC$.

Throughout the paper, for $a,b>0$, we will use the notation $a\ll b$ to denote that $a<b\cdot c$ for some $c>0$ small enough. Similarly, we use the notation $a\gg b$ to denote that $a>b\cdot c$ for some $c>0$ big enough.

\subsection{FK-Ising model}

In this subsection, we briefly review the FK-Ising model with an external field, which was introduced in \cite{ES88}. For a finite graph $G=(V,E)$, let $\mathcal P, \mathcal M\subset V$ denote the subsets with the plus and minus boundary conditions respectively, and let $\{h_x:x\in V\}$ be an external field. We can consider an edge configuration $\omega\in\Omega=\{0,1\}^E$ where $0$ indicates that the edge is \textbf{closed} and $1$ indicates the edge is \textbf{open}. For $\omega \in \Omega$ and $x \neq y\in V$, we say $x$ is \textbf{connected} to $y$ in $\omega$ and denote it by $x\longleftrightarrow y$ if there exists a sequence $x_0=x,x_1,\cdots,x_{n}=y$ such that $\{x_i,x_{i+1}\}\in E$ and $\{x_i,x_{i+1}\}$ is open in $\omega$ for all $0\leq i\leq n-1$; this sequence is called an \textbf{open path} in $\omega$. So given any $\omega\in\Omega$, the graph $G$ is divided into a disjoint union of connected components, which will be referred to as \textbf{open clusters}. 

We use the notation $\cC_+$ and $\cC_-$ to denote the cluster connected to the plus and minus boundaries, respectively. Here we view $\mcc P$ and $\mcc M$ as single points,
and thus $\mathcal C_\pm$ refers to the unique cluster of $\mcc P$ and $\mcc M$ respectively. Note that it is possible to have $\mathcal C_+  = \mcc P$, and we use the convention that $\mathcal C_+ = \emptyset$ if $\mcc P = \emptyset$ (and similarly for $\mathcal C_-$).   
Let $\mathfrak C$ be the collection of other open clusters (so excluding $\mathcal C_\pm$) in $\omega$.

Then the FK-Ising model on $G$ with parameter $p \equiv 1-\exp(-\frac{2}{T})$ is a probability measure on  $\Omega$ given by (recalling our notation convention $h_A = \sum_{v\in A} h_v$)

\begin{equation}\label{eq-def-FK-external-field}
\phi^{\xi}_{p,G,h}(\omega)=\frac{1}{\cZ^{\xi,\phi}_{p,G,h}}  \prod_{e\in E}\hspace{0.5em}p^{\omega(e)}(1-p)^{1-\omega(e)}\prod_{\mathcal C\in \mathfrak C} 2 \cosh(\frac{h_\mathcal C}{T})\times \exp( \frac{h_{\cC_+}-h_{\cC_-}}{T})\times \1_{\{{\mcc P\centernot \longleftrightarrow\mcc M}\}}\,,
\end{equation}
where $\cZ^{\xi,\phi}_{p,G,h}$ is the normalizing constant (partition function) of $\phi^{\xi}_{p,G,h}$ and $\xi$ denotes the boundary condition, i.e., $\xi_u = 1 \text{ for } u\in \mcc P \text{ and } \xi_u = -1 \text{ for } u\in \mcc M$. 
%For notation clarity, we drop the subscript $p$ in $\phi^{\xi}_{G,h}$ and in this case $p=p_c=1-\exp(-\frac{2}{T_c})$.
%Sometimes, in order to emphasize where we pose the boundary condition, we use the notation $\phi_{G,h}^{A,\xi}$ to indicate that the boundary condition $\xi $ is on $A$.
\begin{rmk}
    Somewhat different from the usual occurrence of the FK-Ising model, in this paper, we allow the boundary conditions to have nontrivial plus and minus parts simultaneously. This gives a restriction that  ${\mcc P}$ cannot be connected to ${\mcc M}$ in $\omega$, { which is why we have the term $\1_{\{{\mcc P\centernot \longleftrightarrow\mcc M}\}}$.}
\end{rmk}
\begin{rmk}
    For boundary conditions without restriction to be plus or minus, the FK-Ising measure can be defined similarly, see \cite{D19} (and \cite{DLX22} for the version with an external field) for a detailed definition. In this case, we will use the notation $\mathrm w$ and $\mathrm f$ to denote the wired and free boundary conditions respectively.
\end{rmk}

In \cite[Lemma 26]{DHN11} (see also \cite{CHI15,GW20}), it was shown that for {$d=2$ and} $ p = p_c$ (i.e., $p =1-\exp({-}\frac{2}{T_c})$) there exists a constant $\nc\label{fk exponent}>0$ such that for any $n>0$,\begin{equation}\label{eq: FK one arm event exponent}
\oc{fk exponent}^{-1}n^{-\frac{1}{8}}\le \phi_{\Lambda_n,0}^{\mathrm{w}}(o\longleftrightarrow\intB\Lambda_n)\leq \oc{fk exponent}n^{-\frac{1}{8}}.
\end{equation}
 {Using Edward-Sokal coupling \cite{ES88} we get $m(T_c,n,0)=\phi_{\Lambda_n,0}^\mathrm{w}(o\longleftrightarrow\intB\Lambda_n)$,
and thus} we obtain the following bound on the boundary influence in the $0$-external field case: \begin{equation}\label{eq: origin decay rate without disorder}
   \oc{fk exponent}^{-1}n^{-\frac{1}{8}}\le m(T_c,n,0)\le\oc{fk exponent}n^{-\frac{1}{8}}.
\end{equation}

Furthermore, using some quasi-{multiplicativity} argument (see \cite[Theorem 1.3]{CDH16}) and the RSW theory, one can show that at $p = p_c$ there exists a constant \nc\label{annulus-crossing}$>0$ such that for any integers $m<n$,
\begin{equation}\label{0-field-annulus-crossing}
    \oc{annulus-crossing}^{-1}(\frac{n}{m})^{-\frac{1}{8}}\leq \phi^{\mathrm{w},\mathrm{w}}_{\Lambda_{m,n},0}(\intB \Lambda_{m+1}\longleftrightarrow\intB\Lambda_n)\leq \oc{annulus-crossing}(\frac{n}{m})^{-\frac{1}{8}}.
\end{equation}
Here the $\mathrm{w},\mathrm{w}$ in the superscript indicates that we have wired boundary conditions on both the inner and outer boundaries of $\Lambda_{m,n}$.

\subsection{ A continuous extension of the Ising model}\label{sect:extended-Ising}

As introduced in \cite{AHP20}, one may extend the Ising model to both vertices and edges by assigning a random value from $\{-1, 0, 1\}$ to each edge. We shall use $\bar{\sigma}$ to denote the extended configuration on a graph $G$ (and we use ${V}(G)$ and use ${ E}(G)$ to denote the vertex and edge set of $G$ from now on) which satisfies
\begin{enumerate}[(i)]
\item for any vertex $v\in {V}(G)$, we have $\bar{\sigma}_v \in \{-1,+1\}$;
\item for any edge $e\in {E}(G)$, we have $\bar{\sigma}_e \in \{-1,0,+1\}$;
\item\label{hard constraint} if $v$ is an endpoint of edge $e$ (and we write $v\in e$), then $|\bar{\sigma}_v- \bar{\sigma}_{e}|\le 1$.
\end{enumerate}
We see from~(\ref{hard constraint}) that if the two endpoints of an edge $e$ have value $1$ and $-1$ respectively, then the value of the edge $e$ is restricted to be $0$; if the two endpoints of an edge $e$ have the same value, then the value of the edge $e$ is either $0$ or the same as its endpoints. We use $\bar{\G}={V}(G)\cup {E}(G)$ to denote the set of vertices and edges of $\G$, and it can be viewed as a graph induced by putting a vertex on the midpoint of each edge. For any external field $h$, let $\bar{\mu}_{\G,h}$ be the unique probability measure on the set of extended configurations $\bar{\sigma}$ with
\begin{equation}\label{eq:P_Lambda_tau_def}
  \bar{\mu}_{G,h}(\bar{\sigma}) := \frac{1}{\bar{\cZ}_{G,h}} \prod_{ \substack{e \in {E}(G) \\ v \in e}}  W(\bar{\sigma}_v,\bar{\sigma}_{e}) \cdot  e^{- U(\bar\sigma)}
\end{equation}
where
$U(\bar\sigma) = \frac{1}{T}\sum_{v \in {V}(G)}  h_v \sigma_v$ and $W$ is defined as follows.
For each $a \in\{-1,1\}$ and $ b\in\{-1,0,1\}$,
\begin{equation}\label{eq:Wdef}
    W(a,b):= \lambda \Big( \delta _{a,b}  \ + \ t \delta_{b,0}  \Big)\ = \  \lambda  \cdot \begin{cases}
    1& b=a,\\
    t& b =0,\\
    0&b =-a,
  \end{cases}
\end{equation}
and $(t, \lambda)$ is related to the temperature $T$ by
\begin{equation*} \label{t_beta}
  t=\Big(\exp(\frac{2}{T})-1\Big)^{-\frac{1}{2}} \,,  \qquad \lambda =  \Big(2 \sinh(\frac{1}{T})\Big)^{\frac{1}{2}} \,.
\end{equation*}
Note that notation-wise it may make more sense to write $\bar \mu_{\bar G, h}$; we have chosen to drop the bar on $G$ for simplicity since
{the bar on $\mu$ suffices} to indicate that this refers to an extended Ising measure.  For $A\subset\bar G$, we use the notation $\bar{\mu}_{G,h}^{A,+}$ ($\bar{\mu}_{G,h}^{A,-}$) to denote the extended Ising measure with the plus (minus) boundary condition on $A$. More generally, for any boundary condition $\xi\in \{-1,1\}^{A \cap V(G)}\times \{-1,0,1\}^{A \cap E(G)}$, we denote by $\bar \mu^{A, \xi}_{G,h}$ the extended Ising measure with the boundary condition $\xi$ on $A$. For notational clearness, we might omit $A$ from the superscript, and in this case the boundary condition is posed on $\intB \bar{G}$.
%if they are clear from the context.
When considering the extended Ising model, we define the boundary of $\Gamma\subset\bar G$ by $$\intB\Gamma=\{e={\{x,y\}}\in \Gamma:x\notin \Gamma\text{ or }y\notin \Gamma\}\cup\{x\in \Gamma:\exists\, {\{x,y\}}\in \bar G\setminus\Gamma\}\,.$$

Basic properties such as the domain Markov property (DMP) have been proved in \cite{AHP20} and it is straightforward to verify that the extended Ising model satisfies the FKG inequality. 
% \begin{rmk}
%     For any finite graph $\G$ and any external field $h$, at positive temperatures, the extended Ising measure's restriction to ${E}(\G)$ coincides with the FK-Ising measure $\phi^{h}_G$ on $\G$ in the sense that there exists a coupling such that an edge is closed in FK-Ising measure if and only if its value is 0 in the extended Ising measure.
% \end{rmk}

The extended Ising model is indeed an extension of the Ising model, that is, its restriction on vertex spins follows the law of the Ising model (see \cite[Section 2]{AHP20}).

In addition, the extended Ising model is also closely related to the FK-Ising model as incorporated in the next lemma.
\begin{lem}\label{lem:extended to FK}
   For any finite graph $\G$, any external field $h$ and any Ising boundary condition $\xi$, at any positive temperature $T$, the restriction of the extended Ising measure $\bar\mu^{\xi}_{{T},G,h}$ to ${E}(\G)$ coincides with the FK-Ising measure $\phi^{\xi}_{p,G,h}$ with parameter $p=1-\exp(-\frac{2}{T})$ in the sense that there exists a coupling such that an edge is closed in the FK-Ising measure if and only if its spin value is $0$ in the extended Ising measure.
\end{lem}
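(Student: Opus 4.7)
The plan is to realise the FK-Ising measure as the marginal of the extended Ising measure under the projection
$\Phi \colon \bar\sigma \mapsto (\omega,\sigma)$ given by $\sigma_v = \bar\sigma_v$ and $\omega(e)=\mathbf{1}_{\bar\sigma_e\neq 0}$. The hard constraint $|\bar\sigma_v-\bar\sigma_e|\leq 1$ for $v\in e$ (property (iii)) implies that $\Phi$ is a bijection from extended configurations onto the set of pairs $(\omega,\sigma)$ with the property that $\sigma_u=\sigma_v$ on every open edge $e=\{u,v\}$: on an open edge, $\bar\sigma_e$ is forced to equal the common value $\sigma_u=\sigma_v$, while on a closed edge one has $\bar\sigma_e=0$ by definition. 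Under this bijection, the desired coupling in the statement is exactly $\omega(e)=\mathbf{1}_{\bar\sigma_e\neq 0}$, so the lemma reduces to the identity $\Phi_{*}\bar\mu^{\xi}_{G,h} = \phi^{\xi}_{G,h}$ on the edge-marginal.

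First I would evaluate the local edge weight $W(\bar\sigma_u,\bar\sigma_e)W(\bar\sigma_v,\bar\sigma_e)$ for each edge $e=\{u,v\}$ under $\Phi^{-1}(\omega,\sigma)$. A direct case analysis using \eqref{eq:Wdef} gives
\[
W(\bar\sigma_u,\bar\sigma_e)W(\bar\sigma_v,\bar\sigma_e) \;=\; \lambda^{2}\cdot t^{2(1-\omega(e))}\cdot \mathbf{1}_{\{\omega(e)=0\}\cup\{\sigma_u=\sigma_v\}}.
\]
Since $p=1-e^{-2/T}$ gives $t^{2}=(e^{2/T}-1)^{-1}=(1-p)/p$, the factor $t^{2(1-\omega(e))}$ equals $p^{\omega(e)}(1-p)^{1-\omega(e)}$ up to a multiplicative constant that depends only on $\lambda$ and $|E(G)|$, and so can be absorbed into the partition function. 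This reproduces exactly the edge weight in \eqref{eq-def-FK-external-field}.

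Next I would marginalise the remaining product $e^{-U(\sigma)}$ over $\sigma\in\{-1,+1\}^{V(G)}$ subject to the constraints imposed by the previous step, namely that $\sigma$ is constant on each open cluster of $\omega$ and that $\sigma$ agrees with $\xi$ on $\mathcal P\cup\mathcal M$. For a free cluster $C\in\mathfrak C$ the sum over the two choices $\sigma_C=\pm1$ of $\exp(\pm h_C/T)$ gives $2\cosh(h_C/T)$; for the two boundary clusters $\mathcal C_{\pm}$ the spin is pinned by $\xi$ and produces a factor $\exp((h_{\mathcal C_+}-h_{\mathcal C_-})/T)$ (up to the overall sign convention for $h$, which cancels on both sides). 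A cluster meeting both $\mathcal P$ and $\mathcal M$ gives contradictory constraints and so contributes $0$, which is exactly the indicator $\mathbf{1}_{\{\mathcal C_+\centernot\longleftrightarrow\mathcal C_-\}}$ in \eqref{eq-def-FK-external-field}.

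Assembling the edge factors, the cluster factors, and the connectivity indicator and absorbing all $\sigma$- and $\bar\sigma_e$-independent constants into the normalisation, one recovers \eqref{eq-def-FK-external-field} verbatim, proving the claimed marginal identity and the coupling statement simultaneously. The step I expect to require the most careful bookkeeping is tracking the boundary condition: $\xi$ lives on $\partial\bar G$, which includes boundary edges as well as boundary vertices, and one must verify that the pinning of $\bar\sigma$ on these boundary edges/vertices is consistent with the Ising-style boundary condition declared on $\mathcal P\cup\mathcal M$ — in particular that at plus/minus boundary vertices only the matching value of $\bar\sigma_e$ survives the $W$-weights, so the marginal sum over cluster spins indeed pins $\sigma\equiv+1$ on $\mathcal C_+$ and $\sigma\equiv-1$ on $\mathcal C_-$ as required.
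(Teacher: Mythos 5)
Your proposal is correct and follows essentially the same route as the paper: both are the Edwards--Sokal-type computation in which one fixes the edge configuration $\omega$, observes that the compatible extended configurations are exactly parametrized by vertex spins constant on open clusters, and sums the weights $\lambda^{2\mathtt o(\omega)}(\lambda t)^{2\mathtt c(\omega)}e^{-U}$ to recover the FK weight via $t^2=(1-p)/p$, with empty compatibility classes accounting for the indicator $\1_{\{\cC_+\centernot\longleftrightarrow\cC_-\}}$. Your explicit bijection $\bar\sigma\leftrightarrow(\omega,\sigma)$ and the local edge-weight identity are just a slightly more itemized presentation of the same cancellation, and your closing caveat about the boundary data on $\partial\bar G$ is handled identically (and equally implicitly) in the paper.
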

% \begin{rmk}
% (1) Proposition~\ref{lem:extended to FK} can be easily extended to arbitrary Ising boundary conditions (i.e., conditioning on spin values on some subset of vertices). This is because condtioniong on $\bar \sigma_v = \pm$ for $v\in \vert(G)$ is equivalent to posing $\pm \infty$ external field on $v$. 

% (2) The conditional law of the FK-Ising model under an arbitrary Ising boundary condition is usually complicated since we have to avoid opposite boundaries connecting with each other. However, the extended Ising model naturally handles this restriction, since each allowed configuration in the extended Ising model naturally satisfies this restriction. As a result, Proposition \ref{lem:extended to FK} will be useful in proving Lemma~\ref{lem-DX-percolation}.
% \end{rmk}
\begin{proof}
The proof is similar to the Edward-Sokal coupling, and we give a self-contained proof here merely for completeness. 

For an FK-configuration $\omega$, recall
that $\mathfrak C$ is the collection of all the open clusters in $\omega$ excluding $\mathcal C_\pm$, and that $h_A=\sum_{x\in A} h_x$ for $A\subset V(G)$. Let $\mathtt{c} (\omega)$ and $\mathtt{o} (\omega)$ denote the numbers of closed and open edges in $\omega$ respectively.
Then, recalling \eqref{eq-def-FK-external-field} we can write the corresponding FK-Ising measure $\phi^{\xi}_{p,G,h}$ as 
\begin{align*}
\phi^{\xi}_{p,G,h}(\omega)
&=\frac{1}{\cZ^{\xi,\phi}_{p,G,h}}p^{|{E}(G)|}(\frac{1-p}{p})^{\mathtt c(\omega)} \prod_{\mathcal C\in \mathfrak C} 2 \cosh(\frac{h_\mathcal C}{T})\times \exp( \frac{h_{\cC_+}-h_{\cC_-}}{T})\times \1_{\{{\mcc P\centernot \longleftrightarrow\mcc M}\}}\, ,
\end{align*}
where $\cZ^{\xi,\phi}_{p,G,h}$ is the partition function of $\phi^{\xi}_{p,G,h}$, {and $\mcc P, \mcc M$ are the positive and negative parts of the boundary condition $\xi,$ respectively.}

We say an extended Ising configuration $\bar{\sigma}$ is compatible with an FK-configuration $\omega$ if  $|\bar{\sigma}_e|=\omega_e$ for each edge $e$, and we use ${\rm Com}(\omega)$ to denote the set of all the extended Ising configurations compatible with $\omega$.
Then we obtain that for {$\omega$ such that } ${\rm Com}(\omega)\neq\emptyset$,
\begin{equation*}
\begin{aligned}
   \sum_{\bar{\sigma}\in {\rm Com}(\omega)}\bar{\mu}_{T,\G,h}^\xi(\bar{\sigma}) &= \frac{1}{\bar{\cZ}^{\xi}_{T,G,h}} \sum_{\bar{\sigma}\in {\rm Com}(\omega)}\lambda^{2\mathtt o(\omega)}\cdot(\lambda\cdot t)^{2\mathtt c(\omega)}\cdot \exp{\left\{\Big(\sum_{\cC\in \mathfrak {C}}\bar\sigma_{\cC}\cdot h_{\cC}+h_{\cC^+}-h_{\cC^-}\Big)/T\right\}} \\
   &= \frac{1}{\bar{\cZ}^{\xi}_{T,G,h}}\lambda^{2|{E}(G)|}\cdot t^{2\mathtt c(\omega)}\cdot\prod_{\mathcal C\in \mathfrak C} 2 \cosh(\frac{h_\mathcal C}{T})\times \exp( \frac{h_{\cC_+}-h_{\cC_-}}{T}),
\end{aligned}
\end{equation*}
where $\bar \sigma_\mathcal C$ is the (same) spin on the cluster $\mathcal C$ and $\bar{\cZ}^{\xi}_{T,G,h}$ is the partition function of the extended Ising model. Note that for any FK-configuration $\omega$, ${\rm Com}(\omega)\neq\emptyset$ if and only if ${\mcc P\centernot \longleftrightarrow\mcc M}$.  The desired result follows since $t^2 =\frac{1}{\exp(\frac{2}{T})-1} =\frac{\exp(-\frac{2}{T})}{1-\exp(-\frac{2}{T})} = \frac{1-p}{p}$.
\end{proof}
\begin{rmk}\label{rmk:connecting events for extended Ising}
    In the FK-Ising model, recall that we write $u\longleftrightarrow v$ (in $\omega$) if and only if there is a path $u=x_1,x_2,\cdots,x_n=v$ connecting $u$ and $v$ such that $\{x_i,x_{i+1}\}$ is an open edge in $\omega$. In the extended Ising model, we write $u\longleftrightarrow v$ (in $\bar\sigma$) if and only if there is a path $u=x_1,x_2,\cdots,x_n=v$ connecting $u$ and $v$ such that all the spin values along the path (including the edge spins) are either all plus or all minus. By Lemma \ref{lem:extended to FK}, this coincidence of notations is well-justified. 
\end{rmk}
% \begin{rmk}\label{rmk:part-function}
%     We can obtain from this proof that the ratio between $\bar{\mcc Z}_G$ and $\mcc Z_\phi$ is a constant only relies on the graph $G$. Furthermore, we have $\bar\cZ=\cZ$, see \cite[Lemma 2.1]{AHP20}. 
% \end{rmk}

\subsection{Disagreement percolation}\label{sect:disagreement percolation}

For a finite graph $G$, let $\bar{\sigma}^+$ and $\bar{\sigma}^-$ be two extended Ising configurations on $\bar G$. The \textbf{pre-disagreement set} of $(\bar \sigma^+, \bar \sigma^-)$ is defined as
\begin{equation*}
\mathcal{D}:=\mathcal{D}(\bar{\sigma}^+,\bar{\sigma}^-) = \{u \in \bar{\G} : \bar{\sigma}_u^+>\bar{\sigma}_u^- \}.
\end{equation*}
Similarly, the \textbf{anti-disagreement set} of $(\bar \sigma^+, \bar \sigma^-)$ is defined as $\{u \in \bar{\G} : \bar{\sigma}_u^+<\bar{\sigma}_u^- \}$. For $u\in \bar G$, we call it a pre-disagreement (or anti-disagreement) if it is in the pre-disagreement set (or anti-disagreement set). It can be seen from \eqref{hard constraint} (in the definition of $\bar \sigma$) that pre-disagreements can not be adjacent to anti-disagreements in the following sense: if $u\in V(G)$ is a pre-disagreement, then $e=\{u,v\}\in E(G)$ can not be an anti-disagreement and vise versa.

We say that $u,v\in\bar G$ are connected in $\mcc D$, denoted by $u \stackrel{\mathcal{D}}{\longleftrightarrow} v$, if there is a path connecting $u$ and $v$ such that all vertices and edges along the path are contained in  $\mathcal D$. Furthermore, for $A,B\subset \bar G$, we say $A,B$ are connected in $\mcc D$ if there exist $u\in A, v\in B$ such that $u \stackrel{\mathcal{D}}{\longleftrightarrow} v$.
% We say that $A,B \subset \bar{\G}$ are connected in $\mathcal{D}$, denoted by $A \stackrel{\mathcal{D}}{\longleftrightarrow} B$, if there is a path connecting $A$ and $B$ such that all vertices and edges along the path are contained in  $\mathcal D$ (If the set $A$ contains only one point $u$, we simply write $u\stackrel{\mcc D}\longleftrightarrow B$).
A \textbf{pre-disagreement cluster} is a connected component of the pre-disagreement set. An \textbf{anti-disagreement cluster} is defined similarly.
For a set $S \subset \bar{\G}$, let $\mathcal{D}_{S}$ be the union of the connected components of $\mathcal{D}$ that intersect $S$.

We write $\bar \mu_{G,h}^{A,\xi^+/\xi^-}=\bar \mu_{G,h}^{A,\xi^+}\otimes \bar \mu_{G,h}^{A,\xi^-}$ to be the product of two independent extended Ising measures. By our notation convention of using  plus and minus as superscripts for the boundary condition of the first copy and of the second copy in the product respectively, we indicate  that \emph{usually} the boundary condition for the first copy in the product measure is larger, but we emphasize that this does not have to be the case. Sometimes we will drop $A$ from the superscript, and in this case the boundary condition is posed on $\intB \bar{G}$.
%when $\bar G$ is considered as a subgraph of $\bar{\mathbb{Z}^2}$.
When $G$ is an annulus, say $\Lambda_{M,N}$, we use the notation $\bar \mu_{\Lambda_{M,N},h}^{\xi_1,\xi_2}$
for simplicity to denote the extended Ising measure with boundary conditions $\xi_1,\xi_2$ on $\intB \Lambda_{M+1}$, $\intB\Lambda_{N}$ respectively, and we use the notation $\bar \mu_{\Lambda_{M,N},h}^{\xi^+_1/\xi^-_1,\xi^+_2/\xi^-_2}$ for the product measure where the boundary condition for (e.g.) the first copy is $\xi_1^+, \xi_2^+$ on $\intB \Lambda_{M+1}$, $\intB\Lambda_{N}$ respectively.
We denote $\langle\cdot\rangle^{A,\xi}_{G,h}$ and $\langle\cdot\rangle^{A,\xi^+/\xi^-}_{G,h}$ the expectation operators with respect to the extended Ising measure (or simply the Ising measure) and the product of extended Ising measures respectively. Here we slightly abuse the notation $\langle\cdot\rangle^{A,\xi}_{G,h}$ since the Ising measure is just the restriction of the extended Ising measure to ${V}(G)$.  %The first item of the following proposition is from \cite[Proposition 3.1]{AHP20} and the second item can be derived similarly.

\begin{prop}\label{prop:DisagreementRep}
For a finite graph $\G$ and an arbitrary external field $h$, the following hold.
\begin{enumerate}
    \item For any  $A\subset  \bar G $ and $u \in {V}(\G)$
\begin{eqnarray}\label{eq:DisagreementRepresentation}
\frac{1}{2} \cdot \left(
  \langle \sigma_u \rangle_{\G,h}^{A,+} -
  \langle \sigma_u \rangle_{\G,h}^{ A,-} \right)
  =
  \left\langle \1_{\{u \stackrel{\mathcal{D}}{\longleftrightarrow} A\}}\right\rangle^{A,+/-}_{\G,h} .
\end{eqnarray}
% \item For any $A\subset  \bar G $ and $V_0\subset {V}(\G)$, we denote $\cB$  {the intersection of the following events: (i) All the points in $V_0$ are pre-disagreements or anti-disagreements. (ii)} Each pre-disagreement cluster which does not intersect with $A$ contains an even number of points in $V_0$. (iii) Each anti-disagreement cluster contains an even number of points in $V_0$. Then we have 
% % (Here a pre-disagreement or an anti-disagreement cluster means a cluster where all points are connected to each other in $\cD$ or in the set of anti-disagreements, respectively.) Then we have
% \begin{equation*}\label{eq: disagreement-spin average expansion}
%     \Big\langle\prod_{u\in V_0}(\bar\sigma_{u}^+-\bar\sigma_{u}^-)\Big\rangle^{A,+/-}_{G,h}=2^n\langle\1_{\cB}\rangle^{A,+/-}_{\G,h}
% \end{equation*} 
% where 
% $(\bar\sigma^+,\bar\sigma^-)$ is sampled from $\bar{\mu}^{A,+}_{\G,h}\otimes \bar{\mu}^{A,-}_{\G,h}$.
\item For any $A\subset \bar G, V_0\subset \vert(G)$ and any boundary condition $\xi \in \{1,-1\}^A$, we denote $\cB$ 
 the intersection of the following events: (i) All the points in $V_0$ are pre-disagreements or anti-disagreements. (ii) Each pre-disagreement cluster contains an even number of points in $V_0$. (iii) Each anti-disagreement cluster contains an even number of points in $V_0$.
 Then we have \begin{equation}\label{eq: disagreement-spin average expansion with agreement boundary}
    \Big\langle\prod_{u\in V_0}(\bar\sigma_{u}-\bar\sigma_{u}')\Big\rangle^{A,\xi/\xi}_{\G,h}=2^{{|V_0|}}\langle\1_{\cB}\rangle^{A,\xi/\xi}_{\G,h}
\end{equation} 
where 
$(\bar\sigma,\bar\sigma')$ is sampled from $\bar{\mu}^{A,\xi}_{\G,h}\otimes \bar{\mu}^{A,\xi}_{\G,h}$. \end{enumerate}
\end{prop}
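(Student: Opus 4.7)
The plan is to prove both parts by a single \emph{cluster swap} argument. Given a sample $(\bar\sigma^+,\bar\sigma^-)$ from the product extended-Ising measure and a (pre- or anti-)disagreement cluster $C\subset\bar G$ that does not touch $A$, I would swap $(\bar\sigma^+|_C,\bar\sigma^-|_C)$ to $(\bar\sigma^-|_C,\bar\sigma^+|_C)$ and leave the rest of both configurations unchanged. The first step is to verify that this defines a measure-preserving involution on the product model: the hard constraint $|\bar\sigma_v-\bar\sigma_e|\le 1$ is only at risk on $\intB C$, but by maximality of the disagreement cluster this boundary lies in the agreement set where the two copies coincide, so each swapped copy still satisfies the constraint. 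The weight factorises into $W$-factors that depend only on the configuration near a single vertex (invariant under interchanging the two copies at a single cluster) times $\exp\bigl(-T^{-1}\sum_v h_v(\sigma^+_v+\sigma^-_v)\bigr)$, which is manifestly symmetric in the two copies. Consequently, conditional on the geometry of the disagreement clusters not touching $A$, the signs $\epsilon_C\in\{\pm1\}$ (with $+1$ for pre- and $-1$ for anti-disagreement) are i.i.d.\ uniform.

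For part~(1), I would write
\[
\tfrac12\bigl(\langle\sigma_u\rangle^{A,+}_{G,h}-\langle\sigma_u\rangle^{A,-}_{G,h}\bigr)
=\bigl\langle\1_{\{u\in\mcc D\}}-\1_{\{u\in\overline{\mcc D}\}}\bigr\rangle^{A,+/-}_{G,h},
\]
where $\overline{\mcc D}$ denotes the anti-disagreement set and I used that $\sigma_u^+-\sigma_u^-\in\{-2,0,2\}$ at a vertex. The $+/-$ boundary condition forces $A\subset\mcc D$, so every anti-disagreement cluster avoids $A$ and is therefore swappable. The swap interchanges $\{u\in\overline{\mcc D}\}$ with $\{u\in\mcc D,\ u\not\stackrel{\mcc D}{\longleftrightarrow}A\}$, so their probabilities coincide and the right-hand side reduces to $\langle\1_{\{u\stackrel{\mcc D}{\longleftrightarrow}A\}}\rangle^{A,+/-}_{G,h}$, as desired.

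For part~(2), the $\xi/\xi$ boundary condition guarantees agreement on $A$, so \emph{every} disagreement cluster is swappable. Using again $\bar\sigma_u-\bar\sigma'_u\in\{-2,0,2\}$ at each vertex, I would factorise
\[
\prod_{u\in V_0}(\bar\sigma_u-\bar\sigma'_u)=2^{|V_0|}\,\1_{\{V_0\subset\mcc D\cup\overline{\mcc D}\}}\prod_{C}\epsilon_C^{k_C},
\]
where the product runs over the disagreement clusters intersecting $V_0$ and $k_C=|V_0\cap C|$. Conditioning on the cluster geometry and using the uniform independence of the $\epsilon_C$'s established in the first paragraph, the conditional expectation of $\prod_C\epsilon_C^{k_C}$ equals $\prod_C\1_{\{k_C\text{ even}\}}$. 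Together with the indicator $\1_{\{V_0\subset\mcc D\cup\overline{\mcc D}\}}$ this is exactly $\1_{\mcc B}$, which yields the identity with $n=|V_0|$.

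The main obstacle I anticipate is justifying the first paragraph rigorously: one must carefully check that the cluster swap preserves the product of extended Ising measures, and the fact that disagreement clusters live in the bipartite graph $\bar G$ including edge-sites is essential — it is what guarantees that $\intB C$ lies in the agreement set and thus that the hard constraint survives after swapping. Once this invariance is in place, together with the observation that $A$ itself is automatically disagreement-free or forced-disagreement depending on the boundary regime, the remainder of both parts is a parity bookkeeping exercise.
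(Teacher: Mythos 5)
Your proof is correct and takes essentially the same route as the paper: the paper quotes this proposition from \cite[Proposition 3.1]{AHP20}, whose proof is exactly the cluster-swap symmetry you describe (stated here as Proposition~\ref{prop:swap}), and your parity bookkeeping for part (2) and the pre/anti exchange for part (1) are the standard consequences of that invariance. One minor imprecision: the fact that $\intB C$ lies in the agreement set does not follow from maximality alone (maximality only rules out neighbours of the \emph{same} disagreement type); it is the hard constraint $|\bar\sigma_v-\bar\sigma_e|\le 1$ that forbids a pre-disagreement site of $\bar G$ from being adjacent to an anti-disagreement site, which is precisely why one works on $\bar G$ rather than $G$, as you correctly observe at the end.
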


The first item of Proposition~\ref{prop:DisagreementRep} was proved in \cite{AHP20} using a general principle of symmetry under the following class of \textbf{swap} operation (see \cite{AHP20} for detailed discussions) and the second item can be derived similarly.  For any pairs of sets $A, S\subset \bar G$,  let $R^A_S$ be the mapping
$R^A_S:  (\bar{\sigma},\bar{\sigma}') \mapsto   (\bar{\phi},\bar{\phi}')$ which acts as the identity in case $\mathcal{D}_S \cap A \neq \emptyset$, and otherwise swaps the configurations in $\mathcal{D}_S$, in the sense that
\begin{equation}\label{eq:ClusterSwap}
(\bar\phi_u,\bar\phi'_u) := \begin{cases}  (\bar{\sigma}'_u,\bar{\sigma}_u) & u \in \mathcal{D}_S \\
(\bar{\sigma}_u,\bar{\sigma}'_u) & u \not \in \mathcal{D}_S \\ \end{cases}  \quad \mbox{(provided $\mathcal{D}_S \cap A = \emptyset$)}.
\end{equation}

\begin{prop}\cite[Proposition 3.2]{AHP20}\label{prop:swap}
For any $A,S \subset \bar\G$, any boundary conditions $\xi,\xi'\in \{-1,{0,}+1\}^A$ and any external field $h$, the product measure $\bar{\mu}_{\G,h}^{A, \xi} \otimes\bar{\mu}_{\G,h}^{A, \xi'}$ (interpreted as $\bar{\mu}_{\G,h} \otimes\bar{\mu}_{\G,h}$ if $A=\emptyset$) is invariant under the action of $R_S^A$.
\end{prop}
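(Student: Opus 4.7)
The plan is to verify invariance of the product measure under $R_S^A$ by exhibiting it as a weight-preserving involution on configuration pairs. Since $R_S^A$ acts as the identity on $\{\mathcal{D}_S \cap A \neq \emptyset\}$, I restrict to the complementary event, on which $(\bar\sigma,\bar\sigma')$ is mapped to $(\bar\phi,\bar\phi')$ by swapping values on $\mathcal{D}_S$ and leaving $\bar G \setminus \mathcal{D}_S$ pointwise fixed. I would first check that $(\bar\phi,\bar\phi')$ is again a pair of admissible extended Ising configurations: it carries the original boundary conditions $\xi,\xi'$ on $A$ (immediate since $\mathcal{D}_S$ is disjoint from $A$), and it satisfies the hard constraint $|\bar\phi_v - \bar\phi_e|\le 1$ at every vertex-edge incidence.

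The admissibility check rests on the following interface observation, which is the one place where the definition of $\mathcal{D}_S$ on the enlarged graph $\bar G$ is genuinely used: any element of $\bar G \setminus \mathcal{D}_S$ that is adjacent (in the $V(G)$--$E(G)$ incidence graph underlying $\bar G$) to $\mathcal{D}_S$ cannot itself lie in $\mathcal{D}$, for otherwise connectivity would place it in the same component as $\mathcal{D}_S$. Hence at any such interface point $x$ one has $\bar\sigma_x = \bar\sigma'_x$, and the hard constraint across the interface after the swap reduces to that of one of the two original copies.

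For the weight comparison, recalling
\[
\bar\mu^{A,\xi}_{G,h}(\bar\sigma)\,\bar\mu^{A,\xi'}_{G,h}(\bar\sigma') \;\propto\; \prod_{\substack{e\in E(G)\\ v\in e}} W(\bar\sigma_v,\bar\sigma_e)\,W(\bar\sigma'_v,\bar\sigma'_e)\cdot e^{-U(\bar\sigma)-U(\bar\sigma')},
\]
the field term $U(\bar\sigma)+U(\bar\sigma')=\tfrac{1}{T}\sum_v h_v(\sigma_v+\sigma'_v)$ is invariant since the swap only exchanges $\sigma_v$ and $\sigma'_v$ at vertices of $\mathcal{D}_S\cap V(G)$. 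For the $W$-product, incidences $(v,e)$ with both endpoints inside $\mathcal{D}_S$ or both outside are literally permuted between the two copies; incidences with exactly one of $v,e$ inside $\mathcal{D}_S$ invoke the interface observation again, since the unswapped coordinate satisfies $\bar\sigma = \bar\sigma'$, making $W(\bar\sigma_v,\bar\sigma_e)\,W(\bar\sigma'_v,\bar\sigma'_e)$ symmetric under exchanging the swapped coordinate between the two copies.

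The main obstacle is being precise about the bipartite vertex-edge structure of $\bar G$ and how connected components of $\mathcal{D}$ live in it, because the entire argument hinges on the interface observation $\bar\sigma = \bar\sigma'$ on points of $\bar G \setminus \mathcal{D}_S$ adjacent to $\mathcal{D}_S$. Once that is nailed down, both admissibility and density preservation reduce to the symmetry $W(a,b)=W(-a,-b)$ of the single-incidence weight together with the additive form of the external field energy, and the invariance of $\bar\mu^{A,\xi}_{G,h}\otimes\bar\mu^{A,\xi'}_{G,h}$ under $R_S^A$ follows by matching densities pair by pair.
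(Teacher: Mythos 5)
First, note that the paper does not prove this proposition: it is quoted directly from \cite{AHP20}, so your proposal can only be measured against the argument there, which is indeed the pointwise weight comparison you describe. Your identification of the interface observation as the crux is correct, and your treatment of the external-field term and of incidences lying entirely inside or entirely outside $\mathcal{D}_S$ is fine. However, the justification you give for the interface observation is incomplete: that a point $x\in\bar G\setminus\mathcal{D}_S$ adjacent to $\mathcal{D}_S$ cannot lie in $\mathcal{D}$ only yields $\bar\sigma_x\le\bar\sigma'_x$, since $\mathcal{D}$ is the \emph{one-sided} set $\{\bar\sigma>\bar\sigma'\}$; it does not by itself exclude an anti-disagreement at $x$. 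Equality must be extracted by combining $\bar\sigma_x\le\bar\sigma'_x$ with the hard constraints of the two original configurations and the strict inequality at the neighbouring element of $\mathcal{D}_S$: e.g.\ a vertex $v\in\mathcal{D}_S$ has $(\bar\sigma_v,\bar\sigma'_v)=(1,-1)$, which forces an incident edge $e\notin\mathcal{D}$ to satisfy $\bar\sigma_e\in\{0,1\}$, $\bar\sigma'_e\in\{-1,0\}$ and $\bar\sigma_e\le\bar\sigma'_e$, hence $\bar\sigma_e=\bar\sigma'_e=0$. This step is essential rather than cosmetic: if an interface element could anti-disagree, the swapped pair would violate the hard constraint and the weight identity would fail.

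The more structural problem is your claim that $R_S^A$ is an involution. It is not, for the one-sided clusters used here: after the swap every point of $\mathcal{D}_S$ becomes an anti-disagreement, so the pre-disagreement set of the image is $\mathcal{D}\setminus\mathcal{D}_S$, which has no component meeting $S$; hence $R_S^A\circ R_S^A=R_S^A$, and the map is many-to-one (on a single free vertex with $A=\emptyset$ and $S=\{v\}$, both $(+,-)$ and $(-,+)$ are sent to $(-,+)$). Consequently ``matching densities pair by pair'' does not yield invariance of the pushforward measure --- which is in fact false for this map, as the single-vertex example shows. The usable content of the proposition, and the way it is applied in the paper (e.g.\ in \eqref{eq: swapping for crossing probability}), is the pointwise identity $\rho(R_S^A(x))=\rho(x)$ for the product density $\rho$, \emph{together with} the observation that $R_S^A$ restricts to a weight-preserving bijection from each event $\{\mathcal{D}_S=D,\ D\cap A=\emptyset\}$ onto the corresponding event for anti-disagreement clusters of $S$, the inverse being the analogous swap performed on anti-disagreement clusters. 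Your weight computation supplies the first half; the second half needs to be stated and verified for the argument to be complete.
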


Applying \eqref{eq:DisagreementRepresentation} to $\bar\G$ with $\G =\Lambda_N \subset \mathbb{Z}^2$ and $A = \intB \lamn$, we obtain the following geometric representation of the boundary influence:
 \begin{equation} \label{eq:boundary influence}
\mathbf m(T_c,N,\eps h) =   \left\langle \1_{\{o \stackrel{\mathcal{D}}{\longleftrightarrow} \intB \Lambda_N\}}\right\rangle^{\intB \Lambda_N,+/-}_{\Lambda_N,\eps h} .
\end{equation}
In correspondence to the connection with $\partial \Lambda_N$ above, we point out that it is crucial to consider $\mcc D_{\intB \Lambda_N}$, the connected component of $\intB \Lambda_N$ in $\mcc D$.  We will call $\mathcal D_{\partial \Lambda_N}$ the \textbf{disagreement set} and denote it as $\mathcal D_\partial$ for short. Thus, the boundary influence is just the probability of the event that the origin $o$ is a disagreement point (i.e., a point in the disagreement set).

\begin{rmk}
    Despite being quite obvious, we nevertheless emphasize that the disagreement set is a subset of the pre-disagreement set, but the reverse is not true. This is also the rationale behind the terminology of pre-disagreement and disagreement. 
\end{rmk}

\iffalse
Proposition~\ref{prop:DisagreementRep} was proved in \cite{AHP20} using a general principle of symmetry under the following class of \textbf{swap} operation (see \cite{AHP20} for detailed discussions).  For any pairs of sets $A, S\subset \bar G$,  let $R^A_S$ be the mapping
$R^A_S:  (\bar{\sigma},\bar{\sigma}') \mapsto   (\bar{\phi},\bar{\phi}')$ which acts as the identity in case $\mathcal{D}_S \cap A \neq \emptyset$, and otherwise swaps the configurations in $\mathcal{D}_S$, in the sense that
\begin{equation}\label{eq:ClusterSwap}
(\bar\phi_u,\bar\phi'_u) := \begin{cases}  (\bar{\sigma}'_u,\bar{\sigma}_u) & u \in \mathcal{D}_S \\
(\bar{\sigma}_u,\bar{\sigma}'_u) & u \not \in \mathcal{D}_S \\ \end{cases}  \quad \mbox{(provided $\mathcal{D}_S \cap A = \emptyset$)}.
\end{equation}

\begin{prop}\cite[Proposition 3.2]{AHP20}\label{prop:swap}
For any $A,S \subset \bar\G$, any boundary conditions $\xi,\xi'\in \{-1,{0,}+1\}^A$ and any external field $h$, the product measure $\bar{\mu}_{\G,h}^{A, \xi} \otimes\bar{\mu}_{\G,h}^{A, \xi'}$ (interpreted as $\bar{\mu}_{\G,h} \otimes\bar{\mu}_{\G,h}$ if $A=\emptyset$) is invariant under the action of $R_S^A$.
\end{prop}
\fi

\subsection{Basic properties of the model(s)}

Here we record some of the standard and well-known properties that will be used repeatedly (see e.g., \cite{FV17,D19} for details and proofs).

\begin{enumerate}[(i)]
    \item \textbf{FKG inequality.} This was introduced in \cite{FKG71}  and named after the three authors.
    
    If $\mathtt A,\mathtt B$ are both increasing (or decreasing) events, then 
$$P(\mathtt A\cap \mathtt B)\geq P(\mathtt A)\times P(\mathtt B).$$
    
    \item \textbf{Comparison of boundary conditions}.
    If two boundary conditions $\xi_1\leq \xi_2$, then for any increasing event $\mathtt A$ we have 
    $$P^{\xi_1}(\mathtt A)\leq P^{\xi_2}(\mathtt A).$$
    Here $\leq$ stands for a partial order in the set of boundary conditions: for spin configurations, we say $\xi_1\leq \xi_2$ if every plus spin in $\xi_1$ is also plus in $\xi_2;$ for bond configurations, we say $\xi_1\leq \xi_2$ if every open edge in $\xi_1$ is also open in $\xi_2$.
    \item \textbf{Domain Markov property}. For two domains $\Gamma_1\subset\Gamma_2$, given the configuration $\xi$ on $\Gamma_1$, the influence on the measure in $\Gamma_2\setminus\Gamma_1$ 
    behaves like a boundary condition:
    $$P_{\Gamma_2}(\cdot\mid \xi)=P_{\Gamma_2\setminus\Gamma_1}^{\xi|_{\partial\Gamma_1}}(\cdot),$$
where $\xi|_{\partial\Gamma_1}$ is the restriction of $\xi$ to $\partial\Gamma_1$ and it denotes for the boundary condition on $\partial\Gamma_1$.

    %\item \textbf{Finite energy property.}
    %There exists a constant $c_{\rm{FE}}>0$ only depending on the temperature and the dimension $d$ such that the probability of a spin being plus or an edge being open is in $[c_{\rm FE},1-c_{\rm FE}]$.

\end{enumerate}
In (i), (ii), and (iii), $P$ may stand for the (extended) Ising measure with or without the external field and with or without the boundary condition and may also stand for the FK-Ising measure without the external field. However, the FK-Ising measure with the external field only satisfies (i) and (ii).
We will write FKG, CBC, and DMP in the following for convenience. 
    %We point out that FKG, CBC, and DMP hold for the Ising, FK-Ising, and the extended Ising model for any fixed external field, but FEP only holds for the Ising and FK-Ising without external field. 

\subsection{FKG for pre-disagreements}\label{FKG}
In what follows, we often work with the product of two (extended) Ising measures and we are particularly interested in the (pre-)disagreement set according to the corresponding pair of configurations. In light of this, we will refer to this product of {extended} Ising models as the \textbf{disagreement Ising model}.
In our analysis of the disagreement Ising model, we would like to have a type of FKG inequality which roughly states that the presence of pre-disagreements in some region will encourage the appearance of pre-disagreements in some other region. To this end, we need to define a specific partial order on the product of the configuration spaces. To be precise, let us focus on a fixed graph $\bar G$, with two boundary conditions $\xi_1, \xi_2$ on $A\subset\bar G$ and with an arbitrary external field $h=\{h_x:x\in { V}(G)\}$. Recall $\bar\mu_{G,h}^{\xi_1/\xi_2}=\bar\mu_{G,h}^{\xi_1}\otimes \bar\mu_{G,h}^{\xi_2}$ is the product of two extended Ising measures (with boundary conditions $\xi_1$ and $\xi_2$ respectively). In addition, we sample $(\bar \sigma^1,\bar\sigma^2)$ from this product measure.

% For each vertex $v$, the tuple $(\bar\sigma^1_v, \bar\sigma^2_v)$ can be viewed as a random vector which takes a value in
% $(1,-1),$$ (1,1), $$(-1,-1), $$(-1,1)$,.
% For convenience we define
% $$\bar\Omega_v=\{(x,x'): x, x'\in \{-1,1\}\}.$$
% Similarly, for each edge $e$, the tuple 
% $(\bar\sigma^1_e, \bar\sigma^2_e)$ can be viewed as a random vector which takes a value in
% $$\bar\Omega_e=\{(x,x'): x, x'\in\{-1,0,1\}\}.$$

% Then $\bar\mu_{G}^{\xi_1}\otimes \bar\mu_{G}^{\xi_2}$ will induce a measure on $\bar\Omega^{\bar G}=\bigotimes_{v\in{\rm V}(G)}\bar\Omega_v\bigotimes_{e\in{\rm E}(G)}\bar\Omega_e$.  That is, for any $\Vec{\sigma}=(\bar \sigma^1,\bar \sigma^2)\in \bar\Omega^{\bar G}$,  define

Recalling \eqref{eq:P_Lambda_tau_def}, we have
\begin{equation}\label{eq:product-def}
\bar\mu_{G,h}^{\xi_1/\xi_2}(\bar \sigma^1,\bar \sigma^2)=\bar\mu_{G,h}^{\xi_1}(\bar\sigma^1)\times \bar\mu_{G,h}^{\xi_2}(\bar\sigma^2)=\frac{1}{\bar{\mcc Z}_{G,h}^{\xi_1}\cdot\bar{\mcc Z}_{G,h}^{\xi_2}}\prod_{i=1,2}\prod_{\substack{e \in {E}(G) \\ v \in e}}  W(\bar{\sigma}_v^i,\bar{\sigma}_{e}^i) \cdot  e^{- U(\bar\sigma^i)}.\end{equation}
% The last equation comes from recalling \eqref{eq:Wdef} and here we omitted some subscripts for convenience.
% And we point out that the notation $\Vec{\mu}$ differs from the aforementioned notation conventions as in the ending of Section~\ref{sect:disagreement percolation}. The aim is to emphasize that $\bar\mu$ is a measure on the vector space $\bar\Omega^{\bar G}$.

For each vertex $v\in{V}(G)$ and each edge $e\in{E}(G)$, we see that $(\bar\sigma_v^1,\bar\sigma_v^2)$ and $(\bar\sigma_e^1,\bar\sigma_e^2)$  are elements of $\{-1,0,1\}^2. $ We write $\Theta=\{-1,0,1\}^2$ for convenience.
This motivates the following definition.

\begin{defi}\label{def:partial-order}
    We define a partial order $\succeq$ \emph{(}and  correspondingly $\preceq$\emph{)} on $\Theta$ such that
\begin{equation*}
    (a,b)\succeq (c,d)\iff (c,d)\preceq(a,b)\iff a\geq c\mbox{ and } b\leq d.
\end{equation*} 
% Now we introduce a partial order on $\bar\Theta_v$ and $\bar\Theta_e$,
% that is, $$\Vec{x}=(x_1,x_2)\succeq \Vec{y}=(y_1,y_2)\iff x_1\geq x_2, y_1\leq y_2.$$
This induces a partial order on $\Theta^{\bar G}$ naturally. That is,  $$(\bar\sigma^1,\bar\sigma^2)\succeq (\bar\nu^1,\bar\nu^2)\iff (\bar\nu^1,\bar\nu^2)\preceq(\bar\sigma^1,\bar\sigma^2)\iff(\bar\sigma_v^1,\bar\sigma_v^2)\succeq (\bar\nu_v^1,\bar\nu_v^2)\mbox{ and }  (\bar\sigma_e^1,\bar\sigma_e^2)\succeq (\bar\nu_e^1,\bar\nu_e^2),$$ 
for every  $v\in V(G)$ and $e\in{ E}(G)$.
Given this partial order on $\Theta^{\bar G }$, we can define the notion of \textbf{increasing event}: an event $\mathtt A\subset \Theta^{\bar G }$ is called increasing if and only if $(\bar\nu^1,\bar\nu^2)\in \mathtt A$ implies that $(\bar\sigma^1,\bar\sigma^2)\in \mathtt A$ for all $(\bar\sigma^1,\bar\sigma^2)\succeq (\bar\nu^1,\bar\nu^2)$.\end{defi}

For $(a,b),(c,d)\in \Theta$, we define
$$(a,b)\vee(c,d)=(\max\{a,c\}, \min\{b,d\}),\quad (a,b)\wedge(c,d)=(\min\{a,c\}, \max\{b,d\}).$$
Furthermore, for $(\bar\sigma^1,\bar\sigma^2), (\bar\nu^1,\bar\nu^2)\in \Theta^{\bar G }$, we define $(\bar\sigma^1,\bar\sigma^2)\vee(\bar\nu^1,\bar\nu^2)$ by taking the $\vee$-operation over each vertex and each edge, and we define $(\bar\sigma^1,\bar\sigma^2)\wedge(\bar\nu^1,\bar\nu^2)$ similarly.

% Note that in the above, we took the maximum for the first coordinate and took the minimum for the second coordinate (or vice versa); this is consistent with our definition of the partial order above. 

% \begin{lem}
%     For any increasing events $A,B\subset V_v^G$,
%     we have 
%     $$\Vec{\mu}_G(A\cap B)\geq \Vec{\mu}_G(A)\times \Vec{\mu}_G(B).$$
% \end{lem}

% \begin{proof}
%     By [FK71], we only need to check 
% $$\Vec{\mu}_G(\Vec{\sigma}\vee \Vec{\nu}) \times \Vec{\mu}_G(\Vec{\sigma}\wedge \Vec{\nu})\geq \Vec{\mu}_G(\Vec{\sigma})\times\Vec{\mu}_G(\Vec{\nu}).$$
% But actually, by the definition of our partial order, we know
% $$\Vec{\sigma}\vee\Vec{\nu}=(\sigma^1\vee\nu^1, \sigma^2\wedge\nu^2),\quad \Vec{\sigma}\wedge\Vec{\nu}=(\sigma^1\wedge\nu^1, \sigma^2\vee\nu^2).$$

% So we only need to check
% $$\mu_{G,h}^{\xi_1}(\sigma^1\vee \nu^1)\times\mu_{G,h}^{\xi_1}(\sigma^1\wedge \nu^1)\geq\mu_{G,h}^{\xi_1}(\sigma^1)\times \mu_{G,h}^{\xi_1}(\nu^1)$$ and the similar inequality by changing the subscript 1 to 2. These inequalities only relate with pure Ising measure and are quite easy to check.

% \end{proof}

% Now we focus on the more complicated case, i.e. the product of two extended Ising measure. Then on each vertex we will still see a vector in $V_v$ but on each edge we will see a vector in $$V_e=\{(x_1,x_2): x_1,x_2=-1,0,1\}.$$
% And the partial order  on $V_e$ can be defined in the same way. That is,
% $$(x_1,x_2)\succeq (y_1, y_2)\iff x_1\geq x_2, y_1\leq y_2.$$

\begin{lem}\label{lem:FKG}
{\emph{(FKG)}}
    For any increasing events $\mathtt A,\mathtt B\subset \Theta^{\bar G }$,
    we have 
    $$\bar\mu_{G,h}^{\xi_1/\xi_2}(\mathtt A\cap \mathtt B)\geq \bar\mu_{G,h}^{\xi_1/\xi_2}(\mathtt A)\times \bar\mu_{G,h}^{\xi_1/\xi_2}(\mathtt B).$$
\end{lem}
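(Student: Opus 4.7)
The plan is to reduce this non-standard FKG inequality to the ordinary FKG inequality for the extended Ising model (which is already recorded at the end of Section~\ref{sect:extended-Ising}) via a spin-flip on the second coordinate. The key observation is that the partial order $\succeq$ becomes the coordinate-wise partial order once we flip the sign of the second configuration. Concretely, define the bijection $\Phi:\Theta^{\bar G}\to\Theta^{\bar G}$ by $\Phi(\bar\sigma^1,\bar\sigma^2)=(\bar\sigma^1,-\bar\sigma^2)$. Then $(\bar\sigma^1,\bar\sigma^2)\succeq(\bar\nu^1,\bar\nu^2)$ if and only if $\Phi(\bar\sigma^1,\bar\sigma^2)\geq \Phi(\bar\nu^1,\bar\nu^2)$ coordinate-wise, so $\mathtt A$ is $\succeq$-increasing if and only if $\Phi(\mathtt A)$ is increasing in the coordinate-wise order.

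Next I would show that the pushforward of the product measure $\bar\mu_{G,h}^{\xi_1/\xi_2}$ under $\Phi$ equals $\bar\mu_{G,h}^{\xi_1}\otimes \bar\mu_{G,-h}^{-\xi_2}$, i.e., another product of two extended Ising measures. The first marginal is unaffected. For the second marginal, one verifies directly from \eqref{eq:Wdef} that $W(-a,-b)=W(a,b)$ for each $(a,b)\in \{-1,1\}\times\{-1,0,1\}$ by examining the three cases $b=a$, $b=0$, and $b=-a$; this invariance under simultaneous vertex--edge sign flips takes care of the $W$-factors and in particular preserves the hard constraint $|\bar\sigma_v-\bar\sigma_e|\leq 1$. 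The external-field contribution $-(1/T)\sum_v h_v\sigma_v^2$ becomes $-(1/T)\sum_v(-h_v)\tilde\sigma_v^2$ under the substitution $\sigma_v^2=-\tilde\sigma_v^2$, which is exactly the external-field term for the field $-h$; likewise the boundary condition $\xi_2$ becomes $-\xi_2$. Hence $\Phi_\ast\bar\mu_{G,h}^{\xi_1/\xi_2}=\bar\mu_{G,h}^{\xi_1}\otimes \bar\mu_{G,-h}^{-\xi_2}$, as claimed.

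It then remains to prove FKG for a product of two extended Ising measures under the coordinate-wise order. Since each extended Ising marginal satisfies FKG with an arbitrary external field and an arbitrary boundary condition, this follows by the standard conditioning argument: for any coordinate-wise increasing events $\mathtt A', \mathtt B'$, the slice $\mathtt A'_{\bar\sigma^2}=\{\bar\sigma^1:(\bar\sigma^1,\bar\sigma^2)\in \mathtt A'\}$ is increasing in $\bar\sigma^1$ for every fixed $\bar\sigma^2$, so FKG on the first marginal yields the conditional inequality $\bar\mu_{G,h}^{\xi_1}(\mathtt A'_{\bar\sigma^2}\cap \mathtt B'_{\bar\sigma^2})\geq \bar\mu_{G,h}^{\xi_1}(\mathtt A'_{\bar\sigma^2})\bar\mu_{G,h}^{\xi_1}(\mathtt B'_{\bar\sigma^2})$. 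Since the slice probability $\bar\sigma^2\mapsto \bar\mu_{G,h}^{\xi_1}(\mathtt A'_{\bar\sigma^2})$ is increasing in $\bar\sigma^2$ (as $\mathtt A'$ is coordinate-wise increasing), a second application of FKG to the (flipped) second marginal $\bar\mu_{G,-h}^{-\xi_2}$ finishes the argument. The only place that requires any genuine verification is the case-check $W(-a,-b)=W(a,b)$; everything else is either bookkeeping or a direct appeal to known FKG inequalities. So the lemma is essentially a repackaging of the ordinary FKG inequality for the extended Ising model, exploiting the fact that the asymmetric-looking order $\succeq$ is the coordinate-wise order in disguise.
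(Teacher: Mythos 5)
Your proof is correct, but it takes a genuinely different route from the paper's. The paper proves Lemma~\ref{lem:FKG} by verifying the FKG lattice condition of \cite{FKG71} directly for the product measure under the order $\succeq$: after cancelling the partition functions and field terms, the condition reduces to a per-edge inequality for the weight $W$ in \eqref{eq:FKG-on-one-edge}, which is then checked case by case over the four configurations for which the right-hand side is nonzero. You instead observe that the flip $\Phi(\bar\sigma^1,\bar\sigma^2)=(\bar\sigma^1,-\bar\sigma^2)$ converts $\succeq$ into the coordinate-wise order, that the symmetry $W(-a,-b)=W(a,b)$ makes the pushforward $\Phi_*\bar\mu_{G,h}^{\xi_1/\xi_2}$ equal to $\bar\mu_{G,h}^{\xi_1}\otimes\bar\mu_{G,-h}^{-\xi_2}$, and that a product of two FKG measures is FKG for the coordinate-wise order by the standard conditioning argument. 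Both steps of yours are sound; in particular the monotonicity of the slice $\bar\sigma^2\mapsto\bar\mu^{\xi_1}_{G,h}(\mathtt A'_{\bar\sigma^2})$ follows from nestedness of the slices alone, and the final application of FKG to a product of two increasing functions on the second marginal is the usual upgrade of event-FKG to function-FKG. What your route buys is conceptual clarity: it explains why the asymmetric-looking order $\succeq$ behaves well, namely because it is the coordinate-wise order in disguise, and it avoids the case analysis entirely. What it costs is that it leans on the single-copy FKG inequality for the extended Ising model with arbitrary field and boundary condition as a black box; the paper asserts this (``straightforward to verify'' after \eqref{eq:Wdef}, following \cite{AHP20}) but does not prove it, whereas the paper's lattice-condition computation is fully self-contained and, if one unwinds it, is essentially the same per-edge verification one would need to establish the single-copy FKG anyway.
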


\begin{proof}
    By \cite{FKG71}, we only need to check that for all $(\bar \sigma^1, \bar \sigma^2), (\bar \nu^1, \bar \nu^2) \in\Theta^{\bar G}$
$$ \bar\mu_{G,h}^{\xi_1/\xi_2}\big((\bar\sigma^1,\bar\sigma^2)\vee(\bar\nu^1,\bar\nu^2)\big)\times \bar\mu_{G,h}^{\xi_1/\xi_2}\big((\bar\sigma^1,\bar\sigma^2)\wedge(\bar\nu^1,\bar\nu^2)\big)\geq \bar\mu_{G,h}^{\xi_1/\xi_2}(\bar\sigma^1,\bar\sigma^2)\times \bar\mu_{G,h}^{\xi_1/\xi_2}(\bar\nu^1,\bar\nu^2).$$

Recalling \eqref{eq:product-def} and \eqref{eq:P_Lambda_tau_def}, we see that the partition functions and the terms from the external field on both sides cancel with each other.
Thus, it suffices to show that for any edge $e$ and one of its endpoints $v$
\begin{equation} 
\begin{aligned}
    &W(\max\{\bar \sigma_v^1, \bar\nu_v^1\}, \max\{\bar \sigma_e^1, \bar \nu_e^1\})\times W(\min\{\bar \sigma_v^1, \bar\nu_v^1\}, \min\{\bar \sigma_e^1, \bar \nu_e^1\})\geq W(\bar \sigma_v^1, \bar \sigma_e^1)\times W(\bar\nu_v^1,  \bar \nu_e^1), \\
    &W(\min\{\bar \sigma_v^2, \bar\nu_v^2\}, \min\{\bar \sigma_e^2, \bar \nu_e^2\})\times W(\max\{\bar \sigma_v^2, \bar\nu_v^2\}, \max\{\bar \sigma_e^2, \bar \nu_e^2\})\geq W(\bar \sigma_v^2, \bar \sigma_e^2)\times W(\bar\nu_v^2,  \bar \nu_e^2).
\end{aligned}\label{eq:FKG-on-one-edge}
\end{equation}
Of course, we only need to show the case  for $i=1$ by symmetry, and we omit the superscript $1$ in what follows.

If $\bar \sigma_v=\bar\nu_v$, then it holds obviously. Otherwise, without loss of generality, we can assume $\bar\sigma_v=1$ and $\bar\nu_v=-1$. So it is equivalent to check
\begin{equation}\label{eq: ingredient for FKG}
    W(1, \max\{\bar \sigma_e, \bar \nu_e\})\times W(-1, \min\{\bar \sigma_e, \bar \nu_e\})\geq W(1, \bar \sigma_e)\times W(-1,  \bar \nu_e).
\end{equation}
In order that the right-hand side is not $0$, recalling \eqref{eq:Wdef}, we must have that \begin{equation*}
(\bar\sigma_e, \bar\nu_e)\in\{(1,-1),(1,0),(0,-1),(0,0)\}.\end{equation*}
Then \eqref{eq: ingredient for FKG} follows trivially from the fact that
$\bar\sigma_e\geq \bar\nu_e$ in all the 4 choices above.
% It is then straightforward to check that \eqref{eq:FKG-on-one-edge} holds for each of the above $4$ possibilities one by one.
\end{proof}

At the end of this subsection, we point out that the existence of a pre-disagreement crossing is an increasing event. This is because for each vertex being a pre-disagreement its spin value has to be $(1, -1)$, and for each edge being a pre-disagreement its spin value has to be in $\{(1, -1), (1,0), (0, -1)\}$, and both requirements are increasing with respect to our partial order.

\subsection{CBC for pre-disagreements}

In this subsection we
will prove the CBC property for the disagreement Ising model. 

First, we will introduce some notations. Notations from Section~\ref{FKG} will be used without further explanation. Fix a finite graph $G=(V,E)$. The boundary $A\subset V$ is a subset of the vertex set $V$. The external field $h\in \mbb R^{V}$ is also fixed.  We will consider two disagreement Ising measures 
$\bar{\mu}_{G,h}^{\xi^+/\xi^-}$ and $\bar{\mu}_{G,h}^{\zeta^+/\zeta^-}$ with different boundary conditions $(\xi^+,\xi^-), (\zeta^+,\zeta^-)\in\Theta^A$.

Recalling the partial order in Definition~\ref{def:partial-order}, we have $(\xi^+,\xi^-)\preceq (\zeta^+,\zeta^-)$ if $\xi_v^+\leq \zeta_v^+$ and $\xi_v^-\geq \zeta_v^-$ for all $v\in A$.
Then we have the following comparison of boundary conditions(CBC) lemma.

\begin{lem}\label{lem:CBC}{\emph{(CBC)}}
For any increasing event $\mathtt A\subset \Theta^{\bar G }$ and any boundary conditions $(\xi^+,\xi^-)\preceq (\zeta^+,\zeta^-)$, we have 
    $$\bar\mu_{\G, h}^{\xi^+/\xi^-}(\mathtt A)\leq\bar\mu_{\G, h}^{\zeta^+/\zeta^-}(\mathtt A). $$
\end{lem}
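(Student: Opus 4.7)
The plan is to leverage the product structure $\bar\mu_{G,h}^{\xi^+/\xi^-} = \bar\mu_{G,h}^{\xi^+} \otimes \bar\mu_{G,h}^{\xi^-}$ and apply the standard single-copy CBC property (listed in Section~2.5) to each coordinate in turn, keeping careful track of the two opposite directions built into the partial order $\succeq$.

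The first step is to rewrite $\bar\mu_{G,h}^{\xi^+/\xi^-}(\mathtt A)$ by conditioning on the first coordinate:
\[
\bar\mu_{G,h}^{\xi^+/\xi^-}(\mathtt A) = \int \bar\mu_{G,h}^{\xi^-}\bigl(\mathtt A(\bar\sigma^1,\cdot)\bigr)\, d\bar\mu_{G,h}^{\xi^+}(\bar\sigma^1),
\]
where $\mathtt A(\bar\sigma^1,\cdot) := \{\bar\sigma^2 : (\bar\sigma^1,\bar\sigma^2)\in\mathtt A\}$. For each fixed $\bar\sigma^1$, the slice $\mathtt A(\bar\sigma^1,\cdot)$ is a \emph{decreasing} event in $\bar\sigma^2$ with respect to the usual pointwise order on $\{-1,0,1\}^{\bar G}$: pointwise increasing $\bar\sigma^2$ decreases $(\bar\sigma^1,\bar\sigma^2)$ in the $\succeq$-order, and $\mathtt A$ being $\succeq$-increasing means it can only be exited. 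Since $\zeta^- \le \xi^-$ on $A$, the single-copy CBC applied to decreasing events on the second coordinate yields
\[
\bar\mu_{G,h}^{\xi^-}\bigl(\mathtt A(\bar\sigma^1,\cdot)\bigr) \le \bar\mu_{G,h}^{\zeta^-}\bigl(\mathtt A(\bar\sigma^1,\cdot)\bigr).
\]

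Next, define $f(\bar\sigma^1) := \bar\mu_{G,h}^{\zeta^-}(\mathtt A(\bar\sigma^1,\cdot))$, which takes values in $[0,1]$. For $\bar\sigma^1 \le \bar\tau^1$ pointwise, every slice configuration $\bar\sigma^2 \in \mathtt A(\bar\sigma^1,\cdot)$ satisfies $(\bar\sigma^1,\bar\sigma^2) \in \mathtt A$, hence $(\bar\tau^1,\bar\sigma^2) \succeq (\bar\sigma^1,\bar\sigma^2) \in \mathtt A$ also lies in $\mathtt A$, so $\mathtt A(\bar\sigma^1,\cdot) \subset \mathtt A(\bar\tau^1,\cdot)$; thus $f$ is an increasing function of $\bar\sigma^1$. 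Applying the standard CBC on the first coordinate (where $\xi^+ \le \zeta^+$) — using the layer-cake identity $\mathbb E[f] = \int_0^1 \mathbb P(f\ge t)\, dt$ to pass from increasing events to increasing $[0,1]$-valued functions — gives
\[
\int f(\bar\sigma^1)\, d\bar\mu_{G,h}^{\xi^+}(\bar\sigma^1) \le \int f(\bar\sigma^1)\, d\bar\mu_{G,h}^{\zeta^+}(\bar\sigma^1) = \bar\mu_{G,h}^{\zeta^+/\zeta^-}(\mathtt A).
\]
Chaining the two inequalities completes the proof.

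The argument is essentially bookkeeping, so there is no serious obstacle. The only point requiring care is the orientation flip: the second coordinate of the partial order $\succeq$ is reversed, so $\preceq$ on pairs of boundary conditions corresponds to $\le$ on the first copy but $\ge$ on the second copy, and $\succeq$-increasing events $\mathtt A$ must be treated as \emph{decreasing} in $\bar\sigma^2$ when applying the single-copy CBC to the second factor. As long as this sign is tracked correctly, no further input is needed beyond the standard CBC from Section~2.5.
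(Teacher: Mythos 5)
Your proof is correct. It reaches the same conclusion by a slightly different route than the paper: the paper constructs an explicit monotone (Strassen-type) coupling $P$ of $\mu^{\xi^+}_{G,h}$ and $\mu^{\zeta^+}_{G,h}$ concentrated on $\{\sigma\le\sigma'\}$, a coupling $Q$ of $\mu^{\xi^-}_{G,h}$ and $\mu^{\zeta^-}_{G,h}$ concentrated on $\{\sigma\ge\sigma'\}$, tensors them, and then extends $P\otimes Q$ to edge spins using the conditional independence of edge spins given vertex spins together with a monotone coupling of $\bar\sigma_e$ for ordered endpoint spins; the resulting coupling is supported on $\preceq$-ordered pairs, and monotonicity of $\mathtt A$ finishes the proof. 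You instead argue by Fubini, applying the single-copy CBC twice — to the second factor, where the slice $\mathtt A(\bar\sigma^1,\cdot)$ is a decreasing event and $\zeta^-\le\xi^-$, and then to the first factor, where $f(\bar\sigma^1)=\bar\mu^{\zeta^-}_{G,h}(\mathtt A(\bar\sigma^1,\cdot))$ is an increasing function and $\xi^+\le\zeta^+$ (handled via the layer-cake identity). Both arguments rest on exactly the same input, namely monotonicity of the single-copy \emph{extended} Ising measure in its boundary condition; your version is shorter but takes that extended-model CBC (including edge spins) as a black box from Section~2.5, which is precisely the part the paper's proof spells out when it extends the vertex-spin coupling to edge spins. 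Your sign bookkeeping (the reversal on the second coordinate of $\succeq$) is handled correctly throughout.
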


\begin{rmk}
    Usually, when applying Lemma~\ref{lem:CBC}, we choose $\zeta^+/\zeta^-$ to be the plus/minus boundary condition.
\end{rmk}

\begin{proof}
[Proof of Lemma~\ref{lem:CBC}.]
It is well-known that (see, e.g., \cite{D19})  
there exists a probability measure $P(\cdot, \cdot)$ on $\{-1,1\}^{V}\otimes \{-1,1\}^{V}$, such that 
    $P$ has marginal distributions $\mu^{\xi^+}_{G,h}(\cdot)$ and $\mu^{\zeta^+}_{G,h}(\cdot)$, and $P(\{(\sigma,\sigma'):\sigma\leq\sigma'\})=1$ (here for spin configurations $\sigma,\sigma'\in\{-1,1\}^V$, we say $\sigma\geq\sigma'$ if $\sigma_v\geq\sigma'_v$ for all $v\in V$).
Analogously, there exists a probability measure $Q(\cdot,\cdot)$ with marginal distributions $\mu^{\xi^-}_{G,h}(\cdot)$ and $\mu^{\zeta^-}_{G,h}(\cdot)$ such that 
$Q(\{(\sigma,\sigma'):\sigma\geq\sigma'\})=1$.

It is then clear that the
the measure $P\otimes Q$ has marginal distributions $\mu_{G,h}^{\xi^+/\xi^-}(\cdot)$ and $\mu_{G,h}^{\zeta^+/\zeta^-}(\cdot)$, and we next extend $P\otimes Q$ by incorporating edge spins.
By \eqref{eq:P_Lambda_tau_def} and \eqref{eq:Wdef}, we obtain that conditioned on vertex spins, the edge spins are independent. Furthermore, one can easily check that for an edge $e=\{x,y\}$ and two pairs of spins $(\sigma_x,\sigma_y)$ and $(\sigma_x^{{\prime}}, \sigma_y^{{\prime}})$, there is a monotone coupling between
 $\sigma_e$ and $\sigma_e^{{\prime}}$  as long as $\sigma_x\geq \sigma_x^{{\prime}}$ and $ \sigma_y\geq\sigma_y^{{\prime}}$.
As a consequence, $P\otimes Q$ can be extended to ${\bar P\otimes \bar Q}(\cdot,\cdot)$, a measure on $\Theta^{\bar G}\otimes\Theta^{\bar G}$, which has marginal distributions 
$\bar\mu_{G,h}^{\xi^+/\xi^-}(\cdot)$ and $\bar\mu_{G,h}^{\zeta^+/\zeta^-}(\cdot)$ and satisfies
$${\bar P\otimes \bar Q}\Big(\big\{(\sigma^+, \sigma^-{ , } \nu^+,\nu^-):(\sigma^+,\sigma^-)\preceq(\nu^+,\nu^-)\big\}\Big)=1.$$
This completes the proof since $\mathtt A$ is increasing.
\end{proof}
Combining Lemma~\ref{lem:CBC} and the DMP property, we obtain the following useful criterion which we also refer to as the CBC property.
\begin{cor}\label{cor: CBC}
    Let $\Gamma'\subset\Gamma\subset\mathbb Z^2$, for any increasing event $\mathtt A\subset \Theta^{\bar\Gamma'}$, any boundary conditions $\xi^+,\xi^-$ on $\partial\Gamma$ and any external field $h$, we have $$\bar\mu_{\Gamma', h}^{+/-}(\mathtt A)\geq\bar\mu_{\Gamma, h}^{\xi^+/\xi^-}(\mathtt A)\ge \bar\mu_{\Gamma', h}^{-/+}(\mathtt A). $$
\end{cor}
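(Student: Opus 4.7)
The plan is to reduce to Lemma~\ref{lem:CBC} by applying the domain Markov property to condition away everything outside $\Gamma'$. More precisely, I would first sample the pair $(\bar\sigma^+,\bar\sigma^-)$ on the ``outer'' region $\bar\Gamma\setminus\bar\Gamma'$ from $\bar\mu_{\Gamma,h}^{\xi^+/\xi^-}$ (more precisely, everything on $\bar\Gamma\setminus\overline{\mathrm{int}(\Gamma')}$, keeping the extended-spin values on $\partial\bar\Gamma'$). Write the random pair of boundary conditions produced on $\partial\bar\Gamma'$ as $(\eta^+,\eta^-)\in\Theta^{\partial\bar\Gamma'}$. Since the disagreement Ising measure is a product of two extended Ising measures, and each of them satisfies DMP (stated in Section~\ref{sect:extended-Ising}), the conditional law of the pair restricted to $\bar\Gamma'$ is exactly $\bar\mu_{\Gamma',h}^{\eta^+/\eta^-}$.

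Next, observe that regardless of the random realization of $(\eta^+,\eta^-)$, in the partial order $\preceq$ of Definition~\ref{def:partial-order} we always have
$$(-,+)\;\preceq\;(\eta^+_v,\eta^-_v)\;\preceq\;(+,-) \quad \text{for every } v\in \partial\bar\Gamma',$$
simply because the first coordinate is at most $+1$ and the second coordinate is at least $-1$. Therefore Lemma~\ref{lem:CBC} (applied on the graph $\Gamma'$ with boundary $\partial\bar\Gamma'$, with the same external field $h$) gives, for any increasing event $\mathtt A\subset\Theta^{\bar\Gamma'}$,
$$\bar\mu_{\Gamma',h}^{-/+}(\mathtt A)\;\le\;\bar\mu_{\Gamma',h}^{\eta^+/\eta^-}(\mathtt A)\;\le\;\bar\mu_{\Gamma',h}^{+/-}(\mathtt A).$$

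Finally, I would take the expectation of the middle quantity over the random boundary data $(\eta^+,\eta^-)$; by the DMP reduction above, this expectation equals $\bar\mu_{\Gamma,h}^{\xi^+/\xi^-}(\mathtt A)$. The outer bounds are deterministic in $(\eta^+,\eta^-)$, so they survive the expectation, yielding the desired chain of inequalities. There is essentially no serious obstacle: the only thing to be careful about is that DMP must be applied to the product measure (which is immediate since it is a product) and that the induced boundary values live in the extended alphabet $\{-1,0,+1\}$, so that the partial order comparison with $(+,-)$ and $(-,+)$ still holds coordinatewise. Both points are routine once the extended-Ising framework from Section~\ref{sect:extended-Ising} is in place.
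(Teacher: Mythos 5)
Your proposal is correct and is exactly the argument the paper intends: the paper derives this corollary in one line by "combining Lemma~\ref{lem:CBC} and the DMP property," which is precisely your conditioning on the induced boundary data $(\eta^+,\eta^-)$ on $\partial\bar\Gamma'$, sandwiching it between $(-,+)$ and $(+,-)$ in the partial order of Definition~\ref{def:partial-order}, applying Lemma~\ref{lem:CBC}, and averaging. Your caveat about the induced boundary values lying in the extended alphabet is the right thing to note, and it is handled by the same monotone coupling extension to edge spins used in the proof of Lemma~\ref{lem:CBC}.
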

In order to meet further needs, we prove the following variant of Corollary \ref{cor: CBC} and denote the property as CBC'.
\begin{lem}\label{lem:CBC prime}
    Fix two integers $M<N$ and $\Lambda_{2M}(u)\subset\lamn$. Let $\cA$ denote the event that there exists a disagreement circuit in $\Lambda_{M,2M}$. Then for any $v\in\Lambda_{M}(u)$ and any external field $h$, we have
    \begin{equation*}
        \bar\mu_{\lamn,h}^{+/-}(o\in\cD_{\partial}\mid\cA)\ge \bar\mu_{\Lambda_{2M},h}^{+/-}(o\in\cD_{\partial\Lambda_{2M}(u)}).
    \end{equation*}
\end{lem}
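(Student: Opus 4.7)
The plan is to condition on $\cA$, reveal the outermost disagreement circuit $\mathcal{C}\subset \Lambda_{M,2M}(u)$, and then apply the DMP together with CBC inside $D_{\mathcal{C}}\subset \Lambda_{2M}(u)$, where $D_{\mathcal{C}}$ denotes the closed interior of $\mathcal{C}$.

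For a fixed realization $\gamma$ of $\mathcal{C}$, the event $\{\mathcal{C}=\gamma\}$ is measurable with respect to the configuration on $\Lambda_N\setminus D_\gamma^\circ$ (any competing disagreement circuit outside $\gamma$ and the connection of $\gamma$ to $\intB\Lambda_N$ can be checked from this data alone). By the DMP, conditional on this $\sigma$-algebra, the restriction of $\bar\mu^{+/-}_{\Lambda_N,h}$ to $D_\gamma^\circ$ equals $\bar\mu^{\gamma,(\xi^+,\xi^-)}_{D_\gamma,h}$, where $(\xi^+,\xi^-)$ is the revealed pre-disagreement BC on $\gamma$. Explicitly, $\xi^+|_{V(\gamma)}=+1$, $\xi^-|_{V(\gamma)}=-1$, and for each $e\in E(\gamma)$ one has $(\xi^+_e,\xi^-_e)\in\{(1,-1),(1,0),(0,-1)\}$. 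On this conditioning, the event $\{v\in\cD_\partial\}$ coincides with $\{v\stackrel{\cD}{\longleftrightarrow}V(\gamma)\text{ in }D_\gamma\}$, since $\gamma$ is already known to be connected to $\intB\Lambda_N$ by a pre-disagreement path.

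The key observation is that the marginal of $\bar\mu^{\gamma,(\xi^+,\xi^-)}_{D_\gamma,h}$ on the interior $D_\gamma\setminus\gamma$ is independent of the edge BC on $E(\gamma)$: for each $e\in E(\gamma)$, both endpoints of $e$ lie in $V(\gamma)$ with fixed spins $(+1,-1)$, so the weights $W(\xi^\pm_v,\xi^\pm_e)$ appearing in \eqref{eq:P_Lambda_tau_def} are constants that cancel between the numerator and the partition function. Since any minimal pre-disagreement path from $v$ to $V(\gamma)$ never uses an edge of $E(\gamma)$, we deduce
$$\bar\mu^{\gamma,(\xi^+,\xi^-)}_{D_\gamma,h}\bigl(v\stackrel{\cD}{\longleftrightarrow}V(\gamma)\bigr)=\bar\mu^{\gamma,+/-}_{D_\gamma,h}\bigl(v\stackrel{\cD}{\longleftrightarrow}V(\gamma)\bigr).$$

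Finally, applying CBC (Corollary~\ref{cor: CBC}) to the increasing event $\{v\stackrel{\cD}{\longleftrightarrow}V(\gamma)\}$ with $D_\gamma\subset\Lambda_{2M}(u)$ and $(+/-)$ boundary conditions, and noting that $V(\gamma)$ separates $v\in\Lambda_M(u)$ from $\intB\Lambda_{2M}(u)$ within $\Lambda_{2M}(u)$, we obtain
$$\bar\mu^{\gamma,+/-}_{D_\gamma,h}\bigl(v\stackrel{\cD}{\longleftrightarrow}V(\gamma)\bigr)\ \ge\ \bar\mu^{+/-}_{\Lambda_{2M}(u),h}\bigl(v\stackrel{\cD}{\longleftrightarrow}V(\gamma)\bigr)\ \ge\ \bar\mu^{+/-}_{\Lambda_{2M}(u),h}\bigl(v\stackrel{\cD}{\longleftrightarrow}\intB\Lambda_{2M}(u)\bigr).$$
Averaging the resulting uniform bound over $\gamma$ under the conditional measure $\bar\mu^{+/-}_{\Lambda_N,h}(\cdot\mid\cA)$ yields the lemma. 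The main delicate point is the edge-independence of the interior marginal: this is what allows us to upgrade the revealed edge BC—which satisfies $(\xi^+_e,\xi^-_e)\preceq(1,-1)$ in the partial order of Definition~\ref{def:partial-order} and would therefore give the wrong direction via a direct CBC comparison—to a genuine plus/minus BC for the purpose of applying Corollary~\ref{cor: CBC}.
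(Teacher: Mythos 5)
Your proof is correct and takes essentially the same route as the paper's: reveal the outermost disagreement circuit, apply the DMP to reduce to the enclosed region with the revealed (plus/minus on vertices) boundary condition, and then apply CBC to compare with $\bar\mu^{+/-}_{\Lambda_{2M}(u),h}$. Your additional observation that the interior marginal is insensitive to the revealed \emph{edge} spins on the circuit (which would otherwise give a CBC comparison in the wrong direction, since $(1,0)$ and $(0,-1)$ are $\preceq(1,-1)$) is a genuine refinement of a point the paper's proof glosses over.
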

\begin{proof}
For some connected region $\Gamma$ with $\Lambda_M(u)\subset\Gamma\subset\Lambda_{2M}(u)$, we say $\partial \bar\Gamma$ is a \textbf{disagreement boundary} if $\partial \bar\Gamma \subset \mathcal D_{\partial}$. In addition, we say a disagreement boundary $\partial \bar\Gamma$ is the \textbf{outmost disagreement boundary} in $\bar\Lambda_{M,2M}(u)$ if $\bar \Gamma$ is simply connected, and if for any disagreement boundary $\intB{\bar\Gamma'}$ and any path in $\bar\Lambda_{2M}(u)$ from $x\in\intB{\bar\Gamma'}$ to $y\in \intB\bar\Lambda_{2M}(u)$, this path must intersect with $\intB\bar\Gamma$. Let $\Psi_h(\bar\Gamma)$ denote the probability that $\intB \bar\Gamma$ is the outmost disagreement boundary in $\bar\Lambda_{M,2M}(u)$ with respect to $\bar\mu^{+/-}_{\lamn, h}$. It is clear that the event that $\partial \bar\Gamma$ is the outmost disagreement boundary is measurable with respect to spin values on $(\bar \Lambda_{2M}(u) \setminus \bar \Gamma) \cup \partial \bar \Gamma$. Note that if $\cA$ happens, there exists a unique outmost disagreement boundary. Thus we obtain by DMP and CBC that \begin{align*}
        \bar\mu_{\lamn,h}^{+/-}(o\in\cD_{\partial}\mid\cA)&\stackrel{\mbox{DMP}}= \frac{\sum_{\bar\Gamma}\Psi_h(\bar\Gamma)\cdot\bar\mu^{+/-}_{\Gamma, h}(o\in\cD_{\partial} )}{\sum_{\bar\Gamma}\Psi_h(\bar\Gamma)}\nonumber\\&\stackrel{\mbox{CBC}}\ge \frac{\sum_{\bar\Gamma}\Psi_h(\bar\Gamma)\cdot\bar\mu^{+/-}_{\Lambda_{2M}(u), h}(o\in\cD_{\partial\Lambda_{2M}(u)})}{\sum_{\bar\Gamma}\Psi_h(\bar\Gamma)}=  \bar\mu^{+/-}_{\Lambda_{2M}(u), h}(o\in\cD_{\partial\Lambda_{2M}(u)}).
    \end{align*}
Therefore, we finish the proof of Lemma~\ref{lem:CBC prime}.
\end{proof}

\section{Outline of the proof}\label{sec:outline}
In this section, we will outline the proof of Theorems \ref{thm-critical-temperature} and \ref{thm-critical-temperature-small-perturbation}, while postponing a number of lemmas for smooth flow of presentation. The proof of Theorem \ref{thm-supercritical-temperature} is of different flavor and will be contained in Section \ref{sect:correlation length}. From now on, we fix the temperature $T=T_c(2)$ until Section \ref{sect:correlation length} where we discuss the low temperature regime.
%From now on, all the temperature $T$ is fixed to be the two dimensional critical temperature $T_c(2)$ until we come to the low temperature regime in Section \ref{sect:correlation length}.

In Section \ref{sec: Proof of Theorem 1.3}, we outline the proof of Theorem \ref{thm-critical-temperature-small-perturbation}, which then also explains how the critical exponent of $7/8$ arises. The proof of Theorem \ref{thm-critical-temperature} is substantially more difficult, and it will be outlined in Section \ref{sec: Proof of Theorem 1.1}. In addition, we will outline the proof of important ingredients for proving Theorem \ref{thm-critical-temperature} in later subsections: see Section~\ref{sec: outline for RSW} for the RSW estimate and see Section \ref{sec:Proof of Proposition 3.10} for an upper bound on the disagreement crossing probability. 
\subsection{Proof of Theorem~\ref{thm-critical-temperature-small-perturbation}}\label{sec: Proof of Theorem 1.3}

In \cite{BS22}, it was proved that when $  \eps=N^{-7/8}$, the partition function {of} the RFIM converges to some non-trivial limit with a proper scaling. Using similar ideas with more delicate estimates, we will prove that when $\eps\ll N^{-7/8}$, the spin averages at the origin for the Ising models without disorder and with the external field $\epsilon h$ are typically close to each other. To this end, we note a simple fact which will be used in the analysis without further notice: by independence, the partition function of the disagreement Ising model, denoted as
 % the partition function {of} the Ising model without disorder and the partition function {of} the RFIM with the external field $\epsilon h$ are typically within a multiplicative constant factor. By independence, it is clear that the partition function {of} the disagreement Ising model, denoted as 
 $\mathcal {\bar Z}_{G, h}^{\xi+/\xi-}$, is just the product of the respective partition functions, i.e., $$\bar\cZ^{\xi^+/\xi^-}_{G,h}=\bar \cZ^{\xi^+}_{G,h}\times \bar\cZ^{\xi^-}_{G,h}.$$ Here, we recall our convention on notations and superscripts as in Section~\ref{sect:extended-Ising} after \eqref{eq:Wdef}.   

{
%by \cite[Lemma 2.1]{AHP20} the partition function of the extended Ising measure equals to that of the Ising measure, i.e. $\bar\cZ_{G,h}^{\xi^+}=\cZ_{G,h}^{\xi^+}$. We 
We first expand the ratio between the partition functions with and without disorder as follows:}
\begin{align}
       \frac{\bar\cZ^{+/-}_{\lamn,\eps h}}{\bar\cZ^{+/-}_{\lamn,0}}
       %= \frac{\cZ^{+/-}_{\lamn,\eps h}}{\cZ^{+/-}_{\lamn,0}}
       &= \langle ~\exp(\sum_{v\in \lamn}\veps h_v\bar \sigma_v^++\sum_{v\in \lamn}\veps h_v\bar \sigma_v^-) ~ \rangle_{\lamn,0}^{+/-}\nonumber\\ &= \langle ~\prod_{v\in \lamn}\Big([\cosh(\veps h_v)+\bar \sigma_v^+\sinh(\veps h_v)]\cdot[\cosh(\veps h_v)+\bar \sigma_v^-\sinh(\veps h_v)]\Big)~\rangle_{\lamn,0}^{+/-}\nonumber\\ &= \prod_{v\in \lamn} [\cosh(\veps h_v)]^2 \langle ~\prod_{v\in \lamn}\Big([1+\bar \sigma_v^+\tanh(\veps h_v)]\cdot [1+\bar \sigma_v^-\tanh(\veps h_v)]\Big)~\rangle_{\lamn,0}^{+/-}\nonumber
        \\ &= \prod_{v\in \lamn} [\cosh(\veps h_v)]^2 \langle ~\prod_{v\in \lamn}\Big([1+ \sigma_v^+\tanh(\veps h_v)]\cdot [1+ \sigma_v^-\tanh(\veps h_v)]\Big)~\rangle_{\lamn,0}^{+/-}.\label{eq: partition function expansion}
\end{align}
We remark that in the last equality, we dropped the bars on $\bar\sigma$ because the restriction of the extended Ising measure on vertices is the same as the Ising measure. 
In addition, in the above we have written $\varepsilon=\frac{1}{{T_c}}\eps$.
Although it may appear to be disadvantageous that the notation $\epsilon$ and $\varepsilon$ look somewhat similar, we made such choice on {the} notation to emphasize our point that in our proof it is not at all material to change $\epsilon$ up to a constant factor. Finally, in the above products (as well as similar products below), it may make more sense to multiply over $v\in \Lambda_{N-1}$, and we chose to write $v\in \Lambda_N$ for notation clarity since it is also correct.

Similarly, we can get an expansion of the boundary influence as follows: \newpage
\begin{align}
        &\langle\sigma_o\rangle_{\lamn,\eps h}^{+}-\langle\sigma_o\rangle_{\lamn,\eps h}^{-}\nonumber\\=~&\frac{\langle ~\sigma_o \prod_{v\in\lamn}[1+\sigma_v\tanh(\veps h_v)]~\rangle_{\lamn,0}^{+}}{\langle ~\prod_{v\in\lamn}[1+\sigma_v\tanh(\veps h_v)]~\rangle_{\lamn,0}^{+}}-\frac{\langle ~\sigma_o \prod_{v\in\lamn}[1+\sigma_v\tanh(\veps h_v)]~\rangle_{\lamn,0}^{-}}{\langle ~\prod_{v\in\lamn}[1+\sigma_v\tanh(\veps h_v)]~\rangle_{\lamn,0}^{-}}\nonumber\\=~&\frac{\langle ~\sigma_o \prod_{v\in\lamn}[1+\sigma_v\tanh(\veps h_v)]~\rangle_{\lamn,0}^{+}\cdot\langle ~\prod_{v\in\lamn}[1+\sigma_v\tanh(\veps h_v)]~\rangle_{\lamn,0}^{-}}{\langle ~\prod_{v\in\lamn}[1+\sigma_v\tanh(\veps h_v)]~\rangle_{\lamn,0}^{+}\cdot \langle ~\prod_{v\in\lamn}[1+\sigma_v\tanh(\veps h_v)]~\rangle_{\lamn,0}^{-}}\nonumber\\&-\frac{\langle ~\sigma_o \prod_{v\in\lamn}[1+\sigma_v\tanh(\veps h_v)]~\rangle_{\lamn,0}^{-}\cdot\langle ~\prod_{v\in\lamn}[1+\sigma_v\tanh(\veps h_v)]~\rangle_{\lamn,0}^{+}}{\langle ~\prod_{v\in\lamn}[1+\sigma_v\tanh(\veps h_v)]~\rangle_{\lamn,0}^{+}\cdot \langle ~\prod_{v\in\lamn}[1+\sigma_v\tanh(\veps h_v)]~\rangle_{\lamn,0}^{-}}.\label{eq: boundary influence expansion tmp}
\end{align}
 We expand all the brackets in the numerator of \eqref{eq: boundary influence expansion tmp} and obtain that \begin{align}\label{eq: expansion for boundary influence}
    &\langle\sigma_o\rangle_{\lamn,\eps h}^{+}-\langle\sigma_o\rangle_{\lamn,\eps h}^{-}\nonumber\\=~&\frac{\sum\limits_{I,J\subset\lamn}\Big[\langle ~\sigma_o\sigma^I~\rangle_{\lamn,0}^{+}\langle ~\sigma^J~\rangle_{\lamn,0}^{-}-\langle ~\sigma^I~\rangle_{\lamn,0}^{+}\langle ~\sigma_o\sigma^J~\rangle_{\lamn,0}^{-}\Big]\times\prod\limits_{x\in I}\tanh(\veps h_x)\prod\limits_{y\in J}\tanh(\veps h_y)}{\langle ~\prod\limits_{v\in\lamn}[1+\sigma_v\tanh(\veps h_v)]~\rangle_{\lamn,0}^{+}\cdot \langle ~\prod\limits_{v\in\lamn}[1+\sigma_v\tanh(\veps h_v)]~\rangle_{\lamn,0}^{-}},
\end{align}
where we used the notation convention that $\sigma^I=\prod_{x\in I} \sigma_v$.
Before controlling the denominator of \eqref{eq: expansion for boundary influence}, we need Lemma \ref{lem: upper-bound for the sum of squares of k point function} below to control the $k$-point correlation function. In the proof of Lemma~\ref{lem: small perturbation for partition function} below, we will see that the $k$-point correlation function actually gives an upper bound on how the disorder influences the partition function.
\begin{lem}\label{lem: upper-bound for the sum of squares of k point function}
     Fix a constant $\iota>0$. Then there exists a constant $\nc\label{10}=\oc{10}(\iota)>0$ such that the following holds. For any rectangle $\cR$ with size $\iota M\times M$, there exists a function $F$ such that \begin{equation}\label{eq: k point function bound}
        \lvert \langle\prod_{x\in I}\sigma_{x}\rangle_{\cR,0}^{+}\rvert\le F(I), ~~~~\forall ~I \subset\cR .
    \end{equation}
    Moreover, we have for any integer $k\ge0$ and $y\in\cR$,\begin{align}
        \sum_{\substack{I\subset \cR,\\|I|=k} } F(I)^2&\le \oc{10}^{k}M^{\frac{7k}{4}}\cdot\frac{(k!)^{\frac{1}{4}}}{k!},~~~\text{and}\label{eq: upper-bound for the sum of squares of k point function}\\
        \sum_{\substack{I\subset \cR,\\|I|=k} } F(I\Delta \{y\})^2
        %&\le \oc{10}^{k-1}M^{\frac{7(k-1)}{4}}\cdot\frac{[(k-1)!]^{\frac{1}{4}}}{(k-1)!}+\oc{10}^{k}M^{\frac{7k}{4}}\Big(\langle\sigma_{y}\rangle_{\cR,0}^{+}\Big)^2\cdot\frac{[(k+1)!]^{\frac{1}{4}}}{k!}\nonumber\\
        &\le \oc{10}^{k}M^{\frac{7k}{4}}\Big(\langle\sigma_{y}\rangle_{\cR,0}^{+}\Big)^2\cdot\frac{[(k+1)!]^{\frac{1}{4}}}{k!}.\label{eq: upper-bound for the sum of squares of k point function2}
    \end{align} Here we use the notation $A\Delta B$ to denote the symmetric difference between two sets $A$ and $B$.
\end{lem}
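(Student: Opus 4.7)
My plan is to take $F(I):=|\langle \prod_{x\in I}\sigma_x\rangle^{+}_{\cR,0}|$, so that \eqref{eq: k point function bound} holds with equality and the substance of the lemma reduces to the sum-of-squares bounds \eqref{eq: upper-bound for the sum of squares of k point function} and \eqref{eq: upper-bound for the sum of squares of k point function2}. The main tool will be a chaos-expansion style symmetrization in the spirit of \cite{FSZ16, BS22}, combined with the critical RSW/FK inputs \eqref{eq: FK one arm event exponent} and \eqref{0-field-annulus-crossing}. Specifically, introducing an independent copy $\sigma'$ from $\mu^{+}_{\cR,0}$, one has
\begin{equation}\label{eq-proposal-chaos}
\Big\langle\Big(\sum_{x\in\cR}\sigma_x\sigma'_x\Big)^k\Big\rangle^{+/+}_{\cR,0}=\sum_{(x_1,\ldots,x_k)\in\cR^k}\langle\sigma^{J(\vec x)}\rangle^{+,2}_{\cR,0}\ge 0,
\end{equation}
where $J(\vec x)$ is the set of coordinates appearing an odd number of times in $\vec x$ (so that $\prod_i\sigma_{x_i}\sigma'_{x_i}=\sigma^{J(\vec x)}(\sigma')^{J(\vec x)}$). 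Restricting to tuples with pairwise distinct entries yields exactly $k!\sum_{|I|=k}\langle\sigma^I\rangle^2$, so \eqref{eq: upper-bound for the sum of squares of k point function} reduces to the single moment bound
\begin{equation}\label{eq-proposal-moment}
\Big\langle\Big(\sum_{x\in\cR}\sigma_x\sigma'_x\Big)^k\Big\rangle^{+/+}_{\cR,0}\le C^k M^{7k/4}\,[k!]^{1/4}.
\end{equation}

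The estimate \eqref{eq-proposal-moment} is the heart of the argument. For $k=1,2$ it is elementary: the one-arm bound $\langle\sigma_x\rangle^{+}_{\cR,0}\lesssim d(x,\partial\cR)^{-1/8}$ coming from \eqref{eq: FK one arm event exponent} gives $\sum_x\langle\sigma_x\rangle^2\lesssim M^{7/4}$ by integrating $d(x,\partial)^{-1/4}$ over $\cR$, and the critical two-point bound $\langle\sigma_x\sigma_y\rangle^{+}_{\cR,0}\lesssim |x-y|^{-1/4}$, obtainable from \eqref{eq: FK one arm event exponent} and \eqref{0-field-annulus-crossing} via FK-Ising quasi-multiplicativity, gives $\sum_{x,y}\langle\sigma_x\sigma_y\rangle^2\lesssim M^{7/2}$. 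For general $k$ I would follow the chaos-expansion strategy of \cite{BS22}: the rescaled overlap field $M^{-7/4}\sum_x\sigma_x\sigma'_x$ ought to have uniformly bounded $[k!]^{1/4}$-moments, reflecting the sub-factorial moment growth of the 2D critical magnetization field at the scaling limit \cite{CGN15}. Concretely, one expands the $k$-th power, groups multi-indices by the multiplicity profile of $\vec x$, bounds each resulting $2j$-point correlation via multi-scale RSW/FK inputs, and sums over positions: the spatial sum produces the $M^{7k/4}$ volume factor, while the combinatorics of inequivalent multiplicity profiles produces the $[k!]^{1/4}$ factor in place of the naive $\sqrt{k!}$.

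For \eqref{eq: upper-bound for the sum of squares of k point function2} I would decompose according to whether $y\in I$ or $y\notin I$:
\begin{equation*}
\sum_{|I|=k}F(I\Delta\{y\})^2=\sum_{\substack{|I'|=k-1\\y\notin I'}}F(I')^2+\sum_{\substack{|J|=k+1\\y\in J}}F(J)^2.
\end{equation*}
The first term is controlled by applying \eqref{eq: upper-bound for the sum of squares of k point function} at level $k-1$, giving the first summand in \eqref{eq: upper-bound for the sum of squares of k point function2}. For the second, the anchored analogue of \eqref{eq-proposal-chaos} gives
\begin{equation*}
\sum_{\substack{|J|=k+1\\y\in J}}\langle\sigma^J\rangle^2\le \frac{1}{k!}\Big\langle\sigma_y\sigma'_y\Big(\sum_{x\in\cR\setminus\{y\}}\sigma_x\sigma'_x\Big)^k\Big\rangle^{+/+}_{\cR,0},
\end{equation*}
and since $\langle\sigma_y\sigma'_y\rangle^{+/+}=\langle\sigma_y\rangle^2$, I would extract this factor via GKS monotonicity (conditioning on $\sigma_y,\sigma'_y$ realises a product of two Ising measures with a shifted field at $y$, still ferromagnetic, so the residual overlap moment is bounded by the unconditional one) and then invoke \eqref{eq-proposal-moment} on the residual sum; the upgrade from $[k!]^{1/4}/k!$ to $[(k+1)!]^{1/4}/k!$ reflects the extra index contributed by $y$.

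The main obstacle I expect is the moment bound \eqref{eq-proposal-moment}, specifically securing the sub-factorial factor $[k!]^{1/4}$: a direct Cauchy--Schwarz or Lebowitz-pairing bound, using only pair correlations, produces an unavoidable $\sqrt{k!}$ (or worse) that is too lossy to yield a convergent chaos expansion at the critical scale $\eps\sim N^{-7/8}$. Closing this gap requires exploiting the specific structure of the 2D critical Ising higher-point functions, either through a delicate multi-scale inductive scheme using the FK representation and RSW crossings, or by importing moment bounds for the magnetization scaling-limit field \cite{CGN15, BS22}. A secondary technical point is the clean extraction of $\langle\sigma_y\rangle^2$ in \eqref{eq: upper-bound for the sum of squares of k point function2} without polluting the combinatorial factor; here the sharpness of the GKS-type decoupling at the anchor $y$ will be essential.
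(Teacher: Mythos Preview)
Your route is genuinely different from the paper's, and it has two real gaps.

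\textbf{What the paper does.} The paper does not take $F(I)=|\langle\sigma^I\rangle^+_{\cR,0}|$. It sets
\[
F(I)=\prod_{x\in I}\mathrm{dist}\bigl(x,\partial\cR\cup(I\setminus\{x\})\bigr)^{-1/4},
\]
and then \eqref{eq: k point function bound} and \eqref{eq: upper-bound for the sum of squares of k point function} are direct citations of \cite[Lemmas~8.1,~8.3]{FSZ16}. For \eqref{eq: upper-bound for the sum of squares of k point function2} the paper uses exactly your $y\in I$ / $y\notin I$ split; the first piece is \eqref{eq: upper-bound for the sum of squares of k point function} at level $k-1$, and the second piece $\sum_{y\notin I,\,|I|=k}F(I\cup\{y\})^2$ is handled by an adaptation of \cite[(8.17)]{FSZ16}. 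The point is that the explicit product form of $F$ isolates a factor $\mathrm{dist}(y,\partial\cR\cup I)^{-1/4}$, which for typical $I$ equals $\mathrm{dist}(y,\partial\cR)^{-1/4}\asymp\langle\sigma_y\rangle^{+,2}_{\cR,0}$; this is where the $\langle\sigma_y\rangle^2$ factor comes from, purely combinatorially.

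\textbf{First gap.} Your core estimate \eqref{eq-proposal-moment} on the overlap $\sum_x\sigma_x\sigma'_x$ is not provided by the references you cite. \cite{CGN15} and \cite{BS22} control moments of the magnetization field $\sum_x\sigma_x$, which is a different object; there is no direct inequality relating moments of $\sum_x\sigma_x\sigma'_x$ to those of $\sum_x\sigma_x$. Proving \eqref{eq-proposal-moment} with the factor $[k!]^{1/4}$ requires exactly a multi-point bound of the form $|\langle\sigma^J\rangle|\le F(J)$ together with the combinatorial sum over configurations---i.e.\ precisely the content of \cite[Lemmas~8.1,~8.3]{FSZ16}. So your reduction is circular: the moment bound you defer to is equivalent to the lemma you are trying to prove.

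\textbf{Second gap.} Your GKS extraction for the anchored term points the wrong way. Expanding in the same basis,
\[
\Bigl\langle\sigma_y\sigma'_y\Bigl(\sum_{x\neq y}\sigma_x\sigma'_x\Bigr)^{k}\Bigr\rangle^{+/+}
=\sum_{\vec x}\langle\sigma_y\sigma^{J(\vec x)}\rangle^{+,2}_{\cR,0},
\]
and GKS gives $\langle\sigma_y\sigma^{J}\rangle\ge\langle\sigma_y\rangle\langle\sigma^{J}\rangle$, hence
\[
\Bigl\langle\sigma_y\sigma'_y\Bigl(\sum_{x\neq y}\sigma_x\sigma'_x\Bigr)^{k}\Bigr\rangle^{+/+}
\;\ge\;\langle\sigma_y\rangle^{+,2}_{\cR,0}\,\Bigl\langle\Bigl(\sum_{x\neq y}\sigma_x\sigma'_x\Bigr)^{k}\Bigr\rangle^{+/+},
\]
which is the reverse of what you need. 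Equivalently, conditioning on $\sigma_y=\sigma'_y=+1$ (the dominant event under $+$ boundary) raises all spin correlations and hence the residual overlap moment; it does not bound it by the unconditional one. The paper avoids this issue entirely because the $\langle\sigma_y\rangle^2$ factor is extracted from the explicit product structure of $F$, not from a correlation inequality.
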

\begin{proof}
    For any $I \subset\cR$, let \begin{equation*}
        F(I)=\prod_{x\in I}\frac{1}{\big[dist(x,\partial\cR\cup I\setminus\{x\})\big]^{\frac{1}{{8}}}}.
    \end{equation*} Then \eqref{eq: k point function bound} and \eqref{eq: upper-bound for the sum of squares of k point function} follow from \cite[Lemmas 8.1 and 8.3]{FSZ16} and we emphasize that the denominator $k!$ follows from the difference between summing over $I\subset \lamn$ {with $|I|=k$} and summing over $x_1,\cdots,x_k\in \lamn$. To show \eqref{eq: upper-bound for the sum of squares of k point function2}, we write the summation as \begin{equation}\label{eq: expansion for symmetric difference}
        \sum_{\substack{I\subset \cR,\\|I|=k} } F(I\Delta \{y\})^2=\sum_{\substack{I\subset \cR{\setminus \{y\}},\\|I|=k-1} } F(I)^2+\sum_{\substack{ I\subset \cR{\setminus \{y\}},\\|I|=k} } F(I\cup \{y\})^2.
    \end{equation} The first term in \eqref{eq: expansion for symmetric difference} can be upper-bounded by \eqref{eq: upper-bound for the sum of squares of k point function} and the second term can be bounded using induction and an adaptation of \cite[(8.17)]{FSZ16}. Thus we obtain that \begin{equation}
\sum_{\substack{I\subset \cR,\\|I|=k} } F(I\Delta \{y\})^2\le C_1^{k-1}M^{\frac{7(k-1)}{4}}\cdot\frac{[(k-1)!]^{\frac{1}{4}}}{(k-1)!}+C_1^{k}M^{\frac{7k}{4}}
%\Big(\langle\sigma_{y}\rangle_{\cR,0}^{+}\Big)^2
F(\{y\})^2\cdot\frac{[(k+1)!]^{\frac{1}{4}}}{k!}.\label{eq:10}
    \end{equation} Combined with CBC and \eqref{eq: origin decay rate without disorder}, it yields that $\big(\langle\sigma_{y}\rangle_{\cR,0}^{+}\big)^2\ge C_2F(\{y\})^2\ge C_3 M^{-\frac{1}{4}}$ and thus the second term in \eqref{eq:10} can dominate the first term.
\end{proof}

\begin{defi}\label{def: good external field for partition function}
    Let $\cR$ be a rectangle with size $\iota M\times M$ for some $\iota>0$. Let $\cH^+_{*}\subset\R^{\cR}$ denote the collection of the external field $h$ such that \begin{equation}\label{eq: good external field for expectation1}
        |\langle ~ \prod_{v\in\cR}[1+ \sigma_v\tanh(\veps h_v)]~\rangle^+_{\cR,0 }|\le1+ \sqrt{\eps M^{\frac{7}{8}}}.
    \end{equation} 
\end{defi}
\begin{rmk}
    {Of course, the set $\mcc H_*^+$ relies on $\mcc R$ and $\eps,$ but we omit them from the notation for concision. The same conventions will be used several times without further notice.}
\end{rmk}
\begin{lem}\label{lem: small perturbation for partition function}
    Let $\cR$ be a rectangle with size $\iota M\times M$. There exist constants $\nc\label{11}=\oc{11}(\iota),\nc\label{12}=\oc{12}(\iota)>0$ such that $\P(\cH^+_{*})\ge 1-\oc{11}\exp(-\oc{11}^{-1}\sqrt{\eps^{-1}M^{-7/8}})$ for all disorder strength  {$\epsilon\leq\oc{12}M^{-7/8}$}.
\end{lem}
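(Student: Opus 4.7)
The plan is to perform a chaos expansion in the independent symmetric random variables $Y_v := \tanh(\eps h_v)$, and then deduce concentration from high-moment estimates. Expanding the product gives
\begin{equation*}
X := \langle \prod_{v\in\cR}[1 + \sigma_v \tanh(\eps h_v)]\rangle^+_{\cR,0} = \sum_{I \subset \cR} a_I \, Y^I, \qquad a_I := \langle \sigma^I\rangle^+_{\cR,0}, \quad Y^I := \prod_{v \in I} Y_v,
\end{equation*}
with $a_\emptyset = 1$. Each $Y_v$ is symmetric about $0$ (so all odd moments vanish), bounded by $1$ in absolute value, and satisfies $\E[Y_v^{2m}] \le \E[Y_v^2] \le \eps^2$ for every $m \ge 1$. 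Lemma~\ref{lem: upper-bound for the sum of squares of k point function} gives $|a_I| \le F(I)$ and $\sum_{|I|=k} F(I)^2 \le \oc{10}^k M^{7k/4}/(k!)^{3/4}$.

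Writing $\delta := \eps M^{7/8}$, orthogonality of the monomials $Y^I$ under $\E$ immediately yields
\begin{equation*}
\E[(X - 1)^2] = \sum_{I \neq \emptyset} a_I^2 \, \E[Y_v^2]^{|I|} \le \sum_{k \ge 1} \frac{(\oc{10} \, \delta^2)^k}{(k!)^{3/4}} \le C_\iota \, \delta^2
\end{equation*}
for $\delta$ small. For higher moments, expand
\begin{equation*}
\E[(X - 1)^{2p}] = \sum_{I_1, \ldots, I_{2p} \neq \emptyset} \prod_{j=1}^{2p} a_{I_j} \cdot \E\Big[\prod_{j=1}^{2p} Y^{I_j}\Big],
\end{equation*}
and note that $\E\big[\prod_j Y^{I_j}\big] = \prod_v \E[Y_v^{m_v}]$ (with $m_v := \#\{j : v \in I_j\}$) vanishes unless every $m_v$ is even, and is then bounded by $\eps^{2|\cup_j I_j|}$ since $|Y_v|\le 1$ and $\E[Y_v^{2m}] \le \eps^2$ for $m \ge 1$. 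Organizing the resulting sum via a Wick-style pairing of the $(I_j)_j$ (with corrections for higher even multiplicities) and applying Lemma~\ref{lem: upper-bound for the sum of squares of k point function} pair-by-pair yields a bound of the form $\E[(X - 1)^{2p}] \le (C_\iota \, p \, \delta^2)^p$.

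Markov's inequality with threshold $t = \sqrt\delta$ then gives $\P(|X - 1| > \sqrt\delta) \le (C_\iota \, p \, \delta)^p$, and choosing $p = \lceil 1/(e C_\iota \delta)\rceil$ yields $\P(|X - 1| > \sqrt\delta) \le \exp(-c/\delta)$, which dominates the claimed bound $\exp(-c'\sqrt{\eps^{-1} M^{-7/8}}) = \exp(-c'/\sqrt\delta)$ once $\delta \le 1$. Since $|X| \le 1 + |X - 1|$, this places $h$ in $\cH^+_{*}$ outside an event of stretched-exponentially small probability. The main obstacle is the combinatorics of the higher-moment step: the sum over $2p$-tuples $(I_1, \ldots, I_{2p})$ subject to the parity constraint must be grouped so that Lemma~\ref{lem: upper-bound for the sum of squares of k point function} can be applied to square sums, since we have no useful pointwise bound on $F(I)$ itself. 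The non-pairing even covers (when some $m_v > 2$) give smaller contributions—each extra coincidence saves a factor of $\eps^{m_v - 2}$ against losing one vertex from $\cup_j I_j$—but this cancellation must be tracked by careful accounting over the possible multiplicity patterns.
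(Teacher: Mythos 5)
Your overall strategy coincides with the paper's up to the concentration step: both expand $\langle\prod_v[1+\sigma_v\tanh(\eps h_v)]\rangle^+_{\cR,0}$ as a multilinear chaos in $Y_v=\tanh(\eps h_v)$ and control the coefficients through Lemma~\ref{lem: upper-bound for the sum of squares of k point function}. The paper then decomposes $X-1=\sum_{k\ge1}\Phi_k$ by chaos order and applies the tail bound of \cite[Corollary 3.3]{AL12} for chaoses generated by symmetric random variables with log-concave tails to each $\Phi_k$ at threshold $(\eps M^{7/8})^{k/2}/2$, finishing with a union bound over $k$. You instead try to prove the concentration from scratch via a $2p$-th moment computation, and this is where there is a genuine gap: the central estimate $\E[(X-1)^{2p}]\le(C_\iota p\delta^2)^p$ is asserted, not proved, and you yourself identify the combinatorial accounting behind it as "the main obstacle."

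The reason this step is not routine is the one you half-name. Writing $m_v=\#\{j:v\in I_j\}$, the factor $\E[\prod_v Y_v^{m_v}]$ for an even cover is bounded only by $\eps^{2|\cup_j I_j|}$ (each distinct vertex contributes one factor $\E Y_v^{2m}\le\eps^2$), \emph{not} by $\eps^{2\sum_j|I_j|}=\prod_j\eps^{2|I_j|}$; since $|\cup_j I_j|\le\sum_j|I_j|$ and $\eps<1$, the bound degrades with every coincidence, so the higher-multiplicity configurations are not automatically negligible and cannot be dismissed as "corrections." Moreover, Lemma~\ref{lem: upper-bound for the sum of squares of k point function} controls only $\sum_{|I|=k}F(I)^2$, so to apply it "pair-by-pair" you must first decouple the overlapping index sets (Cauchy--Schwarz over a genuine pairing), and for non-Rademacher variables whose $L^q/L^2$ ratio grows like $\sqrt{q}$ in the relevant range $q\sim\delta^{-1}$, the resulting hypercontractivity-type constant $(Cp)^{k/2}$ per chaos of order $k$ must then be beaten by the $(k!)^{-3/8}$ decay of \eqref{eq: upper-bound for the sum of squares of k point function} when summing over $k$. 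Carrying this out correctly is essentially a re-derivation of the Adamczak--Lata\l a moment bounds. The cleanest repair is to follow the paper: treat each $\Phi_k$ separately, invoke \cite[Corollary 3.3]{AL12} with the Hilbert--Schmidt norm bound from \eqref{eq: upper-bound for the sum of squares of k point function}, and sum the resulting tails $\exp(-C\sqrt{\eps^{-1}M^{-7/8}}\,(k!)^{3/(8k)})$ over $k$; if you insist on the moment route, you must supply the decoupling and multiplicity-pattern analysis explicitly rather than appeal to a Wick heuristic.
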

\begin{rmk}
    In Definition \ref{def: good external field for partition function}, we put $+$ in the superscript in the notation $\cH^+_{*}$ in order to emphasize that we consider the Ising measure with the plus boundary condition. We can also define an analog $\mathcal H_
*^-$ with respect to the minus boundary condition, and it is 
 not hard to see that Lemma~\ref{lem: small perturbation for partition function} also holds for $\cH^-_{*}$.
\end{rmk}

To control the numerator of \eqref{eq: expansion for boundary influence}, we introduce an analog of Definition \ref{def: good external field for partition function} and obtain Lemma~\ref{lem: small perturbation for expectation} as an analog to Lemma~\ref{lem: small perturbation for partition function}.

\begin{defi}\label{def: good external field for o}
    Let $\cH_{o}\subset\R^{\lamn}$ denote the collection of the external field $h$ such that \begin{equation*}\label{eq: good external field for expectation2}
    \begin{aligned}
        &\Big|\sum\limits_{I,J\subset\lamn}\Big([\langle ~\sigma_o\sigma^I~\rangle_{\lamn,0}^{+}\langle ~\sigma^J~\rangle_{\lamn,0}^{-}-\langle ~\sigma^I~\rangle_{\lamn,0}^{+}\langle ~\sigma_o\sigma^J~\rangle_{\lamn,0}^{-}]\times\prod\limits_{x\in I}\tanh(\veps h_x)\prod\limits_{y\in J}\tanh(\veps h_y)\Big)\\&-\Big(\langle ~\sigma_o~\rangle_{\lamn,0}^{+}-\langle ~\sigma_o~\rangle_{\lamn,0}^{-}\Big)\Big|\le \sqrt{\eps N^{\frac{7}{8}}}\langle\sigma_o\rangle_{\lamn,0}^{+}.
    \end{aligned}
    \end{equation*} 
\end{defi}

\begin{lem}\label{lem: small perturbation for expectation}
    There exist constants $\nc\label{expectation1},\nc\label{expectation2}>0$ such that $\P(\cH_{o})\ge 1-\oc{expectation1}\exp(-\oc{expectation1}^{-1}\sqrt{\eps^{-1} N^{-\frac{7}{8}}})$ for all {$\eps\leq\oc{expectation2}N^{-\frac{7}{8}}$}.
\end{lem}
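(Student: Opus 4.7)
Set
$$A(I,J) := \langle \sigma_o\sigma^I\rangle_{\lamn,0}^{+}\langle \sigma^J\rangle_{\lamn,0}^{-} - \langle \sigma^I\rangle_{\lamn,0}^{+}\langle \sigma_o\sigma^J\rangle_{\lamn,0}^{-},\qquad X(I,J) := \prod_{x\in I}\tanh(\eps h_x)\prod_{y\in J}\tanh(\eps h_y),$$
so that the quantity inside the absolute value in Definition~\ref{def: good external field for o} equals $S:=\sum_{(I,J)\neq(\emptyset,\emptyset)} A(I,J) X(I,J)$ (the $(\emptyset,\emptyset)$ term of the full sum is exactly $\langle \sigma_o\rangle_{\lamn,0}^{+}-\langle \sigma_o\rangle_{\lamn,0}^{-}$ which is subtracted off). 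The plan is to bound $S$ in probability by computing a sufficiently high moment and applying Markov's inequality, with the key input being the $k$-point bound of Lemma~\ref{lem: upper-bound for the sum of squares of k point function} applied to $\cR=\lamn$.

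First I would decompose $S=\E[S]+(S-\E[S])$ and control the two pieces separately. Because $h_x$ is symmetric around $0$ and $\tanh$ is odd, $\E[X(I,J)]=0$ unless $I\Delta J=\emptyset$, i.e.\ $I=J$, and in that case $\E[X(I,I)]=\prod_{x\in I}\E[\tanh^2(\eps h_x)]$ with $\E[\tanh^2(\eps h_x)]\le\eps^2$. Thus
$$|\E[S]|\le \sum_{k\ge 1}\eps^{2k}\sum_{|I|=k}|A(I,I)|\le 2\sum_{k\ge 1}\eps^{2k}\sum_{|I|=k} F(I)\cdot F(I\Delta\{o\}),$$
using the bound $|A(I,I)|\le F(I)F(I\Delta\{o\})+F(I\Delta\{o\})F(I)$ that follows from the GKS inequality $|\langle \sigma^J\rangle^-|\le|\langle\sigma^J\rangle^+|\le F(J)$. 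Cauchy--Schwarz together with \eqref{eq: upper-bound for the sum of squares of k point function} and \eqref{eq: upper-bound for the sum of squares of k point function2} then gives $\sum_{|I|=k}F(I)F(I\Delta\{o\})\lesssim (CN^{7/4})^k\tfrac{(k!)^{1/4}}{k!}+ (CN^{7/4})^k\langle\sigma_o\rangle^+_{\lamn,0}\tfrac{((k+1)!)^{1/4}}{k!}$, and plugging in $\eps^{2k}$ yields a geometric series in $\eps^2 N^{7/4}=(\eps N^{7/8})^2$. Since $\eps N^{7/8}$ can be made arbitrarily small, the $k=1$ term dominates and is already bounded by (a constant multiple of) $\eps N^{7/8}\langle\sigma_o\rangle_{\lamn,0}^{+}$.

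For the fluctuation $S-\E[S]$, I would compute $\E[(S-\E[S])^{2m}]$ for an integer $m$ to be optimized. Expanding, this is a sum over $2m$-tuples $(I_i,J_i)$ of products $\prod_x\E[\tanh^{n(x)}(\eps h_x)-\delta]$ which vanish unless each $x$ in the union appears in at least a certain even pattern; using $|\E[\tanh^n(\eps h)]|\le (C\eps)^n$ for $n\ge 2$ and iterating the Cauchy--Schwarz together with the $F$-sum bound of Lemma~\ref{lem: upper-bound for the sum of squares of k point function} reduces everything to a power of $\eps^2 N^{7/4}=(\eps N^{7/8})^2$ times combinatorial factors of the form $(Cm)^{Cm}/m!$. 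Taking $m$ of order $(\eps N^{7/8})^{-1/2}$ and applying Markov's inequality with threshold $\sqrt{\eps N^{7/8}}\langle\sigma_o\rangle^+_{\lamn,0}$ produces the stretched-exponential bound $\P\!\left(|S-\E[S]|>\tfrac12\sqrt{\eps N^{7/8}}\langle\sigma_o\rangle^+_{\lamn,0}\right)\le c\exp(-c^{-1}\sqrt{\eps^{-1}N^{-7/8}})$, which together with the deterministic bound on $|\E[S]|$ yields the lemma.

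The main obstacle I anticipate is the combinatorial/Wick-style bookkeeping in the $2m$-th moment: the pairs $(I,J)$ interact through the shared vertices of the four (or $4m$) subsets, and one must carefully pair up the $\tanh$ factors so that only pairings compatible with Lemma~\ref{lem: upper-bound for the sum of squares of k point function} survive. The analogous computation for $\cH_*^+$ in Lemma~\ref{lem: small perturbation for partition function} is a cleaner special case (no $\sigma_o$ singled out), and the strategy here is essentially to repeat that argument while tracking the factor of $\langle\sigma_o\rangle^+_{\lamn,0}$ coming from the $F(I\Delta\{o\})$ piece of \eqref{eq: upper-bound for the sum of squares of k point function2}; this is precisely what produces the relative error $\sqrt{\eps N^{7/8}}$ rather than an absolute one.
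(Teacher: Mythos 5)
Your overall strategy is the right one and matches the paper's in its essentials: expand the numerator as a polynomial chaos in the variables $\tanh(\eps h_v)$, control the coefficient arrays via Lemma~\ref{lem: upper-bound for the sum of squares of k point function} (with \eqref{eq: upper-bound for the sum of squares of k point function2} at $y=o$ supplying the crucial factor $\langle\sigma_o\rangle^+_{\lamn,0}$ that turns the error into a \emph{relative} one), and then concentrate. Where you diverge is the concentration step. The paper decomposes into fixed-bidegree pieces $\Phi_{m,k}(h)=\sum_{|I|=m,|J|=k}A(I,J)X(I,J)$, applies \cite[Corollary 3.3]{AL12} to each one to get $\P\big(|\Phi_{m,k}|>(\eps N^{7/8})^{(m+k)/2}/2\big)\le\exp\big(-c\sqrt{\eps^{-1}N^{-7/8}}\cdot(\ldots)\big)$ exactly as in Lemmas~\ref{lem: small perturbation for partition function} and~\ref{lem: small perturbation for crossing probability}, and then sums over $(m,k)$; you instead lump all degrees into one sum $S$, split off $\E[S]$, and propose a single $2m$-th moment computation with $m\asymp(\eps N^{7/8})^{-1/2}$. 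Your mean computation is correct and is in fact a detail the paper glosses over (unlike the single sum in Lemma~\ref{lem: small perturbation for partition function}, the double sum is not multilinear when $I\cap J\neq\emptyset$ and genuinely has a nonzero mean), and your choice of moment order is consistent with the tail you are aiming for. However, the step you flag as the anticipated obstacle --- the Wick-style bookkeeping for $\E[(S-\E S)^{2m}]$ across interacting subsets of all sizes --- is precisely the content that \cite[Corollary 3.3]{AL12} packages, and it is the entire analytic core of the concentration; as written it is a plan rather than a proof. You would save yourself that work, and land on the paper's actual argument, by keeping the bidegree decomposition and citing the chaos tail bound per $(m,k)$ before taking a union bound, rather than attempting one global moment estimate.
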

With Lemmas ~\ref{lem: small perturbation for partition function} and \ref{lem: small perturbation for expectation} at hand, we are ready to prove Theorem~\ref{thm-critical-temperature-small-perturbation}.
\begin{proof}[Proof of Theorem~\ref{thm-critical-temperature-small-perturbation}]
The upper bound follows directly from \cite[Theorem 1.1]{DSS22}, so it suffices to prove the lower bound. 
Recall that {$\eps \le\oc{4}N^{-\frac{7}{8}}$}, where we choose $\oc{4}$ such that $0<\oc{4}<\min\{\oc{12}, \oc{expectation2}\}$, {and} $\oc{12}, \oc{expectation2}$ are defined in Lemmas~\ref{lem: small perturbation for partition function} ($\iota=1$) and \ref{lem: small perturbation for expectation}.
    For any external field $h\in \cH=\cH^+_*\cap\cH^-_*\cap \cH_o$, we compute by \eqref{eq: expansion for boundary influence}, Definitions \ref{def: good external field for partition function} and \ref{def: good external field for o} that \begin{equation}\label{eq: boundary influence for good external field}
        \langle\sigma_o\rangle_{\lamn,\eps h}^{+}-\langle\sigma_o\rangle_{\lamn,\eps h}^{-}\ge \frac{(2-\sqrt{\eps N^{\frac{7}{8}}}) \langle\sigma_o\rangle_{\lamn,0}^{+}}{\left(1+\sqrt{\eps N^{\frac{7}{8}}}\right)^2}\ge (2-\theta)m(T_c,N,0),
    \end{equation}where the last inequality follows by recalling {$\eps \le\oc{4}N^{-\frac{7}{8}}$} and taking $\oc{4} >0$ small enough.
    Combining Lemmas~\ref{lem: small perturbation for partition function} and \ref{lem: small perturbation for expectation}, we obtain that \begin{equation}\label{eq: probability for good external field}
        \P(\cH)\ge 1-C_{1}\exp({-}C_{1}^{-1}\sqrt{\eps^{-1} N^{-\frac{7}{8}}})\ge 1-\frac{\theta}{2}.
    \end{equation}
    Thus we complete the proof of \eqref{eq-small-perturbation}. In addition, averaging \eqref{eq: boundary influence for good external field} over $h\in \mathcal H$ and applying \eqref{eq: probability for good external field}, we obtain by CBC that $$\E[\langle\sigma_o\rangle_{\lamn,\eps h}^{+}-\langle\sigma_o\rangle_{\lamn,\eps h}^{-}]\ge \E[\langle\sigma_o\rangle_{\lamn,\eps h}^{+}-\langle\sigma_o\rangle_{\lamn,\eps h}^{-}\mid \cH]\times \mbb P(\mcc H)\ge (2-2\theta)m(T_c,N,0).$$ This completes the proof of Theorem~\ref{thm-critical-temperature-small-perturbation}.
\end{proof}
\subsection{Proof of Theorem~\ref{thm-critical-temperature}}\label{sec: Proof of Theorem 1.1}
We first split Theorem \ref{thm-critical-temperature} into two theorems corresponding to the upper and lower bounds respectively.
\begin{thm}\label{thm:main thm-upper bound}
    Fix $d=2$.  There exist constants
$\nc\label{01}, \newconstant\label{02}>0$ such that for {$\eps\geq\oldconstant{01} N^{-\frac{7}{8}}$}
\begin{equation}\label{eq-main-theorem upper bound}
     m(T_c,N,\eps) \leq \oldconstant{02}^{-1}N^{-\frac{1}{8}} \exp\Big(-\oldconstant{02} \epsilon^{\frac{8}{7}} N\Big)\,. 
     %\mbox{ for all } N\geq 1\, .
\end{equation}
\end{thm}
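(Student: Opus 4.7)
My starting point is the disagreement percolation representation \eqref{eq:boundary influence},
\[
m(T_c,N,\eps) = \mathbb{E}\bigl[\langle \1_{\{o \stackrel{\mathcal D}{\longleftrightarrow}\intB\Lambda_N\}}\rangle_{\Lambda_N,\eps h}^{\intB\Lambda_N,+/-}\bigr].
\]
I will fix the critical scale $L:=\lfloor c_0\,\eps^{-8/7}\rfloor$ with $c_0>0$ a small constant, so that $\eps L^{7/8}\asymp 1$; the hypothesis $\eps>\oc{01}N^{-7/8}$ together with a sufficiently large choice of $\oc{01}$ ensures $L\le N/4$. Heuristically, $L$ is the largest scale on which the cumulative field $\eps h_{\Lambda_L}$ is still subdominant relative to the critical Ising correlations, in line with the $k$-point bounds of Lemma~\ref{lem: upper-bound for the sum of squares of k point function} and the chaos-expansion framework of \cite{BS22, FSZ16}.

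The core of the argument is a per-scale exponential estimate on the disagreement annulus-crossing probability: for every dyadic $M=2^k L\le N/2$ and every $u$ with $\Lambda_{2M}(u)\subset\Lambda_N$,
\begin{equation}\label{eq:plan-per-scale}
\mathbb{E}\bigl[\bar\mu^{+/-}_{\Lambda_{2M}(u),\eps h}\bigl(\intB\Lambda_M(u)\stackrel{\mathcal D}{\longleftrightarrow}\intB\Lambda_{2M}(u)\bigr)\bigr]\leq \exp(-c\, M/L).
\end{equation}
To prove \eqref{eq:plan-per-scale} I would first apply the RSW estimate with disorder (Theorem~\ref{thm: RSW for dis with external field}, to be proved in Section~\ref{sec: RSW}) inside each of the $\asymp M/L$ disjoint $L$-subboxes that any crossing of $\Lambda_{M,2M}(u)$ must traverse, obtaining in each such subbox an ``agreement'' FK-Ising circuit that blocks disagreement with $\bar\mu^{+/-}$-probability at least a fixed $\delta>0$, on a set of $\eps h$ of $\P$-probability at least $1-e^{-c'}$. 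I would then use the FKG inequality for the disagreement Ising model (Lemma~\ref{lem:FKG}) together with the CBC-type reductions (Corollary~\ref{cor: CBC} and Lemma~\ref{lem:CBC prime}) to replace the effective boundary conditions on each subbox by the extremal $+/-$ boundary condition, so that the blocking events in disjoint subboxes become sufficiently independent. Finally, Gaussian concentration for the independent disorder across the disjoint subboxes upgrades the per-subbox positive-probability blocking into the claimed exponential rate $\exp(-cM/L)$.

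Given \eqref{eq:plan-per-scale}, the main bound follows by iterating over the dyadic annuli $A_k:=\Lambda_{2^kL,2^{k+1}L}$, $k=0,1,\ldots,K:=\lfloor\log_2(N/L)\rfloor$. The event $\{o\stackrel{\mathcal D}{\longleftrightarrow}\intB\Lambda_N\}$ forces a disagreement crossing of each $A_k$, and peeling off the $A_k$ from the outermost inward via Corollary~\ref{cor: CBC} (which replaces the effective boundary condition on each inner annulus by the extremal $+/-$), using the independence of $\eps h$ across the disjoint $A_k$ together with \eqref{eq:plan-per-scale}, and inserting the zero-field bound $m(T_c,L,0)\le \oc{fk exponent}\,L^{-1/8}$ from \eqref{eq: origin decay rate without disorder} at the innermost scale, yields
\[
m(T_c,N,\eps)\;\le\;C\,L^{-1/8}\,\prod_{k=0}^{K}\exp(-c\,2^k)\;\le\;C'\,L^{-1/8}\exp\bigl(-c''\, N/L\bigr).
\]
Since $L^{-1/8}=N^{-1/8}(N/L)^{1/8}$ and $(N/L)^{1/8}$ grows only subexponentially in $N/L$, the polynomial factor can be absorbed into the exponential at the cost of a slightly smaller exponent, yielding $m(T_c,N,\eps)\le \oc{02}^{-1}N^{-1/8}\exp(-\oc{02}\,\eps^{8/7}N)$ as claimed.

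The principal obstacle is the per-scale bound \eqref{eq:plan-per-scale}. Because the Ising model at $T_c$ has only polynomially decaying correlations, disjoint $L$-subboxes are far from independent, and converting the positive-probability blocking in each subbox into a genuine exponential rate requires a careful conditioning scheme built on the FKG/CBC structure developed in Sections~\ref{sect:disagreement percolation}--\ref{FKG}. In addition, RSW with disorder (Section~\ref{sec: RSW}) is itself delicate: since the disordered FK-Ising model is not self-dual, one must quantitatively compare it with the zero-field critical FK-Ising model via the chaos expansion of \cite{BS22, FSZ16}, and $L=\eps^{-8/7}$ is precisely the largest scale at which this comparison remains productive, which is exactly where the exponent $7/8$ enters.
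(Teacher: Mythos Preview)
Your overall architecture---work at the scale $L\asymp\eps^{-8/7}$, obtain exponential decay in $N/L$, and absorb the prefactor---matches the paper's. The genuine gap is in your proposed proof of the per-scale estimate \eqref{eq:plan-per-scale}, and this is exactly where the hard work lies.

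First, Theorem~\ref{thm: RSW for dis with external field} points in the wrong direction for your purpose: it gives a \emph{lower} bound on pre-disagreement crossings, not an upper bound. What bounds disagreement crossing from above is Lemma~\ref{lem:crossing}, but that lemma only yields a fixed constant $1-b$, uniformly in $h$ and in the aspect ratio. With only $1-b$ per $L$-scale annulus, iterating over concentric dyadic shells gives at most $(1-b)^{\log_2(N/L)}$, which is polynomial in $N/L$, not exponential. To get $\exp(-cM/L)$ you must factorize over $\asymp M/L$ events; the only candidates that are disjoint and each traversed by every crossing are thin annuli of width $L$ at radius $\asymp M\gg L$, and for those the crossing probability is close to $1$, so Lemma~\ref{lem:crossing} gives nothing. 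Alternatively you can run a Peierls argument over paths of $L$-boxes, but then the per-box bound must be small enough to beat the entropy $1000^K$ of box-paths, and a fixed $1-b$ does not. Neither FKG nor Gaussian concentration repairs this: FKG for the disagreement model goes the wrong way (agreement circuits are decreasing, so their intersection probability is \emph{at most} the product, which is useless for an upper bound on crossings), and concentration of the disorder cannot upgrade a quenched blocking probability that is merely $\delta\in(0,1)$ into one close to $1$.

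The paper closes this gap by a completely different mechanism that genuinely uses the disorder. It first proves the a~priori bound $m(T_c,N,\eps)\le C/(\eps N)$ via surface tension (Proposition~\ref{prop: strictly faster polynomial decay}), then invokes the Aizenman--Burchard fractality input (Theorem~\ref{thm:Fractality}) to force any disagreement crossing to hit $(N/M)^{1+\alpha}$ many $M$-boxes. On most of these boxes Theorem~\ref{thm: RSW for dis with external field} does apply (in the \emph{correct} direction: it manufactures many disagreement points), and comparing the resulting lower bound on $\mathbb E|\mathcal D_\partial|$ with the a~priori upper bound forces the crossing probability at scale $\ell\asymp\eps^{-8/7}$ to be an \emph{arbitrarily small} constant (Proposition~\ref{prop:fast power law}). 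Only then does the Peierls argument over $\ell$-boxes beat the path entropy and yield the exponential rate. Your sketch omits precisely these two ingredients---the $C/(\eps N)$ bound and the fractality---and without them the argument cannot start.
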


\begin{thm}\label{thm:main thm-lower bound}
Fix $d=2$. There exist constants
$\nc\label{03}, \newconstant\label{04}>0$ such that for {$\eps\geq\oldconstant{03}N^{-\frac{7}{8}}$}
\begin{equation}\label{eq-main-theorem-lower-bound 2}
    \P\left( \mathbf m(T_c,N,\eps h)\ge \oldconstant{04}^{-1}N^{-\frac{1}{8}}  \exp(-\oldconstant{04}\epsilon^{\frac{8}{7}} N)\right)\ge 1-\oc{04}\exp\left(-\oc{04}^{-1}(\eps^{\frac{8}{7}}N)^{\frac{1}{10}}\right).
\end{equation}
\end{thm}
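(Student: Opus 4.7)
The plan is to exhibit, on a high-probability disorder event, a pre-disagreement path from $o$ to $\partial\Lambda_N$ by chaining $\Theta(\eps^{8/7}N)$ rectangle crossings at the natural correlation scale $M\asymp\eps^{-8/7}$. The tools are the FKG inequality for the disagreement Ising measure (Lemma~\ref{lem:FKG}), the CBC property (Corollary~\ref{cor: CBC}) to transfer each factor to its own subdomain with the worst local boundary, the RSW estimate with disorder (Theorem~\ref{thm: RSW for dis with external field}) to lower bound each local factor by a universal constant, and Theorem~\ref{thm-critical-temperature-small-perturbation} together with \eqref{eq: origin decay rate without disorder} to seed the chain at the innermost scale.

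Fix $M=c_0\eps^{-8/7}$ for a large constant $c_0$, so that $\eps M^{7/8}$ is small enough for Theorem~\ref{thm-critical-temperature-small-perturbation} inside $\Lambda_M$ and Theorem~\ref{thm: RSW for dis with external field} at scale $M$ to apply. Set $K=\lfloor N/(3M)\rfloor\asymp\eps^{8/7}N$ and arrange a straight chain of overlapping aspect-ratio-$3$ rectangles $B_1,\dots,B_K$ stretching from $\partial\Lambda_M$ to $\partial\Lambda_N$, with consecutive boxes sharing an $M\times M$ square so that any pair of pre-disagreement horizontal crossings in adjacent $B_k$'s automatically concatenates through the overlap. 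Let $\mathcal E_k\subset\Theta^{\bar B_k}$ denote the event that $B_k$ admits a pre-disagreement horizontal crossing and let $\mathcal E_0=\{o\in\mathcal D_{\partial\Lambda_M}\}$ inside $\bar\Lambda_M$. Each $\mathcal E_k$ is increasing in the partial order of Definition~\ref{def:partial-order}, and their simultaneous occurrence yields $\{o\in\mathcal D_\partial\}$.

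Let $\mathcal G=\bigcap_{k=0}^K\mathcal G_k$, where $\mathcal G_0$ is the high-probability event from Theorem~\ref{thm-critical-temperature-small-perturbation} delivering $\bar\mu^{+/-}_{\Lambda_M,\eps h}(\mathcal E_0)\ge c M^{-1/8}$, and for $k\ge 1$, $\mathcal G_k$ is the disorder event from Theorem~\ref{thm: RSW for dis with external field} on which $\bar\mu^{-/+}_{B_k,\eps h}(\mathcal E_k)\ge p_0$ for some universal $p_0\in(0,1)$. Since the $\mathcal G_k$'s depend on disjoint pieces of the disorder, independence plus a union bound and the stretched-exponential disorder tails of Theorem~\ref{thm: RSW for dis with external field} give
\[
\P(\mathcal G)\ \ge\ 1-(K+1)\cdot c\exp\bigl(-c^{-1}M^{\alpha}\bigr)\ \ge\ 1-c\exp\bigl(-c^{-1}(\eps^{8/7}N)^{1/10}\bigr)
\]
for a suitable $\alpha>0$. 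On $\mathcal G$, applying Lemma~\ref{lem:FKG} to the increasing events $\mathcal E_0,\ldots,\mathcal E_K$ and then Corollary~\ref{cor: CBC} to each factor (swapping the global $+/-$ boundary of $\Lambda_N$ for the worst local $-/+$ boundary of $B_k$) yields
\[
\bar\mu^{+/-}_{\Lambda_N,\eps h}(o\in\mathcal D_\partial)\ \ge\ \prod_{k=0}^K\bar\mu^{+/-}_{\Lambda_N,\eps h}(\mathcal E_k)\ \ge\ cM^{-1/8}\cdot p_0^K\ =\ c\eps^{1/7}\exp\bigl(-c^{-1}\eps^{8/7}N\bigr).
\]
Since $\eps>\oc{03}N^{-7/8}$ forces $\eps^{1/7}\ge c\,N^{-1/8}$, this matches the target lower bound \eqref{eq-main-theorem-lower-bound 2}.

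The main obstacle is establishing the RSW estimate with disorder (Theorem~\ref{thm: RSW for dis with external field}) under the hardest possible boundary condition ($-/+$ in the disagreement partial order) with stretched-exponential disorder tails; this is where the critical exponent $7/8$ enters through a chaos expansion in the spirit of \cite{FSZ16,BS22} and is the technical heart of the paper. Secondary subtleties include verifying that the worst-case local boundary arising from Corollary~\ref{cor: CBC} is indeed the $-/+$ boundary handled by the RSW estimate, and verifying the geometric concatenation of pre-disagreement horizontal crossings across consecutive $B_k$'s via their common overlap; both are routine once the analytic input is in place.
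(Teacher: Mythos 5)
Your overall architecture (FKG $+$ CBC to factorize a chain of local crossing events, RSW with disorder at scale $M\asymp\eps^{-8/7}$ for each factor, Theorem~\ref{thm-critical-temperature-small-perturbation} to seed the chain at the origin) is the right skeleton and matches the paper's. But there is a fatal gap in the probability estimate for the disorder event $\mathcal G$. At the scale $M=c_0\eps^{-8/7}$ that you must use (so that $\eps M^{7/8}<\oc{7}$), the quantity $\sqrt{\eps^{-1}M^{-7/8}}=c_0^{-7/16}$ is a \emph{constant}, so Theorem~\ref{thm: RSW for dis with external field} only guarantees that each box $B_k$ is ``good'' with probability $1-\delta_0$ for a fixed constant $\delta_0>0$ (this is exactly what Lemma~\ref{lem: domination by product measure} records). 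Your claimed bound $\P(\mathcal G)\ge 1-(K+1)\,c\exp(-c^{-1}M^\alpha)$ is therefore $1-(K+1)\delta_0$, which is vacuous once $K\asymp\eps^{8/7}N$ is large; a union bound over a \emph{deterministic} chain of $\Theta(\eps^{8/7}N)$ boxes cannot work. This is precisely why the paper does not use a fixed chain: it shows the field of good $M$-boxes dominates a highly supercritical Bernoulli percolation (Lemma~\ref{lem: domination by product measure}) and then finds a \emph{random} path/contour of good boxes of length $O(N/M)$ (Lemmas~\ref{lem: good external field probability 0} and \ref{lem: good external field probability 1}), whose existence — not the goodness of every box — is what holds with probability $1-\exp(-c(\eps^{8/7}N)^{1/10})$. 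Relatedly, the paper must split into the regimes $\eps>N^{-1/5}$ and $\eps\le N^{-1/5}$ and introduce an intermediate scale $N_1$ or $N_2$, because the region near the origin needs separate treatment (Radon–Nikodym control of the disorder in $\Lambda_{2N_1}$, or a second, much smaller box scale $M_2$).

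Two further, smaller gaps. First, your seed event: CBC (Corollary~\ref{cor: CBC}) localizes $\mathcal E_0$ to the \emph{worst} boundary condition $-/+$ on $\partial\Lambda_M$, and $\bar\mu^{-/+}_{\Lambda_M,\eps h}(o\stackrel{\mathcal D}{\longleftrightarrow}\partial\Lambda_M)$ is not the quantity controlled by Theorem~\ref{thm-critical-temperature-small-perturbation} (which concerns the $+/-$ boundary) and has no obvious $M^{-1/8}$ lower bound. The paper handles this with the CBC$'$ device (Lemma~\ref{lem:CBC prime}): condition on a \emph{disagreement} circuit around the origin, explore the outmost disagreement boundary to obtain a favorable $+/-$ condition on a random domain, and only then apply Theorem~\ref{thm-critical-temperature-small-perturbation}. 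Second, two horizontal pre-disagreement crossings of overlapping collinear rectangles need not intersect, so your chain does not ``automatically concatenate''; one needs circuits in annuli (the $\mathtt{Around}$ events built from four hard crossings, whence the constant $\oc{9}^4$) or alternating orientations. Both of these are repairable, but the union-bound issue is a genuine missing idea: the percolation-of-good-boxes argument is essential, not an optimization.
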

Assuming Theorems \ref{thm:main thm-upper bound} and \ref{thm:main thm-lower bound}, we now provide the proof of Theorem~\ref{thm-critical-temperature}.
\begin{proof}[Proof of Theorem~\ref{thm-critical-temperature}]
    {We first prove \eqref{eq-main-theorem-lower-bound} and then show \eqref{eq-main-theorem}. Let $0<\oc{-2}< \oc{02}$ and $\oc{-1}\ge \max\{\oc{01},\oc{03}\}$. Thus we have $\eps\ge\max\{\oc{01}N^{-\frac{7}{8}},\oc{03}N^{-\frac{7}{8}}\}$. } The lower bound in \eqref{eq-main-theorem-lower-bound} follows directly from Theorem~\ref{thm:main thm-lower bound}. The upper bound in \eqref{eq-main-theorem-lower-bound} follows from Theorem \ref{thm:main thm-upper bound} and a straightforward application of Markov's inequality as follows: \begin{align*}
        \P\Big(\mathbf m(T_c,N,\eps h)> \oldconstant{-2}^{-1}N^{-\frac{1}{8}}  \exp(-\oldconstant{-2}\epsilon^{\frac{8}{7}} N)\Big)&\le \oldconstant{-2}N^{\frac{1}{8}}  \exp(\oldconstant{-2}\epsilon^{\frac{8}{7}} N)\times \oldconstant{02}^{-1}N^{-\frac{1}{8}} \exp\Big(-\oldconstant{02} \epsilon^{\frac{8}{7}} N\Big)\nonumber\\&\le C_1\exp(-C_1^{-1}\epsilon^{\frac{8}{7}} N)
    \end{align*}where the last inequality comes since {$\oc{-2}< \oc{02}$}. Thus we finish the proof of \eqref{eq-main-theorem-lower-bound}. 
    
    In order to show \eqref{eq-main-theorem}, {fix any} constant $\oc{1}>0$.
    %Thus it suffices to show that for any $\oc{1}>0$, there exists a constant $\oc{2}>0$ such that \eqref{eq-main-theorem} holds for any $\eps{\geq}\oc{1}N^{-\frac{7}{8}}$.
    If {$\eps\geq\oc{01}N^{-\frac{7}{8}}$}, then by Theorem~\ref{thm:main thm-upper bound}, we obtain the upper bound of \eqref{eq-main-theorem}. If {$\oc{1}N^{-\frac{7}{8}}\leq\eps<\oc{01}N^{-\frac{7}{8}}$}, applying \cite[Theorem 1.1]{DSS22} and letting $\oc{2}{>0}$ be small enough, we obtain the upper bound of \eqref{eq-main-theorem}. As for the lower bound, if {$\eps\geq\oc{03}N^{-\frac{7}{8}}$}, then by Theorem~\ref{thm:main thm-lower bound}, we obtain the lower bound of \eqref{eq-main-theorem}. If $\oc{1}N^{-\frac{7}{8}}\leq\eps<\oc{03}N^{-\frac{7}{8}}$, let $N'=\lceil(\frac{\oc{03}} {\oc{1}})^{\frac{8}{7}}N\rceil$ be such that $\eps\geq\oc{03}(N')^{-\frac{7}{8}}$. Thus we obtain by CBC and Theorem~\ref{thm:main thm-lower bound} that 
    \begin{align*}
        m(T_c,N,\eps)\stackrel{\mbox{CBC}}\ge m(T_c,N',\eps)\stackrel{\text{Thm}~\ref{thm:main thm-lower bound}}\ge& (1-\oc{04}\exp\left(-\oc{04}^{-1}(\eps^{\frac{8}{7}}N')^{\frac{1}{10}}\right)\cdot \oldconstant{04}^{-1}(N')^{-\frac{1}{8}}\exp(-\oldconstant{04}\epsilon^{\frac{8}{7}} N')\\ \ge~~& C_2N^{-\frac{1}{8}}\exp(-C_2^{-1}\eps^{\frac{8}{7}}N)
    \end{align*} where the last inequality follows from the relation between $N$ and $N'$. Thus we finish the proof of the lower bound of \eqref{eq-main-theorem}.
\end{proof}
To show the stretched exponential decay in Theorem~\ref{thm:main thm-upper bound}, we start with the framework presented in \cite{DX21} and \cite{AHP20}, but substantial new ideas are required since our disorder is very weak. For instance, an important ingredient is the following RSW estimate for the disagreement Ising model with disorder. 
\begin{defi}\label{def: horizontal crossing}
    For any integers $a,b>0$, we use the notation $\cR(a,b)$ to denote the rectangle $[-a,a]\times [-b,b]$. We define $\hc(a,b)$ to be the event that there exists a horizontal pre-disagreement crossing through $\cR(a, b)$, i.e., $\{-a\}\times[-b,b]$ is connected to $\{a\}\times[-b,b]$ by $\mcc D\cap \bar{\mcc R}(a,b)$
    %there exists a pre-disagreement crossing from $\{-a\}\times[-b,b]$ to $\{a\}\times[-b,b]$ 
    (sometimes we also use the notation $\hc(\cR)$ to denote the event that there exists a horizontal pre-disagreement crossing through a rectangle $\cR$). When $a>b$ we say it is a hard crossing, and when $a<b$ we say it is an easy crossing. 
\end{defi}
\begin{thm}\label{thm: RSW for dis with external field}
    For any $\iota>0$, there exist constants $\nc\label{7},\nc\label{8},\nc\label{9}>0$ depending only on $\iota$ such that for any integer $M>0$ the following holds. For any {$\eps\leq\oc{7}M^{-\frac{7}{8}}$}, with
     $\P$-probability at least $1-\oc{8}\exp(-\oc{8}^{-1}\sqrt{\eps^{-1} M^{-\frac{7}{8}}})$, we have \begin{equation}\label{eq: RSW for dis with external field}
        \bar\mu^{\xi^+/\xi^-}_{\cR(\iota M+M,2M),\eps h}(\hc(\iota M,M))>\oc{9}
    \end{equation}{for any boundary condition{s} $\xi^+,\xi^-$ on $\partial\cR(\iota M+M,2M)$.}
    % for any  {$\eps\leq\oc{7}M^{-\frac{7}{8}}$} and any boundary condition $\xi^+,\xi^-$ on $\partial\cR(\iota M+M,2M)$.
\end{thm}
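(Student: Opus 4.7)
The plan is to combine the chaos expansion technique used in the proofs of Lemmas~\ref{lem: small perturbation for partition function} and \ref{lem: small perturbation for expectation} with the classical FK-Ising RSW estimates at criticality recorded in \eqref{0-field-annulus-crossing}. Since the existence of a horizontal pre-disagreement crossing is an increasing event with respect to the partial order of Definition~\ref{def:partial-order} (cf.\ the remark at the end of Section~\ref{FKG}), Corollary~\ref{cor: CBC} reduces the problem to the worst-case boundary condition $(-,+)$; that is, it suffices to establish $\bar\mu^{-/+}_{\mcc R(\iota M + M, 2M), \eps h}(\hc(\iota M, M)) > \oc{9}$ with the required $\mathbb P$-probability, uniformly in $\xi^+, \xi^-$.

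The next step is to mimic the expansion \eqref{eq: partition function expansion} for the product of two extended Ising measures, additionally inserting the indicator $\mathbf 1_{\hc}$ in the numerator. This writes $\bar\mu^{-/+}_{\mcc R, \eps h}(\hc)$ as a ratio $\mathcal N(\eps h)/\mathcal D(\eps h)$, where
$$ \mathcal N(\eps h) = \sum_{I, J \subset V(\mcc R)}\Big\langle \mathbf 1_{\hc}\prod_{x\in I}\sigma^+_x\prod_{y\in J}\sigma^-_y\Big\rangle^{-/+}_{\mcc R, 0}\prod_{x\in I}\tanh(\eps h_x)\prod_{y\in J}\tanh(\eps h_y) $$
and $\mathcal D(\eps h)$ is the analogous sum with the indicator removed. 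At zero field the two copies decouple, giving $\mathcal N(0) = \bar\mu^{-/+}_{\mcc R, 0}(\hc)$ and $\mathcal D(0) = 1$, and the zero-field correlators factorize as $\langle\sigma^I\rangle^-_{\mcc R,0}\langle \sigma^J\rangle^+_{\mcc R,0}$. Defining a ``good'' event $\mathcal H$ analogously to Definitions~\ref{def: good external field for partition function} and \ref{def: good external field for o} by requiring $|\mathcal D(\eps h) - 1| + |\mathcal N(\eps h) - \mathcal N(0)| \le \sqrt{\eps M^{7/8}}$, we control $\mathbb P(\mathcal H^c)$ by the same second-moment computation as in Lemmas~\ref{lem: small perturbation for partition function} and \ref{lem: small perturbation for expectation}: factorization of the zero-field correlator together with Lemma~\ref{lem: upper-bound for the sum of squares of k point function} applied to each copy controls the variance, and the additional $\mathbf 1_{\hc}$ in $\mathcal N$ only costs a pointwise factor of $1$ (since $|\mathbf 1_{\hc}\langle \sigma^I\sigma^J\rangle| \le |\langle \sigma^I\rangle^-||\langle \sigma^J\rangle^+|$ via CBC and the pointwise bound on the indicator). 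On $\mathcal H$ one obtains $\bar\mu^{-/+}_{\mcc R, \eps h}(\hc) \geq \tfrac12 \bar\mu^{-/+}_{\mcc R, 0}(\hc)$ once $\oc{7}$ is small enough.

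What remains is the zero-field lower bound $\bar\mu^{-/+}_{\mcc R(\iota M+M, 2M), 0}(\hc(\iota M, M)) \geq c'(\iota) > 0$. By independence of the two copies and the Edwards-Sokal-type coupling of Lemma~\ref{lem:extended to FK}, the edge spins in each copy form an FK-Ising configuration at $p_c$; combined with the classical FK-RSW bound \eqref{0-field-annulus-crossing}  and the fact that at zero external field every FK cluster disjoint from the monochromatic boundary carries an independent uniform $\pm 1$ label, one can construct with positive probability a horizontal path labeled $+1$ in the first copy (with $-$ boundary) and labeled $-1$ in the second copy (with $+$ boundary), and combine them into a pre-disagreement horizontal crossing. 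The FKG inequality for pre-disagreements (Lemma~\ref{lem:FKG}) then glues easy crossings into hard ones in the standard RSW manner, so it suffices to handle a single favorable aspect ratio. The main technical obstacle is precisely this last step: converting FK-RSW (a statement about open-cluster connectivity within a single FK sample) into a simultaneous pre-disagreement crossing in the product model, while carefully tracking cluster labels and the interplay between vertex and edge pre-disagreements dictated by the hard constraint~(iii) in Section~\ref{sect:extended-Ising}.
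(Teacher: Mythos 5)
Your high-level architecture (CBC reduction to the $-/+$ boundary condition, chaos expansion of the ratio $\mathcal N(\eps h)/\mathcal D(\eps h)$, concentration of numerator and denominator, and a zero-field RSW input) is the same as the paper's, but the two steps you treat as routine are exactly where the paper has to work hardest, and in both places your shortcut fails.

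First, the concentration of the numerator. You claim that inserting $\1_{\hc}$ ``only costs a pointwise factor of $1$'' because $|\langle \1_{\hc}\prod_{x\in I}\sigma^+_x\prod_{y\in J}\sigma^-_y\rangle^{-/+}_{\cR,0}| \le |\langle\sigma^I\rangle^-_{\cR,0}|\,|\langle\sigma^J\rangle^+_{\cR,0}|$. This inequality is false: restricting to an event destroys the factorization across the two independent copies and can turn a product with small (even zero) mean into one with mean of order the probability of the event — think of $\langle \1_{\{X=1\}} X\rangle$ versus $\langle X\rangle$ for a symmetric $\pm1$ variable $X$. Neither CBC nor ``the pointwise bound on the indicator'' gives such a domination for signed observables. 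This is precisely why the paper proves Lemma~\ref{lem: bound for the spin average conditioned on the crossing} via the Edwards--Sokal representation, a cancellation over ``excellent'' clusters, the BK-type inequality of Lemma~\ref{lem: BK-type inequality for disagreement Ising model} for two disjoint pre-disagreement crossings, and an analysis of pivotal clusters; note also that the correct bound $F_{\cR}(I,J)$ measures distances to $\partial\cR\cup\partial\cR'$ (the inner rectangle enters through pivotality for $\hc$), which your proposed bound cannot produce.

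Second, the zero-field bound $\bar\mu^{-/+}_{\cR,0}(\hc)\ge c'(\iota)$. A $+1$-labelled crossing in copy 1 and a $-1$-labelled crossing in copy 2 are two different paths in two independent samples; a pre-disagreement crossing requires a \emph{single} path on which every vertex is simultaneously $+1$ in copy 1 and $-1$ in copy 2 (and the edge spins satisfy the corresponding constraint), and this does not follow from the existence of the two separate crossings (their monochromatic regions need not intersect in a crossing). Moreover, with the $-$ (resp.\ $+$) boundary condition the label of any cluster touching $\partial\cR$ is forced, so the labels are not all free. The paper circumvents both issues by first producing an outmost agreed boundary (via Lemma~\ref{lem:crossing}) and then running a first/second moment (Paley--Zygmund) argument on $\sum_{u\in\cL_1,v\in\cL_2}\1_{\rc_{u,v}}$, where the swap identity \eqref{eq: disagreement-spin average expansion with agreement boundary} converts the point-to-point pre-disagreement connection probability into the truncated two-point function, lower-bounded uniformly in the boundary condition in Appendix A. Without these two ingredients your argument does not close.
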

\begin{rmk}
    By rotation symmetry, an analog of Theorem \ref{thm: RSW for dis with external field} holds for vertical crossings, and we sometimes apply Theorem \ref{thm: RSW for dis with external field} to vertical crossings without further notice.
\end{rmk}
\begin{rmk}\label{rmk: anti reduction}
    By CBC (Lemma~\ref{lem:CBC}), it suffices to prove \eqref{eq: RSW for dis with external field} with the anti-disagreement boundary condition, i.e., $\xi^+_v=-1,\xi^-_v=1$ for all $v\in \partial\cR(\iota M+M,2M)$. 
\end{rmk}
\subsubsection{Upper bound}

As in \cite{DX21} and \cite{AHP20}, the key ingredient in proving  Theorem \ref{thm:main thm-upper bound} is the following Proposition \ref{prop:fast power law}. We start with the {following} definition.
\begin{defi}\label{def:connect-event}
    We define $\con(M_1,M_2)$ ($M_1<M_2$) to be the event that  there exists a pre-disagreement crossing from $\intB \Lambda_{M_2}$ to $\intB \Lambda_{M_1+1}$ (with respect to the disagreement Ising model).
    % the boundary component of the pre-disagreements in $\Lambda_{M_2}$ contains a crossing from $\intB \Lambda_{M_2}$ to $\intB \Lambda_{M_1+1}$ (with respect to the disagreement Ising model).
    We remark that the event $\con(M_1,M_2)$ is equivalent to $\intB\Lambda_{M_1+1} \cap \mathcal D_{\intB \Lambda_{M_2}} \neq \emptyset$.
\end{defi}

\begin{prop}\label{prop:fast power law}
  For any $\nc\label{fpl1}>0$, there exists a constant $\nc\label{fpl2}>0$ such that for any integer $N>0$ and any 
  disorder strength {$\eps \geq\oc{fpl2}N^{-\frac{7}{8}}$}, 
  \begin{equation*}\label{eq:fast power law}
    \E\bar\mu_{\Lambda_{N/2,N},\eps h}^{+/-,+/-}(\con(N/2,N))\le \oc{fpl1}.
  \end{equation*}
\end{prop}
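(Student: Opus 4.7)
My approach combines the RSW estimate (Theorem~\ref{thm: RSW for dis with external field}) with a multi-scale argument leveraging the independence of the disorder on disjoint regions.

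Set the critical scale $M = \eps^{-8/7}$. The hypothesis $\eps > \oc{fpl2} N^{-7/8}$ forces $N/M \geq \oc{fpl2}^{8/7}$, which can be made arbitrarily large by choosing $\oc{fpl2}$ large enough (depending on $\oc{fpl1}$). The key step is a per-scale blocking lemma: for any sub-annulus $\Lambda_{r,2r} \subset \Lambda_{N/8,N}$ at scale $r \geq M$, with $\P$-probability at least $1 - \exp(-c(r/M)^{\alpha})$ over the disorder, the conditional probability (under any boundary configuration on $\intB \Lambda_{r,2r}$) that $\Lambda_{r,2r}$ contains a circuit of \emph{non}-pre-disagreement spins (i.e., anti-disagreements or agreements) surrounding $\Lambda_r$ is at least some universal constant $\delta > 0$. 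Such a circuit blocks any pre-disagreement connection from $\intB \Lambda_{2r}$ to $\intB \Lambda_{r+1}$, hence prevents the $\con$-event. This estimate follows from Theorem~\ref{thm: RSW for dis with external field} applied to annulus crossings, combined with the swap symmetry (Proposition~\ref{prop:swap}), which interchanges pre- and anti-disagreement crossings at the cost of swapping the boundary conditions of the two copies, and with the CBC property for pre-disagreements (Lemma~\ref{lem:CBC}) to reduce arbitrary external boundary conditions to the extremal $-/+$ case.

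To complete the proof, I would iterate this per-scale estimate using the domain Markov property. Since $\Lambda_{N/8,N}$ has fixed aspect ratio $8$, straightforward geometric nesting of fat sub-annuli only yields $O(1)$ blocking trials, which is insufficient for arbitrary smallness in $\oc{fpl1}$. I would therefore combine nested annuli with a spatial covering argument: tile the middle region $\Lambda_{N/4,N/2}$ with many disjoint $M$-boxes $\Lambda_M(x_i)$ and leverage the i.i.d.\ structure of the disorder, which renders the ``good disorder'' events on disjoint boxes independent. Conditioning on the outer configurations from outside in via DMP and using CBC to dominate by extremal boundary conditions, the bound telescopes to
\[
\E\bar\mu_{\Lambda_{N/8,N},\eps h}^{+/-,+/-}(\con(N/8,N)) \leq (1-\delta)^K + K\exp(-cM^{\alpha}),
\]
where $K$ denotes the number of independent blocking trials and grows with $\oc{fpl2}$; choosing $\oc{fpl2}$ large enough to force the right-hand side below $\oc{fpl1}$ completes the argument.

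The main obstacle is the \emph{fixed} aspect ratio of $\Lambda_{N/8,N}$, which rules out an arbitrary number of nested fat blocking sub-annuli and forces reliance on disorder independence across spatially disjoint sub-regions to extract many blocking trials. A secondary difficulty is carefully applying the swap symmetry of Proposition~\ref{prop:swap}, phrased in terms of pre-disagreement clusters, to deduce anti-disagreement RSW, as well as translating the classical planar duality blocking-circuit arguments to the extended Ising setting where both vertices and edges carry spin values. Finally, one must ensure that the disorder-exceptional events inherited from Theorem~\ref{thm: RSW for dis with external field} do not accumulate across all $K$ boxes to destroy the disorder-averaged bound, which requires the exceptional probability in Theorem~\ref{thm: RSW for dis with external field} to be summable in $K$ uniformly in our choice of $\oc{fpl2}$.
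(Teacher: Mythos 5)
Your per-scale blocking estimate is essentially sound (it is in fact Lemma~\ref{lem:crossing} of the paper, which holds uniformly in the external field and does not even require the RSW estimate with disorder), but the iteration step that is supposed to produce the factor $(1-\delta)^K$ with $K\to\infty$ does not work, and this is precisely where the whole argument collapses. An agreement circuit in the annulus $\Lambda_{M,2M}(x_i)$ around a single $M$-box only prevents pre-disagreement paths from \emph{entering} $\Lambda_M(x_i)$; a crossing of the macroscopic annulus $\Lambda_{N/8,N}$ is under no obligation to enter any particular $M$-box and can simply weave around every locally blocked box. To defeat the event $\con(N/8,N)$ you need a single \emph{connected} circuit of agreements separating $\intB\Lambda_{N/8+1}$ from $\intB\Lambda_N$, i.e., a macroscopic blocking circuit. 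Disjoint local circuits do not concatenate into one, and the independence of the disorder on disjoint $M$-boxes is beside the point: the blocking events live under the Gibbs measure, which is critically correlated, and in any case local blocking does not imply global blocking. Since $\Lambda_{N/8,N}$ has fixed aspect ratio, only $O(1)$ disjoint macroscopic sub-annuli of bounded aspect ratio fit inside it, so the honest version of your telescoping gives $(1-\delta)^{O(1)}$ — a fixed constant strictly less than $1$, but not arbitrarily small. The proposition demands a bound below an \emph{arbitrary} $\oc{fpl1}>0$, so your argument cannot close.

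The paper's route is entirely different and worth contrasting: it is a mass-counting argument rather than a blocking argument. On one side, a surface-tension computation (Proposition~\ref{prop: strictly faster polynomial decay}, via Lemma~\ref{lem: upper-bound for the log partition function} and \cite{AP19}) bounds the \emph{expected total number} of disagreement points in the annulus by $O(N/\eps)$, uniformly in $N$. On the other side, conditionally on the crossing event, the Aizenman--Burchard fractality bound (Theorem~\ref{thm:Fractality}) forces the crossing to intersect at least $(N/M)^{1+\alpha}$ $M$-boxes, most of which are good (Lemma~\ref{lem: perfect external field bound}); the RSW circuit around each good box together with FKG and the quenched lower bound at scale $M$ then shows each such box contributes $\gtrsim M^{15/8}$ disagreement points (Lemma~\ref{lem: dis crossing probability upper bound}). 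Comparing the two bounds yields a crossing probability of order $(\eps M^{7/8})^{-1}(N/M)^{-\alpha}$, which with $M\asymp\eps^{-8/7}$ is $\asymp(\eps^{8/7}N)^{-\alpha}$ and hence arbitrarily small as $\oc{fpl2}\to\infty$. The extra smallness your scheme is missing comes exactly from the fractal exponent $\alpha>0$, not from independent blocking trials.
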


\begin{proof}[Proof of Theorem~\ref{thm:main thm-upper bound}]
Provided with Proposition~\ref{prop:fast power law}, our proof is an adaption of arguments in \cite[Section 6]{AHP20}. Let $\oc{fpl1}>0$ be an absolute constant to be decided. Let $\oc{fpl2}$ be the corresponding constant defined in Proposition~\ref{prop:fast power law}. Let {$\ell = \lceil\oc{fpl2}^{\frac{8}{7}}\epsilon^{-\frac{8}{7}}\rceil$} and thus {$\eps\ge \oc{fpl2}\ell^{-\frac{7}{8}}$} satisfies the condition in Proposition~\ref{prop:fast power law}.  We can choose $\oc{01}$ in Theorem~\ref{thm:main thm-upper bound} big enough such that $N \geq 2\ell$.

Now without loss of generality, we can assume that $N$ is divisible by 2. Recall $\Lambda_\ell(v)$ is a translated copy of $\Lambda_\ell$ centered at $v$. Then define $o\in\mathcal{V}\subset\Lambda_{ N/2}$ to be a set of vertices such that (the union of) $\{\Lambda_\ell(v): v\in \mathcal{V}\}$ covers $\Lambda_{N/2}$ and each $\Lambda_{2\ell}(v)$ intersects at most 100 boxes in $\{\Lambda_{2\ell}(v): v\in \mathcal V\}$. We further require (and it is clear that this is feasible) $\Lambda_{\ell}(o)$ to be the only box in $\{\Lambda_{\ell}(v): v\in \mathcal V\}$ that contains the origin $o$.

By \eqref{eq:boundary influence},
we only need to bound the probability that there is a pre-disagreement crossing joining $o$ and $\intB\Lambda_N$, which in turn is bounded by the probability that pre-disagreements percolate through boxes {$\{\Lambda_{\ell}(v): v\in \mathcal V\}$} to $o$. To carry this out, define
\begin{align*}
    \mathfrak P_K=\{(v_1,\cdots,v_K): ~&\Lambda_{\ell}(v_k) \mbox{ is either neighboring or overlapping with } \Lambda_{\ell}(v_{k+1}) \mbox{ for } 1\leq k < K\\
    &\mbox{ and }v_1=o
    %, \Lambda_{\ell}(v_K)\cap\partial\Lambda_{N/2}\neq\emptyset
    \}.
\end{align*}
% we let $\mathfrak P_K$ be the collection of $v_1, \ldots, v_K \in \mathcal V$ such that $\Lambda_{\ell}(v_k)$ is either neighboring or overlapping with $\Lambda_{\ell}(v_{k+1})$ for $1\leq k < K$. 
Let $F_v$ be the event that 
% $\Lambda_\ell(v)\cap  \mathcal{D}_{\intB}\neq \emptyset$ (equivalently, 
$\intB\Lambda_{\ell{+1}}(v)\cap \mathcal{D}_{\intB}\neq \emptyset$. If $o\in\mathcal{D}_{\intB}$, then there exist $K \geq N/4\ell$ and $(v_1, \ldots, v_K) \in \mathfrak P_K$ such that the event $F_{v_k}$ occurs for $1\leq k\leq K$.
So we have
% \huangcomment{Actually the product here can be taken from $k=1$ to $K$}
\begin{align*}
    \E\langle\sigma_o\rangle^{+/-}_{\Lambda_N,\eps h}&\leq \E\left(\left\langle \{\substack{\exists (v_1, \ldots, v_K) \in \mathfrak P_K \text{ with } K \geq N/4\ell \\ \text{ such that } F_{v_k}\text{ holds for }1\leq k\leq K}\}\cap\{o\stackrel{\cD}{\longleftrightarrow}\intB\Lambda_{\ell}(o)\}\right\rangle^{+/-}_{\Lambda_N,\eps h}\right)\nonumber\\
    &\leq \sum_{\substack{\exists (v_1, \ldots, v_K) \in \mathfrak P_K \\ K \geq N/4\ell }}\E\left(\left\langle \1_{\{o\stackrel{\cD}{\longleftrightarrow}\intB\Lambda_{\ell}(o)\}}\cdot\prod_{k={1}}^K\1_{F_{v_k}}\right\rangle^{+/-}_{\Lambda_N,\eps h}\right).
\end{align*}

Recalling $\mcc V\subset\Lambda_{N/2}$, we see that $\Lambda_{2\ell}(v_{k})\subset \Lambda_N$. Thus, on the event $F_{v_k}$ there exists a pre-disagreement path connecting $\intB\Lambda_{2\ell}(v_k)$ and $\intB\Lambda_{\ell+1}(v_k)$, which is a translation of the event $\con(\ell,2\ell)$.
Given $(v_1, \ldots, v_K) \in \mathfrak P_K$, we can
choose a subset $\mcc V_0=\{v_{i_1},\cdots, v_{i_\mathsf K}\}$ with $o\in \mathcal V_0$ such that $\mathsf K \geq K/100$ and $\Lambda_{2\ell}(v_{i_1}), \ldots, \Lambda_{2\ell}(v_{i_{\mathsf K}})$ are disjoint.

Recalling Proposition~\ref{prop:fast power law} 
% we have \begin{equation}
%    \E\bar\mu_{\Lambda_{8\ell},\eps h}^{+/-}(\con(\ell,8\ell))\le \E\bar\mu_{\Lambda_{\ell,8\ell},\eps h}^{+/-,+/-}(\con(\ell,8\ell))\le \oc{fpl1}.  \label{eq:bound-of-con}
% \end{equation} 
{and applying} DMP and CBC, we deduce that: 
 \begin{align*}
 &\E\left(\left\langle \1_{\{o\stackrel{\cD}{\longleftrightarrow}\partial\Lambda_{\ell}(o)\}}\cdot\prod_{k={1}}^{{K}} \1_{F_{v_k}}\right\rangle^{+/-}_{\Lambda_N,\eps h}\right)\nonumber\\
      % &\le\E\left(\left\langle \1_{\cap_{v\in\mathcal{V}_0\setminus\{o\}}F_v}\cdot\1_{o\stackrel{\cD}\longleftrightarrow\intB\Lambda_{\ell}(o)}\right\rangle^{+/-}_{\Lambda_N,\eps h}\right)\\ 
      \le& \prod_{v\in\mathcal{V}_0
      % \setminus\{o\}
      } \E\left(\left\langle \1_{\{\intB\Lambda_{\ell{+1}}(v)\cap \mathcal{D}_{\intB \Lambda_{2\ell}(v)}\neq\emptyset\}}\right\rangle_{\Lambda_{{\ell,} 2\ell}(v),\eps h}^{+/-}\right)\times\E\langle\sigma_o\rangle^{+/-}_{\Lambda_{\ell}(o),\eps h}\nonumber\\ \le &~  \oc{fpl1}^{|\mathcal{V}_0|}m(T_c,\ell,\eps).
      \label{eq:preparation for percolation}
 \end{align*}

Finally, notice that $|\mathfrak P_K| \leq 100^K$. Then for $\oc{fpl1}$ sufficiently small, 
we get  
\begin{equation*}\label{eq: percolation argument for boundary influence}
    m(T_c,N, \epsilon) \leq m(T_c,\ell, \epsilon)\sum_{K\geq (N/4\ell)} 100^K (\oc{fpl1} )^{\frac{K}{100}} \leq m(T_c,\ell, \epsilon)e^{- C_1N/\ell}\le C_2\ell^{-\frac{1}{8}}e^{- C_1N/\ell}
\end{equation*}
 % The last inequality in \eqref{eq: percolation argument for boundary influence} comes from \cite[Theorem 1.1]{DSS22} and \eqref{eq: origin decay rate without disorder}. 
where the last inequality follows from \cite[Theorem 1.1]{DSS22} and \eqref{eq: origin decay rate without disorder}.
This completes the proof since {$\ell = \lceil\oc{fpl2}^{\frac{8}{7}}\epsilon^{-\frac{8}{7}}\rceil$}.
\end{proof}
We leave the proof of Proposition \ref{prop:fast power law} to Section~\ref{sec:Proof of Proposition 3.10} and the proof will rely on the RSW estimate (Theorem~\ref{thm: RSW for dis with external field})
 we presented at the beginning of this subsection.
\subsubsection{Lower bound}\label{subsubsect:lower-bound}
In this subsection, we outline the proof for Theorem \ref{thm:main thm-lower bound}. The idea is to use Theorem~\ref{thm: RSW for dis with external field} to construct a disagreement path from the boundary to a box around the origin $o$.
However, in Theorem~\ref{thm: RSW for dis with external field}, we can only show for a typical disorder that the probability {for} the existence of a pre-disagreement hard crossing has a lower bound. In order to get a disagreement crossing from the boundary to the origin $o$, an intermediate step is to get a sequence of neighboring $M$-boxes each with such a typical disorder (so that a pre-disagreement crossing occurs with good probability). Thus, it is natural to apply some sort of percolation argument. We next carry out the details.

\begin{defi}\label{def: good external field for RSW}
For any $u\in\mbb Z^2$ and integer $M\geq 1$, let $\aro(u;M)$ denote the event that there exists a pre-disagreement circuit in $\Lambda_{M,2M}(u)$.
We say the external field $h|_{\Lambda_{5M}(u)}\in \mathbb{R}^{\Lambda_{5M}(u)}$ is \textbf{(i)-good} if and only if for any $\xi^+,\xi^-\in\{-1,1\}^{\intB\Lambda_{5M}(u)}$, $$\bar\mu^{\xi^+/\xi^-}_{\Lambda_{5M(u)},\eps h}(\aro(u;M))\ge \oc{9}^4.$$ Recall that $\oc{9}$ is defined in Theorem~\ref{thm: RSW for dis with external field} (with $\iota=4$) to be the lower bound on the probability of a hard pre-disagreement crossing. 

Besides, we say the external field $h|_{\Lambda_{5M}(u)}\in \mathbb{R}^{\Lambda_{5M}(u)}$ is \textbf{(ii)-good} if and only if  there exists $J\subset \Lambda_{M}(u)\mbox{ with } |J|\geq 2M^2$ such that for any $x\in J$ the following holds: 
%\begin{equation}\label{eq: def of (ii)-good boxes}\langle\sigma_x\rangle^{+}_{\Lambda_{4M}(x),\eps h}-\langle\sigma_x\rangle^{-}_{\Lambda_{4M}(x),\eps h}\geq \frac{1}{2}\left(\langle\sigma_x\rangle^{+}_{\Lambda_{4M}(x),0}-\langle\sigma_x\rangle^{-}_{\Lambda_{4M}(x),0}\right).
%\end{equation}
\begin{equation}\label{eq: def of (ii)-good boxes}
    \langle\1_{{\{}x\stackrel{\mcc D}\longleftrightarrow\intB\Lambda_{4M}(x){\}}}\rangle^{+/-}_{\Lambda_{4M}(x),\eps h}\geq \frac{1}{2}\langle\1_{{\{}x\stackrel{\mcc D}\longleftrightarrow\intB\Lambda_{4M}(x){\}}}\rangle^{+/-}_{\Lambda_{4M}(x),0}\,.
\end{equation}

We say $h|_{\Lambda_{5M}(u)}\in \mathbb{R}^{\Lambda_{5M}(u)}$ is \textbf{good} if it is {both} (i)-good and (ii)-good. In this case, we interchangeably say that 
the external field $h$ is good with respect to $\Lambda_M(u)$ or $\Lambda_M(u)$ is {good} under $h$. We use the word \textbf{bad} when it is not good.
\end{defi}
\begin{rmk}
    It is worth emphasizing that we will consider good boxes with different scales (i.e., different M), but $\oc{9}$ does not depend on $M$ in Theorem \ref{thm: RSW for dis with external field}. 
\end{rmk}

\begin{rmk}
    We point out that in Section~\ref{subsubsect:lower-bound}
    we only use the (i)-good property, and the (ii)-good property is defined for later convenience. 
\end{rmk}

% \begin{rmk}
%     Actually Definition~\ref{def: good external field for RSW} also works even if $\Lambda_{5M}(u)\not\subset\Lambda_N,$ in which cases we recall that the external field $h$ is defined on the whole lattice $\mathbb Z^2.$
% \end{rmk}

By Lemma~\ref{lem: domination by product measure} below, we can easily obtain that for $\P$-probability at least $0.9^{N/M}$, we have a consecutive horizontal sequence of good $M$-boxes from the boundary to the $M$-box containing the origin. However, in order to get a lower bound on $\bar\mu_{\lamn,\eps h}^{+/-}(o\in \mathcal D_{\partial})$ for a \emph{typical} instance of the disorder, we need a refined estimate on good boxes, which calls for a percolation type of argument. {Let $\{B_i\}_{i\in I}$ be a partition of $\Lambda_{N}$ into $M$-boxes (without loss of generality we assume $M$ divides $N$) and from now on, we will only focus on these $M$-boxes.}

We divide the proof into two cases: Case 1 when $\eps>N^{-\frac{1}{5}}$ and Case 2 when $\eps\le N^{-\frac{1}{5}} $. In Case 1, we will define $N_1$ to be an integer such that $N_1\sim (\eps^{\frac{8}{7}} N)^{\frac{1}{10}}\eps^{-\frac{8}{7}}$. Then the sum of the absolute values of disorder on $\Lambda_{N_1}$ is typically smaller than $\eps^{\frac{8}{7}} N$, and thus the influence of disorder in $\Lambda_{N_1}$ is no more than $\exp(-C\eps^{\frac{8}{7}} N)$. Thus it suffices to consider how disagreements percolate from the boundary to $\Lambda_{N_1}$. To this end, we make the following definition. In what follows, we say a collection of boxes separating $A$ and $B$ if any path from $A$ to $B$ has to intersect at least one of the boxes in this collection. 

\begin{defi}\label{def: good external field 1}
    For any constant $c>0$ and integers $N>N_1>M$, we define $\cH_{\star}(c,
N,N_1,M)$ to be the collection of the external field $h\in \mathbb R^{\mbb Z^2}$ satisfying the following two conditions:
    \begin{enumerate}
        \item There exists a neighboring contour of good $M$-boxes separating the inner and outer boundaries of the annulus $\Lambda_{N_1,2N_1}$.\label{item:11}

        \item There exists a neighboring sequence of good $M$-boxes connecting $\intB\lamn$ to $\intB\Lambda_{N_1}$ where the number of $M$-boxes in this sequence is at most $\frac{cN}{M}$.\label{item:12}
    \end{enumerate}
\end{defi}
\begin{lem}\label{lem: good external field probability 0}
    There exist constants $\nc\label{gefp00},\nc\label{gefp01},\nc\label{gefp02}>0$ such that for any integers $N>N_1>M$ with disorder strength {$\eps\leq\oc{gefp00}M^{-\frac{7}{8}}$}, we have $$\P\Big(\cH_{\star}(\oc{gefp01},N,N_1,M)\Big)\ge 1-\oc{gefp02}^{-1}(\frac{N}{M})^4\exp(-\frac{\oc{gefp02}N_1}{M}).$$
\end{lem}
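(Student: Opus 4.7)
The plan is three-step: bound the $\P$-failure probability that a single $M$-box is good, dominate the resulting goodness field on a coarse grid by a supercritical Bernoulli site percolation, and apply standard supercritical percolation estimates for circuits in an annulus and chemical distances to verify both conditions of Definition~\ref{def: good external field 1}.

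For the first step, fix $\Lambda_M(u)$ and recall that its goodness is measurable with respect to $h|_{\Lambda_{5M}(u)}$. For (i)-good, I apply Theorem~\ref{thm: RSW for dis with external field} with $\iota=4$ to four overlapping rectangles forming the annulus $\Lambda_{M,2M}(u)$: on an event of $\P$-probability at least $1-4\oc{8}\exp(-\oc{8}^{-1}\sqrt{\eps^{-1}M^{-7/8}})$, each of the four hard pre-disagreement crossings has probability at least $\oc{9}$ under the anti-disagreement boundary condition. The FKG inequality (Lemma~\ref{lem:FKG}) combines these into a pre-disagreement circuit in $\Lambda_{M,2M}(u)$ of probability at least $\oc{9}^4$ under that boundary condition, and by CBC (Corollary~\ref{cor: CBC}) the same lower bound holds under any boundary condition, yielding (i)-good. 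For (ii)-good, a Markov-type argument built on a local version of Lemma~\ref{lem: small perturbation for expectation} (obtained by translation and replacing $\Lambda_N$ with $\Lambda_{4M}(x)$) bounds the expected number of $x\in\Lambda_M(u)$ violating \eqref{eq: def of (ii)-good boxes} by $O(M^2\exp(-c\sqrt{\eps^{-1}M^{-7/8}}))$; since $|\Lambda_M(u)|\geq 4M^2$, at least $2M^2$ sites satisfy \eqref{eq: def of (ii)-good boxes} outside an event of comparable probability. Combining,
\[
\P(\Lambda_M(u)\text{ is good})\ \geq\ 1-C\exp\bigl(-C^{-1}\sqrt{\eps^{-1}M^{-7/8}}\bigr),
\]
which exceeds $1-\delta$ for any prescribed $\delta>0$ once $\oc{gefp00}$ is chosen small enough.

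For the second step, let $(X_k)_{k\in\mathbb Z^2}$ be the indicator field of goodness of $\Lambda_M$-boxes centered on a coarse sublattice of spacing proportional to $M$. Since goodness of each box depends only on $h|_{\Lambda_{5M}}$ around it, the field $(X_k)$ has uniformly bounded range of dependence; by the Liggett-Schonmann-Stacey domination theorem (essentially the content of the forthcoming Lemma~\ref{lem: domination by product measure}), $(X_k)$ stochastically dominates an i.i.d.\ Bernoulli site percolation with parameter $p^\star=p^\star(\delta)\to 1$ as $\delta\to 0$. Shrinking $\oc{gefp00}$ as needed, I may take $p^\star$ as close to $1$ as convenient.

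For the third step, two classical supercritical estimates close the argument. Condition 1: the absence of an open circuit in the coarse version of $\Lambda_{N_1,2N_1}$ forces a closed dual crossing of coarse length at least $cN_1/M$, which has probability at most $\exp(-c'N_1/M)$ by a standard Peierls argument when $p^\star$ is close to $1$. Condition 2: from any point on the coarse outer boundary, take an axis-aligned segment toward $\partial\Lambda_{N_1}$ of coarse length $(N-N_1)/M$ and reroute around bad clusters as they appear; since bad clusters have exponentially decaying diameters at $p^\star$ close to $1$, a union bound over $\lesssim(N/M)^2$ starting points and $\lesssim(N/M)$ potential detour locations yields a good neighboring path of coarse length at most $\oc{gefp01}N/M$ outside an event of probability $\lesssim(N/M)^3\exp(-c''(N-N_1)/M)$. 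Combining both estimates, using that $\Lambda_{2N_1}\subset\Lambda_N$ forces $N-N_1\geq N_1$, and absorbing the polynomial prefactors into $(N/M)^4$ yields the claimed bound. The main obstacle I anticipate is the (ii)-good step: upgrading the single-vertex statement of Lemma~\ref{lem: small perturbation for expectation} to a simultaneous pointwise estimate across $\Lambda_M(u)$ may require the sharper second-moment bounds of Lemma~\ref{lem: upper-bound for the sum of squares of k point function} together with a careful translation argument.
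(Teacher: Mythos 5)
Your proposal follows essentially the same route as the paper: single-box goodness via the RSW estimate (Theorem~\ref{thm: RSW for dis with external field}) for (i)-good and Theorem~\ref{thm-critical-temperature-small-perturbation} for (ii)-good, domination of the goodness field by a highly supercritical finite-range-dependent Bernoulli percolation via Liggett--Schonmann--Stacey (this is exactly the content of Lemma~\ref{lem: domination by product measure}), and then a Peierls/duality argument for the circuit in $\Lambda_{N_1,2N_1}$ together with a chemical-distance/rerouting estimate for the short good path (for which the paper simply cites \cite[Lemma 4.4]{RY23}). The only slip is cosmetic bookkeeping at the end: the exponent $(N-N_1)/M$ you track for Condition 2 is not bounded below by $N_1/M$ under the stated hypothesis $N>N_1>M$ (the containment $\Lambda_{2N_1}\subset\Lambda_N$ is not assumed), but a correct union bound over the $\gtrsim N/M$ boundary boxes — or the cited quantitative result, which gives error $C(N/M)^4e^{-cN/M}+C(N/M)^2e^{-cN_1/M}$ — yields the stated $\exp(-cN_1/M)$ directly.
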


\begin{defi}
    Let $\cH_0$ denote the collection of external field $h$ such that $|h_v|\le (\eps^{\frac{8}{7}} N)^{\frac{1}{10}}$ for any $v\in \Lambda_{N_1}$.
\end{defi}
It is not hard to compute that \begin{equation}\label{eq: not too big external field probability}
    \P(\cH_0)\ge 1-N_1^2\exp(-c(\eps^{\frac{8}{7}} N)^{\frac{1}{10}}).
\end{equation}

\begin{proof}[Proof of Theorem \ref{thm:main thm-lower bound}:Case 1]
Recall that $\eps^{-1}<N^{\frac{1}{5}}$. Let $M=\lfloor\oc{gefp00}^{8/7}\eps^{-8/7}\rfloor$, let $N_1=\lfloor(\eps^{\frac{8}{7}} N)^{\frac{1}{10}}\cdot\eps^{-\frac{8}{7}}\rfloor$ {and let $\oc{03}>0$ be big enough} such that $N>N_1>M$. Since $\frac{N_1}{M}\ge C(\eps^{\frac{8}{7}} N)^{\frac{1}{10}}$ for some $C>0$, by Lemma~\ref{lem: good external field probability 0} and \eqref{eq: not too big external field probability} it suffices to consider $h\in \cH_{\star}(\oc{gefp01},N,N_1,M)\cap \cH_0$.
Recalling $\aro(u;M)$ is the event that there exists a pre-disagreement circuit in the annulus $\Lambda_{M,2M}(u),$
 we first prove \begin{align}
    &\mu^{+/-}_{\lamn,\eps h}(\aro(o;N_1))\ge C_1\exp(-C_1^{-1}\eps^{\frac{8}{7}}N),\label{eq: disagreement circuit in the annulus 2N_1, N_1 0}\\
        &\mu^{+/-}_{\lamn,\eps h}(\cD_{\partial}\cap\intB\Lambda_{N_1}\neq\emptyset)\ge C_2\exp(-C_2^{-1}\eps^{\frac{8}{7}}N),\label{eq: disagreement percolation from origin to 2N_1 0}
    \end{align}for some constants $C_1,C_2>0$.
    To this end, let $\Lambda_{M}(u_1),\cdots,\Lambda_{M}(u_K)$ be a contour of good $M$-boxes whose existence is ensured by Item \ref{item:11} of Definition \ref{def: good external field 1}; let $\Lambda_{M}(v_1),\cdots,\Lambda_{M}(v_J)$ for $J\le \frac{\oc{gefp01}{N}}{M}$ be a sequence of good $M$-boxes whose existence is ensured by Item \ref{item:12} of Definition \ref{def: good external field 1}.
    Define $$\aro^1_{k}=\aro(u_k;M) \text{ for }1\le k\leq K; \quad \aro^2_j=\aro(v_j;M)\text{ for }1\le j\leq J.$$     
      % $\mathtt{Around}^1_k~(1\le k\le K)$ denote the event that there exists a pre-disagreement circuit in the annulus $\Lambda_{M,2M}(u_k)$ and $\mathtt{Around}^2_j~(1\le j\le J)$ denote the event that there exists a pre-disagreement circuit in the annulus $\Lambda_{M,2M}(u_j)$. 
      Then, applying FKG (Lemma ~\ref{lem:FKG}), CBC (Corollary~\ref{cor: CBC}) and recalling Definition \ref{def: good external field for RSW}, we get that
\begin{align}
      &\bar\mu^{+/-}_{\lamn,\eps h}(\aro(o;N_1))\ge \bar\mu^{+/-}_{\lamn,\eps h}(\cap_{k=1}^{K}\mathtt{Around}^1_k)\nonumber\\ \stackrel{\mbox{FKG}}\ge& \prod_{k=1}^{K}\bar\mu^{+/-}_{\lamn,\eps h}(\mathtt{Around}^1_k)\stackrel{\mbox{CBC}}\ge \prod_{k=1}^{K}\bar\mu^{-/+}_{\Lambda_{5M}(u_k),\eps h}(\mathtt{Around}^1_k)\ge \oc{9}^{4K};\label{eq: pre-disagreement path constructed in good boxes through RSW 01}\\&\bar\mu^{+/-}_{\lamn,\eps h}(\{\cD_{\partial}\cap\intB\Lambda_{N_1}\neq \emptyset\})\ge \bar\mu^{+/-}_{\lamn,\eps h}(\cap_{j=1}^{J}\mathtt{Around}^2_j) \nonumber\\ \stackrel{\mbox{FKG}}\ge& \prod_{j=1}^{J}\bar\mu^{+/-}_{\lamn,\eps h}(\mathtt{Around}^2_j)\stackrel{\mbox{CBC}}\ge \prod_{j=1}^{J}\bar\mu^{-/+}_{\Lambda_{5M}(v_j),\eps h}(\mathtt{Around}^2_j)\ge \oc{9}^{4J}.\label{eq: pre-disagreement path constructed in good boxes through RSW 02}
\end{align} 
Combining \eqref{eq: pre-disagreement path constructed in good boxes through RSW 01}, \eqref{eq: pre-disagreement path constructed in good boxes through RSW 02} with the facts that $K\le 4N_1^2\le \eps^{\frac{8}{7}} N$ and $J\le \frac{\oc{gefp01}{N}}{M}$, we finish the proof of \eqref{eq: disagreement circuit in the annulus 2N_1, N_1 0} and \eqref{eq: disagreement percolation from origin to 2N_1 0}. 

Let $\daro(o;N_1)$ denote the event that there exists a disagreement circuit in the annulus $\Lambda_{N_1,2N_1}$. On the event $\aro(o;N_1)$, there exists a pre-disagreement circuit $\gamma$ in the annulus $\Lambda_{N_1,2N_1}$. Furthermore, given $\cD_{\partial}\cap\intB\Lambda_{N_1}\neq\emptyset$ and $\aro(o; N_1)$, we obtain a disagreement path $\eta$ from $\intB\lamn$ to $\intB\Lambda_{N_1}$. Then $\gamma\cap\eta\neq\emptyset,$  and thus $\daro(o;N_1)$ happens. Combining \eqref{eq: disagreement circuit in the annulus 2N_1, N_1 0}, \eqref{eq: disagreement percolation from origin to 2N_1 0} and applying Lemma~\ref{lem:FKG}, we obtain that \begin{equation}\label{eq: disagreement percolate from boundary to M_2 box at o 0}
       \mu^{+/-}_{\lamn, \eps h}(\daro(o;N_1))\ge C_3\exp(-C_3^{-1}\eps^{\frac{8}{7}}N),
   \end{equation}for some $C_3>0$. Applying CBC' (Lemma~\ref{lem:CBC prime}), we have        \begin{align}
           \mu^{+/-}_{\lamn, \eps h}(\{o \in\cD_{\partial}\}\mid \daro(o;N_1))&~\ge \mu^{+/-}_{\Lambda_{2N_1}, \eps h}(\{o \in\cD_{\partial\Lambda_{2N_1}}\}).\label{eq: disagreement probability in small N_1 box}
       \end{align}
Recalling that $\eps>N^{-\frac{1}{5}}$ and our assumption that $h\in \mathcal H_0$, we get that $$\sum_{v\in \Lambda_{2N_1}}|\eps h_v|\le ({4}N_1)^2\cdot\eps\cdot(\eps^{\frac{8}{7}} N)^{\frac{1}{10}}\le {16}\eps^{-\frac{33}{35}}N^{\frac{3}{10}}\le {16}\eps^{\frac{8}{7}} N.$$ Therefore, for any configuration $\sigma \in \{1,-1\}^{\Lambda_{2N_1-1}}$, \begin{equation}\label{eq: Hamiltonian difference}
    -{16}\eps^{\frac{8}{7}} N/T_c\le H^\pm_{\Lambda_{2N_1},0}(\sigma)-H^\pm_{\Lambda_{2N_1},\eps h}(\sigma)=\sum\limits_{x\in\Lambda_{2N_1-1}}\frac{\eps h_x\sigma_x}{T_c}\le {16}\eps^{\frac{8}{7}} N/T_c.
\end{equation} 
%$$\exp(-\textcolor{red}{16}\eps^{\frac{8}{7}} N/T_c)\le\frac{\exp\Big(\sum\limits_{\substack{x,y\in\Lambda_{2N_1-1},\\x\sim y}}\frac{\sigma_x\sigma_y}{T_c}\pm\sum\limits_{\substack{x\in\Lambda_{2N_1-1},\\y\in\partial\Lambda_{2N_1},\\x\sim y}}\frac{\sigma_x}{T_c}\Big)}{\exp\Big(\sum\limits_{\substack{x,y\in\Lambda_{2N_1-1},\\x\sim y}}\frac{\sigma_x\sigma_y}{T_c}\pm\sum\limits_{\substack{x\in\Lambda_{2N_1-1},\\y\in\partial\Lambda_{2N_1},\\x\sim y}}\frac{\sigma_x}{T_c}+\sum\limits_{x\in\Lambda_{2N_1-1}}\frac{\eps h_x\sigma_x}{T_c}\Big)}\le \exp(\textcolor{red}{16}\eps^{\frac{8}{7}} N/T_c).$$
Hence we get from \cite[Lemma 2.1]{AHP20}
%from \eqref{eq: Hamiltonian difference} 
that \begin{equation}\label{eq: partition function ratio}
    {\exp(-16\eps^{\frac{8}{7}} N/T_c)\le \frac{\bar\cZ^{\pm}_{\Lambda_{2N_1},\eps h}}{\bar\cZ^{\pm}_{\Lambda_{2N_1},0}}=\frac{\cZ^{\pm}_{\Lambda_{2N_1},\eps h}}{\cZ^{\pm}_{\Lambda_{2N_1},0}}
    =\frac{\sum_{\bar\sigma}\exp(-H^\pm_{\Lambda_{2N_1},\eps h}(\sigma)/T_c)}{\sum_{\bar\sigma}\exp(-H^\pm_{\Lambda_{2N_1},0}(\sigma)/T_c)}
    \le\exp(16\eps^{\frac{8}{7}} N/T_c).}
\end{equation}
{Recall the definitions in Section~\ref{sect:extended-Ising}. For any $\bar\sigma$, we define $$H^\pm_{\Lambda_{2N_1},\eps h}(\bar\sigma)=H^\pm_{\Lambda_{2N_1},\eps h}(\sigma)-\sum_{\substack{e\in E(G)\\v\in e}}\log\big(W(\bar\sigma_v,\bar\sigma_e)\big)$$ to be the Hamiltonian of $\bar\sigma$. Then the extended ising measure can be written similarly as \eqref{def_mu}. Note that}
the conditional distribution of edge spins given the configuration for vertex spins does not depend on the external field. Therefore, %{the ratio of $\bar\mu^{+/-}_{\Lambda_{2N_1},\eps h}(\bar\sigma)$ and $\bar\mu^{+/-}_{\Lambda_{2N_1},0}(\bar\sigma)$ is proportional to $\exp\big(-H^+_{\Lambda_{2N_1},\eps h}(\sigma)-H^-_{\Lambda_{2N_1},\eps h}(\sigma)+H^+_{\Lambda_{2N_1},0}(\sigma)+H^-_{\Lambda_{2N_1},0}(\sigma)\big)$ }.
{$H^\pm_{\Lambda_{2N_1},0}(\bar\sigma)-H^\pm_{\Lambda_{2N_1},\eps h}(\bar\sigma)=H^\pm_{\Lambda_{2N_1},0}(\sigma)-H^\pm_{\Lambda_{2N_1},\eps h}(\sigma)$. Combined with \eqref{eq: Hamiltonian difference} and \eqref{eq: partition function ratio}, it yields}
that the Radon-Nikodym derivative between $\bar\mu^{+/-}_{\Lambda_{2N_1},\eps h}$ and $\bar\mu^{+/-}_{\Lambda_{2N_1},0}$ is lower-bounded by $\exp(-{64}\eps^{\frac{8}{7}} N/T_c)$. Hence we obtain that 
        \begin{align}
            \bar\mu^{+/-}_{\Lambda_{2N_1},\eps h}(\{o \in\cD_{\partial\Lambda_{2N_1}}\})&\ge \bar\mu^{+/-}_{\Lambda_{2N_1},0}(\{o \in\cD_{\partial\Lambda_{2N_1}}\})\exp(-{64}\eps^{\frac{8}{7}} N/T_c)\nonumber\\& \ge C_4 N_1^{-\frac{1}{8}} \exp(-{64}\eps^{\frac{8}{7}} N/T_c),\label{eq: spin spin correlation with disorder 0}
        \end{align}
where the last inequality follows from \eqref{eq: origin decay rate without disorder} and \eqref{eq:DisagreementRepresentation}. (Note that in the above, thanks to \eqref{eq:DisagreementRepresentation} we apply the Radon-Nikodym derivative bound to the product measure instead of to each copy separately, since otherwise it is hard to control the error terms.) Combined with \eqref{eq: disagreement percolate from boundary to M_2 box at o 0} and \eqref{eq: disagreement probability in small N_1 box}, it yields that \begin{equation*}
        \begin{aligned}
            \bar\mu^{+/-}_{\lamn,\eps h}(\{o \in\mcc D_{\partial}\})&\stackrel{\eqref{eq: disagreement probability in small N_1 box}}{\ge}\mu^{+/-}_{\lamn, \eps h}(\daro(o;N_1))\cdot\mu^{+/-}_{\Lambda_{2N_1}, \eps h}(\{o \in\cD_{\partial\Lambda_{2N_1}}\})\\ &\stackrel{\eqref{eq: disagreement percolate from boundary to M_2 box at o 0}}{\ge} C_3\exp(-C_3^{-1}\eps^{\frac{8}{7}}N))\cdot \mu^{+/-}_{\Lambda_{2N_1}, \eps h}(\{o \in\cD_{\partial\Lambda_{2N_1}}\})\\
            &\stackrel{\eqref{eq: spin spin correlation with disorder 0}}{\ge}  C_3\exp(-C_3^{-1}\eps^{\frac{8}{7}}N))\cdot C_{4} N_1^{-\frac{1}{8}} \exp(-64\eps^{\frac{8}{7}} N/T_c).
        \end{aligned}
    \end{equation*}
This completes the proof for the lower bound in Case 1.
\end{proof}

In Case $2$ when $\eps\le N^{-\frac{1}{5}}$, it is not possible to control the Radon-Nikodym derivatives since $N_1\sim \eps^{-\frac{8}{7}}(\eps^{\frac{8}{7}}N)^{\frac{1}{10}}$ may be too big in this case (especially if $\epsilon$ has the order of $N^{-7/8}$). We will use an alternative approach to control the probability of ${\{}o \stackrel{\cD}\longleftrightarrow  \partial \Lambda_{N_1}{\}}$. Let $M_*$ be an integer such that $\eps^{-\frac{8}{7}}M_*^{-1}\sim (\eps^{\frac{8}{7}} N)^{\frac{1}{10}}$. ($M_*$ is well-defined since $\eps $ is small.) Since (as we will show that) the probability for an $M_*$-box to be good is very close to $1$, the percolation formed by good $M_*$-boxes is very supercritical. As a result, we can get that with probability close to $1$, the origin $o$ is connected via a sequence of neighboring good $M_*$-boxes to the boundary $\intB \Lambda_{N_1}$. In order to carry out the details, we need the following modification of Definition \ref{def: good external field 1}. 
\begin{defi}\label{def: good external field 2}
    For any constant $c>0$ and integers $N>N_2>M$, we define $\cH_{\diamond}(c,
N,N_2,M)$ to be the collection of the external field $h\in \mathbb R^{{\mbb Z^2}}$ satisfying the following two conditions:
    \begin{enumerate}
        \item There exists a neighboring contour of good $M$-boxes separating the inner and outer boundaries of the annulus $\Lambda_{N_2,2N_2}$, and the number of boxes in this contour is at most $\frac{cN_2}{M}$.\label{item:21}

        \item There exists a neighboring sequence of good $M$-boxes that connects $\intB \Lambda_{2N_2}$ and $\Lambda_M$, and the number of boxes in this sequence is at most $\frac{cN_2}{M}$.\label{item:22}
    \end{enumerate}
\end{defi}
\begin{lem}\label{lem: good external field probability 1}
    There exist constants $\nc\label{gefp10},\nc\label{gefp11},\nc\label{gefp12}>0$ such that for any integers $N>N_2>M$ and disorder strength satisfying {$\eps^{-1}M^{-\frac{7}{8}}\geq\oc{gefp10}(\eps^{\frac{8}{7}}N)^{\frac{1}{5}}$} and $\frac{N_2}{M}\le \eps^{\frac{8}{7}}N$, we have $$\P\Big(\cH_{\diamond}(\oc{gefp11},N,N_2,M)\Big)\ge 1-{\oc{gefp12}^{-1}}\exp(-\oc{gefp12}(\eps^{\frac{8}{7}}N)^{\frac{1}{10}}).$$
\end{lem}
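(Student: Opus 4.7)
The plan is to first establish that each $M$-box is \emph{good} (in the sense of Definition~\ref{def: good external field for RSW}) with very high $\P$-probability, and then use a Peierls-type argument in the dependent site percolation of good $M$-boxes to produce both the contour in $\Lambda_{N_2,2N_2}$ and the connecting sequence from $\partial\Lambda_{2N_2}$ to $\Lambda_M$. Under the hypothesis $\eps^{-1}M^{-7/8} > \oc{gefp10}(\eps^{8/7}N)^{1/5}$, we will show that the failure probability for a single box is at most $\exp(-c(\eps^{8/7}N)^{1/10})$ for a suitable $c>0$, which is what drives the final bound.

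For the single-box estimate, the (i)-good property is handled directly by Theorem~\ref{thm: RSW for dis with external field} applied to each of the four sub-rectangles of $\Lambda_{5M}(u)$ whose hard crossings combine (via FKG on pre-disagreements, Lemma~\ref{lem:FKG}) to produce a pre-disagreement circuit; by choosing $\oc{gefp10}$ large enough, the hypothesis forces $\sqrt{\eps^{-1}M^{-7/8}} \gtrsim (\eps^{8/7}N)^{1/10}$, and the bad probability from Theorem~\ref{thm: RSW for dis with external field} is at most $\exp(-c(\eps^{8/7}N)^{1/10})$. For the (ii)-good property, we observe that the zero-field expectation $\langle \1_{x \stackrel{\mcc D}{\longleftrightarrow} \partial \Lambda_{4M}}\rangle^{+/-}_{\Lambda_{4M}(x),0}$ is of order $M^{-1/8}$ uniformly in $x\in\Lambda_M(u)$ by the $\eps=0$ RSW/quasi-multiplicativity estimates of \eqref{0-field-annulus-crossing}. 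By a chaos-expansion argument analogous to those in Lemmas~\ref{lem: small perturbation for partition function} and \ref{lem: small perturbation for expectation} applied to the disagreement representation, the relative error induced by the external field on each such $\langle \cdot \rangle_{\Lambda_{4M}(x),\eps h}^{+/-}$ is small except on an event of $\P$-probability $\le \exp(-c\sqrt{\eps^{-1}M^{-7/8}})$. Since the number of $x$ that could be ``spoiled'' has expectation at most $M^2\cdot\exp(-c\sqrt{\eps^{-1}M^{-7/8}}) \ll M^2$, a first-moment (or Markov) bound shows that with probability $\ge 1-\exp(-c(\eps^{8/7}N)^{1/10})$ the set $I$ of vertices satisfying \eqref{eq: def of (ii)-good boxes} has cardinality at least $2M^2$, giving (ii)-good.

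Next comes the percolation step. The goodness of $\Lambda_M(u)$ depends only on $h|_{\Lambda_{5M}(u)}$, so the field of indicators $\{\1_{\Lambda_M(u)\text{ good}}\}$ is $5$-dependent on the coarse-grained lattice (i.e., boxes at $\ell_\infty$-distance $\ge 5M$ apart are independent). We will invoke a Liggett--Schonmann--Stacey-type stochastic domination (as already used in this framework; see the forthcoming Lemma~\ref{lem: domination by product measure}) to dominate the good field from below by an independent Bernoulli field of parameter arbitrarily close to $1$. In this independent model the standard Peierls contour-counting and path-counting arguments yield: (a) a contour of good boxes separating the two components of $\Lambda_{N_2,2N_2}$ exists except on an event of probability bounded by $\sum_{k\ge N_2/M} k(N_2/M)\cdot C^k p_{\mathrm{bad}}^{k/C'}$; (b) a good path from $\partial\Lambda_{2N_2}$ to $\Lambda_M$ exists except on a similarly bounded event, and on this event both the contour and the path can be chosen of length $O(N_2/M)$. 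Since $p_{\mathrm{bad}} \le \exp(-c(\eps^{8/7}N)^{1/10})$ and $N_2/M \le \eps^{8/7}N$, a direct computation shows both tails are bounded by $\exp(-\oc{gefp12}(\eps^{8/7}N)^{1/10})$ for a suitable $\oc{gefp12}>0$, and a union bound completes the proof.

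The main obstacle is the (ii)-good clause: (i)-goodness is essentially a direct consequence of the already-proven RSW Theorem~\ref{thm: RSW for dis with external field}, but (ii)-goodness quantifies a \emph{pointwise} comparison with the zero-field measure on a positive fraction of $\Lambda_M(u)$. Controlling the bad set $\{x:\eqref{eq: def of (ii)-good boxes}\text{ fails}\}$ requires a chaos-expansion estimate that compares the disagreement-weighted two-point-type quantities $\langle \1_{x\stackrel{\mcc D}{\longleftrightarrow}\partial\Lambda_{4M}}\rangle_{\Lambda_{4M}(x),\eps h}^{+/-}$ with their zero-field counterparts; we will carry this out using the product-measure expansion as in \eqref{eq: expansion for boundary influence} together with the $k$-point function sum bound of Lemma~\ref{lem: upper-bound for the sum of squares of k point function}, and it is precisely the hypothesis $\eps^{-1}M^{-7/8} > \oc{gefp10}(\eps^{8/7}N)^{1/5}$ (equivalently $\eps M^{7/8}$ very small compared to $(\eps^{8/7}N)^{-1/5}$) that makes the corresponding Chebyshev-type inequality strong enough to yield the required $\exp(-c(\eps^{8/7}N)^{1/10})$ tail.
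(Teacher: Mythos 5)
Your proposal is correct in substance, and your single-box estimate is essentially the paper's: (i)-goodness from Theorem~\ref{thm: RSW for dis with external field}, (ii)-goodness from the quenched small-perturbation bound plus Markov, with the hypothesis $\eps^{-1}M^{-7/8}>\oc{gefp10}(\eps^{8/7}N)^{1/5}$ converting the per-box failure probability $\exp(-c\sqrt{\eps^{-1}M^{-7/8}})$ into $\exp(-c'(\eps^{8/7}N)^{1/10})$ (in the paper this is exactly \eqref{eq:good-estimation}, already established in the proof of Lemma~\ref{lem: domination by product measure}, so it need not be re-derived). Where you diverge is the global step: you run LSS stochastic domination followed by a Peierls contour/path count, whereas the paper simply union-bounds over all $(8N_2/M)^2$ $M$-boxes covering $\Lambda_{2N_2}$ and uses $N_2/M\le\eps^{8/7}N$ to absorb this polynomial-in-$(\eps^{8/7}N)$ prefactor into the stretched exponential; on the event that \emph{every} box is good, both items of Definition~\ref{def: good external field 2} hold with deterministic, explicitly constructible contours and paths of length $O(N_2/M)$. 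The paper's route buys two things your route has to work for: it avoids invoking domination at all, and — more importantly — it sidesteps the step you gloss over, namely that the good contour and good path produced by a Peierls argument ``can be chosen of length $O(N_2/M)$'' (controlling the length of the outermost good circuit requires an additional argument of the type in the cited result of Ray--Yu used for Lemma~\ref{lem: good external field probability 0}, where the per-box failure probability is only a small constant and a genuine percolation argument is unavoidable). Here the failure probability is so small relative to the entropy that the crude union bound is both simpler and sufficient; your argument is correct but does more work than needed, and should either fill in the contour-length control or be replaced by the union bound.
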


Provided with Lemmas~\ref{lem: good external field probability 0} and \ref{lem: good external field probability 1}, we are ready to provide the proof in Case 2.
\begin{proof}[Proof of Theorem \ref{thm:main thm-lower bound}:Case 2]
Recall that in Case 2 we have $\eps^{-1}\ge N^{\frac{1}{5}}$.
     Let $M_1=\lfloor\oc{gefp00}^{8/7}\eps^{-8/7}\rfloor$ be such that {$\eps\leq\oc{gefp00}M_1^{-\frac{7}{8}}$}. Let $M_2=\lfloor\Big(\oc{gefp10}^{-1}\eps^{-1}\cdot (\eps^{\frac{8}{7}}N)^{-\frac{1}{5}}\Big)^{\frac{8}{7}}\rfloor$. Thanks to the assumption $\eps^{-1}\ge N^{\frac{1}{5}}$, we see that $M_2\ge 1$ is well-defined. Let $N_2=\lfloor\eps^{\frac{8}{7}}N\cdot M_2\rfloor$. We can let $\oc{03}>0$ be big enough such that $N>N_2>M_1>M_2$. By Lemmas \ref{lem: good external field probability 0}, \ref{lem: good external field probability 1} and Theorem~\ref{thm-critical-temperature-small-perturbation}, it suffices to consider the external field $h$ such that \eqref{eq-small-perturbation} holds for {$\Lambda_{2M_2}$}
     % \huangcomment{Maybe change to $\Lambda_{2M_2}$ is better, otherwise it is unclear which $2M_2$-box we refer to.}
     (with $\theta=\frac{1}{2}$) and that $$h\in \cH_{\star}(\oc{gefp01},N,N_2,M_1)\cap \cH_{\diamond}(\oc{gefp11},N,N_2,M_2).$$ We first prove \begin{align}
     &\mu^{+/-}_{\lamn,\eps h}(\cD_{\partial}\cap\intB\Lambda_{N_2}\neq\emptyset)\ge C_1\exp(-C_1^{-1}\eps^{\frac{8}{7}}N),\label{eq: disagreement percolate to N_2}\\
    &\mu^{+/-}_{\lamn,\eps h}(\aro(o;N_2))\ge C_2\exp(-C_2^{-2}\eps^{\frac{8}{7}}N),\label{eq: disagreement circuit in the annulus 2N_2, N_2}\\
        &\mu^{+/-}_{\lamn,\eps h}(\{\intB\Lambda_{M_2+1}\stackrel{\cD}{\longleftrightarrow}\intB\Lambda_{2N_2})\ge C_3\exp(-C_3^{-1}\eps^{\frac{8}{7}}N),\label{eq: disagreement percolation from origin to 2N_2}\\
        &\mu^{+/-}_{\lamn,\eps h}(\aro(o;M_2))\ge C_4,\label{eq: disagreement circuit in the annulus 2M_2, M_2}
    \end{align}for some constants $C_1,C_2,C_3,C_4>0$. 
    %To this end, let $\Lambda_{M_2}(u_1),\cdots,\Lambda_{M_2}(u_K)$ for $K\le \frac{\oc{gefp11}N_2}{M_2}$ be a contour of good $M_2$-boxes whose existence is ensured by Item \ref{item:21} of Definition \ref{def: good external field 2}; let $\Lambda_{M_2}(v_1),\cdots,\Lambda_{M_2}(v_J)$ for $J\le \frac{\oc{gefp11}N_2}{M_2}$ be a sequence of good $M_2$-boxes whose existence is ensured by Item \ref{item:22} of Definition \ref{def: good external field 2}.
    
    The proof of \eqref{eq: disagreement percolate to N_2} is the same as that of \eqref{eq: pre-disagreement path constructed in good boxes through RSW 02}, and \eqref{eq: disagreement circuit in the annulus 2M_2, M_2} comes from CBC and the definition of a good $M_2$-box. In order to prove \eqref{eq: disagreement circuit in the annulus 2N_2, N_2} and \eqref{eq: disagreement percolation from origin to 2N_2},  let $\Lambda_{M_2}(u_1),\cdots,\Lambda_{M_2}(u_K)$ for $K\le \frac{\oc{gefp11}N_2}{M_2}$ be a contour of good $M_2$-boxes whose existence is ensured by Item \ref{item:21} of Definition \ref{def: good external field 2}; let $\Lambda_{M_2}(v_1),\cdots,\Lambda_{M_2}(v_J)$ for $J\le \frac{\oc{gefp11}N_2}{M_2}$ be a sequence of good $M_2$-boxes whose existence is ensured by Item \ref{item:22} of Definition \ref{def: good external field 2}. Thus it suffices to construct a pre-disagreement circuit in each annulus of the form $\Lambda_{M_2,2M_2}(u_k)$ and $\Lambda_{M_2,2M_2}(v_j)$. Thanks to the restriction that $\frac{N_2}{M_2}\le \eps^{\frac{8}{7}}N$, the verifications are highly similar to that of \eqref{eq: pre-disagreement path constructed in good boxes through RSW 01} and \eqref{eq: pre-disagreement path constructed in good boxes through RSW 02} and thus we omit further details.
% \huangcomment{Here we just change the order.}

   We now prove the theorem provided with \eqref{eq: disagreement percolate to N_2}, \eqref{eq: disagreement circuit in the annulus 2N_2, N_2}, \eqref{eq: disagreement percolation from origin to 2N_2} and \eqref{eq: disagreement circuit in the annulus 2M_2, M_2}. Recall 
   $\daro (o;M_2)$ is the event that there exists a disagreement circuit in the annulus $\Lambda_{M_2,2M_2}$. Given $\aro(o;N_2)$, we obtain a pre-disagreement circuit $\gamma$ in the annulus $\Lambda_{N_2,2N_2}$. Thus, on the event $$\{\cD_{\partial}\cap\intB\Lambda_{N_2}\neq\emptyset\}\cap\aro(o;N_2)\cap\{\intB\Lambda_{M_2+1}\stackrel{\cD}{\longleftrightarrow}\intB\Lambda_{2N_2}\}\cap\aro(o;M_2),$$ there exist a disagreement path from $\intB\lamn$ to $\intB\Lambda_{N_2+1}$ and a pre-disagreement path from $\intB\Lambda_{2N_2}$ to $\intB\Lambda_{M_2+1}$. In addition, these two paths both intersect with $\gamma$, and thus $\daro(o;M_2)$ happens. Combining \eqref{eq: disagreement percolate to N_2}, \eqref{eq: disagreement circuit in the annulus 2N_2, N_2}, \eqref{eq: disagreement percolation from origin to 2N_2}{, \eqref{eq: disagreement circuit in the annulus 2M_2, M_2}} and applying Lemma~\ref{lem:FKG} (FKG), we obtain that \begin{equation}\label{eq: disagreement percolate from boundary to M_2 box at o}
       \mu^{+/-}_{\lamn, \eps h}(\daro(o;M_2))\ge C_5\exp(-C_5^{-1}\eps^{\frac{8}{7}}N),
   \end{equation}for some $C_5>0$. Applying CBC' (Lemma~\ref{lem:CBC prime}) and Theorem~\ref{thm-critical-temperature-small-perturbation}, we have (recall that we have assumed $\theta=\frac{1}{2}$ when posing constraints on $h$ earlier)    
   \begin{align}
           \mu^{+/-}_{\lamn, \eps h}(\{o \in\cD_{\partial}\}\mid \daro(o;M_2))&~\ge \mu^{+/-}_{\Lambda_{2M_2}, \eps h}(\{o \in\cD_{\partial\Lambda_{2M_2}}\})~~~\text{(by CBC')}\nonumber\\&~\ge (1-\theta)m(T_c,2M_2,0)~~~\text{(by Theorem \ref{thm-critical-temperature-small-perturbation})}\nonumber\\&\stackrel{\eqref{eq: origin decay rate without disorder}}{\ge} C_6M_2^{-\frac{1}{8}}.\label{eq: disagreement probability in small M_2 box}
       \end{align}
   Combining \eqref{eq: disagreement percolate from boundary to M_2 box at o} and \eqref{eq: disagreement probability in small M_2 box}, we obtain that \begin{equation*}
       \begin{aligned}
           \mu^{+/-}_{\lamn, \eps h}(\{o \in\cD_{\partial}\})&\ge \mu^{+/-}_{\lamn, \eps h}(\{o \in\cD_{\partial}\}\mid \daro(o;M_2))\times\mu^{+/-}_{\lamn, \eps h}(\daro(o;M_2))\\&\ge C_6M_2^{-\frac{1}{8}}\times C_5\exp(-C_5^{-1}\eps^{\frac{8}{7}}N)\\&\ge C_7N^{-\frac{1}{8}}\times \exp(-C_7^{-1}\eps^{\frac{8}{7}}N).
       \end{aligned}
   \end{equation*}
   Note that in the last inequality when $M_2 \ll N$, we have that the stretched exponential term is much smaller than the polynomial prefactor and this is why the inequality can hold by changing from $C_5$ to $C_7$.
\end{proof}

%\begin{rmk}
%    If $C_1N^{-\frac{7}{8}}\le\eps\le C_2N^{-\frac{7}{8}}$ for some constant $C_2>C_1>0$, then applying Theorem \ref{thm-critical-temperature} shows that $$m(T_c,N,\eps)\ge \oc{2}N^{-\frac{1}{8}}\exp(-\oc{2}\eps^{\frac{8}{7}}N)\ge C_3m(Y_c,N,0).$$ Thus we obtain that $m(T_c,N,\eps)$ still decays as $N^{-\frac{1}{8}}$. At the same time, if $\eps=\varphi(N)\cdot N^{-\frac{7}{8}}$ with some $\varphi(N)\uparrow \infty$, applying Proposition~\ref{prop: strictly faster polynomial decay} gives that $m(T_c,N,\eps)$ decays strictly faster than $N^{-\frac{1}{8}}$.
%\end{rmk}
\subsection{RSW for disagreement Ising model}\label{sec: outline for RSW}
In this section, we outline the proof of Theorem \ref{thm: RSW for dis with external field}. It is clear, also as hinted by the statement of Theorem \ref{thm: RSW for dis with external field},  that the RSW estimate cannot hold with an arbitrary external field. For instance, if it turns out that $h_v \geq{10}$ for all $v$, then we will have exponential decay and as a result will not have an effective RSW estimate. In order to prove Theorem \ref{thm: RSW for dis with external field}, there are two possible routes:
\begin{itemize}
    \item First bound the easy crossing probability with a typical disorder and then extend it to the hard crossing probability. After attempts, we found that one is able to derive the easy crossing probability by using a similar technique as in the proof of Theorem \ref{thm-critical-temperature-small-perturbation} and also using Proposition \ref{prop:DisagreementRep}, but it seems to us rather difficult to extend the easy crossing probability to the hard crossing probability. Despite the fact that we now have a very powerful RSW method from \cite{KT23}, in our setting, the disorder ruins the translation invariance, and as a result it seems very difficult to adapt arguments in \cite{KT23}.
    \item We first bound the hard (and thus easy) crossing probability without disorder and then extend it to the case with disorder by the method of chaos expansion. This is the route we end up employing.
\end{itemize}
As a warm up, we will first show the RSW estimate for the disagreement Ising model without disorder. 
\begin{thm}\label{thm: RSW for dis at 0 external field}
    For any $\iota>0$, there exists a constant $\nc\label{6}=\oc{6}(\iota)>0$, such that for any integer $M>0$ and any boundary conditions $\xi^+,\xi^-$ on $\intB \cR(\iota M+M,2M)$, we have that \begin{equation*}\label{eq: RSW for dis at 0 external field}
        \bar\mu^{\xi^+/\xi^-}_{\cR(\iota M+M,2M),0}(\hc(\iota M,M))>\oc{6}.
    \end{equation*}
\end{thm}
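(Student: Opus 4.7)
By Remark~\ref{rmk: anti reduction} and the CBC property (Lemma~\ref{lem:CBC}), since the event $\hc(\iota M, M)$ is increasing in the partial order of Definition~\ref{def:partial-order}, it suffices to establish the bound for the anti-disagreement boundary $\xi^+_v = -1, \xi^-_v = +1$. Under this choice the two extended-Ising copies are independent with Ising boundary conditions $-$ and $+$ respectively, and by Lemma~\ref{lem:extended to FK} each edge marginal is the FK-Ising measure at $p_c$ with wired boundary. The plan is to combine the FK-Ising RSW estimate~\eqref{0-field-annulus-crossing} with the random $\pm 1$ signs attached to FK-clusters in the extended Ising, and with the FKG inequality for the disagreement model (Lemma~\ref{lem:FKG}).

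First I would handle the single-copy spin crossings. For copy 1, I claim that with probability bounded below by some $c_1>0$ there exist simultaneously (i) a dual FK-circuit in the annulus $\mcc R(\iota M + M, 2M) \setminus \mcc R((\iota + \tfrac12) M, \tfrac32 M)$, and (ii) an FK-horizontal crossing of $\mcc R(\iota M, M)$ inside that circuit. Event (i) is standard dual FK-RSW at $p_c$; conditioning on the outermost such dual circuit $B$, the DMP reduces the interior to an FK-Ising with \emph{free} boundary, and FK-RSW inside then delivers (ii) with positive conditional probability. Since the resulting cluster is contained in $B$ and hence disconnected from $\partial \mcc R(\iota M + M, 2M)$, the Edwards--Sokal structure of the extended Ising assigns it spin $+1$ with probability $\tfrac12$. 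Thus with probability $\geq c_1/2$ copy 1 has a $+$-spin horizontal crossing of $\mcc R(\iota M, M)$; symmetrically copy 2 has a $-$-spin horizontal crossing with probability $\geq c_1/2$.

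Both of these events are increasing in the disagreement partial order (one depends on copy 1 alone, the other on copy 2 alone), so Lemma~\ref{lem:FKG} yields a joint probability of at least $c_1^2/4$. The crucial step that remains is to promote the joint existence of a $+$-cluster crossing in copy 1 and a $-$-cluster crossing in copy 2 into an actual pre-disagreement horizontal crossing: a priori these two monochromatic crossings can sit at very different heights and share no vertices. My plan is to strengthen the joint event by intersecting with additional FK-crossings obtained in the same dual-circuit-plus-RSW fashion: vertical $+$- and $-$-cluster crossings of sub-rectangles in copies 1 and 2 together with dual circuits at several aspect-scaled boxes inside $\mcc R(\iota M + M, 2M)$. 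All these are increasing in the disagreement partial order, so FKG keeps the joint probability bounded below. A horizontal and a vertical cluster crossing in the same rectangle must meet by planar topology, and the intersection of a $+$-cluster (copy 1) with a $-$-cluster (copy 2) is forced to contain pre-disagreement vertices; a multi-scale gluing argument driven by the FK-Ising quasi-multiplicativity implicit in~\eqref{eq: FK one arm event exponent}--\eqref{0-field-annulus-crossing} should then chain these pre-disagreement intersection points into a single horizontal pre-disagreement crossing of $\mcc R(\iota M, M)$.

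The main obstacle is exactly this final matching/gluing step. The earlier pieces only use one-box FK-RSW, DMP, random cluster signs, and FKG for the product measure, all of which are routine once set up; but turning co-existence of same-color cluster crossings in two \emph{independent} copies into a genuine pre-disagreement path requires careful geometric bookkeeping and is where the quantitative content of FK-Ising RSW (beyond a single crossing inequality) enters the proof.
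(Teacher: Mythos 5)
Your reduction to the anti-disagreement boundary condition via CBC is legitimate, and the construction of a $+$-spin crossing in copy 1 (dual circuit, DMP, FK--RSW inside, random cluster sign) together with a $-$-spin crossing in copy 2, combined by FKG, is sound as far as it goes. But the final step --- converting the coexistence of a $+$-cluster crossing in copy 1 and a $-$-cluster crossing in copy 2 into an actual pre-disagreement crossing --- is not ``careful geometric bookkeeping''; it is the entire content of the theorem, and your sketch does not supply it. The two monochromatic crossings live in independent copies and need not share a single vertex; even after adding transverse crossings so that planar topology forces some intersection points, what you obtain is a (possibly totally disconnected) set of pre-disagreement vertices, and there is no a priori reason that the pre-disagreement set contains a \emph{connected} horizontal crossing. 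Single-copy RSW and quasi-multiplicativity inputs cannot address this, since the connectivity of the overlap is an intrinsically two-copy quantity. (A secondary flaw: the dual-circuit events you propose to intersect in at the end are neither increasing nor decreasing in the partial order of Definition~\ref{def:partial-order}, because a closed edge corresponds to the \emph{middle} spin value $0$; so FKG does not combine them with the monochromatic crossing events. That part is repairable by conditioning on outermost circuits, but the gluing gap is not.)

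The paper's proof avoids gluing altogether and runs a second-moment argument on pairs of boundary points. It first uses Lemma~\ref{lem:crossing} to produce, with probability bounded below uniformly in the boundary conditions, an agreed boundary $\partial\Gamma$ separating $\cR_2$ from $\cR_1$, and conditions on the outermost one (note it does \emph{not} pass to the anti-disagreement boundary, precisely because the next identity needs equal boundary conditions on the two copies). Under an agreed boundary condition $\xi/\xi$, the swap symmetry (Proposition~\ref{prop:DisagreementRep}, \eqref{eq: disagreement-spin average expansion with agreement boundary}) yields the exact identity
\begin{equation*}
\langle\1_{\rc_{u,v}}\rangle^{\xi/\xi}_{\Gamma,0}=\langle\sigma_u\sigma_v\rangle^{\xi}_{\Gamma,0}-\langle\sigma_u\rangle^{\xi}_{\Gamma,0}\,\langle\sigma_v\rangle^{\xi}_{\Gamma,0},
\end{equation*}
so the first moment of the number of pre-disagreement-connected pairs $(u,v)$ on the two short sides is bounded below by $cM^{7/4}$ via the uniform truncated-correlation bound of Lemma~\ref{lem: truncated correlation lower-bound uniformly for boundary condition}, while the second moment is bounded above by $CM^{7/2}$ using the four-point expansion and the FK one-arm estimate \eqref{eq: FK one arm event exponent}; Paley--Zygmund then gives the uniform lower bound. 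To complete your route you would need, at a minimum, a substitute for this second-moment identity that controls connectivity of the two-copy overlap.
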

\begin{rmk}\label{rmk: weird +/- notation}
    We do not require $\xi^+\ge \xi^-$ in the previous theorem, but we use this notation to remind us that on a pre-disagreement crossing spin values of the first copy (i.e., under the boundary condition $\xi^+$) are greater than {those of} the second copy.
\end{rmk}
We now explain how we proceed provided with Theorem \ref{thm: RSW for dis at 0 external field}. First, as pointed out in Remark \ref{rmk: anti reduction}, it suffices to consider the anti-disagreement boundary condition. Thus, in the remaining of this subsection, we assume that $\xi^+_v=-1$ and $\xi^-_v=1$ for all $v\in \partial\cR(\iota M+M,2M)$. For notation clarity, in the rest of this subsection, we fix $\iota=4$ (since this is the only case we will need later and other cases can be done similarly) and fix $M$ to be a large enough integer. In addition, we use $\cR$,  $\cR'$ and $\hc$ to denote $\cR(\iota M+M,2M),\cR(\iota M,M)$ and $\hc(\iota M,M)$ respectively. 

% \huangcomment{In the current version of RSW, we do not need the $N$ anymore, so it can be removed.}

Recalling the expansion in \eqref{eq: partition function expansion}, we obtain that\begin{equation}\label{eq: expectation for dis crossing}
\begin{aligned}
        \bar\mu_{\cR,\eps h}^{-/+}(\hc)&= \frac{\langle ~\1_{\hc} \prod_{v\in\cR}\Big([1+\bar \sigma_v^+\tanh(\veps h_v)]\cdot [1+\bar \sigma_v^-\tanh(\veps h_v)]\Big)~\rangle_{\cR,0}^{-/+}}{\langle ~\prod_{v\in\cR}[1+ \sigma_v^+\tanh(\veps h_v)]~\rangle_{\cR,0}^{-}\cdot \langle ~\prod_{v\in\cR} [1+ \sigma_v^-\tanh(\veps h_v)]~\rangle_{\cR,0}^{+}}.
\end{aligned}
\end{equation}
Here $\bar \sigma^+$ and $\bar \sigma^-$ are independent spin configurations sampled from the extended Ising model with the minus and plus boundary conditions, respectively. 
In order to bound \eqref{eq: expectation for dis crossing}, we note that its denominator can be upper-bounded by  Definition \ref{def: good external field for partition function} and Lemma \ref{lem: small perturbation for partition function}, and thus it remains to control its numerator. To this end, we need a similar result as Lemma \ref{lem: upper-bound for the sum of squares of k point function} to control the influence on the crossing event from the external field.

\begin{defi}\label{def:upper-bound function for spin average}
    For $I,J\subset \cR, x\in I\cup J$, define  \begin{align*}
        &d_{\cR,I,J}(x)=\min\{dist(x,\intB\cR),~dist(x,\intB\cR'),~{\lfloor\frac{1}{2}\min_{z\in I\cup J\setminus\{x\}}dist(x,z)\rfloor}\}-1,\\\text{and}~&F_{\cR}(I,J)=\prod_{x\in I}(\max\{d_{\cR, I, J}(x),1\})^{-{\frac{1}{8}}}\cdot \prod_{y\in J}(\max\{d_{\cR, I, J}(y),1\})^{-{\frac{1}{8}}}.
    \end{align*} 
\end{defi}
We remark here that we let $d_{\cR,I,J}(x)<\frac{1}{2}\min_{z\in I\cup J\setminus\{x\}}\{dist(x,z)\}$ in order to ensure that $\Lambda_{d(x)}(x)$ and $\Lambda_{d(y)}(y)$ are disjoint for $x\neq y \in I\cup J$ (where we have dropped the subscripts $R,I,J$ in $d(x)$ and $d(y)$ for notation clarity).

\begin{lem}\label{lem: bound for the spin average conditioned on the crossing}
There exist constants $\nc\label{13},\nc\label{15}>0$ such that the following holds. For any integer {$M$},
% any $\cR\subset\lamn$ and any external field $\Tilde{h}\in \R^{\lamn}$ with $\Tilde{h}\mid_{\cR}=0$, 
we have for any $I,J\subset \cR$   \begin{align}
&|\langle\1_{\hc}\prod_{x\in I} \bar\sigma_{x}^+\prod_{y\in J} \bar\sigma_{y}^- \rangle_{\cR,0}^{-/+}|\le \oc{13}^{|I|+|J|} F_{\cR}(I,J).\label{eq: bound for the spin average conditioned on the crossing}    
    \end{align}
    Furthermore, for any integer $m,k\ge 0$ we have \begin{align}
\sum_{\substack{I,J\subset\cR,\\|I|=k,|J|=m}}F_{\cR}(I,J)^2&\le \oc{15}^{k+m}M^{\frac{7(k+m)}{4}}\times\frac{[(k+m)!]^{1/4}}{k!m!},\label{eq: upper-bound for sum of squares of Fs}
\end{align}
and for any $I, J\subset\mcc R$ we have
\begin{align}
% \sum_{I\subset\mcc R, |I|=m}F_{\mcc R}(I,I)&\leq c_{33}^m M^{\frac{7m}{4}}\times \frac{(m!)^\frac{1}{4}}{m!}.\label{eq: upper-bound for sum of of FIIs}\\
\sum_{\substack{U\subset\cR\setminus(I\cup J),\\|U|=r}} F^2_{\mcc R}(I\cup U, J\cup U)&\leq \oc{15}^{r}M^{\frac{3r}{2}}F^2_{\mcc R}(I,J)\times\frac{\prod_{s=1}^r(|I|+|J|+s)^{\frac{1}{2}}}{r!}.\label{eq: upper-bound for sum of of FUs}
    \end{align}
\end{lem}
The proof of Lemma \ref{lem: bound for the spin average conditioned on the crossing}, as presented in Section \ref{Sec:4.3} below, strongly relies on the relation between the extended Ising model and the FK-Ising model.

With Lemma~\ref{lem: bound for the spin average conditioned on the crossing} replacing Lemma~\ref{lem: upper-bound for the sum of squares of k point function}, we obtain the following lemma similar to Lemma~\ref{lem: small perturbation for partition function}.
\begin{defi}\label{def: good external field for crossing event}
    Let $\cH_{\hc}\subset\R^{\mcc R}$ denote the collection of the external field $h$ such that \begin{equation}\label{eq: good external field for crossing event}
        |\langle ~\1_{\hc} \prod_{v\in\cR}\Big([1+\bar\sigma_v^+\tanh(\veps h_v)]\cdot [1+\bar\sigma_v^-\tanh(\veps h_v)]\Big)~\rangle_{\cR,0}^{-/+}-\langle ~\1_{\hc} ~\rangle_{\cR,0}^{-/+}|\le \sqrt{\eps M^{\frac{7}{8}}}.
    \end{equation} 
\end{defi}
\begin{lem}\label{lem: small perturbation for crossing probability}
    There exist constants $\nc\label{16},\nc\label{17}>0$ such that for any disorder strength {$\eps\leq\oc{17}M^{-\frac{7}{8}}$}, we have $$\P(\cH_{\hc})\ge 1-\oc{16}\exp(-\oc{16}^{-1}\sqrt{\eps^{-1}M^{-7/8}}).$$
\end{lem}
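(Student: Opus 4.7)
The plan is to mirror the proof strategy of Lemma~\ref{lem: small perturbation for partition function}, adapted to the two-copy (disagreement) setting, with Lemma~\ref{lem: bound for the spin average conditioned on the crossing} playing the role of Lemma~\ref{lem: upper-bound for the sum of squares of k point function}. Concretely, I would chaos-expand the quantity on the left-hand side of \eqref{eq: good external field for crossing event}, compute a high moment in the independent Gaussian variables $\{h_v\}_{v\in\cR}$, and conclude by Markov's inequality after optimizing over the moment index.

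First, setting $T_v:=\tanh(\eps h_v)$ and expanding
$$\prod_{v\in\cR}[1+\bar\sigma_v^+ T_v][1+\bar\sigma_v^- T_v] \;=\; \sum_{I,J\subset\cR}\prod_{x\in I}\bar\sigma_x^+T_x\prod_{y\in J}\bar\sigma_y^-T_y,$$
then inserting into $\langle\1_{\hc}\cdot\rangle^{-/+}_{\cR,0}$ and pulling out the $(I,J)=(\emptyset,\emptyset)$ term, the quantity whose absolute value appears in \eqref{eq: good external field for crossing event} becomes
$$Y(h) \;=\; \sum_{(I,J)\neq(\emptyset,\emptyset)} A(I,J)\prod_{x\in I}T_x\prod_{y\in J}T_y,\qquad A(I,J):=\langle\1_{\hc}\prod_{x\in I}\bar\sigma_x^+\prod_{y\in J}\bar\sigma_y^-\rangle^{-/+}_{\cR,0}.$$
Since the $h_v$'s are independent and symmetric Gaussians, the $T_v$'s inherit these properties, so $\E[T_v^{2j+1}]=0$ for every $j\geq 0$; moreover $|T_v|\leq\min(1,\eps|h_v|)$ gives $\E[T_v^{2j}]\leq \eps^2$ for every $j\geq 1$.

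Next, for each integer $p\geq 1$ I would expand $\E[Y^{2p}]$ as a sum over $2p$-tuples of pairs $(I_i,J_i)_{i=1}^{2p}$. By independence and the vanishing of odd moments, only configurations in which every vertex of $\cR$ appears with even total multiplicity across the $2p$ pairs survive, and in such cases the Gaussian expectation is at most $\eps^{2|U|}$ with $U:=\bigcup_i(I_i\cup J_i)$. Bounding each coefficient by $|A(I_i,J_i)|\leq\oc{13}^{|I_i|+|J_i|}F_\cR(I_i,J_i)$ via Lemma~\ref{lem: bound for the spin average conditioned on the crossing}, grouping tuples according to their diagonal pairing structure (the dominant contribution being the involutive pairings $(I_i,J_i)=(I_{\pi(i)},J_{\pi(i)})$, which number $(2p-1)!!$), and applying \eqref{eq: upper-bound for sum of squares of Fs}, the combinatorial bookkeeping should yield an estimate of the form $\E[Y^{2p}]\leq C^p(\eps M^{7/8})^{2p}(p!)^{\alpha}$ for some constants $C>0$ and $\alpha>0$ determined by the exponents in \eqref{eq: upper-bound for sum of squares of Fs} together with the pairing count. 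Markov's inequality then gives $\P(|Y|>\sqrt{\eps M^{7/8}})\leq(C\eps M^{7/8})^p(p!)^{\alpha}$; optimizing over integer $p$, provided $\oc{17}$ is chosen small enough that $\eps M^{7/8}\leq\oc{17}$ is sufficiently small, produces the claimed tail $\oc{16}\exp(-\oc{16}^{-1}\sqrt{\eps^{-1}M^{-7/8}})$.

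The main obstacle lies in the high-moment estimate, specifically the combinatorial accounting in $\E[Y^{2p}]$. The off-diagonal pairings and the overlap terms $v\in I_i\cap J_i$ (which contribute a factor $T_v^2$ rather than $T_v$, and therefore yield only one power of $\eps^2$ in place of $\eps^4$) are strictly more involved in this two-copy setting than in the single-copy proof of Lemma~\ref{lem: small perturbation for partition function}. One must carefully exploit the distance-based structure built into $F_\cR(I,J)$ together with the factorial-type bound \eqref{eq: upper-bound for sum of squares of Fs} so that the overlap and off-diagonal contributions do not ruin the $(\eps M^{7/8})^{2p}$ scaling of the final moment bound.
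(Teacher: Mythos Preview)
Your setup, the chaos expansion of $Y(h)$ with coefficients $A(I,J)$ controlled by Lemma~\ref{lem: bound for the spin average conditioned on the crossing}, is exactly what the paper does. The divergence is in how the concentration is extracted. The paper does \emph{not} compute $\E[Y^{2p}]$ directly; instead it stratifies by chaos order, writing $Y=\sum_{m,k\ge 1}\Phi_{m,k}(h)$ with
\[
\Phi_{m,k}(h)=\sum_{\substack{I,J\subset\cR\\|I|=m,\ |J|=k}}A(I,J)\prod_{x\in I}\tanh(\eps h_x)\prod_{y\in J}\tanh(\eps h_y),
\]
and then applies \cite[Corollary~3.3]{AL12} (concentration for polynomial chaos in symmetric log-concave variables) \emph{separately} to each $\Phi_{m,k}$, using \eqref{eq: upper-bound for sum of squares of Fs} as the variance-type input. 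This gives a tail bound $\P(|\Phi_{m,k}|>(\eps M^{7/8})^{(m+k)/2}/2)\le\exp(-c\sqrt{\eps^{-1}M^{-7/8}}\cdot f(m,k))$ with an explicit $f(m,k)$ growing in $m+k$, after which a union bound over $(m,k)$ and a geometric sum finish the job. The entire ``overlap and off-diagonal'' combinatorics you are worried about is absorbed into the black box of \cite{AL12}.

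Your direct moment route is in principle viable, but the step you flag as the main obstacle is genuinely the whole proof, and you have not carried it out. Two concrete difficulties: first, $Y$ mixes chaoses of all orders, so $\E[Y^{2p}]$ contains cross-terms between $\Phi_{m,k}$ and $\Phi_{m',k'}$ with $(m,k)\ne(m',k')$; the parity argument kills many but not all of these, and what remains is not obviously dominated by the diagonal pairings. Second, even within a fixed order the two-copy structure means a single vertex can appear in both $I$ and $J$ of the same pair, so the ``$\eps^{2|U|}$'' bound does not straightforwardly reduce the sum to powers of $\sum_{I,J}F_\cR(I,J)^2$ as you would need to invoke \eqref{eq: upper-bound for sum of squares of Fs}. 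Both issues are solvable, but the clean way to solve them is precisely to stratify by $(m,k)$ first, at which point you are re-deriving the Adamczak--Lata{\l}a bound. I would recommend following the paper's route: split by order, invoke \cite[Corollary~3.3]{AL12}, and union bound.
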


Now we are ready for the proof of Theorem~\ref{thm: RSW for dis with external field}, which is at this point very similar to the proof of Theorem~\ref{thm-critical-temperature-small-perturbation}.
\begin{proof}[Proof of Theorem~\ref{thm: RSW for dis with external field}]
    For $h\in \cH^+_*\cap\cH^-_*\cap\cH_{\hc}$, recalling Definitions~\ref{def: good external field for partition function} and \ref{def: good external field for crossing event}, we obtain by \eqref{eq: expectation for dis crossing} that\begin{equation}\label{eq: influence for disorder on crossing probability}
        \bar\mu_{\cR,\eps h}^{-/+}(\hc)\ge \frac{\bar\mu_{\cR,0}^{-/+}(\hc)-\sqrt{\eps M^{\frac{7}{8}}}}{(1+\sqrt{\eps M^{\frac{7}{8}}})^2}.
    \end{equation}
    Let $\sqrt{\oc{7}}<\min\{\oc{6}/2,1\}$ where $\oc{6}$ is the constant defined in Theorem~\ref{thm: RSW for dis at 0 external field}. Combined with Theorem \ref{thm: RSW for dis at 0 external field} and the fact that $\eps M^{\frac{7}{8}}\le \oc{7}$, it yields that $$\bar \mu^{-/+}_{\cR, \epsilon h}(\hc) \geq \oc{6}/8,$$ thereby completing the proof by combining  Lemmas~\ref{lem: small perturbation for partition function} and \ref{lem: small perturbation for crossing probability}. 
\end{proof}
\subsection{Proof of Proposition~\ref{prop:fast power law}}\label{sec:Proof of Proposition 3.10}
In this subsection, we outline the proof for  Proposition~\ref{prop:fast power law}. We first explain the proof strategy in the overview level. On the one hand, we will show (see Proposition~\ref{prop: strictly faster polynomial decay}) by controlling the surface tension (see Definition~\ref{def:surface tension}) that the decay rate of the boundary influence is faster than $m(T_c,N,0)$ (recall that $m(T_c,N,0)$ has the order of $ N^{-\frac{1}{8}}$ from \eqref{eq: origin decay rate without disorder}). 
On the other hand, we will use the fractality of the disagreement crossing to derive a lower bound on the number of disagreements provided that there exists a disagreement crossing in an annulus, and thus combined with Proposition \ref{prop: strictly faster polynomial decay} this yields Proposition \ref{prop:fast power law}. Following this strategy, in Section \ref{sec: upper-bound for main theorem} we will prove Lemma \ref{lem: dis crossing probability upper bound} (below), which is the key input for the proof of Proposition \ref{prop:fast power law}.

We now implement the proof strategy above. We start with the following estimate on the boundary influence. 
\begin{prop}\label{prop: strictly faster polynomial decay}
    There exists an absolute constant $\nc\label{prior}>0$ such that for any disorder strength $\eps$, we have $$m(T_c,N,\eps) \leq  \frac{\oc{prior}}{\eps N} \mbox{ for all }  N\geq 1\,.$$
\end{prop}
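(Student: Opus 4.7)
The plan is to exploit Gaussian integration by parts applied to the free-energy difference
$$F(h) \ := \ \log \mcc Z^{+}_{T_c,\lamn}(\epsilon h) \ - \ \log \mcc Z^{-}_{T_c,\lamn}(\epsilon h),$$
which plays the role of a (signed) surface tension. Three structural facts about $F$ set the stage: (i) by the joint symmetry $\sigma\mapsto -\sigma$, $h\mapsto -h$ of the Hamiltonian, $F$ is an odd function of $h$, so $\mathbb E F = 0$; (ii) a direct differentiation gives $\partial_{h_v}F(h) = (2\epsilon/T_c)\,\mathbf m(T_c,N,v,\epsilon h)\ge 0$ by FKG; and (iii) the Hamiltonians $H^{+}$ and $H^{-}$ differ only through the boundary coupling, of magnitude at most $|\partial\lamn|=O(N)$, giving the a priori surface-tension bound $|F(h)|\le O(N)$ uniformly in $h$.

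Gaussian integration by parts applied to~(ii) yields $\mathbb E[h_v F] = \mathbb E[\partial_{h_v}F] = (2\epsilon/T_c)\,m(T_c,N,v,\epsilon)$ for each $v\in\lamn$, where $m(T_c,N,v,\epsilon):=\mathbb E\mathbf m(T_c,N,v,\epsilon h)$. Summing over $v$ and writing $\Sigma := \sum_{v\in\lamn}h_v$ produces the sum rule
$$\mathbb E[\Sigma\,F] \ = \ \frac{2\epsilon}{T_c}\sum_{v\in\lamn} m(T_c,N,v,\epsilon). \qquad (\star)$$
The core analytic task is to show $|\mathbb E[\Sigma F]|\le O(N)$, which via $(\star)$ yields $\sum_v m(T_c,N,v,\epsilon)\le CN/\epsilon$. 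To beat the trivial Cauchy-Schwarz-Poincar\'e $O(N^2)$ bound, I decompose $h=\bar h\,\mathbb 1+\tilde h$ into independent Gaussians with $\bar h=\Sigma/|\lamn|$ of variance $|\lamn|^{-1}$; using the explicit identity
$$F(h+c\mathbb 1)-F(h)=\log\frac{\langle e^{\epsilon cM/T_c}\rangle^{+}_{\epsilon h}}{\langle e^{\epsilon cM/T_c}\rangle^{-}_{\epsilon h}}, \qquad M:=\sum_v\sigma_v,$$
together with (iii), the oscillation of $F$ along the direction $\mathbb 1$ is bounded by $O(N)$; a one-dimensional oscillation/Poincar\'e argument in $\bar h$ (exploiting that this single direction is $1/|\lamn|$-variance under the Gaussian law) is then intended to deliver the target $O(N)$ upper bound on $|\mathbb E[\Sigma F]|$.

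Finally, to convert the sum-of-influences estimate into a pointwise bound at $o$, a CBC/DMP argument shows that $m(T_c,N,v,\epsilon)\ge c\,m(T_c,N,o,\epsilon)$ for every $v\in\Lambda_{N/2}(o)$ (by nesting a centred sub-box around $v$ inside $\lamn$ and invoking FKG). Since $|\Lambda_{N/2}(o)|=\Theta(N^2)$, this gives
$$\Theta(N^2)\cdot m(T_c,N,\epsilon) \ \le \ \sum_{v\in\Lambda_{N/2}(o)} m(T_c,N,v,\epsilon) \ \le \ \sum_{v\in\lamn} m(T_c,N,v,\epsilon) \ \le \ \frac{CN}{\epsilon},$$
i.e.\ $m(T_c,N,\epsilon)\le \oc{prior}/(\epsilon N)$, as desired. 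The main technical hurdle is the quantitative step delivering $|\mathbb E[\Sigma F]|\le O(N)$: a naive Cauchy-Schwarz with the Poincar\'e estimate only gives the trivial $O(N^2)$, so the argument has to leverage the restricted ``one-dimensional'' dependence of $F$ on the mean field $\bar h$ in an essential way, using that at the 2D critical temperature the surface-tension oscillation is no larger than $O(N)$ (whereas off criticality one would expect much larger fluctuations that would preclude this bound).
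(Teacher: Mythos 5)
Your overall architecture (a free-energy functional playing the role of a surface tension, Gaussian integration by parts to produce the sum rule $\E[\Sigma F]=\frac{2\eps}{T_c}\sum_v m(T_c,N,v,\eps)$, and a CBC step to pass from the summed influence to the influence at the origin) matches the skeleton of the paper's argument, which runs through Definition~\ref{def:surface tension}, \cite[Theorem 3.2]{AP19} and Lemma~\ref{lem: upper-bound for the log partition function}. However, the step you yourself flag as the main hurdle --- showing $|\E[\Sigma F]|\le O(N)$ --- is a genuine gap, and the one-dimensional oscillation argument you propose cannot close it. Given $\tilde h$, the function $\bar h\mapsto F$ is increasing with total oscillation $\Theta(N)$ (its limits as $\bar h\to\pm\infty$ differ by the boundary energy $\Theta(N)/T_c$), and $\bar h$ has standard deviation $\Theta(1/N)$; the sharp generic bound for an increasing function of a one-dimensional Gaussian with oscillation $K$ is $\E[\bar h F]\le K\sigma/\sqrt{2\pi}=O(1)$, attained when $F$ jumps by $K$ near $\bar h=0$. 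Multiplying by $|\lamn|$ returns $|\E[\Sigma F]|\le O(N^2)$, exactly the trivial Cauchy--Schwarz bound, hence only $m(T_c,N,\eps)\le C/\eps$. Ruling out the ``step'' scenario would require an a priori bound on $\partial_{\bar h}F=\frac{2\eps}{T_c}\sum_v\mathbf m(T_c,N,v,\eps h)$, i.e.\ precisely the quantity you are trying to estimate, so the argument is circular as it stands.

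The missing idea is to replace the first difference $F=\log\cZ^+-\log\cZ^-$ by the \emph{second} difference over an annulus, $\mathcal T_{A_1,A_2}=T\log\frac{\cZ^{+,+}\cZ^{-,-}}{\cZ^{+,-}\cZ^{-,+}}$ with $A_1,A_2$ the two boundaries of $\Lambda_{N,2N}$. Lemma~\ref{lem: upper-bound for the log partition function} identifies this with $-T_c\log\bigl(1-\langle\1_{\con(N,2N)}\rangle^{+/-,+/-}\bigr)$, and Lemma~\ref{lem:crossing} (via RSW at criticality together with the inequality of \cite{DSS22}) shows that this crossing probability is bounded away from $1$ uniformly in the disorder, so $\mathcal T=O(1)$ rather than $O(N)$. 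Feeding this into the identity of \cite[Theorem 3.2]{AP19}, namely $\E\mathcal T=\frac{2\eps}{N}\E[D/\varphi(\widehat h)]\ge\frac{C\eps}{N}\E D$ with $D=\sum_{v\in\Lambda_N}\mathbf m(T_c,2N,v,\eps h)$, gives $\E D\le CN/\eps$: the factor of $N$ you are missing comes precisely from the $O(1)$ versus $O(N)$ bound on the surface tension, which is where criticality enters. Your final CBC step converting the summed bound into $m(T_c,N,\eps)\le \oc{prior}/(\eps N)$ is fine and matches the paper's.
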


In the case that $\eps\gg N^{-\frac{7}{8}}$, Proposition~\ref{prop: strictly faster polynomial decay}  shows that the decay rate of the boundary influence is strictly faster than $m(T_c,N,0)$. The proof of Proposition~\ref{prop: strictly faster polynomial decay} employs the concept of surface tension (see Definition~\ref{def:surface tension}) which connects the influence of boundary condition with the disagreement crossing probability, and the actual proof is carried out in Section \ref{sec: upper-bound for main theorem}. 

Next, we state the aforementioned fractality result, which is similar to \cite[Proposition 3.1]{DX21} and \cite[Theorem 5.5]{AHP20}. 
The proof of this lemma will rely strongly on \cite{AB99} and Lemma~\ref{lem:crossing} below. To meet further needs, we let $M$ be an integer smaller than $N$, whose exact value will be specified later.

\begin{defi}\label{def: dis crossing intersecting a large number of M-boxes}
    For $\alpha >0$ and integers $N\ge M \geq 1$, recall that $\{B_i\}_{i\in I}$ is a partition of $\Lambda_{N}$ into $M$-boxes (without loss of generality we assume $2M$ divides $N$).
    % Let $\{B_i\}_{i\in I_0}$ denote the collections of $M$-boxes in $\Lambda_{2N/2,N}$.
    Let $\cE_{\alpha,N,M}$ be the event that there is a path of disagreements intersecting at most $(\frac{N}{M})^{1 + \alpha}$ $M$-boxes in $\{B_i\}_{i\in I}$ and this path joins the inner and outer boundaries of $\Lambda_{N/2, N}$.
\end{defi}

\begin{thm}\label{thm:Fractality}
There exist absolute constants $\alpha,\nc\label{tur} >0$,  such that for any integers $N>M>0$, 
\begin{equation*}\label{fractality}
   \mathbb{E}\left(\langle 1_{\cE_{\alpha,N,M}}\rangle^{+/-,+/-}_{\Lambda_{N/2,N},\eps h}\right) \leq  \oc{tur}^{-1} \exp(-\oc{tur} \sqrt{N/M}). 
\end{equation*}
\end{thm}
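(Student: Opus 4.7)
The plan is to follow the tortuous path framework of Aizenman--Burchard \cite{AB99}, suitably adapted to the disagreement Ising model with disorder. At its heart, this framework asserts that a disagreement crossing of an annulus that is not sufficiently fractal (visiting only $K^{1+\alpha}$ boxes at scale $M$, where $K = N/M$) is exponentially unlikely, provided one can verify a \emph{tortuous condition} at each scale: whenever the crossing enters a box, there is a uniform positive probability, over the conditional measure given any outside configuration, that a pre-disagreement circuit forms in a suitable sub-annulus around the entry point, forcing the crossing to take a detour.

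First, I would verify the tortuous condition using the quenched RSW estimate, Theorem \ref{thm: RSW for dis with external field}. If the disorder in a $\Lambda_{5M}$-neighborhood of an $M$-box is \emph{(i)-good} in the sense of Definition \ref{def: good external field for RSW}, then combining CBC for pre-disagreements (Corollary \ref{cor: CBC}) with FKG for pre-disagreements (Lemma \ref{lem:FKG}) and the hard-crossing bound from Theorem \ref{thm: RSW for dis with external field} yields a uniform lower bound on the probability of a pre-disagreement circuit in the surrounding annulus, independent of the boundary condition imposed on the box. This is exactly the tortuous condition, valid at every good box, and should match the hypothesis of Lemma~\ref{lem:crossing}/Lemma~\ref{lem:criterion} invoked by the paper.

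Next, I would handle the disorder via a two-scale argument. By Theorem \ref{thm: RSW for dis with external field}, an individual $M$-box fails to be good with probability at most $\exp\bigl(-c\sqrt{\eps^{-1} M^{-7/8}}\bigr)$. A Markov/union-bound argument on the collection $\{B_i\}_{i\in I_0}$ shows that typically only a tiny fraction of the $M$-boxes in $\Lambda_{N/4,N/2}$ are bad. On this typical disorder event, any disagreement crossing visiting at most $K^{1+\alpha}$ boxes must nevertheless traverse a positive fraction of good boxes, so the AB99 multi-scale bound applies and gives an exponential decay in a suitable power of $K$. The rate $e^{-c\sqrt{N/M}}$ emerges from balancing this fractality bound against the (weaker) probability of having many bad boxes, whose disorder-contribution already costs $\exp(-c\sqrt{\eps^{-1}M^{-7/8}})$ per box.

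The main obstacle is synchronizing the quenched randomness with the AB99 fractality machinery, which is naturally formulated for translation-invariant, annealed or product-type measures. Since the RSW estimate only holds on the good-disorder event (which depends in a complicated way on $h$), one cannot invoke AB99 as a black box; instead, one must decouple contributions from disjoint good boxes along the crossing using DMP and CBC$'$ (Lemma \ref{lem:CBC prime}), and then dominate the random sequence of good boxes by a supercritical Bernoulli percolation so that, along a typical crossing, a positive fraction of scales carry the required uniform tortuosity bound. Carrying out this renormalization cleanly, while keeping track of the $\eps$-dependence in the RSW input, is the delicate step that ultimately fixes the rate $e^{-\oc{tur}\sqrt{N/M}}$.
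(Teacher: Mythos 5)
Your overall plan (verify a tortuosity condition, then invoke the Aizenman--Burchard machinery and upgrade to exponential decay) has the right skeleton, but the way you propose to verify the tortuosity condition is wrong in two respects, and the second is fatal. First, the condition required by \cite{AB99} (encoded here in Lemma~\ref{lem:criterion}) is an \emph{upper} bound, uniform in the conditioning outside, on the probability that a well-separated family of rectangles is simultaneously crossed between their short sides; it is not a lower bound on the probability of pre-disagreement circuits. A pre-disagreement circuit around a box does not obstruct a pre-disagreement crossing --- both live in the same set $\mathcal D$, and such a circuit would if anything help the crossing --- so the ``detour'' mechanism you describe does not yield the required estimate (the genuine obstruction is an \emph{agreement} circuit, whose probability is exactly what the complement of $\con$ in Lemma~\ref{lem:crossing} controls). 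Second, and more seriously, the tool you reach for, Theorem~\ref{thm: RSW for dis with external field}, is a \emph{lower} bound on crossing probabilities available only on a good-disorder event requiring $\eps\lesssim M^{-7/8}$, whereas Theorem~\ref{thm:Fractality} is asserted for all integers $N>M>0$ with no constraint relating $\eps$ and $M$. When $\eps M^{7/8}$ is large, essentially no $M$-box is good, the supercritical-Bernoulli domination has nothing to dominate, and the proposed renormalization collapses; the claimed balancing that ``fixes the rate $e^{-\oc{tur}\sqrt{N/M}}$'' against bad-box probabilities cannot be carried out in the stated generality.

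The paper's route avoids all of this. Lemma~\ref{lem:crossing} shows that the pre-disagreement crossing probability of an annulus is bounded away from $1$ \emph{uniformly over arbitrary external fields and boundary conditions}: the swap symmetry (Proposition~\ref{prop:swap}) rewrites the complement of the crossing event as a ratio of one-point functions, the correlation inequality of \cite{DSS22} reduces this ratio to the zero-field case, and there the critical FK--RSW theory of \cite{DHN11} applies. This yields Lemma~\ref{lem:criterion} for every realization of $h$, so \cite[Theorem 1.3]{AB99} applies deterministically (quenched) and gives \eqref{weak fractality}; the quantitative rate $e^{-\oc{tur}\sqrt{N/M}}$ then comes from the standard percolation enhancement argument of \cite[Theorem 5.5]{AHP20} and \cite[Proposition 3.1]{DX21}, with no good/bad box decomposition at all. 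The good/bad machinery you describe is used elsewhere in the paper (for the lower bound and for Lemma~\ref{lem: dis crossing probability upper bound}), but it is neither needed nor sufficient for Theorem~\ref{thm:Fractality}.
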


By Theorem \ref{thm:Fractality}, once there is a disagreement crossing (say $\mathcal C$) there is a large number of $M$-boxes ($M$ will be chosen later such that $\epsilon M^{7/8}\ll1$) which intersects $\mathcal C$. In light of this, we wish to apply the RSW estimate to obtain with positive probability a pre-disagreement circuit around each $M$-box intersecting $\mathcal C$. 
This relates to the definition of our good boxes.

% \begin{rmk}
%     It is worth emphasizing that \eqref{eq: good external field for RSW} still holds for the $M$-boxes $\Lambda_M(u)$ with $\Lambda_{5M}(u)\not\subset\lamn$ because we the boundary condition $\partial\lamn$ on is $+/-$ and we can apply a CBC argument to verify \eqref{eq: good external field for RSW}.
% \end{rmk}

In order to discuss whether an $M$-box intersected by $\mcc C$ is good or not, we consider the graph consisting of $M$-boxes as well as the configuration of good or bad for these $M$-boxes.
\begin{defi}\label{def: domination}
    Let $G_M$ be the graph where the vertex set is given by the collection of M-boxes $\{B_i\}_{i\in I}$ and two $M$-boxes are neighboring with each other if their $\ell_1$-distance is $1$. For each $\zeta\in \{0, 1\}^I$, we say $h$ is \textbf{consistent} with $\zeta$ if for each $i\in I$ with $\zeta_i = 1$ (respectively $\zeta_i=0$) we have that $h$ is good (respectively bad) with respect to $B_i$. Let  \begin{equation*}
        \begin{aligned}
            \P_{G_M}(\zeta) = \P(\text{h is consistent with } \zeta).
        \end{aligned}
    \end{equation*}
\end{defi}

Applying a result of \cite{LSS97}, we obtain in the following lemma that $\mbb P_{G_M}$ dominates a supercritical Bernoulli percolation since $\mathbb P_{G_M}$ is a very supercritical percolation with finite-range dependence. 
\begin{lem}\label{lem: domination by product measure}
    For any constant $\rho>0$, there exists a constant $\nc\label{dom}=\oc{dom}(\rho)>0$ such that for any integer $M$ and any disorder strength {$\eps\leq\oc{dom}M^{-\frac{7}{8}}$}, we have $\P_{\G_M}$ stochastically dominates $\P_{\rho}$, where $\P_{\rho}$ denotes the Bernoulli site percolation on $\G_M$ with  density $\rho$. 
    %  \textcolor{green}{Furthermore, there exists a constant $\nc\label{dom1}>0$ such that for any disorder strength $\eps>0$ there exists a density \begin{equation}\label{eq: quantitative bound for domination}
    %     \rho>1-\oc{dom1}\exp(-\oc{dom1}^{-1}\sqrt{{\eps^{-1}M^{-\frac{7}{8}}}})
    % \end{equation} such that $\P_{\G_M}$ stochastically dominates $\P_{\rho}$.} 
\end{lem}

In order to derive a lower bound on the number of disagreements provided with the existence of a pre-disagreement crossing, we need to strengthen Theorem~\ref{thm:Fractality} and show that there exists a large number of good $M$-boxes intersected by a pre-disagreement crossing. 
\begin{defi}\label{def: perfect external field}
    We say an external field $h\in \R^{\Lambda_{N/2,N}}$ is \textbf{perfect} if for each crossing in $\Lambda_{N/2,N}$ intersecting at least $(\frac{N}{M})^{1+\alpha}$ $M$-boxes in $\Lambda_{N/2,N}$, this crossing intersects at least $\frac{1}{2}(\frac{N}{M})^{1+\alpha}$ good $M$-boxes in $\Lambda_{N/2,N}$. We denote by $\cH_{\mathtt {perf}}$ the collection of perfect external fields.
\end{defi}
\begin{lem}\label{lem: perfect external field bound}
    There exist constants $\nc\label{perf1},\nc\label{perf2} >0$ such that for any integer $N>0$ and any disorder strength {$\eps\leq\oc{perf1}M^{-\frac{7}{8}}$}, we have \begin{equation*}\label{eq: perfect external field bound}
        \P(\cH_{\mathtt {perf}})\ge 1-\oc{perf2}^{-1}\exp(-\oc{perf2}(\frac{N}{M})^{1+\alpha}).
    \end{equation*}
\end{lem}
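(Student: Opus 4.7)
The plan is to reduce $\cH_{\mathtt{perf}}^c$ to a Peierls-type event on the graph $G_M$ of $M$-boxes and then apply the stochastic domination from Lemma~\ref{lem: domination by product measure}. First I observe that on the event $\cH_{\mathtt{perf}}^c$ there must exist a \emph{connected} set $S$ of $M$-boxes in $\Lambda_{N/4,N/2}$ with $|S|\geq (N/M)^{1+\alpha}$ such that at least $|S|/2$ of the boxes in $S$ are bad. Indeed, the collection of $M$-boxes intersected by any crossing of $\Lambda_{N/4,N/2}$ is connected in $G_M$ (consecutive sites of the path lie in the same or in neighboring $M$-boxes), so if perfectness fails then it fails along the sequence of boxes visited, giving such an $S$.

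Next I would fix some $\rho\in(0,1)$ close to $1$ (to be specified) and set $\oc{perf1}\leq \oc{dom}(\rho)$, so that by Lemma~\ref{lem: domination by product measure} the good-box indicators $\{\1_{B_i\text{ good}}\}_{i\in I}$ stochastically dominate i.i.d.\ Bernoulli($\rho$). Since the event \textquotedblleft at least $|S|/2$ boxes of $S$ are bad\textquotedblright\ is decreasing in the good-indicators, for any fixed connected $S\subset G_M$ with $|S|=k$,
\begin{equation*}
\P\bigl(\text{at least }k/2\text{ boxes of }S\text{ are bad}\bigr)\;\leq\;\sum_{j\geq k/2}\binom{k}{j}(1-\rho)^{j}\;\leq\;2^{k}(1-\rho)^{k/2}.
\end{equation*}

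Finally, by the standard lattice-animal bound, the number of connected subsets of $G_M$ (essentially a copy of $\mathbb Z^2$) of size $k$ containing a prescribed vertex is at most $C_0^{k}$ for some absolute $C_0$, and $\Lambda_{N/4,N/2}$ contains at most $(N/M)^{2}$ boxes. A union bound therefore yields
\begin{equation*}
\P(\cH_{\mathtt{perf}}^c)\;\leq\;\Bigl(\tfrac{N}{M}\Bigr)^{2}\sum_{k\geq (N/M)^{1+\alpha}}\bigl(2C_{0}(1-\rho)^{1/2}\bigr)^{k}.
\end{equation*}
Choosing $\rho$ close enough to $1$ that $2C_{0}(1-\rho)^{1/2}\leq e^{-1}$ makes this a geometric series, bounded by $2(N/M)^{2}\exp(-(N/M)^{1+\alpha})$, which is in turn at most $\exp(-\oc{perf2}(N/M)^{1+\alpha})$ for an appropriate $\oc{perf2}>0$ (the polynomial prefactor is absorbed by taking $\oc{perf2}$ slightly below $1$ for large $N/M$, and the bound for small $N/M$ is handled by enlarging the constant further).

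The main obstacle is conceptual rather than technical: the good/bad status of $B_i$ depends on $h|_{\Lambda_{5M}(u)}$, so distinct $M$-boxes are \emph{not} independent, and one cannot naively apply a product-measure estimate. This is precisely what Lemma~\ref{lem: domination by product measure} (built on \cite{LSS97}) is designed to sidestep, provided $\eps$ is small enough that each box is good with probability arbitrarily close to $1$ (which in turn is exactly what the RSW-type Theorem~\ref{thm: RSW for dis with external field} and Definition~\ref{def: good external field for RSW} deliver for $\eps<\oc{perf1}M^{-7/8}$). Given this input, the remaining steps above are routine Peierls-type counting.
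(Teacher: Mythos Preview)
Your proposal is correct and follows essentially the same Peierls-type strategy as the paper: reduce $\cH_{\mathtt{perf}}^c$ to the existence of a connected $M$-box set $S$ of size $k\ge (N/M)^{1+\alpha}$ with at least $k/2$ bad boxes, invoke Lemma~\ref{lem: domination by product measure} to bound the probability for each fixed $S$ exponentially in $k$, and sum over lattice animals. The only cosmetic differences are that the paper uses $*$-connectivity and a depth-first-search encoding giving $|\mathfrak B_k|\le \tfrac{2N}{M}\,8^{2k-2}$ (rooting at a box on $\partial\Lambda_{N/2}$), whereas you use a generic lattice-animal bound $C_0^{k}$ rooted at any of the $(N/M)^2$ boxes; both prefactors are absorbed by the exponential.
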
 
The following is a key input for the proof of Proposition \ref{prop:fast power law}.
\begin{lem}\label{lem: dis crossing probability upper bound}
    There exists a constant $\nc\label{disu}>0$ such that \begin{equation*}\label{eq: dis crossing probability upper bound}
        \E\bar\mu_{\Lambda_{N/2,N},\eps h}^{+/-,+/-}(\con(N/2,N))\le \mathbb{E}\left(\langle 1_{\cE_{\alpha,N,M}}\rangle^{+/-,+/-}_{\Lambda_{N/2,N},\eps h}\right)+\P(\cH_{\mathtt {perf}}^c)+\oc{disu}\frac{1}{\eps M^{\frac{7}{8}}}\cdot(\frac{N}{M})^{-\alpha}.
    \end{equation*}
\end{lem}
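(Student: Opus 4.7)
The plan is to decompose the probability according to whether $\cE_{\alpha,N,M}$ occurs and whether the disorder is perfect, and then attack the remaining piece through a double-counting that converts Proposition~\ref{prop: strictly faster polynomial decay} into an upper bound. Concretely, I would write
\begin{equation*}
\E\bar\mu^{+/-,+/-}_{\Lambda_{N/8,N},\eps h}(\con(N/4,N/2))
\le \E\bar\mu(\cE_{\alpha,N,M}) + \P(\cH_{\mathtt{perf}}^c) + \mathcal A,
\end{equation*}
with $\mathcal A := \E[\1_{\cH_{\mathtt{perf}}}\bar\mu(\con(N/4,N/2)\cap\cE_{\alpha,N,M}^c)]$. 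The first two summands match the first two terms of the target bound, so the task reduces to $\mathcal A \le C\eps^{-1}M^{-7/8}(N/M)^{-\alpha}$.

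For each $M$-box $B_i=\Lambda_M(u_i)$ in $\Lambda_{N/4,N/2}$, let $A_i$ be the intersection of the events \emph{$B_i$ lies on the disagreement crossing} of $\con(N/4,N/2)$ and \emph{$\Lambda_{M,2M}(u_i)$ contains a pre-disagreement circuit}. Both components are increasing in the sense of Lemma~\ref{lem:FKG}, so FKG combined with the (i)-good estimate yields $\bar\mu(A_i)\ge\oc{9}^4\,\bar\mu(B_i\text{ on crossing})$ whenever $B_i$ is good. On $A_i$ the disagreement crossing must pass through the annulus $\Lambda_{M,2M}(u_i)$ to reach $B_i$ and therefore intersects the pre-disagreement circuit; this intersection forces the whole circuit to lie in $\mathcal D_\partial$, so $A_i$ implies the existence of a genuine \emph{disagreement} circuit in $\Lambda_{M,2M}(u_i)$. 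Running the outermost-circuit decomposition from the proof of Lemma~\ref{lem:CBC prime}, I expect for each $x\in I_i$
\begin{equation*}
\bar\mu(x\in\mathcal D_\partial\mid A_i)\ge \bar\mu^{+/-}_{\bar\Gamma,\eps h}(x\in\mathcal D_{\intB\bar\Gamma})\ge \bar\mu^{+/-}_{\Lambda_{4M}(x),\eps h}(x\in\mathcal D_{\intB\Lambda_{4M}(x)})\ge cM^{-1/8},
\end{equation*}
where the first step is the DMP-based decomposition over the outermost disagreement circuit $\bar\Gamma\subset\Lambda_{M,2M}(u_i)$ inside $\Lambda_{2M}(u_i)$, the second comes from DMP and CBC using $\bar\Gamma\subset\Lambda_{4M}(x)$ together with the containment $\{x\in\mathcal D_{\intB\Lambda_{4M}(x)}\}\subset\{x\in\mathcal D_{\intB\bar\Gamma}\}$ (valid since $x\in\bar\Gamma$), and the third is the (ii)-good bound \eqref{eq: def of (ii)-good boxes} combined with the no-disorder RSW estimate \eqref{eq: origin decay rate without disorder}. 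Since $|I_i|\ge 2M^2$, summing over $x\in I_i$ yields $\sum_{x\in I_i}\bar\mu(x\in\mathcal D_\partial;A_i)\ge c'M^{15/8}\bar\mu(B_i\text{ on crossing})$ for good $B_i$.

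The double-count is then routine: on $\con(N/4,N/2)\cap\cE_{\alpha,N,M}^c\cap\cH_{\mathtt{perf}}$, by $\cE_{\alpha,N,M}^c$ and perfectness at least $\tfrac12(N/M)^{1+\alpha}$ good $M$-boxes lie on the disagreement crossing, so summing over $i$ and integrating against the disorder
\begin{equation*}
\mathcal T := \E\Big[\sum_{i}\sum_{x\in I_i}\1_{B_i\text{ good}}\bar\mu(x\in\mathcal D_\partial;A_i)\Big]\ge c'M^{15/8}\cdot\tfrac12(N/M)^{1+\alpha}\cdot\mathcal A.
\end{equation*}
At the same time $\mathcal T\le \sum_{x\in\Lambda_{N/4,N/2}}\E\bar\mu(x\in\mathcal D_\partial)\le |\Lambda_{N/4,N/2}|\cdot C/(\eps N)\le C'N/\eps$, where one uses that each $x\in\Lambda_{N/4,N/2}$ is at distance $\Theta(N)$ from the annulus boundary and applies Proposition~\ref{prop: strictly faster polynomial decay} together with translation invariance. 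Solving the two inequalities for $\mathcal A$ produces precisely $\mathcal A\le C''\eps^{-1}M^{-7/8}(N/M)^{-\alpha}$, as desired.

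The main obstacle lies in the middle paragraph: I have to argue carefully that on $A_i$ the pre-disagreement circuit in $\Lambda_{M,2M}(u_i)$ genuinely merges with the $\intB\Lambda_N$-connected crossing through $B_i$ to form a circuit in $\mathcal D_\partial$, and then adapt the outermost-circuit decomposition underlying Lemma~\ref{lem:CBC prime} so that the conditional lower bound survives conditioning on the strictly stronger event $A_i$ rather than merely on the existence of \emph{some} disagreement circuit. This requires correctly treating the outermost-circuit notion in the annular geometry and verifying that the FKG comparison enabled by (i)-goodness interacts cleanly with the outer-configuration conditioning invoked for DMP/CBC.
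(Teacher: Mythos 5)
Your proposal is correct and follows essentially the same route as the paper: the same three-way decomposition, the same conversion of each good $M$-box on the crossing into $\gtrsim M^{15/8}$ expected disagreements via (i)-goodness, FKG, the outermost-circuit/CBC$'$ decomposition and (ii)-goodness, and the same double count against the $\oc{prior}N/\eps$ bound from Proposition~\ref{prop: strictly faster polynomial decay}. The obstacle you flag at the end (conditioning on the stronger event $A_i$ rather than on mere existence of a disagreement circuit) is present in the paper's own argument as well and is resolved there exactly as you suggest, by decomposing over the outermost disagreement boundary.
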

Assuming {Theorem \ref{thm:Fractality},} Lemmas~\ref{lem: perfect external field bound} and \ref{lem: dis crossing probability upper bound}, we are able to prove Proposition~\ref{prop:fast power law}.
\begin{proof}[Proof of Proposition~\ref{prop:fast power law}]
    Let $M=\lfloor\oc{perf1}^{{\frac{8}{7}}}\eps^{-\frac{8}{7}}\rfloor$. Recall that {$\eps\geq\oc{fpl2}N^{-\frac{7}{8}}$} and thus we have $\frac{N}{M}\ge \frac{N}{\oc{perf1}^{{8/7}}\eps^{-8/7}}\ge \frac{\oc{fpl2}^{8/7}}{{\oc{perf1}^{8/7}}}$.
    For any $\oc{fpl1}>0$, let $\oc{fpl2}>0$ be big enough such that the following hold.\begin{itemize}
        %\item \textcolor{green}{$\frac{\oc{fpl2}^{8/7}}{{\oc{perf1}^{8/7}}}>8$, so that $\frac{N}{8}>M$.}
        \item $\oc{perf2}^{-1}\exp\Big(-\oc{perf2}\cdot(\frac{\oc{fpl2}^{8/7}}{{\oc{perf1}^{8/7}}})^{1+\alpha}\Big)<\frac{\oc{fpl1}}{3}$, so that by Lemma \ref{lem: perfect external field bound}, $$\P(\cH_{\mathtt {perf}}^c)\le\oc{perf2}^{-1}\exp(-\oc{perf2}(\frac{N}{M})^{1+\alpha})\le\frac{\oc{fpl1}}{3}.$$
        \item $\frac{\oc{disu}}{\oc{perf1}}\cdot (\frac{\oc{fpl2}^{8/7}}{{\oc{perf1}^{8/7}}})^{-\alpha}\le\frac{\oc{fpl1}}{3}$, so that $\oc{disu}\frac{1}{\eps M^{\frac{7}{8}}}\cdot(\frac{N}{M})^{-\alpha}\le \frac{\oc{fpl1}}{3}.$
        \item $\oc{tur}^{-1}\exp\big(-\oc{tur}\sqrt{\frac{\oc{fpl2}^{8/7}}{{\oc{perf1}^{8/7}}}}\big)\le \frac{\oc{fpl1}}{3}$, so that by Theorem~\ref{thm:Fractality}, $$\mathbb{E}\left(\langle 1_{\cE_{\alpha,N,M}}\rangle^{+/-,+/-}_{\Lambda_{N/2,N},\eps h}\right)\le \oc{tur}^{-1}\exp(-\oc{tur}\sqrt{N/M})\le\frac{\oc{fpl1}}{3}.$$
    \end{itemize}
    Thus we obtain by Lemma~\ref{lem: dis crossing probability upper bound} that \begin{equation*}
\E\bar\mu_{\Lambda_{N/2,N},\eps h}^{+/-,+/-}(\con(N/2,N))\le \frac{\oc{fpl1}}{3}+\frac{\oc{fpl1}}{3}+\frac{\oc{fpl1}}{3}=\oc{fpl1}.
        \qedhere
    \end{equation*}
    % Combined with the trivial fact that $\con(N/2,N)\subset\con(N/4,N/2)$, this completes the proof of the proposition.
\end{proof}

\section{RSW for disagreement Ising model}\label{sec: RSW}
In this section, we provide proofs for missing ingredients of RSW estimates. That is, we prove Theorems~\ref{thm: RSW for dis with external field}, \ref{thm: RSW for dis at 0 external field} and Lemmas \ref{lem: bound for the spin average conditioned on the crossing},~\ref{lem: small perturbation for crossing probability}.

\subsection{RSW and a BK-type inequality without disorder}
This subsection will be devoted to the proof of Theorem~\ref{thm: RSW for dis at 0 external field} and a BK-type inequality which will be useful in the proof of Lemma~\ref{lem: bound for the spin average conditioned on the crossing}.

We first give the following definition, which will be helpful for analyzing
 the RSW estimate without disorder.
%with the aim of  overcoming the difficulty that $\xi^+,\xi^-$ are arbitrary boundary conditions in Theorem~\ref{thm: RSW for dis at 0 external field}.
\begin{defi}\label{def:agreement boundary}
    For any configuration pair $(\bar\sigma^+,\bar\sigma^-)$, we say a nonempty connected region $\Gamma\subset \bar {\mbb Z}^2$ has an \textbf{agreed boundary condition} if $\bar\sigma_x^+=\bar\sigma_x^-$  for all $x\in \intB\Gamma$,  and in this case we say that $\intB \Gamma$ is an \textbf{agreed boundary}.
\end{defi}

Recall that in Definition~\ref{def:connect-event} we denote by $\con(M_1, M_2)$ the event that $\mathcal D \cap \bar\Lambda_{M_2}$ contains a crossing from $\intB \Lambda_{M_2}$ to $\intB \Lambda_{M_1+1}$. {Furthermore, we define $\con_\diamond(M_1, M_2)$ to be the event that there exists an anti-disagreement crossing from $\intB \Lambda_{M_2}$ to $\intB \Lambda_{M_1+1}$.} Then, by planar duality {and the fact that pre-disagreements and anti-disagreements can not be adjacent}, we see that on the complement of {$\con(M_1, M_2)\cup\con_\diamond(M_1, M_2)$}, there exists a region $\bar\Lambda_{M_1} \subset \Gamma \subset \bar\Lambda_{M_2} $ such that $ \Gamma$ has {an} agreed boundary condition.
This then allows us to work with an agreed boundary condition when analyzing spin configurations restricted on $\bar\Lambda_{M_1}$.  
It then naturally calls for an upper bound on the probability of the crossing event, and indeed we next show that the pre-disagreement crossing probability is bounded away from $1$ even in the best boundary condition.
\begin{lem}\label{lem:crossing}
  %For any $\iota>1$, there exists a constant $b=b(\iota)>0$ such that for
  For any integers $M_2> M_1$ and any external field $h$ on $\Lambda_{M_1, M_2}$, we have
  \begin{equation}\label{eq:lasso estimate}
    \langle \1_{\con(M_1,M_2)} \rangle_{\Lambda_{M_1,M_2},h}^{+/-,+/-}\le\langle \1_{\con(M_1,M_2)} \rangle_{\Lambda_{M_1,M_2},0}^{+/-,+/-}
    = 1-\Big(\frac{1- \phi_{G}^{\mathtt w/\mathtt w}(\intB \Lambda_{M_1+1} \longleftrightarrow \intB\Lambda_{M_2})}{1+ \phi_{G}^{\mathtt w/\mathtt w}(\intB \Lambda_{M_1+1} \longleftrightarrow \intB\Lambda_{M_2})}\Big)^2.
  \end{equation}
  \end{lem}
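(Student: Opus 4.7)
The plan is to handle the two inequalities separately. For the first inequality, the strategy is to show that adding a common external field $h$ to both copies aligns the two spin configurations and thereby reduces pre-disagreements. Concretely, one applies the spin flip $\tilde\sigma^- = -\sigma^-$ to rewrite $\bar\mu^{+/-}_{\Lambda_{M_1,M_2},h}$ as the product $\bar\mu^+_{\Lambda_{M_1,M_2},h} \otimes \bar\mu^+_{\Lambda_{M_1,M_2},-h}$, after which the event $\con(M_1,M_2)$ becomes the event that both independent copies admit a ``$+1$-path'' from inner to outer boundary in the extended sense. At a single vertex, the GHS inequality (concavity of magnetization in $h$) combined with AM-GM gives $(1+\langle\sigma_v\rangle^+_h)(1+\langle\sigma_v\rangle^+_{-h}) \le (1+\langle\sigma_v\rangle^+_0)^2$, so the single-vertex ``$+$-and-$+$'' probability is maximized at $h=0$. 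Upgrading this local inequality to the joint crossing event then proceeds via the FKG property for the disagreement Ising model (Lemma~\ref{lem:FKG}) together with a Holley-type stochastic domination in the partial order $\succeq$ of Definition~\ref{def:partial-order}.

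For the second inequality, the plan is to exploit the FK-Ising representation (Lemma~\ref{lem:extended to FK}). Since the two copies are independent and each is an Ising measure with wired-$+$ (resp.\ wired-$-$) boundary conditions on both inner and outer boundaries, the corresponding FK marginals $\tilde\phi^{++}$ and $\tilde\phi^{--}$ are obtained from the raw $\phi_G^{\mathtt{w}/\mathtt{w}}$ by the Edwards--Sokal reweighting $2^{-k_{\mathrm{wired}}(\omega)}$, where $k_{\mathrm{wired}}(\omega) \in \{1,2\}$ depends on whether inner and outer are already FK-connected. Splitting according to these two cases yields the direct computation
\begin{equation*}
\tilde\phi^{++}(\intB\Lambda_{M_1+1} \not\longleftrightarrow \intB\Lambda_{M_2}) \;=\; \frac{(1-p)/4}{p/2 + (1-p)/4} \;=\; \frac{1-p}{1+p},
\end{equation*}
and similarly for $\tilde\phi^{--}$ by $\mathbb{Z}_2$-symmetry. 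The crucial claim is then that
\begin{equation*}
\bar\mu^{+/-,+/-}_{\Lambda_{M_1,M_2},0}(\con(M_1,M_2)) \;\le\; 1 - \tilde\phi^{++}(\text{no in-out FK cross}) \cdot \tilde\phi^{--}(\text{no in-out FK cross}),
\end{equation*}
whose right-hand side equals $1 - \left(\frac{1-p}{1+p}\right)^2$ by independence of the two copies.

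The hardest step is this crucial claim. A direct enumeration in the degenerate one-vertex annulus case (e.g.\ with $p_e = 1/2$ so that $p = 1/9$) verifies the inequality as an equality, which suggests a combinatorial identity underlies it. The subtlety is that the implication ``pre-disagreement crossing $\Rightarrow$ at least one FK copy crosses'' fails pointwise: there are configurations in which neither $\omega^+$ nor $\omega^-$ has an in-out FK crossing, yet the union $\omega^+ \cup \omega^-$ does and the random free-cluster labels can conspire to produce a pre-disagreement path. However, such ``excess'' contributions are expected to be compensated by ``deficit'' contributions from configurations where some copy has an FK crossing but the free-cluster labels spoil the required pre-disagreement spin pattern. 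The plan is to make this balancing rigorous by integrating out the random cluster labels conditional on $(\omega^+, \omega^-)$ and then using the swap invariance (Proposition~\ref{prop:swap}) to match the two sides. This cancellation mechanism is the principal technical obstacle of the proof.
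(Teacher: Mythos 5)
Your proposal contains two genuine gaps, one for each inequality. For the second inequality, you correctly identify the target quantity $\bigl(\tfrac{1-p}{1+p}\bigr)^2$, but the ``crucial claim'' relating the pre-disagreement crossing probability to the product of the two FK no-crossing probabilities is precisely the content of the lemma, and you explicitly leave its proof (the ``cancellation mechanism'') as an open obstacle. The paper closes this by a different and exact route: contract $\intB\Lambda_{M_1+1}$ to a single vertex $\fQ$, observe that $\langle \1_{\con^c}\rangle^{+/-,+/-}$ equals a ratio of probabilities of the spin values at $\fQ$, and apply the swap operation of Proposition~\ref{prop:swap} to the cluster $\cD_\fQ$ (which cannot reach $\intB\Lambda_{M_2}$ on $\con^c$) to obtain the identity \eqref{eq:crossing prime -/+}. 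The right-hand side then factorizes over the two independent copies into one-point functions at $\fQ$, each of which is computed exactly via the FK representation to be $\tfrac{1}{2}(1\mp\phi^{\mathtt w/\mathtt w}(\cdot\longleftrightarrow\cdot))$. No excess/deficit bookkeeping over cluster labelings is needed; the swap symmetry does all the work in one stroke, and at $h=0$ the bound is in fact an equality.

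For the first inequality your route does not work as stated. After flipping the second copy, the pre-disagreement crossing event is \emph{not} ``both copies admit a $+1$-path'': a pre-disagreement edge only requires $\bar\sigma_e^+>\bar\sigma_e^-$, so after the flip an edge may carry $(1,0)$ or $(0,1)$, and the event does not factorize into a product of single-copy events. More seriously, even granting the single-vertex inequality $(1+\langle\sigma_v\rangle^+_h)(1+\langle\sigma_v\rangle^+_{-h})\le(1+\langle\sigma_v\rangle^+_0)^2$ (which itself is delicate, since GHS concavity holds only for fields of a fixed sign), there is no mechanism to upgrade a bound on single-site marginals to a bound on a joint connectivity event: the product measure $\bar\mu^+_h\otimes\bar\mu^+_{-h}$ is not stochastically dominated by $\bar\mu^+_0\otimes\bar\mu^+_0$ in the partial order $\succeq$, and the Holley criterion you invoke is not verified and should not be expected to hold. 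The paper instead reuses the contraction-plus-swap identity: the $h$ versus $0$ comparison reduces to comparing ratios of one-point functions at $\fQ$, which is handled by choosing an auxiliary field $g_\star$ at $\fQ$ so that $\langle\1_{\{\bar\sigma_\fQ=-1\}}\rangle^{+,0}_{h'_\star}=\langle\1_{\{\bar\sigma_\fQ=1\}}\rangle^{-,0}_{h'_\star}$ and then invoking the correlation inequality of \cite{DSS22} in \eqref{eq-apply-DSS}. This last input is essential and is absent from your argument.
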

\begin{rmk}\label{rmk:RSW-ref}
    If  $M_2\ge \iota M_1$ for some constant $\iota>1$, then the right{-}hand side of \eqref{eq:lasso estimate} is upper-bounded by a positive constant smaller than 1  depending only on $\iota$ thanks to the RSW theory of the two-dimensional critical FK-Ising model \cite[Theorem 1.1]{DHN11} (see also e.g. \cite{BDC12, Tassion16, KT23} for remarkable further progress on RSW theory).
\end{rmk}
\begin{rmk}\label{rmk:anti-crossing}
{By symmetry, an analog of Lemma~\ref{lem:crossing} holds for $\con_\diamond(M_1,M_2)$} with boundary condition $-/+,-/+$.
\end{rmk}
\begin{cor}\label{cor: agreed boundary}
    For any integers $M_2\ge \iota M_1$ {with some constant $\iota>1$,
    there exists a constant $\nc\label{agree}=\oc{agree}(\iota)>0$ such that \begin{equation}\label{eq:agree estimate}
    \bar\mu_{\Gamma,0}^{\xi^+/\xi^-} \Big(\con(M_1,M_2)^c\cap\con_\diamond(M_1,M_2)^c\Big) \ge \oc{agree}
  \end{equation}    for any region $\Gamma$ containing $\Lambda_{M_1,M_2}$ with any boundary conditions $\xi^+,\xi^-$ on $\partial\Gamma$.}
\end{cor}
\begin{proof}
   {Let $M'=\lfloor\frac{M_1+M_2}{2}\rfloor.$ Then
    by DMP and CBC, we get that \begin{align*}
        &\bar\mu_{\Gamma,0}^{\xi^+/\xi^-} \Big(\con(M_1,M_2)^c\cap\con_\diamond(M_1,M_2)^c\Big)\\\ge~& \bar\mu_{\Lambda_{M_1,M'},0}^{+/-,+/-} \Big(\con(M_1,M')^c\Big)\cdot\bar\mu_{\Lambda_{M',M_2},0}^{-/+,-/+} \Big(\con_\diamond\big(M',M_2\big)^c\Big).
    \end{align*} The desired result then follows from Lemma~\ref{lem:crossing}, Remarks~\ref{rmk:RSW-ref} and \ref{rmk:anti-crossing}.
    }
\end{proof}
\begin{proof}[Proof of Lemma~\ref{lem:crossing}]
   Denote by $ G$ the graph $\overline{\Lambda_{M_1,M_2}}$ and let $ G^{\prime}= G/{\intB}\Lambda_{M_1+1}$ which is obtained from $ G$ by contracting ${\intB} \Lambda_{M_1+1}$ into a single point (denoted as $\fQ$) and throwing away all edges between the points in ${\intB} \Lambda_{M_1+1}$. 

We compute that
    \begin{equation*}
        \begin{aligned}
            \langle \1_{\con(M_1,M_2)} \rangle_{\Lambda_{M_1,M_2},h}^{+/-,+/-}&= \langle \1_{\con(M_1,M_2)} \rangle_{G,  h}^{+/-, +/-} \stackrel{(*)}{=} \langle \1_{\con(M_1,M_2)} \rangle_{G^{\prime},h^{\prime}}^{+/-, +/-},
        \end{aligned}
    \end{equation*}
    where $h_v^{\prime}=
        h_v$ for  $v \in G^{\prime}\setminus\{\fQ\}$ and $h_\fQ^\prime = g$
         for some $g\in\mbb R$ to be chosen later. Here (*) comes from the fact that once the boundary condition on  ${\intB} \Lambda_{M_1+1}$ is fixed to be plus or minus, wiring ${\intB}\Lambda_{M_1+1}$ into a single point $\fQ$ and posing an arbitrary external field on $\fQ$ will not change the Ising measure.
         For notation convenience, in what follows we use the superscript $\mathbf 0$ to indicate the free boundary condition for the (extended) Ising model. The $\mathbf 0$-boundary condition will only apply to vertex spins, and it means that the corresponding spins can be either plus or minus (i.e., there is no constraint). Using this notation, we see that
         %Note that \textcolor{red}{(where in the following, we use $\mathbf{0}$ to denote the free boundary condition for (extended) Ising model)}
         \begin{equation}\label{eq: expansion for annulus boundary}
            \langle \1_{\con(M_1,M_2)} \rangle_{G^{\prime},h^{\prime}}^{+/-, +/-}=\frac{\langle \1_{\{\bar \sigma_{\fQ}^+=1,\bar \sigma_{\fQ}^-=-1\}\cap \con(M_1,M_2)} \rangle_{G^{\prime},h^{\prime}}^{{\mathbf{0}/\mathbf{0},+/-}}}{\langle \1_{\{\bar \sigma_{\fQ}^+=1,\bar \sigma_{\fQ}^-=-1\}} \rangle_{G^{\prime},h^{\prime}}^{\mathbf{0}/\mathbf{0},+/-}}.
        \end{equation}
%Thus, in order to prove \eqref{eq:lasso estimate} it suffices to prove that 
%\begin{equation}\label{eq:crossing prime}
%        \frac{\langle \1_{\{\bar \sigma_{\fQ}^+=1,\bar \sigma_{\fQ}^-=-1\}\cap \con(M_1,M_2)^{c}} \rangle_{G^{\prime},h^{\prime}}^{{\mathbf{0}/\mathbf{0},+/-}}}{\langle \1_{\{\bar \sigma_{\fQ}^+=1,\bar \sigma_{\fQ}^-=-1\}} \rangle_{G^{\prime},h^{\prime}}^{{\mathbf{0}/\mathbf{0},+/-}}} \ge \Big(\frac{1- \phi_{G}^{\mathtt w/\mathtt w}(\intB \Lambda_{M_1+1} \longleftrightarrow \intB\Lambda_{M_2})}{1+ \phi_{G}^{\mathtt w/\mathtt w}(\intB \Lambda_{M_1+1} \longleftrightarrow \intB\Lambda_{M_2})}\Big)^2.
%    \end{equation}
    On the event $\con(M_1,M_2)^{c}$, we have that $\cD_\fQ$, i.e., the connected component of $\fQ$ in $\mathcal D$, does not intersect $\intB \Lambda_{M_2}$. Applying Proposition~\ref{prop:swap} with $S=\{\fQ\}$ and $A=\intB \Lambda_{M_2}$, we obtain \begin{equation}\label{eq: swapping for crossing probability}
        \langle \1_{\{\bar \sigma_{\fQ}^+=1,\bar \sigma_{\fQ}^-=-1\}\cap \con(M_1,M_2)^{c}} \rangle_{G^{\prime},h^{\prime}}^{{\mathbf{0}/\mathbf{0},+/-}}=\langle \1_{\{\bar \sigma_{\fQ}^+=-1,\bar \sigma_{\fQ}^-=1\}} \rangle_{G^{\prime},h^{\prime}}^{{\mathbf{0}/\mathbf{0},+/-}}\,,
    \end{equation}
    which follows from swapping the configurations on $\cD_{\fQ}$, as in \eqref{eq:ClusterSwap}.
    Combined with \eqref{eq: expansion for annulus boundary}, it yields that \begin{equation}\label{eq:crossing prime -/+}
        \langle \1_{\con(M_1,M_2)^c} \rangle_{G^{\prime},h^{\prime}}^{+/-, +/-}=\frac{\langle \1_{\{\bar \sigma_{\fQ}^+=-1,\bar \sigma_{\fQ}^-=1\}} \rangle_{G^{\prime},h^{\prime}}^{{\mathbf{0}/\mathbf{0},+/-}}}{\langle \1_{\{\bar \sigma_{\fQ}^+=1,\bar \sigma_{\fQ}^-=-1\}} \rangle_{G^{\prime},h^{\prime}}^{{\mathbf{0}/\mathbf{0},+/-}}} =  \frac{\langle \1_{\{\bar \sigma_{\fQ}=-1\}} \rangle_{G^{\prime},h^{\prime}}^{\mathbf{0},+} \cdot \langle \1_{\{\bar \sigma_{\fQ}=1\}} \rangle_{G^{\prime},h^{\prime}}^{\mathbf{0},-}}{\langle \1_{\{\bar \sigma_{\fQ}=1\}} \rangle_{G^{\prime},h^{\prime}}^{\mathbf{0},+} \cdot \langle \1_{\{\bar \sigma_{\fQ}=-1\}} \rangle_{G^{\prime},h^{\prime}}^{\mathbf{0},-}}\,,
    \end{equation}
   where the second equality holds regardless of the choice of $g$ thanks to independence. 
   
{Now for clarity of notation, we define $$p(\xi,h)=\langle \1_{\{\bar \sigma_{\fQ}=1\}} \rangle_{G^{\prime},h}^{\mathbf{0},\xi},\text{ and }m(\xi,h)=\langle \1_{\{\bar \sigma_{\fQ}=-1\}} \rangle_{G^{\prime},h}^{\mathbf{0},\xi}.$$
Here $p$ and $m$ stand for the probability of plus and minus respectively, $\xi\in\{+,-\}$ stands for the boundary condition on $\partial\Lambda_{M_2},$ and $h$ stands for the external field. As a result we have 
\begin{equation}\label{eq:p+m=1}
    p(\xi,h)+m(\xi, h)=1, 
\end{equation}
and the right-hand side of \eqref{eq:crossing prime -/+} can be rewritten as 
\begin{equation}\label{eq:rewrite-by-pm}
    \frac{m(+,h')\cdot p(-,h')}{p(+,h')\cdot m(-,h')}=\frac{p(-,h')}{p(+,h')}\cdot\frac{m(+,h')}{m(-,h')}.
\end{equation}}
   
   Since ${m(+,h')=}\langle \1_{\{\bar \sigma_{\fQ}=-1\}} \rangle_{G^{\prime},h^{\prime}}^{\mathbf{0},+}$ decreases from $1$ to $0$ continuously and  $\langle \1_{\{\bar \sigma_{\fQ}=1\}} \rangle_{G^{\prime},h^{\prime}}^{\mathbf{0},-}$ increases from  $0$ to $1$ continuously as $g$ increases from $-\infty$ to $\infty$, we can choose $g = g_\star$ (and correspondingly $h' = h'_\star$) such that \begin{equation}
       m(+,h'_\star)=\langle \1_{\{\bar \sigma_{\fQ}=-1\}} \rangle_{G^{\prime},h^{\prime}_\star}^{\mathbf{0},+} = \langle \1_{\{\bar \sigma_{\fQ}=1\}} \rangle_{G^{\prime},h^{\prime}_\star}^{\mathbf{0},-}=p(-,h'_\star).\label{eq:choose-g}
   \end{equation} In addition, applying \cite[Theorem 1.1]{DSS22}, we get that 
    \begin{equation}\label{eq-apply-DSS}
        \langle \bar \sigma_{\fQ} \rangle_{G^{\prime},h^{\prime}}^{\mathbf{0},+} - \langle \bar \sigma_{\fQ} \rangle_{G^{\prime},h^{\prime}}^{\mathbf{0},-} \le \langle \bar \sigma_{\fQ} \rangle_{G^{\prime},0}^{\mathbf{0},+} - \langle \bar \sigma_{\fQ} \rangle_{G^{\prime},0}^{\mathbf{0},-} \quad \mbox{ for all } g\in \mathbb R\,,	
    \end{equation}
    which is equivalent to
    \begin{equation*}
        \langle \1_{\{\bar \sigma_{\fQ}=1\}} \rangle_{G^{\prime},h^{\prime}}^{\mathbf{0},+} - \langle \1_{\{\bar \sigma_{\fQ}=1\}} \rangle_{G^{\prime},h^{\prime}}^{\mathbf{0},-} \le \langle \1_{\{\bar \sigma_{\fQ}=1\}} \rangle_{G^{\prime},0}^{\mathbf{0},+} - \langle \1_{\{\bar \sigma_{\fQ}=1\}} \rangle_{G^{\prime},0}^{\mathbf{0},-}\quad \mbox{ for all } g\in \mathbb R\, .
    \end{equation*}
    {
    Taking $g=g_\star$ and thus $h'=h'_\star,$ we have
 $$1-2m(+,h'_\star)\stackrel{\eqref{eq:p+m=1}}{=}p(+,h'_\star)-m(+,h'_\star)\stackrel{\eqref{eq:choose-g}}{=}p(+,h'_\star)-p(-,h'_\star)\leq p(+,0)-p(-,0)=1-2m(+,0).$$
 The last equality comes from the symmetry in the 0-external field case.
 }

{
 Combined with the fact that $\frac{x}{1-x}\ge\frac{y}{1-y}$ if $1-2x\le 1-2y$ and $x,y\in [0,1]$, it yields that
\begin{equation}
    \frac{p(-,h'_\star)}{p(+,h'_\star)}=\frac{m(+,h'_\star)}{1-m(+,h'_\star)}\geq\frac{m(+,0)}{1-m(+,0)}=\frac{p(-,0)}{p(+,0)}.\label{eq: Con lower bound part 1}
\end{equation}}
    % \begin{equation}
    %     \frac{\langle \1_{\{\bar \sigma_{\fQ}=1\}} \rangle_{G^{\prime},h^{\prime}_\star}^{\mathbf{0},-}}{\langle \1_{\{\bar \sigma_{\fQ}=1\}} \rangle_{G^{\prime},h^{\prime}_\star}^{\mathbf{0},+}}\ge \frac{\langle \1_{\{\bar \sigma_{\fQ}=1\}} \rangle_{G^{\prime},0}^{\mathbf{0},-}}{\langle \1_{\{\bar \sigma_{\fQ}=1\}} \rangle_{G^{\prime},0}^{\mathbf{0},+}}.
    % \end{equation}
    % Combined with \eqref{eq:choose-g} and symmetry, it yields that (note that the left-hand side of \eqref{eq: Con lower bound part 2} below equals to the left-hand side of \eqref{eq: Con lower bound part 1}  and so does the right-hand side)
Similarly we can have 
 {\begin{equation}\label{eq: Con lower bound part 2}
\frac{m(+,h'_\star)}{m(-,h'_\star)}=\frac{p(-,h'_\star)}{1-p(-,h'_\star)}\geq\frac{p(-,0)}{1-p(-,0)}=\frac{m(+,0)}{m(-,0)}.
% \quad 
% \frac{\langle \1_{\{\bar \sigma_{\fQ}=-1\}} \rangle_{G^{\prime},h^{\prime}_\star}^{\mathbf{0},+}}{\langle \1_{\{\bar \sigma_{\fQ}=-1\}} \rangle_{G^{\prime},h^{\prime}_\star}^{\mathbf{0},-}}\ge \frac{\langle \1_{\{\bar \sigma_{\fQ}=-1\}} \rangle_{G^{\prime},0}^{\mathbf{0},+}}{\langle \1_{\{\bar \sigma_{\fQ}=-1\}} \rangle_{G^{\prime},0}^{\mathbf{0},-}}.
\end{equation}}
Combining \eqref{eq: Con lower bound part 1}, \eqref{eq: Con lower bound part 2}, \eqref{eq:crossing prime -/+} (where we apply \eqref{eq:crossing prime -/+} to the case of $h' = 0$ and also to the case $h' = h'_\star$) and \eqref{eq:rewrite-by-pm}, we finish the proof of the {inequality} in \eqref{eq:lasso estimate}.

To show the {equality} in \eqref{eq:lasso estimate} it suffices to show (recalling \eqref{eq:crossing prime -/+} for the case of $h'=0$) $$\frac{\langle \1_{\{\bar \sigma_{\fQ}=1\}} \rangle_{G^{\prime},0}^{\mathbf{0},-}}{\langle \1_{\{\bar \sigma_{\fQ}=1\}} \rangle_{G^{\prime},0}^{\mathbf{0},+}} = \frac{1- \phi_{G}^{\mathtt w/\mathtt w}(\intB \Lambda_{M_1+1} \longleftrightarrow \intB\Lambda_{M_2})}{1+ \phi_{G}^{\mathtt w/\mathtt w}(\intB \Lambda_{M_1+1} \longleftrightarrow \intB\Lambda_{M_2})}.$$ To this end, we relate $G^{\prime}$ to $G$ again and obtain that \begin{align*}
  \langle \1_{\{\bar \sigma_{\fQ}=1\}} \rangle_{G^{\prime},0}^{\mathbf{0},-}
            &= \langle \1_{\{\bar\sigma|_{\intB \Lambda_{M_1+1}}=1\}} \mid \bar\sigma|_{\intB \Lambda_{M_1+1}}=1 \text{ or } \bar \sigma|_{\intB \Lambda_{M_1+1}}=-1 \rangle_{G,0}^{\mathbf{0},-}\nonumber\\
            &=\frac{1}{2}\Big(1- \phi_{G,0}^{\mathtt w/\mathtt w}(\intB \Lambda_{M_1+1} \longleftrightarrow \intB\Lambda_{M_2})\Big) \,.\label{eq: con to FK}
    \end{align*}
Similarly, we also have \begin{equation*}
    \langle \1_{\{\bar \sigma_{\fQ}=1\}} \rangle_{G^{\prime},0}^{\mathbf{0},+}
            =\frac{1}{2}\Big(1+ \phi_{G,0}^{\mathtt w/\mathtt w}(\intB \Lambda_{M_1+1} \longleftrightarrow \intB\Lambda_{M_2})\Big).
\end{equation*} Thus we complete the proof of \eqref{eq:lasso estimate}.
\end{proof}

\begin{proof}[Proof of Theorem \ref{thm: RSW for dis at 0 external field}]
    {In this proof, we will drop all the external field in the subscript since it is always $0$.} Recall that $\cR$ and $ \cR'$ denote the rectangle $\cR(\iota M+M,2M)$ and $\cR(\iota M,M)$ respectively. Let $\cR_1=\cR(\iota M+0.2M,0.9M)$ and $\cR_2=\cR(\iota M+0.1M,0.8M)$. 
    We first show that there exists an agreed boundary around $\cR_2$ (we will make this clear later) with positive probability, and then show the pre-disagreement crossing probability given the agreed boundary has a positive lower bound.

    For some $\bar\cR_2\subset\Gamma\subset\bar\cR_1$, we say $\intB \Gamma$ is the \textbf{outmost agreed boundary} in ${\bar\cR}_1\setminus\bar\cR_2$  if $\Gamma$ is simply connected and $\intB \Gamma$ is an agreed boundary, and in addition for any agreed boundary $\intB\Gamma'\subset{\bar\cR}_1\setminus\bar\cR_2$ with $\bar\cR_2\subset\Gamma'$ and any path in $\bar{\mcc R}_1$ from $x\in\intB\Gamma'$ to $y\in \intB\bar{\mcc R}_1$, this path must intersect with $\intB\Gamma$.
    For $\xi\in\{-1,0,1\}^{\intB \Gamma}$, let $\Psi(\Gamma,\xi)$ denote the probability that $\intB \Gamma$ is the outmost agreed boundary in $\bar \cR_1 \setminus \bar \cR_2$ and $\xi$ is the spin configuration on it.  {By Corollary~\ref{cor: agreed boundary}} we get that with positive probability $C_1$, there is an agreed boundary separating $\intB\cR_1$ and $\intB\cR_2$ {and thus there exists a unique outmost agreed boundary} (see Figure~\ref{fig: agreement boundary for RSW in 0 external field} for an illustration). Thus we conclude that \begin{equation}\label{eq: agreement boundary probaility}
        \sum_{\Gamma,\xi}\Psi(\Gamma,\xi)\ge C_1.
    \end{equation} Let $\cA(\Gamma)$ denote the event that there is a pre-disagreement crossing from $\{-\iota M\}\times[-M,M]$ to $\{\iota M\}\times[-M,M]$ in $\Gamma\cap\bar\cR'$. Applying the DMP and using the fact that $\cA(\Gamma)\subset\hc(\iota M,M)$, we obtain that that\begin{equation}\label{eq: DMP for outmost agreement boundary}
        \bar\mu^{\xi^+/\xi^-}_{\cR}(\hc(\iota M,M))\ge\sum_{\Gamma,\xi}\Psi(\Gamma,\xi)\bar\mu^{\xi/\xi}_{\Gamma}(\cA(\Gamma)).
    \end{equation}
    Thus it suffices to prove that \begin{equation}\label{eq: RSW with an agreement boundary in 0 external field case}
        \bar\mu^{\xi/\xi}_{\Gamma}(\cA(\Gamma))\ge C_2
    \end{equation} for some $C_2>0$ that does not depend on $\xi$ {or $\Gamma$}.

    \begin{figure}[htb]
    \centering
    \begin{tikzpicture}    
    \draw (-5,-3)rectangle (5,3);
    \draw (-3.5,-2.3) rectangle (3.5,2.3);
    \draw [dashed] (-4.2,-1.9) rectangle (4.2,1.9);
    \draw [dashed] (-3.7,-1.5) rectangle (3.7,1.5);
    \node [name=start1] at (5,3){ };
    \node [name=end1] at (6,4){$\cR$ };
    \draw [->] (start1)--(end1) ;
    \node [name=start2] at (2,2.2){ };
    \node [name=end2] at (2,2.8){ $\cR'$};
    \node at (0.5,0.5) {pre-disagreement crossing};
    \draw [color=red,very thick] (-3.5,-1.15)--(-3.5,1.15);
    \node [color=red] at (-3.2,1.3){$\cL_1$ };
    \node [color=red] at (3.2,1.3){$\cL_2$ };
    \draw [color=red,very thick] (3.5,-1.15)--(3.5,1.15);
    \draw [->] (start2)--(end2) ;
    \draw (-3.5,0.1) .. controls (1,2) and (2,-1.5) .. (3.5,-1);
    \node [name=start3] at (-4,1.7){ };
    \node [name=end3] at (-4,3.8){outmost agreed boundary };
    \draw [->] (start3)--(end3) ;
    \node at (4.2,1.5){$\cR_1$ };
    \node at (3.8,0){$\cR_2$ };
    \draw (-4.1,1.8) .. controls (-3.5,1) and (-4.5,0) .. (-3.8,-1.7);
    \draw (-3.8,-1.7) .. controls (-2,-1.3) and (1,-2) .. (3.95,-1.7);
    \draw (3.95,-1.7) .. controls (4.4,-0.5) and (4,1) .. (3.95,1.85);
    \draw (3.95,1.85) .. controls (0,1.6) .. (-4.1,1.8);
    \end{tikzpicture}
    \caption{Outmost agreed boundary around $\cR_2$}
    \label{fig: agreement boundary for RSW in 0 external field}
\end{figure}
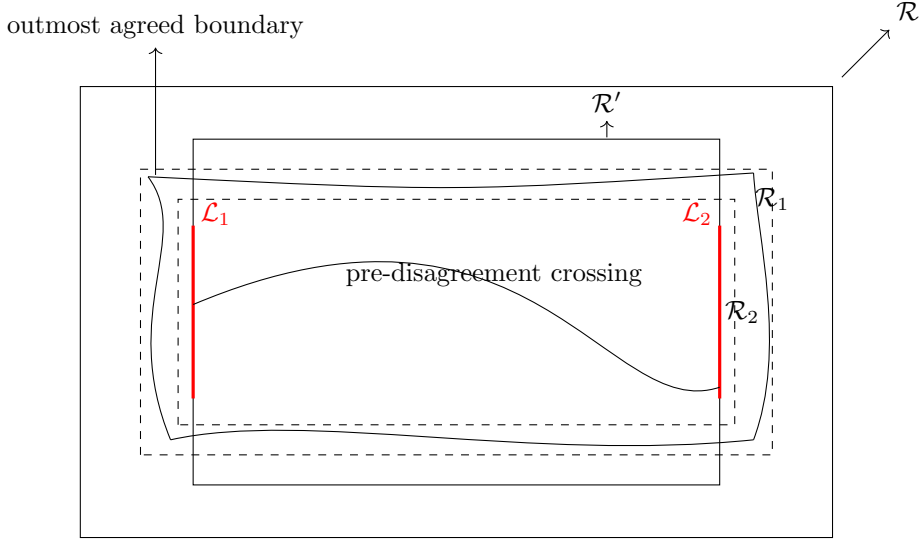

    Let $\cL_1$, $\cL_2$ denote the set of vertices in $\{-\iota M\}\times [-0.5M, 0.5M]$ and $ \{\iota M\}\times [-0.5M, 0.5M]$ respectively. Let $\rc_{u,v}$(for $u\in \mathcal L_1$ and $v\in \mathcal L_2$) denote the event that $u$ and $v$ is connected by a pre-disagreement path in $\Gamma$. Note that we do not restrict the pre-disagreement path to be contained in $\bar\cR'$; this is not a problem since whenever $\rc_{u,v}$ happens, there must be a pre-disagreement path from $\{-\iota M\}\times[-M,M]$ to $\{\iota M\}\times[-M,M]$ in $\Gamma\cap\bar\cR'$. Then we obtain that \begin{equation}\label{eq: crossing event from line to line}
        \bar\mu^{\xi/\xi}_{\Gamma}(\cA(\Gamma))\ge\bar\mu^{\xi/\xi}_{\Gamma}(\sum_{\substack{u\in \cL_1, v\in \cL_2}}\1_{\rc_{u,v}}>0).
    \end{equation}
    We will compute the first and second moments and then apply the Paley-Zygmund inequality to lower-bound the right-hand side of \eqref{eq: crossing event from line to line}. To lower-bound the first moment, we apply \eqref{eq: disagreement-spin average expansion with agreement boundary} to obtain that for $u\in \cL_1,~v\in \cL_2$,\begin{equation}\label{eq: crossing to truncated correlation function}
\langle\1_{\rc_{u,v}}\rangle^{\xi/\xi}_{\Gamma}=\langle\sigma_u\sigma_v\rangle^{\xi}_{\Gamma}-\langle\sigma_u\rangle^{\xi}_{\Gamma}\cdot\langle\sigma_v\rangle^{\xi}_{\Gamma}.
    \end{equation}
Note that in \eqref{eq: crossing to truncated correlation function}, we have used that with an agreed boundary condition,  by symmetry, the probability $u,v$ is connected by pre-disagreements equals to the  probability that $u,v$ is connected by anti-disagreements.
Applying Lemma~\ref{lem: truncated correlation lower-bound uniformly for boundary condition}, we obtain 
    \begin{align}
        \sum_{\substack{u\in \cL_1, v\in \cL_2}}\langle\1_{\rc_{u,v}}\rangle^{\xi/\xi}_{\Gamma}&=
    \sum_{\substack{u\in \cL_1, v\in \cL_2}}\Big(\langle\sigma_u\sigma_v\rangle^{\xi}_{\Gamma}-\langle\sigma_u\rangle^{\xi}_{\Gamma}\cdot\langle\sigma_v\rangle^{\xi}_{\Gamma}\Big)\nonumber\\&\ge\sum_{\substack{u\in \cL_1, v\in \cL_2}}\oc{A4} M^{-\frac{1}{4}}=\oc{A4}(M+1)^2M^{-\frac{1}{4}}\geq \oc{A4}M^{\frac{7}{4}}.\label{eq: first moment lower-bound for pre-disagreement crossing}
    \end{align}

Next, we will upper-bound the second moment. For any $u_1\neq u_2\in \cL_1,~v_1\neq v_2\in \cL_2$, we apply \eqref{eq: disagreement-spin average expansion with agreement boundary} again (note that $\rc_{u_1,v_1}\cap\rc_{u_2,v_2}\subset\cB$ where $\mathcal B$ is defined in Proposition \ref{prop:DisagreementRep} with $V_0 = \{u_1,u_2,v_1,v_2\}$) and obtain that \begin{align}\label{eq: second moment expansion}
\langle\1_{\rc_{u_1,v_1}}\1_{\rc_{u_2,v_2}}\rangle^{\xi/\xi}_{\Gamma}&\le 2^{-4}\langle\prod_{i=1}^2(\sigma_{u_i}^+-\sigma_{u_i}^-)(\sigma_{v_i}^+-\sigma_{v_i}^-)\rangle^{\xi/\xi}_{\Gamma}\nonumber\\
&=2^{-4}\sum_{\substack{I\cup J=\{u_1,u_2,v_1,v_2\}\\I\cap J=\emptyset}}(-1)^{|I|}\langle\sigma^I\rangle^{\xi}_{\Gamma}\langle\sigma^{J}\rangle^{\xi}_{\Gamma}.
    \end{align}
To compute the right-hand side of \eqref{eq: second moment expansion}, we define $$d_u=\min\{0.3M,\lfloor\frac{dist(u_1,u_2)}{2}\rfloor\},\quad d_v=\min\{0.3M,\lfloor\frac{dist(v_1,v_2)}{2}\rfloor\};$$ 
 $$\quad  \cR_{u_1}=\Lambda_{d_u}(u_1), \quad \cR_{u_2}=\Lambda_{d_u}(u_2),\quad \cR_{v_1}=\Lambda_{d_v}(v_1),\quad \cR_{v_2}=\Lambda_{d_v}(v_2).$$
  Recall the coupling in Lemma \ref{lem:extended to FK}, the extended Ising configuration can be understood as an FK-configuration with a random signal assigned to each FK-cluster.
For any $I\subset\{u_1,u_2,v_1,v_2\}$
and FK-configuration $\omega$ on ${E}(\Gamma)$, if there exists $x\in I$ such that $\cC_x\cap \intB\Gamma=\emptyset$ and $\cC_x\cap I=\{x\}$ (here we recall that $\mathcal C_x$ is the FK-cluster containing $x$), then flipping the signal of $\cC_x$ changes the value of $\sigma^I$ and thus the configuration $\omega$ contributes nothing to the expectation of $\sigma^I$.
Then we obtain that for any $I\subset\{u_1,u_2,v_1,v_2\}$
\begin{equation*}
    |\langle\sigma^I\rangle^{\xi}_{\Gamma}|\le \phi^\xi_{\Gamma} \left (\bigcap_{x\in I}\left\{x\longleftrightarrow \intB\Gamma\cup (I\setminus\{x\})\right\}\right )\le \phi^\xi_{\Gamma} \left (\bigcap_{x\in I}\left\{x\longleftrightarrow \intB \mcc R_x\right\}\right ),
\end{equation*}
where the last inequality follows from the choice of $\mcc R_x$. Recalling Lemma~\ref{lem:extended to FK}, 
% the measure $\phi_{\Gamma}^\xi$ corresponds to the measure $\bar\mu_{\Gamma}^{\xi}$ and
we see that
the event $\{x\longleftrightarrow \intB\mcc R_x\}$ in the FK Ising model is equivalent to the event $\mathtt {Con}^+(x,\mcc R_x)\cup\mathtt{Con}^-(x,\mcc R_x)$ in the extended Ising model, where $\mathtt {Con}^+(x,\mcc R_x)$ (respectively,  $\mathtt {Con}^-(x,\mcc R_x)$) is the event that there exists a path connecting $x$ and $\intB \mcc R_x$ on which all the vertex spins and edge spins are plus (respectively, minus).
It is obvious that $\con^+(x, \mathcal R_x)$ (respectively, $\con^-(x, \mathcal R_x)$)  is an increasing (respectively, decreasing) event under the extended Ising measure. For $\zeta = (\zeta_x)_{x\in I}$ with $\zeta_x \in \{-1, 0, 1\}^{\partial \mathcal R_x}$, define 
 $\Psi(\zeta)=\bar\mu^{\xi}_{\Gamma}(\bar\sigma|_{\intB\mcc R_x}=\zeta_x\mbox{ for } x\in I)$. Then we have {(recall our notations in Remark~\ref{rmk:connecting events for extended Ising})}
%(note that the connection events below are for connection in FK clusters or its counterparts given in Lemma \ref{lem:extended to FK})
 \begin{align}
\bar\mu_{\Gamma}^{\xi}\left(\bigcap_{x\in I}\left\{x\longleftrightarrow \intB \mcc R_x\right\}\right )\stackrel{\mbox{DMP}}=&\sum_{\zeta}\Psi(\zeta)\prod_{x\in I}\bar\mu_{{\mcc R_x}}^{\zeta_x}\big(\mathtt {Con}^+(x,\mcc R_x)\cup\mathtt{Con}^-(x,\mcc R_x)\big)\nonumber\\
\stackrel{\mbox{CBC}}\leq&\sum_{\zeta}\Psi(\zeta)\prod_{x\in I}\big[\bar\mu_{{\mcc R_x}}^{+}(\mathtt {Con}^+(x,\mcc R_x))+\bar\mu_{{\mcc R_x}}^{-}(\mathtt{Con}^-(x,\mcc R_x))\big]\nonumber\\ 
\stackrel{(*)}=~&\prod_{x\in I}\big[\bar\mu_{{\mcc R_x}}^{+}(\mathtt {Con}^+(x,\mcc R_x))+\bar\mu_{{\mcc R_x}}^{-}(\mathtt{Con}^-(x,\mcc R_x))\big]\nonumber\\
\stackrel{(**)}= \;&2^{{|I|}}\prod_{x\in I}\phi^{\mathrm w}_{\mcc R_x}(x\longleftrightarrow \intB \mcc R_x),\label{eq: multi-point Fk upper-bound}
 \end{align}
where (*) follows from the law of total probability and (**) follows from Lemma~\ref{lem:extended to FK}.
Combined with \eqref{eq: second moment expansion}and  \eqref{eq: FK one arm event exponent}, it yields that
\begin{equation*}\label{eq: second moment upper-bound for pre-disagreement crossing1}
    \begin{aligned}
        &\sum_{\substack{u_1\neq u_2\in \cL_1\\v_1\neq v_2\in \cL_2}}\langle\1_{\rc_{u_1,v_1}}\1_{\rc_{u_2,v_2}}\rangle^{\xi/\xi}_{{\Gamma}}\\\le~& 
     \sum_{\substack{u_1\neq u_2\in \cL_1\\v_1\neq v_2\in \cL_2}}\sum_{\substack{I\cup J=\{u_1,u_2,v_1,v_2\}\\I\cap J=\emptyset}}\prod_{x\in I}\phi^{\mathrm w}_{\cR_x}(x\longleftrightarrow\intB\cR_x)\prod_{x\in J}\phi^{\mathrm w}_{\cR_x}(x\longleftrightarrow\intB\cR_x)\\\stackrel{\eqref{eq: FK one arm event exponent}}{\le}&
     \sum_{\substack{u_1\neq u_2\in \cL_1\\v_1\neq v_2\in \cL_2}}C_4d_u^{-\frac{1}{4}}\cdot d_v^{-\frac{1}{4}}\le \sum_{\substack{u_1\neq u_2\in \cL_1\\v_1\neq v_2\in \cL_2}}C_5[dist(u_1,u_2)]^{-\frac{1}{4}}\cdot [dist(v_1,v_2)]^{-\frac{1}{4}}
     \\\le~& C_6M^{\frac{7}{2}}.
    \end{aligned}
\end{equation*}
Hence we obtain the following upper bound on the second moment: \begin{equation}\label{eq: second moment upper-bound for pre-disagreement crossing2}
    \begin{aligned}
        &\sum_{\substack{u_1, u_2\in \cL_1\\v_1, v_2\in \cL_2}}\langle\1_{\rc_{u_1,v_1}}\1_{\rc_{u_2,v_2}}\rangle^{\xi/\xi}_{{\Gamma}}\\\le~&
     \sum_{\substack{u_1\neq u_2\in \cL_1\\v_1\neq v_2\in \cL_2}}\langle\1_{\rc_{u_1,v_1}}\1_{\rc_{u_2,v_2}}\rangle^{\xi/\xi}_{{\Gamma}}+C_7M^3
     \le C_8M^{\frac{7}{2}}.
    \end{aligned}
\end{equation}
Combining \eqref{eq: crossing event from line to line}, \eqref{eq: first moment lower-bound for pre-disagreement crossing}, \eqref{eq: second moment upper-bound for pre-disagreement crossing2} and applying the Paley-Zygmund inequality, we obtain that \begin{equation*}
    \bar\mu^{\xi/\xi}_{\Gamma}(\cA(\Gamma))\ge \frac{\Big[\sum_{u\in \cL_1, v\in \cL_2}
    % {\substack{u\in \cL_1\\v\in \cL_2}
\langle\1_{\rc_{u,v}}\rangle^{\xi/\xi}_{{\Gamma}}\Big]^2}{\sum_{\substack{u_1, u_2\in \cL_1\\v_1, v_2\in \cL_2}}\langle\1_{\rc_{u_1,v_1}}\1_{\rc_{u_2,v_2}}\rangle^{\xi/\xi}_{{\Gamma}}}\ge C_9.
\end{equation*}
Thus we finish the proof of \eqref{eq: RSW with an agreement boundary in 0 external field case} and the desired result follows from \eqref{eq: agreement boundary probaility} and \eqref{eq: DMP for outmost agreement boundary}.
\end{proof}

We now show the aforementioned BK-type inequality.

\begin{defi}\label{def: BK event}
 Let $\cont(M_1,M_2)$ be the event that there exist two pre-disagreement crossings in the annulus $\overline{\Lambda_{M_1,M_2}}$ such that their pre-disagreement clusters (restricted in $\overline{\Lambda_{M_1,M_2}}$) are disjoint.
 {We say a circuit $\gamma\subset\overline{\Lambda_{M_1,M_2}}$ is a \textbf{connected circuit} if it is connected. Let $\con_{\star}(M_1,M_2)$ be the event that there does not exist a connected circuit $\gamma$ in $\Lambda_{M_1,M_2}$ such that $\gamma\subset\cD\cap\overline{\Lambda_{M_1,M_2}}$.} 
 %\textcolor{blue}{We say an annulus $\Lambda_{M_1,M_2}$ contains a \textbf{connected circuit} if there exists a connected subset in the $\overline{\Lambda_{M_1,M_2}}$ that separates the inner and outer boundaries of $\Lambda_{M_1,M_2}$. Let $\con_{\star}(M_1,M_2)$ be the event that there does not exist a connected circuit $\gamma$ in $\Lambda_{M_1,M_2}$ such that $\gamma\subset\cD\cap\overline{\Lambda_{M_1,M_2}}$.} 
\end{defi}
\begin{lem}\label{lem: BK-type inequality for disagreement Ising model}
There exist constants $\nc\label{14},\delta>0$ such that for any $M_1<M_2$, any
boundary conditions $\xi^+,\xi^-$ on $\intB \Lambda_{M_1+1}$ and any boundary conditions $\zeta^+,\zeta^-$ on $ \intB \Lambda_{M_2}$, \begin{equation*}\label{eq: BK-type inequality for disagreement Ising model}
        \bar\mu^{\xi^+/\xi^-,\zeta^+/\zeta^-}_{\Lambda_{M_1,M_2},0}(\cont(M_1,M_2))
        %\le [\bar\mu^{+/-,+/-}_{\Lambda_{M_1,M_2},0}\big(\con(M_1,M_2)\big)]^2
        \le \oc{14}(\frac{M_2}{M_1})^{-\frac{1}{4}-\delta}.
    \end{equation*}
\end{lem}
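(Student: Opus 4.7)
The proof splits into the two inequalities, and I sketch each separately.

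\textbf{Plan for the second inequality.} By CBC (Lemma~\ref{lem:CBC}), the event $\con(M_1,M_2)$ is increasing in the disagreement partial order, so it suffices to prove $\bar\mu^{+/-,+/-}_{\Lambda_{M_1,M_2},0}(\con) \le C(M_2/M_1)^{-1/8}$; squaring then gives the bound with some constant $\oc{14}$. Under the $+/-,+/-$ boundary the two extended-Ising copies are independent, and by Lemma~\ref{lem:extended to FK} each is coupled to a critical FK-Ising with wired boundary on both $\intB\Lambda_{M_1+1}$ and $\intB\Lambda_{M_2}$. A pre-disagreement path from inner to outer forces every vertex on it to be $+1$ in copy $1$, and the Edwards--Sokal-type coupling then yields an FK one-arm connection in copy $1$ between the two wired boundaries (up to an argument handling random-spin clusters via an FKG/moment estimate). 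Plugging in the FK arm bound \eqref{0-field-annulus-crossing} closes this step.

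\textbf{Plan for the first (BK-type) inequality.} I plan to run a leftmost-cluster exploration argument. First cut $\Lambda_{M_1,M_2}$ along a radial ray into a topological rectangle, so that pre-disagreement crossings from $\intB\Lambda_{M_1+1}$ to $\intB\Lambda_{M_2}$ become top-to-bottom crossings. On $\cont$, there is a leftmost pre-disagreement cluster $\mcc C_L$ admitting a full top-to-bottom crossing; discovered by scanning from the left, $\mcc C_L$ is measurable with respect to a natural exploration $\sigma$-algebra. By DMP, conditional on $\mcc C_L$, the configuration in the region $R$ to the right of $\mcc C_L$ follows $\bar\mu_R^{(1,-1)/\mcc C_L,\,\xi^+/\xi^-,\,\zeta^+/\zeta^-}$, and on $\cont$ there must exist a further pre-disagreement crossing in $R$ disjoint from $\mcc C_L$. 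Summing over leftmost clusters, and using that $\sum_{\mcc C_L}\mathbb{P}(\mcc C_L\text{ is leftmost})=\bar\mu(\con)$, the claim reduces to bounding this conditional second-crossing probability by $\bar\mu(\con)$ itself.

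\textbf{Main obstacle.} The last step is genuinely delicate. The sub-region $R$ is strictly smaller than the full annulus (suggesting a smaller crossing probability) but carries an additional $(1,-1)$ pre-dis-maximal boundary along the $\mcc C_L$-interface (which by CBC pushes the crossing probability in the opposite direction). The naive CBC+DMP chain therefore yields the inequality in the wrong direction. To overcome this, I expect to construct a tailored monotone coupling, using the extended-Ising/FK correspondence (Lemma~\ref{lem:extended to FK}) together with FKG (Lemma~\ref{lem:FKG}), that injects every pre-disagreement crossing in $R$ (under the conditional measure) into a pre-disagreement crossing in the full annulus (under $\bar\mu^{\xi^+/\xi^-,\zeta^+/\zeta^-}_{\Lambda_{M_1,M_2}}$) while preserving measure dominance. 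As a fallback, one may settle for the constant-factor weaker bound $\bar\mu(\cont)\le C\bar\mu(\con)^2$, which still yields the lemma after absorbing $C$ into $\oc{14}$, and which can be obtained from the quasi-multiplicativity of FK-Ising arm events \cite{CDH16} combined with the polychromatic two-arm exponent bound for critical FK-Ising.
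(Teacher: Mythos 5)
Your plan for the first (BK-type) inequality correctly identifies the structure (explore the first crossing cluster, then bound the conditional probability of a second disjoint crossing) and, commendably, correctly identifies the exact point where a naive argument breaks. But the proposal does not resolve that point, and the resolution is the entire content of the lemma. The paper's fix has two ingredients you are missing. First, the exploration should reveal the crossing cluster $\mathbf C$ \emph{together with its exterior boundary} $\mathbf B=\extB\mathbf C\cap\Lambda_{M_1,M_2}$; the conditioning relevant to the unexplored region then sits on $\mathbf B$, which by construction consists of \emph{non}-pre-disagreement sites, i.e. sites where $\bar\sigma^+\le\bar\sigma^-$. This is the opposite of the $(1,-1)$ interface you describe: it is the \emph{favorable} direction for CBC (Lemma~\ref{lem:CBC}), so one may dominate the revealed boundary by an agreed boundary $\nu^+=\nu^-$ rather than fight against a pre-disagreement-maximal one. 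Second, once the boundary is agreed, the conditioning on $\mathbf B$ can be encoded as a ($\pm\infty$-valued) external field $\eta$ on $\mathbf B$, and the first inequality of Lemma~\ref{lem:crossing} — which says $\langle\1_{\con}\rangle^{+/-,+/-}_{\Lambda_{M_1,M_2},\eta}\le\langle\1_{\con}\rangle^{+/-,+/-}_{\Lambda_{M_1,M_2},0}$ uniformly over external fields, proved via the swap symmetry of Proposition~\ref{prop:swap} and the correlation inequality of \cite{DSS22} — bounds the conditional second-crossing probability by $\bar\mu^{+/-,+/-}_{\Lambda_{M_1,M_2},0}(\con)$. Your proposed ``tailored monotone coupling'' is not constructed, and your fallback via quasi-multiplicativity of FK arm events does not obviously apply, because $\cont$ concerns disjoint pre-disagreement clusters, not FK arms.

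Relatedly, your derivation of the second inequality has a gap: a pre-disagreement path does \emph{not} yield an FK open path in copy $1$. On such a path every vertex satisfies $(\bar\sigma^+_v,\bar\sigma^-_v)=(1,-1)$, but an edge only needs $\bar\sigma^+_e>\bar\sigma^-_e$, so its value may be $(0,-1)$, i.e. closed in copy $1$ and open in copy $2$. The path can therefore alternate between the two FK configurations, and no one-arm event in a single copy follows; the parenthetical ``argument handling random-spin clusters'' is precisely the missing step. The paper sidesteps this entirely: the second inequality is CBC plus the second inequality of Lemma~\ref{lem:crossing} (which bounds $\langle\1_{\con}\rangle$ by $4\,\phi^{\mathrm w/\mathrm w}(\intB\Lambda_{M_1+1}\longleftrightarrow\intB\Lambda_{M_2})$ via the $\fQ$-contraction and swapping argument) combined with \eqref{0-field-annulus-crossing}.
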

\begin{proof}
    %The second inequality is a direct result of Lemma~\ref{lem:crossing} and \eqref{0-field-annulus-crossing}. Now we show the first inequality. 
    List $\intB M_2$ as $\{x_1,x_2,\cdots,x_n\}$. 
    For each $x$, if $x$ is not a pre-disagreement, then let $\mathbf C_x = \emptyset$ and $ \mathbf B_x=\{x\}$; if $x$ is a pre-disagreement, then let $\mathbf C_x$ be the cluster of pre-disagreement (restricted in $\overline{\Lambda_{M_1,M_2}}$) that contains $x$, and in addition define
    $$\mathbf B_x=\extB\mathbf C_x\cap\overline{\Lambda_{M_1,M_2}},$$
    where $\extB\mathbf C_x=\{e={\{x,y\}} \notin \mathbf C_x:x\in \mathbf C_x,\text{ or } y\in\mathbf C_x\}\cup\{x\not\in \mathbf C_x:\exists\, {\{x,y\}}\in \mathbf C_x\}$ is the exterior boundary of $\mathbf C_x.$
    
    % let $\mathbf B_x$ be the external boundary of $\mathbf C_x$. Here and in the rest of the proof, when we refer to the boundary of a region,  we always treat it as a subgraph of $\Lambda_{M_2}\setminus\Lambda_{M_1}$.

%We say $(\mathsf B, \mathsf C)$ is an \textbf{admissible} pair if $\mathsf B=\extB\mathsf C\cap\Lambda_{M_1,M_2}$; here we used a new font for $ \mathsf B$ and $\mathsf C$ in order to indicate that they are deterministic sets. Then, for each admissible pair $(\mathsf B, \mathsf C)$,
%we have that the event $\{{\mathbf B_x = \mathsf B},\mathbf C_x = \mathsf C\}$ is measurable with spins on $\mathsf C\cup \mathsf B$.

    Now we can check $\mathbf C_{x_1},\mathbf C_{x_2}, \cdots$ in order, and assume that $\mathbf C_{x_\tau}$ is the first crossing cluster we have encountered. (If there is no crossing cluster, we set $\tau=\infty$.) So $\mcc \con(M_1,M_2)$ is equivalent to the event that $\{\tau\leq n\}$.
    Define
    $$\mathbf B=\bigcup_{i=1}^{{\min\{\tau,n\}}} \mathbf B_{x_i}, \quad  \mathbf C=\bigcup_{i=1}^{{\min\{\tau,n\}}} \mathbf C_{x_i}.$$ Note that $\extB \mathbf C\cap\overline{\Lambda_{M_1,M_2}}\subset\mathbf B$ but they may not equal {(this comes from the cases that $\mathbf B_x=\{x\}$). So we know the points in $\mathbf B$ could be agreements or anti-disagreements.}
    By definition of $\con$, we have 
$$\con(M_1,M_2)=\bigcup\limits_{\mathsf B,\mathsf C\subset \overline{\Lambda_{M_1,M_2}}} \{\mathbf B=\mathsf B,  \mathbf C=\mathsf C, \tau\leq n\}.$$ 
Here we used a new font for $ \mathsf B$ and $\mathsf C$ in order to indicate that they are deterministic sets.
We point out that 
    in the union above, some of the events of the form
     $\{\mathbf B=\mathsf B,  \mathbf C=\mathsf C, \tau\leq n\}$ are empty. In light of this, we say a pair of deterministic sets $(\mathsf B,\mathsf C)$ is valid if the event $\{\mathbf B=\mathsf B,  \mathbf C=\mathsf C, \tau\leq n\}$ is not empty, {and there does not exist a connected circuit $\gamma$ in $\mathsf C$ that separates the inner and outer boundary of $\Lambda_{M_1,M_2}$.}
    In addition, recall $\mcc D$ is the set of pre-disagreements, and we see that for each valid pair $(\mathsf B, \mathsf C)$, the event
$\{\mathbf B=\mathsf B,  \mathbf C=\mathsf C, \tau\leq n\}$ is equivalent to $\{\mathsf B\subset\mcc D^c, \mathsf C\subset \mcc D\}$. Moreover, since $\mathsf C$ contains a pre-disagreement crossing between the inner and outer boundary of $\Lambda_{M_1,M_2}$, we have that each connected circuit $\gamma\subset\overline{\Lambda_{M_1,M_2}}$ must intersect with $\mathsf C$ and thus is contained in $\mathsf C$. As a result, we conclude that 
\begin{equation}
  \con(M_1,M_2)\cap\con_{\star}(M_1,M_2)=\bigcup_{\text{valid} (\mathsf B, \mathsf C)} \{\mathsf B\subset\mcc D^c, \mathsf C\subset \mcc D\}.\label{decompose-into-valid-events}  
\end{equation}

By definition, $\cont(M_1,M_2)\subset\con(M_1,M_2)\cap\con_{\star}(M_1,M_2)$, so we have  
    \begin{align}
    &\bar\mu^{\xi^+/\xi^-,{\zeta^+/\zeta^-}}_{\Lambda_{M_1,M_2},0}(\cont(M_1,M_2))\nonumber\\
    =&\sum_{\text{ valid } (\mathsf B,\mathsf C)}\bar\mu^{\xi^+/\xi^-,{\zeta^+/\zeta^-}}_{\Lambda_{M_1,M_2},0}(\cont(M_1,M_2)\mid \mathsf B\subset\mcc D^c, \mathsf C\subset \mcc D)\cdot\bar\mu^{\xi^+/\xi^-,{\zeta^+/\zeta^-}}_{\Lambda_{M_1,M_2},0}(\mathsf B\subset\mcc D^c, \mathsf C\subset \mcc D).
    \label{decomposition-of-exploration}\end{align}
    % On the other hand, let $$B=\bigcup_{i=1}^m B_{x_i}, C=\bigcup_{i=1}^m C_{x_i},$$ then 
    % $E(\mss B,\mss C)$ is equivalent to the event $\mcc D_C\cap \mcc E_B$. Here   $\mcc D_c$ is the event that all the points in $C$ are pre-dis, and $\mcc E_B$ stands for the event that every point in $B$ is not pre-dis.
    % With \eqref{decompose-into-valid-events} at hand, all we need to do is upper bound $\bar\mu^{\xi^+/\xi^-,\nu^+/\nu^-}_{\Lambda_{M_1,M_2},0}(\cont(M_1,M_2)\mid \mathsf B\subset\mcc D^c, \mathsf C\subset \mcc D).$ 
    
For any valid pair $(\mathsf B, \mathsf C)$
 % on the event $\{\mathsf B\subset\mcc D^c, \mathsf C\subset\mcc D\}$,
 % we know all the spins in $\mathsf B$ are agreements since anti-disagreements and pre-disagreements cannot be adjacent. In light of this,  
and any $\nu^+,\nu^-\in\{-1,0,1\}^\mathsf B$,
define $$\Psi_\mathsf B(\nu^+,\nu^-)=\bar\mu^{\xi^+/\xi^-,\zeta^+/\zeta^-}_{\Lambda_{M_1,M_2},0}(\bar\sigma^+|_\mathsf B=\nu^+,\bar\sigma^-|_\mathsf B=\nu^-~\Big|~ \mathsf B\subset\mcc D^c, \mathsf C\subset \mcc D).$$
    % Then we have $\Psi_\mathsf B(\{\nu^+ \leq \nu^-\}) = 1$ since $\mathsf B\subset \mcc D^c.$ 
    Thus, by the DMP and CBC (as in Lemma~\ref{lem:CBC}), we have 
    \begin{align}
&\bar\mu^{\xi^+/\xi^-,\zeta^+/\zeta^-}_{\Lambda_{M_1,M_2},0}(\cont(M_1,M_2)\mid\mathsf B\subset\mcc D^c, \mathsf C\subset \mcc D)\nonumber\\
    \stackrel{\mbox{DMP}}{=}&\sum_{\nu^+,\nu^-}\Psi_\mathsf B(\nu^+,\nu^-)\bar\mu^{\xi^+/\xi^-,\zeta^+/\zeta^-}_{\Lambda_{M_1,M_2},0}(\con(M_1,M_2,\overline{\Lambda_{M_1,M_2}}\setminus(\mathsf B\cup \mathsf C))~\Big|~ \bar\sigma^+|_\mathsf B=\nu^+, \bar\sigma^-|_\mathsf B=\nu^-)\nonumber\\
    % \stackrel{\mbox{CBC}}{\le}&\sum_{\nu^+,\nu^-}\Phi_\mathsf B(\nu^+,\nu^-)\bar\mu^{+/-,+/-}_{\Lambda_{M_1,M_2},0}(\con(M_1,M_2,\Lambda_{M_1,M_2}\setminus( \mathsf B\cup \mathsf C))\mid \bar\sigma^+|_\mathsf B=\nu^+,\bar\sigma^-|_\mathsf B=\nu^+)\nonumber\\
\stackrel{\mbox{CBC}}{\le}&\sum_{\nu^+,\nu^-}\Psi_\mathsf B(\nu^+,\nu^-)\bar\mu^{+/-,+/-}_{\Lambda_{M_1,M_2},0}(\con(M_1,M_2)~\Big|~ \bar\sigma^+|_\mathsf B=\bar\sigma^-|_\mathsf B=\nu^+),\label{eq:DMP-of-BK}
    \end{align}
where $\con(M_1,M_2,\overline{\Lambda_{M_1,M_2}}\setminus( \mathsf B\cup \mathsf C))$  is the event that there is a pre-disagreement crossing connecting $\intB\Lambda_{M_1+1}$ and $\intB\Lambda_{M_2}$ in the region $\overline{\Lambda_{M_1,M_2}}\setminus( \mathsf B\cup \mathsf C)$, and the last inequality follows from CBC (note that $(\nu^+, \nu^+) \succeq (\nu^+, \nu^-)$ since $\mathsf B \subset \mathcal D^c$) and the inclusion relation of the two events under consideration.  
% And the measure $\bar\mu^{\xi^+/\xi^-,\zeta^+/\zeta^-,\nu^+/\nu^-}_{\Lambda_{M_1,M_2}\setminus(\mathsf B\cup \mathsf C),0}$ stands for the disagreement Ising model on the domain $\Lambda_{M_1,M_2}\setminus(\mathsf B\cup \mathsf C)$ with boundary conditions $\xi^+/\xi^-,\zeta^+/\zeta^-,\nu^+/\nu^-$ on $\intB\Lambda_{M_1+1}\setminus(\mathsf B\cup \mathsf C),$ $\intB\Lambda_{M_2}\setminus(\mathsf B\cup \mathsf C)$ and $\mathsf B$ respectively.
Now we introduce an external field $\{\eta_x\}_{x\in\Lambda_{M_1,M_2}}$ with 
$$\eta_x=\left\{
\begin{aligned}
    &+\infty,\quad \text{ if } x\in\mathsf B\text{ and }\nu^{+}_x=1;\\
    &-\infty,\quad \text{ if } x\in\mathsf B\text{ and }\nu_x^{+}=-1;\\
    &0,~~~~~~\quad \text{otherwise}.
\end{aligned}
\right.$$
Then we have 
\begin{align*}
     \bar\mu^{+/-,+/-}_{\Lambda_{M_1,M_2},0}(\con(M_1,M_2)\mid \bar\sigma^+|_\mathsf B=\bar\sigma^-|_\mathsf B=\nu^{+})&=
    \bar\mu^{+/-,+/-}_{\Lambda_{M_1,M_2},\eta}(\con(M_1,M_2))\\&\le \bar\mu^{+/-,+/-}_{\Lambda_{M_1,M_2},0}(\con(M_1,M_2))
\end{align*}from Lemma~\ref{lem:crossing}.
   Plugging it into \eqref{eq:DMP-of-BK} and recalling
    \eqref{decompose-into-valid-events} and \eqref{decomposition-of-exploration},
we obtain that \begin{align}
    &\bar\mu^{\xi^+/\xi^-,{\zeta^+/\zeta^-}}_{\Lambda_{M_1,M_2},0}(\cont(M_1,M_2))\nonumber\\\le~& \bar\mu^{\xi^+/\xi^-,{\zeta^+/\zeta^-}}_{\Lambda_{M_1,M_2},0}(\con(M_1,M_2)\cap\con_{\star}(M_1,M_2))\times\bar\mu^{+/-,+/-}_{\Lambda_{M_1,M_2},0}(\con(M_1,M_2)).\label{eq:BK1}
\end{align} Since $\con(M_1,M_2)$ is an increasing event and $\con_{\star}(M_1,M_2)$ is an decreasing event, we obtain from FKG property(Lemma~\ref{lem:FKG}) that \begin{align}
    &\bar\mu^{\xi^+/\xi^-,{\zeta^+/\zeta^-}}_{\Lambda_{M_1,M_2},0}(\con(M_1,M_2)\cap\con_{\star}(M_1,M_2))\nonumber\\\le~& \bar\mu^{\xi^+/\xi^-,{\zeta^+/\zeta^-}}_{\Lambda_{M_1,M_2},0}(\con(M_1,M_2))\times\bar\mu^{\xi^+/\xi^-,{\zeta^+/\zeta^-}}_{\Lambda_{M_1,M_2},0}(\con_{\star}(M_1,M_2)).\label{eq:BK2}
\end{align} Combining \eqref{eq:BK1} and \eqref{eq:BK2} with Lemma~\ref{lem:crossing}, it suffices to show \begin{equation}\label{eq:BK4}
    \bar\mu^{\xi^+/\xi^-,{\zeta^+/\zeta^-}}_{\Lambda_{M_1,M_2},0}(\con_{\star}(M_1,M_2))\le (\frac{M_2}{M_1})^{-\delta}
\end{equation} for some $\delta>0$. Let $K=\lfloor\log_2(\frac{M_2}{M_1})\rfloor$ and $m_k=M_12^k(0\le k\le K)$. Noting that $\con_{\star}(M_1,M_2)\subset\cap_{k=0}^{K-1}\con_{\star}(m_k,m_{k+1})$ and applying DMP and CBC, we obtain that \begin{equation}\label{eq:BK3}
    \bar\mu^{\xi^+/\xi^-,{\zeta^+/\zeta^-}}_{\Lambda_{M_1,M_2},0}(\con_{\star}(M_1,M_2))\le \prod_{k=0}^{K-1}\bar\mu^{-/+,{-/+}}_{\Lambda_{m_{k},m_{k+1}},0}(\con_{\star}(m_{k},m_{k+1})).
\end{equation} Applying Theorem \ref{thm: RSW for dis at 0 external field} four times for $\iota=4$ (see Figure~\ref{fig:good-box} for an illustration of concatenating four crossings into a contour), we obtain that \begin{equation*}
    \bar\mu^{-/+,{-/+}}_{\Lambda_{m_{k},m_{k+1}},0}(\con_{\star}(m_{k},m_{k+1}))\le C_1
\end{equation*} for some $0<C_1<1$. Combined with \eqref{eq:BK3}, it completes the proof of \eqref{eq:BK4}.
\end{proof}

\subsection{Proof of Lemma~\ref{lem: small perturbation for crossing probability}}
In this subsection, we give the proof of Lemma~\ref{lem: small perturbation for crossing probability} assuming Lemma~\ref{lem: bound for the spin average conditioned on the crossing}.
\begin{proof}[Proof of Lemma~\ref{lem: small perturbation for crossing probability}]
We expand the left-hand side of \eqref{eq: good external field for crossing event} as follows:
\begin{equation*}
    \begin{aligned}
            \langle ~\1_{\hc}\prod_{v\in\cR}[1+\bar\sigma_v^+\tanh(\veps h_v)]\cdot [1+\bar\sigma_v^-\tanh(\veps h_v)] ~\rangle_{\cR,0}^{-/+}= \langle ~\1_{\hc} ~\rangle_{\cR,0}^{-/+}+\sum_{k+m\ge 1} \Phi_{k,m}(h),
    \end{aligned}
\end{equation*} 
where in the sum $k$ and $m$ are non-negative integers and 
$$\Phi_{k,m}(h)= \sum_{\substack{ I,J\subset\cR \\|I|=k, |J|=m}}\langle\1_{\hc}\prod_{x\in I}\bar\sigma_{x}^+\prod_{y\in J}\bar\sigma_{y}^- \rangle_{\cR,0}^{-/+} \prod_{x\in I}\tanh(\veps h_{x})\prod_{y\in J}\tanh(\veps h_{y}).$$ 
Let $a_{I,J}=\langle\1_{\hc}\prod_{x\in I}\bar\sigma_{x}^+\prod_{y\in J}\bar\sigma_{y}^- \rangle_{\cR,0}^{-/+}$ and let $A_r$ be defined as in Lemma \ref{lem: concentration for sum of products of tanh2}. Applying Lemma \ref{lem: bound for the spin average conditioned on the crossing}, we obtain \begin{align}
A_r^2&~=C_1^{r}M^{2r}\sum_{|I|=k,|J|=m,|I\cap J|\ge r}|a_{I,J}|^2\stackrel{\eqref{eq: bound for the spin average conditioned on the crossing}}\le C_2^{m+k}M^{2r}\sum_{|I|=k,|J|=m,|I\cap J|\ge r}F_{\cR}(I,J)^2\nonumber\\&\le C_2^{m+k}M^{2r} \sum_{\substack{I,J\subset\cR,\\|I|=k-r,|J|=m-r}}\sum_{\substack{U\subset\cR\setminus(I\cup J),\\|U|=r}}F^2_{\cR}(I\cup U,J\cup U)\nonumber\\&\stackrel{\eqref{eq: upper-bound for sum of of FUs}}\le C_2^{m+k}M^{2r} \sum_{\substack{I,J\subset\cR,\\|I|=k-r,|J|=m-r}}\oc{15}^rM^{\frac{3r}{2}}F^2_{\cR}(I,J)\times\frac{[(m+k-2r+1)\cdots(m+k-r)]^{\frac{1}{2}}}{r!}\nonumber\\&\stackrel{\eqref{eq: upper-bound for sum of squares of Fs}}\le C_3^{m+k} M^{\frac{7(m+k)}{4}}\times\frac{[(m+k-2r)!]^{\frac{1}{4}}}{(m-r)!(k-r)!}\times\frac{[(m+k-2r+1)\cdots(m+k-r)]^{\frac{1}{2}}}{r!}.\label{eq:4}
\end{align}
In order to upper-bound the term on the right-hand side above, we calculate that 
\begin{align}
&\frac{[(m+k-2r)!]^{\frac{1}{4}}}{(m-r)!(k-r)!}\times\frac{[(m+k-2r+1)\cdots(m+k-r)]^{\frac{1}{2}}}{r!}\le \Big(\frac{\binom{m+k-r}{m-r,k-r,r}}{(m-r)!(k-r)!r!}\Big)^{\frac{1}{2}}\nonumber\\&\le \frac{3^{\frac{m+k-r}{2}}}{[(\frac{m+k}{4})!]^{\frac{1}{2}}}\le \frac{3^{\frac{m+k}{2}}}{(\frac{m+k}{8})^{\frac{m+k}{16}}}=\Big[\frac{3^{\frac{1}{4}}}{(\frac{m+k}{8})^{\frac{1}{32}}}\Big]^{2m+2k}.\label{eq:bound-of-km-factorial1}\end{align}
Combining \eqref{eq:4} and \eqref{eq:bound-of-km-factorial1}, we obtain that $$\max_{0\le r\le \min\{m,k\}} A_r\le C_3^{\frac{m+k}{2}} M^{\frac{7(m+k)}{8}}\times\Big[\frac{3^{\frac{1}{4}}}{(\frac{m+k}{8})^{\frac{1}{32}}}\Big]^{m+k}.$$
Combined with Lemma \ref{lem: concentration for sum of products of tanh2},
% \cite[Theorem 1.2]{AW15}\footnote{In our case, $\Phi_{m,k}$ can be viewed as a product of two homogeneous polynomials with degree $m$ and $k$ for $\tanh(\eps h_i)$. } (see also \cite{Nel73,AL12}) 
%and \ref{lem: bound for the spin average conditioned on the crossing},
it yields that \begin{equation}
  \P(|\Phi_{k,m}(h)|>(\eps M^{\frac{7}{8}})^{(m+k)/2}/3)\le C_4^{-1}\exp\left(-C_4\sqrt{\eps^{-1} M^{-\frac{7}{8}}}\times\frac{(\frac{m+k}{8})^{\frac{1}{32}}}{3^{\frac{1}{4}}}\right ).  \label{eq:bound-of-Phi_mk}
\end{equation} 
%In order to bound the term on the right-hand side above, we calculate that 
%\begin{equation}
%\frac{k!m!}{[(k+m)!]^{\frac{1}{4}}}= \frac{(k!m!)^{\frac{3}{4}}}{\binom{m+k}{m}^{\frac{1}{4}}}\ge \frac{(\lfloor\frac{k+m}{2}\rfloor!)^{3/2}}{\binom{m+k}{m}^{\frac{1}{4}}}\ge \frac{(\frac{k+m}{8})^{\frac{3(k+m)}{8}}}{2^{\frac{m+k}{4}}}=\left [ \frac{(\frac{k+m}{8})^{\frac{3}{8}}}{2^{\frac{1}{4}}}  \right ]^{k+m}.\label{eq:bound-of-km-factorial}\end{equation}
Note that $\sum_{m+k\ge 1}^{\infty}(\eps M^{\frac{7}{8}})^{(m+k)/2}\le (2+C_5)\sqrt{\eps M^{\frac{7}{8}}}$ for some constant $C_5>0$ relying on $\eps M^{\frac{7}{8}}$, and we can assume $C_5<1$ by choosing $\oc{17}>0$ small enough.
Then we conclude that 
\begin{align*}
    &\P(|\sum_{m+k\ge 1}^{\infty}\Phi_{k,m}{(h)}| > \sqrt{\eps M^{\frac{7}{8}}})
    \le \sum_{m+k\ge 1}^{\infty}\P(|\Phi_{k,m}(h)|>(\eps M^{\frac{7}{8}})^{(m+k)/2}/3)\\ \le& \sum_{m+k\ge 1}^{\infty}C_4^{-1}\exp\left(-C_4\sqrt{\eps^{-1} M^{-\frac{7}{8}}}\times
    \frac{(\frac{m+k}{8})^{\frac{1}{32}}}{3^{\frac{1}{4}}}\right)\le C_6^{-1}\exp(-C_6\sqrt{\eps^{-1} M^{-\frac{7}{8}}}).\qedhere
\end{align*} 
\end{proof}

\subsection{Proof of Lemma~\ref{lem: bound for the spin average conditioned on the crossing}}\label{sec: bound for the spin average conditioned on the crossing}\label{Sec:4.3}
{In this subsection, we will drop all the external field in the subscript since it is always $0$.}
%In order to control the numerator of \eqref{eq: expectation for dis crossing}, we need to prove a similar result as Lemma~\ref{lem: upper-bound for the sum of squares of k point function}.
%\begin{defi}\label{def:upper-bound function for spin average}
%    For $I=\{x_1,\cdots,x_m\}, J=\{y_1,\cdots,y_k\}\subset \cR, x\in I\cup J$, define  $$d_{\cR,I,J}(x)=\min\{dist(x,\intB\cR),dist(x,\intB\cR'),\frac{1}{2}\min_{z\in I\cup J\setminus\{x\}}dist(x,z)\}-1.$$ Let $$F_{\cR}(I,J)=\prod_{i=1}^m\max\{D_{\cR, I, J}(x_i),1\}^{-\frac{1}{4}}\cdot \prod_{j=1}^k\max\{D_{\cR, I, J}(y_j),1\}^{-\frac{1}{4}}.$$
%\end{defi}
%\begin{lem}\label{lem: bound for the spin average conditioned on the crossing}
%    There exists an absolute constant $\nc\label{13}$ such that for any $I,J\subset \cR$, we have \begin{equation}\label{eq: bound for the spin average conditioned on the crossing}
%|\langle\1_{\rc}\prod_{i=1}^m\bar \sigma_{x_i}^+\prod_{j=1}^k\bar \sigma_{y_j}^- \rangle_{\lamn,\eps \Tilde h}^{+/-}|\le \oc{13}^{m+k} F_{\cR}(I,J).
%    \end{equation}
%\end{lem}
We first sketch our proof ideas. By the Edward-Sokal coupling, we can first consider the FK-configurations $\omega^{\pm}$ under $\phi^{\mp}_{\cR}$ and then 
let $s^\pm$ be $\{1,-1\}$-valued random vectors which record the signal on each of the FK-clusters in $\omega^\pm$. 
In light of Lemma~\ref{lem:extended to FK}, we can then define the configuration for the disagreement Ising model as follows: for edges within an FK-cluster, we let their edge spins in the (corresponding copy of the) extended Ising model have the same signal as the vertex spins in this cluster; for edges that are between different FK-clusters, we let their edge spins in the extended Ising model be 0. 
Thus, the configuration pair $(\bar \sigma^+, \bar \sigma^-)$ of the disagreement Ising model is in bijection with $(\omega^+, \omega^-, s^+, s^-)$.

Our main intuition is to map a configuration (for the disagreement Ising model) to another  such that their contributions to the targeted sum cancel each other. Naturally, we wish to construct the aforementioned mapping whenever it is possible, and then the targeted sum is bounded by the contribution from the collection of configurations where such mapping is unavailable (our natural hope is that these configurations are rare). In order to implement this strategy, a natural attempt is to flip the signal in some FK-cluster $\mathcal C_z^\xi$ for $z\in I\cup J$ and $\xi\in\{+,-\}$. Here $\mathcal C_z^\pm$ is the FK-cluster of $z$ in the copy with the {$\mp$-boundary condition (and we remark that we use the superscripts $\pm$ due to the same reason as explained in Remark~\ref{rmk: weird +/- notation}).}   There are three potential issues with this strategy:
\begin{enumerate}[(i)]
\item\label{item: RSW1} If $\mathcal C_z^\xi \cap \partial\mathcal R \neq \emptyset$, then the signal of $\mathcal C_z^\xi$ will be restricted by the boundary condition.
\item\label{item: RSW2} If $\mathcal C_z^\xi$ has an even number of intersections with {$I$ (for $\xi=+$) or $J$ (for $\xi=-$)}, then flipping the signal of $\mathcal C_z^\xi$ may not change the value of the product, and as a result, leads to no cancellation. 
\item\label{item: RSW3} If flipping the signal in $\mathcal C_z^\xi$ changes the {value of $\1_{\hc}$}, then it also ruins the desired cancellation. 
\end{enumerate}

We next try to address these three issues. It is clear that if $\mathcal C_z^\xi $ is local (i.e., $\mathcal C_z^\xi$ does not intersect either $\partial\mathcal R$ or $(I\cup J) \setminus \{z\}$), then Issue \eqref{item: RSW1} and \eqref{item: RSW2} disappear. With this motivation in mind, we make the following definition. (Note that we will slightly change the definition of `local' for later convenience.)
Recalling Definition~\ref{def:upper-bound function for spin average}, we fix $I,J\subset \cR$, and drop the subscripts $\cR,I,J$ in $d_{\cR,I,J}(x)$.

\begin{defi}
     For $z\in I\cup J,$ 
     let {$\Lambda_{d(z)}(z)=\{z\}$} if $d(z)<0$.
     We say $\mcc C_z^\xi$ is \textbf{local} if $\mcc C_z^\xi\cap\intB\Lambda_{d(z)}(z)=\emptyset.$
    % We say a cluster $\cC^+$(or $\cC^-$) is \textbf{local} if it contains at least one point $x\in I$(or $y\in J$)  and $\cC^+=\cC_{x}^+\subset\Lambda_{d(x)}(x)$(or $\cC^-=\cC_{y}^-\subset\Lambda_{d(y)}(y)$). 
    Otherwise, we say it is \textbf{non-local}. 
\end{defi}
By \eqref{eq: FK one arm event exponent}, we know the probability of $\mcc C_z^{\xi}$ being non-local decays as ${(\max\{d(z),1\})}^{-\frac{1}{8}}$.

% (I think here we can insert Definition 4.5, but change tough to local, and change easy to non-local. After the definition, we can then also add one sentence or two on how we control the probability of non-local clusters.)

In order to address Issue \eqref{item: RSW3}, our main intuition is that if flipping a `local' FK-cluster $\mathcal C$ changes the event $\hc$, then there should exist two disjoint pre-disagreement clusters that connect $\mathcal C$ to the boundary of a larger box containing $\mathcal C$ (see Figure~\ref{fig: two disjoint disagreement crossing} for an illustration). Therefore, an application of Lemma~\ref{lem: BK-type inequality for disagreement Ising model} gives a good upper bound on the probability for this to occur. 
\begin{defi}

For $z\in I\cup J$, we say $\mathcal C_z^\xi$ is \textbf{excellent} if it is local and flipping the signal on this cluster does not change the value of $1_{\hc}$. In addition, let $\ne$ be the
event that there exists no excellent cluster. We say a cluster is \textbf{pivotal} if it is local {but not excellent, i.e.,}
flipping the signal on this cluster changes the value of $1_{\hc}$.
\end{defi}

The contribution from configurations with at least one excellent cluster is 0, as incorporated in the next lemma.
\begin{lem}
\label{lem:cancellation-of-excellent-clusters}
We have that
   \begin{equation}
    \langle\1_{\hc}\prod_{x\in I} \bar\sigma_{x}^+\prod_{y\in J} \bar\sigma_{y}^- \rangle_{\cR}^{-/+}=\langle\1_\ne\1_{\hc}\prod_{x\in I} \bar\sigma_{x}^+\prod_{y\in J} \bar\sigma_{y}^- \rangle_{\cR}^{-/+}.\label{eq:cancellation-of-excellent-cluster}
    \end{equation}
\end{lem}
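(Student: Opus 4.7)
My plan is to construct a sign-reversing involution on the set of configurations with at least one excellent cluster. Using the Edwards--Sokal coupling of Lemma \ref{lem:extended to FK}, a configuration pair under $\bar\mu^{-/+}_{\cR,0}$ is encoded by the pair of FK-configurations $(\omega^+,\omega^-)$ together with a signal in $\{+1,-1\}$ on each FK-cluster. Since we are at zero external field, the signals of non-boundary clusters are independent uniform, while boundary clusters carry the signal dictated by the boundary condition. The product $\prod_{x\in I}\bar\sigma_x^+\prod_{y\in J}\bar\sigma_y^-$ factors through these signals, and crucially a local cluster meets $I\cup J$ at exactly one point (because $\Lambda_{d(z)}(z)$ excludes every $z'\in (I\cup J)\setminus\{z\}$ by the definition of $d$), so flipping the signal on a local cluster changes exactly one sign factor of the product.

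Fix $(\omega^+,\omega^-)$ and enumerate the local clusters in a canonical order $\mcc C_1,\mcc C_2,\ldots$ depending only on the FK data (for instance by the lex position of their unique representative point in $I\cup J$). For each signal configuration $s$ with $\1_{\ne^c}(s)=1$, set $\tau(s):=\min\{i:\mcc C_i\text{ is excellent in }s\}$ and define $\Phi(s)$ to be the configuration obtained by flipping the signal on $\mcc C_{\tau(s)}$. Three properties of $\Phi$ are then immediate: (i) $\Phi$ preserves $\bar\mu^{-/+}_{\cR,0}$, since signals of non-boundary clusters are i.i.d.\ uniform; (ii) $\Phi$ preserves $\1_\hc$, by the very definition of excellence of $\mcc C_{\tau(s)}$; (iii) $\Phi$ reverses the sign of $\prod_{x\in I}\bar\sigma_x^+\prod_{y\in J}\bar\sigma_y^-$, since $\mcc C_{\tau(s)}$ is local and contributes exactly one sign factor to the product.

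The main obstacle will be verifying that $\Phi$ is an involution on $\{\1_{\ne^c}=1\}$, i.e., that $\tau(\Phi(s))=\tau(s)$. Symmetry of the excellence condition under signal flip immediately gives $\tau(\Phi(s))\le\tau(s)$, so the delicate half is to prove that no cluster with smaller index becomes excellent after the flip. The key structural fact to exploit is that the local clusters $\mcc C_1,\ldots,\mcc C_L$ are contained in pairwise disjoint boxes $\Lambda_{d(z_i)}(z_i)$ and that the effect of flipping $\mcc C_{\tau(s)}$'s signal on the pre-disagreement set is confined to $\Lambda_{d(z_{\tau(s)})}(z_{\tau(s)})$; combined with a careful choice of the canonical ordering so that the pivotal-versus-excellent status of every $\mcc C_j$ with $j<\tau(s)$ is unaffected by a flip outside its domain, this should yield the desired preservation of $\tau$.

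Once the involution property is in hand, properties (i)--(iii) immediately give
\[
\Big\langle\1_{\ne^c}\1_\hc\prod_{x\in I}\bar\sigma_x^+\prod_{y\in J}\bar\sigma_y^-\Big\rangle^{-/+}_{\cR,0}
=-\Big\langle\1_{\ne^c}\1_\hc\prod_{x\in I}\bar\sigma_x^+\prod_{y\in J}\bar\sigma_y^-\Big\rangle^{-/+}_{\cR,0},
\]
so this expectation vanishes; rewriting $\1_{\ne^c}=1-\1_\ne$ then yields the identity \eqref{eq:cancellation-of-excellent-cluster}.
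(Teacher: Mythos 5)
Your proposal is correct in outline and follows essentially the same route as the paper: the paper likewise pairs each configuration in $(\ne)^c\cap\hc$ with the one obtained by flipping the signal of the first (in a pre-fixed order) excellent cluster, uses excellence to preserve $\1_{\hc}$, locality to flip exactly one factor of the product while leaving the measure unchanged, and concludes by term-by-term cancellation. The involution/bijection property that you single out as the delicate step is precisely the point the paper dispatches in a single clause (``$\tau$ does not change the excellent cluster''), so you have correctly isolated the only non-routine ingredient; note only that your sketched justification via locality of the pre-disagreement change does not by itself settle it, since whether another local cluster is pivotal is a global property of the crossing event.
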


\begin{proof}
%[Proof of Lemma~\ref{lem:cancellation-of-excellent-clusters}.]

On the event $(\ne)^c \cap \hc$, let $\mathcal C^\xi_z$ be the first (with respect to some pre-fixed order) excellent cluster in $\{\mathcal C^+_x: x\in I\} \cup \{\mathcal C^-_y: y\in J\}$.
Now we can define a map $\tau:(\ne)^c\cap\hc\to (\ne)^c\cap \hc$ by letting
$$\tau(\omega^+,\omega^-,s^+,s^-)=(\omega^+,\omega^-,\hat s^+,\hat s^-),$$
where $(\hat s^+,\hat s^-)$ only differs from $(s^+,s^-)$ on the cluster $\mcc C_z^\xi$.
In light of the aforementioned bijection between extended Ising spins and FK-clusters with signals, we see that
$(\omega^+,\omega^-,s^+,s^-)$ and $(\omega^+,\omega^-,\hat s^+,\hat s^-)$ correspond to some extended Ising configurations $(\bar\sigma^+,\bar\sigma^-)$ and 
$(\hat\sigma^+,\hat\sigma^-)$ respectively.
The map $\tau$ is well-defined since by the definition of excellent, we know the event $\hc$ also holds on the
configuration $(\omega^+,\omega^-,\hat s^+,\hat s^-)$.
Furthermore, we see that $\tau$ is a bijection since it does not change any excellent cluster (and thus does not change the `first' excellent cluster) and the signal can only change between plus and minus. 
Therefore,
\begin{align} 
&2\langle\1_{(\ne)^c}\1_{\hc}\prod_{x\in I} \bar\sigma_{x}^+\prod_{y\in J} \bar\sigma_{y}^- \rangle_{\cR}^{-/+}\nonumber\\=&\sum
\bar\mu_{\cR}^{-/+}(\bar\sigma^+,\bar\sigma^-)\prod_{x\in I} \bar\sigma_{x}^+\prod_{y\in J} \bar\sigma_{y}^- +\bar\mu_{\cR}^{-/+}(\hat\sigma^+,\hat\sigma^-)\prod_{x\in I} \hat\sigma_{x}^+\prod_{y\in J} \hat\sigma_{y}^-, \label{eq:cancellation-of-configurations-of-excellent}
\end{align}
where the summation is taken over all $(\bar\sigma^+,\bar\sigma^-)\in(\ne)^c\cap \hc$
and the spin configurations $(\hat\sigma^+, \hat\sigma^-)=\tau(\bar\sigma^+, \bar\sigma^-)$. By definition, the signals only differ on one local cluster, so we have 
$$\prod_{x\in I} \bar\sigma_{x}^+\prod_{y\in J} \bar\sigma_{y}^-=-\prod_{x\in I} \hat\sigma_{x}^+\prod_{y\in J} \hat\sigma_{y}^-.$$
Using the local property again, we see that the cluster $\mcc C_z^\xi$ is contained in {$\mcc R\setminus\intB\cR$ and thus }
$$\bar\mu_{\cR}^{-/+}(\bar\sigma^+,\bar\sigma^-)=\bar\mu_{\cR}^{-/+}(\hat\sigma^+,\hat\sigma^-)\,.$$
As a result, the right-hand side of \eqref{eq:cancellation-of-configurations-of-excellent} gets cancelled term by term. This completes the proof.
\end{proof}

Now in order to bound the probability of the right-hand side of \eqref{eq:cancellation-of-excellent-cluster}, we will give an upper bound on the probability of the event $\ne$. This leads to the following definition.
\begin{defi}\label{def:noex}For any $z\in I\cup J$, we define the following events:
\begin{itemize}
    \item $\mcc E_z^1:=\{z\in I\setminus J,  \mcc C_z^+ \text{ is non-local}\}\cup\{z\in J\setminus I, \mcc C_z^- \text{ is non-local}\}; $
    \item $\mcc E_z^2:=\{z\in I\setminus J,  \mcc C_z^+ \text{ is pivotal}\}\cup\{z\in J\setminus I, \mcc C_z^- \text{ is pivotal}\};$
    \item $\mcc E_z^3:=\{z\in I\cap J; \mcc C_z^+, \mcc C_z^-\text{ are non-local}\};$
    \item $\mcc E_z^4:=\{z\in I\cap J; \text{one of }\mcc C_z^+, \mcc C_z^- \text{ is pivotal, and the other is non-local}\};$
    \item $\mcc E_z^5:=\{z\in I\cap J; \mcc C_z^+, \mcc C_z^-\text{ are pivotal}\}$.
\end{itemize}

\end{defi}

It is obvious that for any $z\in I\cup J$, the event $z$ is not contained in an excellent cluster is the union of $\mathcal E_z^1, \ldots, \mathcal E_z^5$.
As aforementioned, the probability of one cluster being non-local has a straightforward upper bound, so the main challenge is to bound the probability of  being pivotal. To this end, we consider the following two scenarios:

\begin{enumerate}[(i)]
    \item There are two subcases:
    $z\in I\Delta J$ and $\mcc C_z^\xi$ is pivotal {\bf or} $z\in I\cap J$ but only one of $\mcc C_z^+, \mcc C_z^-$ is pivotal.
    Without
    loss of generality we can assume $z=x\in I$ and $\mcc C_x^+$ is pivotal. Note that $\hc$ is an increasing event with respect to the partial order as in Definition~\ref{def:partial-order}. If flipping the signal of $\mcc C_x^+$ changes ${\1_\hc}$, the only possibility is that $\hc$ holds with $s_x^+=+1$  and fails with $s_x^+=-1$ (here $s_x^+$ is the signal of $\mcc C_x^+)$. For convenience, denote by $\mcc D^+$  and $\mcc D^-$ the sets of pre-disagreements corresponding to $s_x^+=+1$ and $s_x^+=-1$ respectively. 
    Assume {$\cC_x^+\cap\intB\Lambda_t(x)=\emptyset$ but $\cC_x^+\cap\intB\Lambda_{t-1}(x)\neq\emptyset$} 
    %$\mcc C_x^+\subset \Lambda_t(x)$ but $\mcc C_x^+\not\subset \Lambda_{t-1}(x)$ 
    for some integer $t\leq d(x)$. 
    Then $\mcc D^+\Delta\mcc D^-\subset\bar\Lambda_t(x)$. 
    
    Now if $x\in \mcc R\setminus \mcc R'$, then $t\leq d(x)$ means that $\bar\Lambda_t(x)\cap \bar\cR'=\emptyset$. As a result, $\cD^+\cap{\bar\cR'}=\mcc D^-\cap{\bar\cR'}$, which means $\1_\hc$ can not change. In conclusion, $\mcc C_x^+$ can not be pivotal for $x\not\in\mcc R'$. This allows us to assume $x\in \mcc R'$ when we consider pivotal clusters. 
    % Define $$\mcc D_0=\mcc D^-\cap (\mcc R'\setminus \Lambda_t(x))= \mcc D^+\cap (\mcc R'\setminus \Lambda_t(x)).$$
    % Then, we have $$\mcc D_0\subset \mcc D^-\subset\mcc D^+.$$Thus, there exists a crossing of $\mcc R'$ in $\mcc D^+$ but not in $\mcc D_0$. 
    
    By definition, $\mcc C_x^+$ being pivotal means that there exists a path $\mcc P$ crossing $\mcc R'$ in $\mcc D^+$ but not in $\mcc D^-$.
    {Without loss of generality, assume $\mcc P$ is (one of) the  shortest pre-disagreement crossing so we can write $\mcc P=\{x_1,x_2,\cdots,x_L\}$ where each $x_i$ is a pre-disagreement vertex in $\mcc D^+$ and every edge $ {\{x_i,x_{i+1}\}}\in\mcc D^{{+}} $.
     Since $\mcc P$ is not contained in $\mcc D^-,$ we know $\mcc P\cap \bar\Lambda_t(x)\neq \emptyset.$ This motivates us to define
    $$l_2=\min\{1\leq l\leq L: x_l\in\intB\Lambda_{t+1}(x)\},\mbox{ and }  l_1=\max\{1\leq l<l_2: x_l\in\intB\Lambda_{d(x)}(x)\};$$
    $$l_3=\max\{1\leq l\leq L: x_l\in\intB\Lambda_{t+1}(x)\},\mbox{ and }  l_4=\min\{l_3\leq l\leq L: x_l\in\intB\Lambda_{d(x)}(x)\}.$$
    From the above discussions, we see that $1\leq l_1 < l_2 < l_3 < l_4 \leq L$. We further define $$\mcc P_1=\{x_{l_1},x_{l_1+1},\cdots,x_{l_2}\}, \quad \mcc P_2=\{x_{l_3}, x_{l_3+1},\cdots,x_{l_4}\},$$
    which correspond to the `first' and `last' pre-disagreement crossings of $\mcc P$ in $\overline{\Lambda_{t,d(x)}}{(x)}.$
    Now we claim that $\mcc P_1$ and $\mcc P_2$ are not connected in $\mcc D^{{+}}\cap\,  \overline{ \Lambda_{t,d(x)}}(x).$ Otherwise, this forms a disagreement crossing of $\cR'$ without intersecting with $\overline{\Lambda_{t}}(x)$ which contradicts the event that $\mcc C_x^+$ is pivotal. See Figure~\ref{fig: two disjoint disagreement crossing} for an illustration.
    }

    %Thus, the path $\mcc P$ is split into multiple segments by $\mathcal C_x^+$ and at least for two of these segments, which we denote as $\mcc P_1$ and $\mcc P_2$ respectively, each contains a crossing connecting $\intB \Lambda_{t+1}(x)$ and $\intB \Lambda_{d(x)}(x)$. We then see that $\mcc P_1$ and $\mcc P_2$ are not connected in $\mathcal D^+ \setminus \Lambda_t(x)$, since otherwise the two segments $\mcc P_1$ and $\mcc P_2$ together with this connecting path joining $\mcc P_1$ and $\mcc P_2$ form a crossing of $\mathcal R'$ in $\mathcal D^-$. See Figure~\ref{fig: two disjoint disagreement crossing} for an illustration.
    % {Thus, the path $\mcc P$ is split into multiple segments by $\cC_x^+$, which we denote as $\mcc P_1,\cdots,\mcc P_m~(m\ge 2)$. We then see that at least two of them are not connected in $\mathcal D^+ \setminus \Lambda_t(x)$, since otherwise all these segments $\mcc P_1,\cdots,\mcc P_m$ together with these connecting paths joining them form a pre-disagreement crossing of $\mathcal R'$ in $\mathcal D^-$. }

    As a result, we have (recalling Definition~\ref{def: BK event}) 
    \begin{equation}\{\mcc C_x^+\text{ is pivotal}\}\subset\bigcup_{t=1}^{d(x)}{\{\cC_x^+\cap\intB\Lambda_t(x)=\emptyset,\cC_x^+\cap\intB\Lambda_{t-1}(x)\neq\emptyset\}}
    %\{\mcc C_x^+\subset \Lambda_t(x), \mcc C_x^+\not\subset \Lambda_{t-1}(x)\}
    \cap \cont(t, d(x)).\label{eq:decomposition-of-pivotal}\end{equation}
    % where we recall $\cont(M_1,M_2)$ is the event that there are two pre-disagreement paths connecting $\intB\Lambda_{M_1+1}$ and $\intB\Lambda_{M_2}$ and these two paths are not connected in 
    % pre-disagreements restricted to the annulus $\Lambda_{M_1,M_2}$.

% \begin{figure}[htb]
%     \centering
%     \begin{tikzpicture}    
%     \draw (-4,-4)rectangle (4,4);
%     \draw (-2,-2) rectangle (2,2);
%     \draw[color=red] (0,0) .. controls (-1,1) and (-1.5,-0.5) .. (-2,-1);
%     \draw[color=red] (0,0) .. controls (0.3,-1) and (0.7,-1.5) .. (1.1,-1);
%     \draw[color=red] (1.1,-1) .. controls (1.5,0) and (1.8,1) .. (1.9,1.1);
%     \draw [color=red](1.1,-1) .. controls (1.3,-1.3) and (0.8,-1.7) .. (1.1,-1.9);
%     \draw [color=blue](-4,-3) .. controls (1.3,-1.3) and (0.8,-1.7) .. (2,4);
%     \node [name=start1] at (4,3){ };
%     \node [name=end1] at (5,4){$\Lambda_{d(x)}(x)$ };
%     \draw [->] (start1)--(end1) ;
%     \node [name=start2] at (2,2){ };
%     \node [name=end2] at (2.8,2.8){ $\Lambda_t(x)$};
%     \node at (0.2,0.2) {$\cC_{x}^+$};
%     \draw [->] (start2)--(end2) ;
%     \end{tikzpicture}
%     \caption{Illustration of two disjoint pre-disagreement crossing. Dotted line: the FK-cluster $\cC_{x}^+$. Solid line: the disagreement crossing in $\cR'$. Red line: two disagreement crossings. }
%     \label{fig: two disjoint disagreement crossing}
% \end{figure}

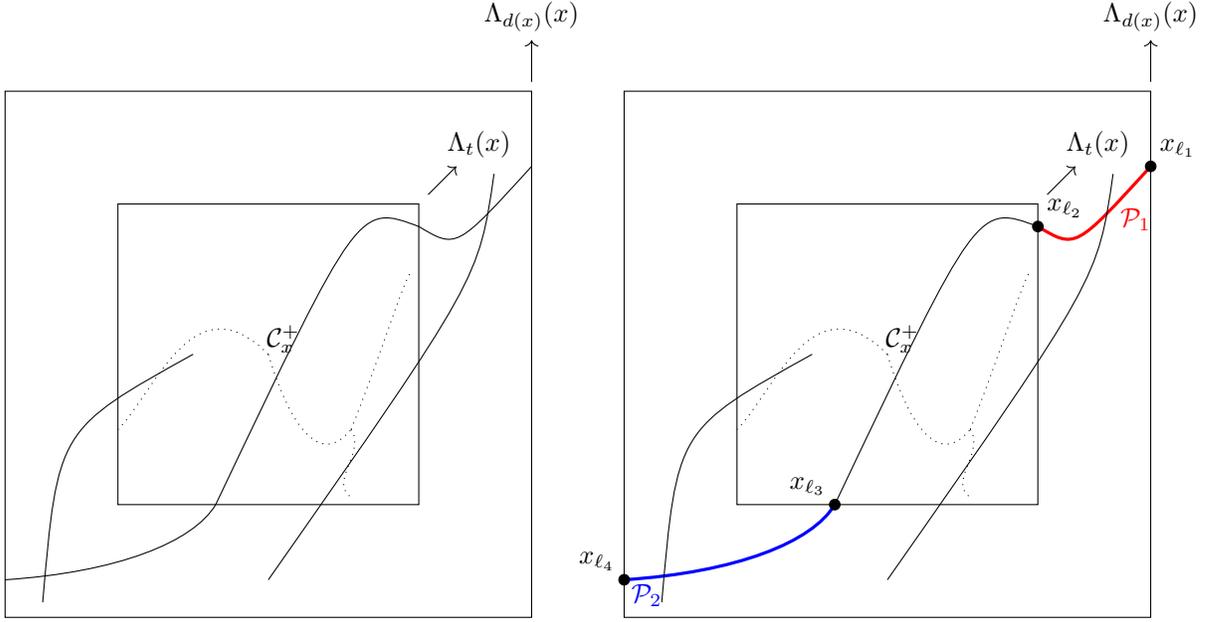
\begin{figure}[htb]
    \centering
    \begin{minipage}[t]{0.49\linewidth}
        \begin{tikzpicture}    
    \draw (-3.5,-3.5)rectangle (3.5,3.5);
    \draw (-2,-2) rectangle (2,2);
    \draw[dotted] (0,0) .. controls (-1,1) and (-1.5,-0.5) .. (-2,-1);
    \draw[dotted] (0,0) .. controls (0.3,-1) and (0.7,-1.5) .. (1.1,-1);
    \draw[dotted] (1.1,-1) .. controls (1.5,0) and (1.8,1) .. (1.9,1.1);
    \draw[dotted] (1.1,-1) .. controls (1.3,-1.3) and (0.8,-1.7) .. (1.1,-1.9);
    \draw[ ] (-3.5,-3) .. controls (-2,-2.9) and (-1,-2.5) .. (-0.7,-2);
    \draw[ ] (-0.7,-2) .. controls (1.2,2) .. (2,1.7);
    \draw[ ] (2,1.7) .. controls (2.5,1.4) .. (3.5,2.5);
    \draw[ ] (3,2.4) .. controls (2.8,1) .. (0,-3);
    \draw[ ] (-3,-3.3) .. controls (-2.8,-1) .. (-1,0);
    \node [name=start1] at (3.5,3.5){ };
    \node [name=end1] at (3.5,4.5){$\Lambda_{d(x)}(x)$ };
    \draw [->] (start1)--(end1) ;
    \node [name=start2] at (2,2){ };
    \node [name=end2] at (2.8,2.8){ $\Lambda_t(x)$};
    \node at (0.2,0.2) {$\cC_{x}^+$};
    \draw [->] (start2)--(end2) ;
    \end{tikzpicture}
    \end{minipage}
    \begin{minipage}[t]{0.49\linewidth}
        \begin{tikzpicture}    
    \draw (-3.5,-3.5)rectangle (3.5,3.5);
    \draw (-2,-2) rectangle (2,2);
    \draw[dotted] (0,0) .. controls (-1,1) and (-1.5,-0.5) .. (-2,-1);
    \draw[dotted] (0,0) .. controls (0.3,-1) and (0.7,-1.5) .. (1.1,-1);
    \draw[dotted] (1.1,-1) .. controls (1.5,0) and (1.8,1) .. (1.9,1.1);
    \draw[dotted] (1.1,-1) .. controls (1.3,-1.3) and (0.8,-1.7) .. (1.1,-1.9);
    \draw[color= blue,very thick] (-3.5,-3) .. controls (-2,-2.9) and (-1,-2.5) .. (-0.7,-2);
    \draw[fill=black] (-3.5,-3) circle(0.2em);
    \node[above left] at (-3.5,-3) {$x_{\ell_4}$};
    \draw[fill=black] (-0.7,-2) circle(0.2em);
    \node[above left] at (-0.7,-2) {$x_{\ell_3}$};
    %\draw (-0.7,-2) .. controls (-0.3,-1) .. (0,0);
    %\draw (0,0) .. controls (1.2,2) .. (2,1.7);
    \draw[ ] (-0.7,-2) .. controls (1.2,2) .. (2,1.7);
    \draw[color=red, very thick] (2,1.7) .. controls (2.5,1.4) .. (3.5,2.5);
    \draw[fill=black] (2,1.7) circle(0.2em);
    \node[above right] at (2,1.7) {$x_{\ell_2}$};
    \draw[fill=black] (3.5,2.5) circle(0.2em);
    \node[above right] at (3.5,2.5) {$x_{\ell_1}$};
    
    \node [color=red] at (3.3,1.8) {$\mcc P_1$};
    \node [color=blue] at (-3.2,-3.2) {$\mcc P_2$};
    \draw (3,2.4) .. controls (2.8,1) .. (0,-3);
    \draw (-3,-3.3) .. controls (-2.8,-1) .. (-1,0);
    \node [name=start1] at (3.5,3.5){ };
    \node [name=end1] at (3.5,4.5){$\Lambda_{d(x)}(x)$ };
    \draw [->] (start1)--(end1) ;
    \node [name=start2] at (2,2){ };
    \node [name=end2] at (2.8,2.8){ $\Lambda_t(x)$};
    \node at (0.2,0.2) {$\cC_{x}^+$};
    \draw [->] (start2)--(end2) ;
    \end{tikzpicture}
    \end{minipage}
    \caption{Illustration of two disjoint pre-disagreement crossings. Dotted lines: the FK-cluster $\cC_{x}^+$. Solid lines: the pre-disagreements in $\Lambda_{d(x)}(x)$. Colored lines: 
    %pre-disagreement crossings split by $\cC_x^+$. Bold lines: two pre-disagreement crossings that are not connected in the annulus $\Lambda_{t,d(x)}(x)$. 
    the first and last pre-disagreement crossings of $\mcc P$ in $\Lambda_{t,d(x)}(x)$.}
    \label{fig: two disjoint disagreement crossing}
\end{figure}

\item $z\in I\cap J$, and  both $\mcc C_z^+$ and $\mcc C_z^-$ are pivotal. Similarly, we can
assume for integers $t^+$ and $t^-$, {$$\cC_z^+\cap\intB\Lambda_{t^+}(z)=\emptyset\,,\cC_z^+\cap\intB\Lambda_{t^+-1}(z)\neq\emptyset;\quad\cC_z^-\cap\intB\Lambda_{t^-}(z)=\emptyset\,,\cC_z^-\cap\intB\Lambda_{t^--1}(z)\neq\emptyset.$$}
%$$\mcc C_z^{+}\subset\Lambda_{t^+}(z),\, \mcc C_z^{+}\not\subset\Lambda_{t^+-1}(z);\quad \mcc C_z^{-}\subset\Lambda_{t^-}(z),\, \mcc C_z^{-}\not\subset\Lambda_{t^--1}(z).$$ 
Using the same analysis, we get that the events $\cont(t^+,d(z))$ and $ \cont(t^-,d(z))$ both happen.
{(But these two events are highly dependent so in what follows we try to use the ``more restricted'' one.)}
Therefore,  we see that there exists $t\leq d(z)$ such that the following event (or the version with $+$ and $-$ switched) holds:{
\begin{equation}
    \begin{aligned}
        \mathcal \cont( t,d(z)) \mbox{ occurs } &\mbox{ and } \mathcal C_z^{-} \cap\intB \Lambda_{t-1}(z)\neq \emptyset \\&\mbox { and }  \mathcal C_z^{{+}} \cap\intB \Lambda_{t-1}(z)\neq \emptyset, \mathcal C_z^{{+}} \cap\intB \Lambda_{t}(z)=\emptyset  \,.
    %\mathcal C_z^{{-}} \not\subset \Lambda_{t-1}(z) \mbox { and }  \mathcal C_z^{{+}} \not\subset \Lambda_{t-1}(z), \mathcal C_z^{{+}} \subset \Lambda_{t}(z)  \,.
    \end{aligned}
    \label{ev:remove-thinner}
\end{equation}}

% Note that these two events are highly dependent, in fact, we have 
% $\mcc B_2(d(z),t^-)\subset \mcc B_2(d(z),t^+)$ if $t^+<t^-$ and vice versa.
% In light of this, we say $\mcc C_z^+$ is the \textbf{thicker} cluster if $z\in I\cap J$, both $\mcc C_z^+$ and $\mcc C_z^-$ are pivotal, and $t^+\geq t^-$, and we call $\mcc C_z^-$ the \textbf{thinner} cluster. For later convenience, we also say $\mcc C_z^+$ is \textbf{thinner} if $z\in I\cap J, \mcc C_z^+$ is pivotal and $\mcc C_z^-$ is non-local.

\end{enumerate}

% This leads to the following definition:
% \begin{defi}For any $z\in I\cup J$, the event that $z$ is not contained in an excellent cluster is the union of the following events:
% \begin{itemize}
%     \item $\mcc E_z^1:=\{z\in I\setminus J,  \mcc C_z^+ \text{ is non-local }\}\cup\{z\in J\setminus I, \mcc C_z^- \text{ is non-local }\}; $
%     \item $\mcc E_z^2:=\{z\in I\setminus J,  \mcc C_z^+ \text{ is pivotal}\}\cup\{z\in J\setminus I, \mcc C_z^- \text{ is pivotal }\};$
%     \item $\mcc E_z^3:=\{z\in I\cap J: \mcc C_z^+, \mcc C_z^-\text{ are non-local }\};$
%     \item $\mcc E_z^4:=\{z\in I\cap J: \text{one of }\mcc C_z^+, \mcc C_z^- \text{ is pivotal, and the other is non-local }\};$
%     \item $\mcc E_z^5:=\{z\in I\cap J: \mcc C_z^+, \mcc C_z^-\text{ are pivotal }\}$.
% \end{itemize}

% \end{defi}

Provided with discussions on these two scenarios, we now prove the following lemma.
\begin{lem}\label{lem:bound-of-noex}
There exists a constant $\oc{13}>0$ relying on $\oc{fk exponent}$ and $\oc{14}$
such that
    \begin{equation}
        \bar\mu_{\cR}^{-/+}(\ne)\leq \oc{13}^{|I|+|J|}\prod_{z\in I\cup J}{(\max\{d(z),1\})}^{-\frac{\1_{z\in I}+\1_{z\in J}}{8}}.\label{eq:bound-of-noex}
    \end{equation}
\end{lem}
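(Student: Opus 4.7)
The plan is to first set up the decomposition
\[
\ne\subset\bigcap_{z\in I\cup J}\mathcal F_z,\qquad \mathcal F_z:=\bigcup_{i=1}^{5}\mcc E_z^i,
\]
based on Definition~\ref{def:noex}, noting that only $\mcc E_z^1\cup\mcc E_z^2$ is active for $z\in I\Delta J$, and only $\mcc E_z^3\cup\mcc E_z^4\cup\mcc E_z^5$ is active for $z\in I\cap J$. Since the right-hand side of \eqref{eq:bound-of-noex} factors as a product over $z$, the aim is to reduce the intersection probability to a product of per-$z$ probabilities. The key geometric input is that the balls $\{\Lambda_{d(z)}(z):z\in I\cup J\}$ are pairwise disjoint by Definition~\ref{def:upper-bound function for spin average}. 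I would use DMP and CBC (Lemma~\ref{lem:CBC}, Corollary~\ref{cor: CBC}), together with the independence of the $+$ and $-$ copies in $\bar\mu^{-/+}_{\cR,0}$, to factor the intersection: first reveal the non-local clusters (the only ones that can leave their local ball and create correlations across different $z$), then apply DMP inside each surviving disjoint local ball to treat the remaining (local) events as independent.

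For a single $z\in I\setminus J$ (the case $z\in J\setminus I$ is symmetric), the non-locality event $\mcc E_z^1$ implies $z\longleftrightarrow\partial\Lambda_{d(z)}(z)$ in the $+$-copy FK configuration, which by \eqref{eq: FK one arm event exponent}, Lemma~\ref{lem:extended to FK}, and CBC has probability at most $C(d(z)\vee 1)^{-1/8}$. For the pivotal event $\mcc E_z^2$, the decomposition \eqref{eq:decomposition-of-pivotal} yields $\mcc E_z^2\subset\bigcup_{t=1}^{d(z)}\{t^*=t\}\cap\cont(t,d(z))$, where $t^*$ is the outermost radius of $\mcc C_z^+$. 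Conditioning on the $+$-copy configuration inside $\Lambda_t(z)$ via DMP and applying CBC together with Lemma~\ref{lem: BK-type inequality for disagreement Ising model} gives an upper bound of $\oc{14}(d(z)/t)^{-1/4}$ for the $\cont$ factor. The crucial step is a dyadic grouping $t\in[2^k,2^{k+1})$: using $\bar\mu(t^*\ge 2^k)\le C2^{-k/8}$ from \eqref{eq: FK one arm event exponent},
\[
\bar\mu(\mcc E_z^2)\le\sum_{k=0}^{\lceil\log_2 d(z)\rceil}C\cdot 2^{-k/8}\cdot\oc{14}(d(z)/2^k)^{-1/4}\le C''(d(z)\vee 1)^{-1/8},
\]
since the resulting geometric sum $\sum_k 2^{k/8}$ is dominated by its last term $d(z)^{1/8}$, producing the net decay $d(z)^{-1/4+1/8}=d(z)^{-1/8}$.

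For $z\in I\cap J$, since $\bar\mu^{-/+}_{\cR,0}$ is a product of two independent extended Ising measures, any event jointly constraining $\mcc C_z^+$ and $\mcc C_z^-$ factors into a product of constraints on each copy. Hence $\bar\mu(\mcc E_z^3)\le(C(d(z)\vee 1)^{-1/8})^{2}=C^2(d(z)\vee 1)^{-1/4}$, and analogous combinations of the per-copy one-arm bound with the per-copy dyadic-pivotal bound give $\bar\mu(\mcc E_z^4),\bar\mu(\mcc E_z^5)\le C(d(z)\vee 1)^{-1/4}$. Multiplying the resulting per-$z$ bounds and absorbing universal constants into $\oc{13}^{|I|+|J|}$ yields \eqref{eq:bound-of-noex}.

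The primary obstacle I foresee is the pivotal case: the naive linear bound $\bar\mu(t^*=t)\le Ct^{-1/8}$ combined with a linear sum $\sum_t t^{-1/8}(d(z)/t)^{-1/4}$ diverges polynomially as $d(z)^{7/8}$, so the dyadic grouping above is essential to regain the factor $d(z)$. A secondary obstacle is rigorously justifying the factorization across different $z$ in the presence of non-local clusters, since a large non-local FK cluster starting near $z$ can extend into another ball $\Lambda_{d(z')}(z')$ and break naive independence; I would address this by first performing an exploration (in a fixed order, e.g.\ inner to outer) of all non-local clusters, recording them in a single sigma-algebra, and then invoking DMP on the remainder so the residual local (pivotal) events inside the still-disjoint balls become conditionally independent.
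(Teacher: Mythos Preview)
Your overall strategy matches the paper's: decompose $\ne\subset\bigcap_z\bigcup_i\mcc E_z^i$, factor over $z$, and bound each per-$z$ probability. Your dyadic summation for $\mcc E_z^2$ is equivalent to the paper's Abel-transform argument and produces the same $d(z)^{-1/8}$ bound. However, there is one genuine gap and one unnecessary complication.

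The gap is in your treatment of $\mcc E_z^4$ and $\mcc E_z^5$. You assert that because $\bar\mu^{-/+}_{\cR,0}$ is a product measure, ``any event jointly constraining $\mcc C_z^+$ and $\mcc C_z^-$ factors into a product of constraints on each copy'', and hence you can simply multiply the per-copy pivotal and one-arm bounds. This is incorrect: the event ``$\mcc C_z^+$ is pivotal'' means flipping the sign of $\mcc C_z^+$ changes $\1_{\hc}$, and $\hc$ is a pre-disagreement event depending on \emph{both} copies. Pivotality of $\mcc C_z^+$ therefore cannot be written as a constraint on the $+$-copy alone, and you cannot bound $\mcc E_z^5$ by the square of your $\mcc E_z^2$ estimate. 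The paper instead observes (see \eqref{ev:remove-thinner}) that on $\mcc E_z^4\cup\mcc E_z^5$ there exists $t\le d(z)$ such that both $\mcc C_z^+$ and $\mcc C_z^-$ reach to distance $t-1$ and a single $\cont(t,d(z))$ holds; the two reaching events $\mathtt L_t,\mathtt K_t$ are genuinely per-copy (hence conditionally independent given the spins on $\partial\Lambda_{t+1}(z)$), while the single $\cont$ factor is bounded by Lemma~\ref{lem: BK-type inequality for disagreement Ising model}. This contributes an extra factor $t^{-1/8}$ in the summand compared to the $\mcc E_z^2$ case, and the resulting sum gives $d(z)^{-1/4}$.

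The unnecessary complication is your proposed exploration of non-local clusters to achieve factorization across different $z$. The paper's route is simpler: since the balls $\Lambda_{d(z)}(z)$ are pairwise disjoint by construction, one conditions on the spin values $(\xi_z^+,\xi_z^-)$ on all the boundaries $\partial\Lambda_{d(z)}(z)$ simultaneously. By DMP the intersection bound then factors as $\prod_z\bar\mu^{\xi_z^+/\xi_z^-}_{\Lambda_{d(z)}(z)}(\cdot)$, and one bounds each factor by the superset events from \eqref{eq:decomposition-of-pivotal} and \eqref{ev:remove-thinner}, which are measurable with respect to the configuration inside $\Lambda_{d(z)}(z)$. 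No exploration is needed.
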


\begin{proof}
% [Proof of Lemma~\ref{lem:bound-of-noex}.]Since the external field is always $0$, we omit it from the subscript. 
By Definition~\ref{def:noex}, we have 
    $\ne\subset\bigcap_{z\in I\cup J}(\cup_{i=1}^5 \mcc E_z^i).$
    For $\xi^{\pm}=(\xi_z^{\pm})_{z\in I\cup J}$ with $\xi_z^\pm\in\{-1,1\}^{\intB\Lambda_{d(z)}(z)},$ let 
    $$\Psi(\xi^+,\xi^-)=\bar\mu_{\cR}^{-/+}(\sigma^\pm|_{\intB\Lambda_{d(z)}(z)}=\xi_z^\pm
    % \sigma^-|_{\intB\Lambda_{d(z)}(z)}=\xi_z^-,
    \mbox{ for all }z\in I\cup J ).$$
    Then by the DMP, we have 
    $$\bar\mu_{\cR}^{-/+}(\ne)\leq\sum_{\xi^+,\xi^-}\Psi(\xi^+,\xi^-)\prod_{z\in I\cup J}\bar\mu^{{\xi_z^+,\xi_z^-}}_{\Lambda_{d(z)}(z)}(\cup_{i=1}^5 \mcc E_z^i).$$
    It is possible that $d(z)\leq 0$, in which case we take $\bar\mu^{\xi_z^+/\xi_z^-}_{\Lambda_{d(z)}(z)}(\cup_{i=1}^5 \mcc E_z^i)=1$ {as a trivial upper-bound}. So in order to prove \eqref{eq:bound-of-noex}, taking the law of total probability into account, we only need to prove the following three estimations:
    \begin{equation}
        \bar\mu^{\xi^+_z/\xi^-_z}_{\Lambda_{d(z)}(z)}(\mcc E_z^1\cup\mcc E_z^3)\leq \left[\oc{13}{(\max\{d(z),1\})}^{-\frac{1}{8}}\right]^{\1_{z\in I}+\1_{z\in J}},\label{eq:noex-case13}
    \end{equation}
    \begin{equation}
        \bar\mu^{\xi_z^+/\xi_z^-}_{\Lambda_{d(z)}(z)}(\mcc E_z^2)\leq\oc{13}{(\max\{d(z),1\})}^{-\frac{1}{8}},\label{eq:noex-case2}
    \end{equation}
    \begin{equation}
        \bar\mu^{\xi^+_z/\xi^-_z}_{\Lambda_{d(z)}(z)}(\mcc E_z^4\cup \mcc E_z^5)\leq\left[\oc{13}{(\max\{d(z),1\})}^{-\frac{1}{8}}\right]^2 .\label{eq:noex-case45}
    \end{equation}

We see that \eqref{eq:noex-case13} follows from \eqref{eq: FK one arm event exponent} as long as $\oc{13} \geq \oc{fk exponent}$. The other two estimates, \eqref{eq:noex-case2} and \eqref{eq:noex-case45} require more effort and in particular require an application of Lemma~\ref{lem: BK-type inequality for disagreement Ising model}; we postpone their proofs until the end of this subsection. 
\end{proof}

With Lemma{s}~\ref{lem:cancellation-of-excellent-clusters} and \ref{lem:bound-of-noex} at hand, we can prove Lemma~\ref{lem: bound for the spin average conditioned on the crossing}.

\begin{proof}[Proof of Lemma~\ref{lem: bound for the spin average conditioned on the crossing}.]
Recalling Definition~\ref{def:upper-bound function for spin average}, we know \eqref{eq: bound for the spin average conditioned on the crossing} is a direct consequence of \eqref{eq:cancellation-of-excellent-cluster}
and \eqref{eq:bound-of-noex}. As for the proof of \eqref{eq: upper-bound for sum of squares of Fs}, it is very similar to that in \cite[Lemma 8.3]{FSZ16}. The only difference is that we have restricted $d(x)< dist(x,\intB\cR')$. In other words, we let the boundary be $\intB\mcc R\cup \intB\mcc R'$. Note that in their proof, the only assumption on the boundary is that $$\sum_{x\in \cR}\Big[\max\{dist(x,\intB\mcc R\cup\intB\mcc R'),1\}\Big]^{-\frac{1}{4}}\le C_1M^{\frac{7}{4}}$$ for some constant $C_1>0$. So our modified boundary satisfies the only property for the boundary that was used in \cite{FSZ16}, and thus their proof extends to our case. The proof of \eqref{eq: upper-bound for sum of of FUs} follows from an adaptation of the induction in \cite[Lemma 8.3]{FSZ16}. We only provide a sketch emphasizing the additional subtleties. To carry out the induction, it suffices to prove that \begin{align}
&\sum_{x_1,\cdots,x_r\in\cR\setminus(I\cup J)} F^2_{\mcc R}(I\cup \{x_1,\cdots,x_r\}, J\cup \{x_1,\cdots,x_r\})\nonumber\\\le ~&\sum_{x_1,\cdots,x_{r-1}\in\cR\setminus(I\cup J)} C(r+|I|+|J|)^{\frac{1}{2}}F^2_{\mcc R}(I\cup \{x_1,\cdots,x_{r-1}\}, J\cup \{x_1,\cdots,x_{r-1}\}).\label{eq:3}
\end{align} The proof of \eqref{eq:3} is highly similar to that of \cite[(8.17)]{FSZ16}. We emphasize that the only difference between \eqref{eq:3} and \cite[(8.17)]{FSZ16} comes from the term $(r+|I|+|J|)^{\frac{1}{2}}$. The reason is that the power of $d(x_i)$ in $F_{\mcc R}(I\cup \{x_1,\cdots,x_{r-1}\}, J\cup \{x_1,\cdots,x_{r-1}\})$ is $-\frac{1}{4}$ rather than $-\frac{1}{8}$ as in \cite[Lemma 8.3]{FSZ16} and thus leads to the difference.
%in \cite[(8.26))]{FSZ16}.}
%and \eqref{eq:bound-of-noex}. As for the proof of \eqref{eq: upper-bound for sum of squares of Fs}, it is very similar to that in \cite[Lemma 8.3]{FSZ16}. The only difference is that we have restricted $d(x)< dist(x,\intB\cR')$. In other words, we let the boundary be $\intB\mcc R\cup \intB\mcc R'$. Note that in their proof, the only assumption on the boundary is that $$\sum_{x\in \cR}\Big[\max\{dist(x,\intB\mcc R\cup\intB\mcc R'),1\}\Big]^{-\frac{1}{4}}\le C_1M^{\frac{7}{4}}$$ for some constant $C_1>0$. So our modified boundary satisfies the only property for the boundary that was used in \cite{FSZ16}, and thus their proof extends to our case. \textcolor{red}{And the proof of \eqref{eq: upper-bound for sum of of FUs} has to imitate the induction of \cite[Lemma 8.3]{FSZ16} and we postpone it until the appendix.}
\end{proof}

The rest of this subsection is devoted to the proofs of \eqref{eq:noex-case2} and \eqref{eq:noex-case45}.

\begin{proof}[Proof of \eqref{eq:noex-case2}.]
Without loss of generality, we can assume $z\in I\setminus J,  d(z)>1$ and $\mcc C_z^+$ is pivotal.
{Denote by $\mathtt L_t$  the event that $\{\mcc C_z^+\cap\intB\Lambda_{t}(z)=\emptyset\}\cap \{\mcc C_z^+\cap\intB\Lambda_{t-1}(z)\neq\emptyset\}$. }
Recalling \eqref{eq:decomposition-of-pivotal}, we have 
\begin{equation}
    \bar\mu^{\xi_z^+/\xi_z^-}_{\Lambda_{d(z)}(z)}(\mcc E_z^2)\leq \sum_{t=1}^{d(z)}\bar\mu^{\xi_z^+/\xi_z^-}_{\Lambda_{d(z)}(z)}\Big(\mathtt L_t\cap \cont(t,d(z))\Big)\,.\label{eq:decompose-of-noex-case2}
\end{equation}

Let $\sB_t$ denote the collection of boundary conditions on $\intB\Lambda_{t}(z)$, i.e., $\sB_t=\{1,-1\}^{\intB\Lambda_{t}(z)}$.
Here notice that $\mathtt L_t$ is measurable with respect to the configurations on {$\Lambda_{t}$}. 
%For t = d(z), we see that $\bar\mu^{\xi_z^+/\xi_z^-}_{\Lambda_{d(z)}(z)}\Big(\mathtt L_t\cap \cont(t,d(z))\Big)\le \oc{fk exponent}d(z)^{-\frac{1}{8}}$ by \eqref{eq: FK one arm event exponent}. Therefore, in what follows we only need to control the sum in \eqref{eq:decompose-of-noex-case2} over $t<d(z)$.
 For $\nu^+,\nu^-\in \sB_t$, let $$\Psi_t(\nu^+,\nu^-):=\bar\mu^{\xi^+_z/\xi_z^-}_{\Lambda_{d(z)}(z)}\Big(\bar\sigma^\pm|_{\intB\Lambda_{t}(x)}=\nu^\pm \,
 %,\bar\sigma^-|_{\intB\Lambda_{t}(x)}=\nu^-
 \Big).$$ Applying the DMP and Lemma ~\ref{lem: BK-type inequality for disagreement Ising model}, we obtain that 
    \begin{align}
        &\sum_{t=1}^{d(z)}\bar\mu^{\xi^+_z/\xi_z^-}_{\Lambda_{d(z)}(z)}(\mathtt L_t\cap \cont(t,d(z)))\nonumber\\ {=}& \sum_{t=1}^{d(z)}\sum_{\nu^+,\nu^-\in \sB_t}\Psi_t(\nu^+,\nu^-)\bar\mu_{\Lambda_{t}(z)}^{\nu^+}(\mathtt L_t)\times\bar\mu_{\Lambda_{t,d(z)}(z)}^{\nu^+/\nu^-,\xi^+_z/\xi_z^-}\big(\cont(t,d(z))\big)\nonumber~~~\text{(by DMP)}\\ {\le} &\sum_{t=1}^{d(z)}\left[\sum_{\nu^+,\nu^-\in \sB_t}\Psi_t(\nu^+,\nu^-)\times\bar\mu_{\Lambda_{t}(z)}^{\nu^+}(\mathtt L_t)\right]\times \oc{14}(\frac{d(z)}{t})^{-\frac{1}{4}-\delta}\nonumber~~~(\text{by Lemma}~ \ref{lem: BK-type inequality for disagreement Ising model}),
    \end{align}
where $\bar\mu_{\Lambda_{t,d(z)}(z)}^{\nu^+/\nu^-,\xi^+_z/\xi_z^-}(\cont(t,d(z))):=1$ if $t=d(z)$. 
Applying the total law of probability and the change from $\bar \mu^{\xi_z^+}_{\Lambda_{d(z)}(z)}$ to $\phi^{\xi_z^+}_{\Lambda_{d(z)}(z)}$ (which follows from Lemma~\ref{lem:extended to FK}), we obtain that \begin{align}  &\sum_{t=1}^{d(z)}\left[\sum_{\nu^+,\nu^-\in \sB_t}\Psi_t(\nu^+,\nu^-)\times\bar\mu_{\Lambda_{t}(z)}^{\nu^+}(\mathtt L_t)\right]\times \oc{14}(\frac{d(z)}{t})^{-\frac{1}{4}{-\delta}}\nonumber\\ =~&\sum_{t=1}^{d(z)}\phi_{\Lambda_{d(z)}(z)}^{\xi_z^+}(\mathtt L_t)\times \oc{14}(\frac{d(z)}{t})^{-\frac{1}{4}{-\delta}}.\label{eq: tough event bound for 4, 5: part 1}
\end{align}
To bound the right-hand side in \eqref{eq: tough event bound for 4, 5: part 1}, we use the following Abel transformation. For arbitrary numbers $a_i,b_i (i=1,\cdots,k)$ and $b_0=0$, we have \begin{equation}\label{eq: Abel transform}
\sum_{i=1}^k a_ib_i=\sum_{i=1}^k(\sum_{j=i}^ka_j)\cdot(b_i-b_{i-1}).
\end{equation}
Applying \eqref{eq: Abel transform}, we get 
    \begin{align}
        &\sum_{t=1}^{d(z)}\phi_{\Lambda_{d(z)}(z)}^{\xi_z^+}(\mathtt L_t)\times \oc{14}(\frac{d(z)}{t})^{-\frac{1}{4}{-\delta}}\nonumber\\
        \stackrel{\eqref{eq: Abel transform}}{=}&\oc{14}\sum_{t=1}^{d(z)}\phi_{\Lambda_{d(z)}(z)}^{\xi_z^+}(\mathtt L_t\cup\cdots\cup \mathtt L_{d(z)})\times [(\frac{d(z)}{t})^{-\frac{1}{4}-\delta}-(\frac{d(z)}{t-1})^{-\frac{1}{4}{-\delta}}]\nonumber\\
        \stackrel{(*)}{\le}~& \oc{14}d(z)^{-\frac{1}{4}{-\delta}}+\oc{14}\sum_{t=2}^{d(z)}\oc{fk exponent}(t-1)^{-\frac{1}{8}}\times [(\frac{d(z)}{t})^{-\frac{1}{4}{-\delta}}-(\frac{d(z)}{t-1})^{-\frac{1}{4}{-\delta}}]\nonumber
        % \stackrel{\eqref{eq: Abel transform}}{=}&\oc{14}d(z)^{-\frac{1}{4}}+\oc{14}\oc{fk exponent}\sum_{t=2}^{d(z)}[(t-1)^{-\frac{1}{8}}-t^{-\frac{1}{8}}]\times (\frac{d(z)}{t})^{-\frac{1}{4}}\nonumber\\
        \\\le~& C(\oc{fk exponent},\oc{14},\delta) d(z)^{-\frac{1}{8}},\label{eq: tough event bound for 4, 5: part 2}
    \end{align}
where $(*)$ follows from \eqref{eq: FK one arm event exponent} and the fact that $(\frac{d(z)}{t})^{-\frac{1}{4}{-\delta}}$ increases in $t$ and the last inequality comes from some basic analysis. Choosing $\oc{13}>C$ and combining \eqref{eq: tough event bound for 4, 5: part 1} and \eqref{eq: tough event bound for 4, 5: part 2}, we complete the proof of \eqref{eq:noex-case2}.
\end{proof}

\begin{proof}[Proof of \eqref{eq:noex-case45}.]

The bound for $\mcc E_z^4$ and $\mcc E_z^5$ can be derived together, since on either of the events we have that \eqref{ev:remove-thinner} (or the version with
$+$ and $-$ switched)  holds (the case for $\mathcal E_z^5$ was derived around 
\eqref{ev:remove-thinner}, and the case for $\mathcal E_z^4$ is obvious).
Let us assume without loss of generality that the original version of \eqref{ev:remove-thinner} holds (this only loses a factor of $2$ in the estimate by symmetry), and we denote by
{$$\mathtt L_t = \{\mcc C_z^+\cap\intB\Lambda_{t}(z)=\emptyset\}\cap \{\mcc C_z^+\cap\intB\Lambda_{t-1}(z)\neq\emptyset\}, \quad \mathtt K_t  = \{\mathcal C_z^- \cap\intB\Lambda_{t-1}(z)\neq\emptyset \}.$$}
% and the key is that in both cases we always have one pivotal and thinner cluster. Without loss of generality, we can assume $z\in I\cap J, d(z)\geq 1$ and 
% $C_z^+$ is pivotal and thinner.(By symmetry, making such assumptions only loses a multiplicative factor of 2.)
% We continue to define 
% $$L_t=\{\mcc C_z^+\subset \Lambda_t(z)\}\cap\{\mcc C_z^+\not\subset \Lambda_{t-1}(z)\}.$$
% Since $C_z^+$ is thinner,  on the event $L_t$, we know 
% $\mcc C_z^-\not\subset \Lambda_{t-1}(z)$, and we denote $K_t = \{\mathcal C_z^- \not\subset\Lambda_{t-1}(z) \}$.
Using a similar derivation for \eqref{eq:decompose-of-noex-case2} and \eqref{eq:noex-case2} 
%(as well as a similar bound for the term of $t = d(z)$ for the right-hand side of \eqref{eq:decompose-of-noex-case2}),
we have 
    \begin{align}
        &\frac{1}{2}\bar\mu^{\xi_z^+/\xi_z^-}_{\Lambda_{d(z)}(z)}(\cE_z^4\cup \cE_z^5)\nonumber\\ \leq~& \sum_{t=1}^{d(z)}\sum_{\nu^+,\nu^-\in \sB_t}\Psi_t(\nu^+,\nu^-)\bar\mu_{\Lambda_{t}(z)}^{\nu^+}(\mathtt L_t)\times\bar\mu_{\Lambda_{t}(z)}^{\nu^-}(\mathtt K_t)\times\bar\mu_{\Lambda_{t,d(z)}(z)}^{\nu^+/\nu^-,\xi_z^+/\xi_z^-}(\cont(t,d(z)))\nonumber\\ \le ~&\sum_{t=1}^{d(z)}\sum_{\nu^+,\nu^-\in \sB_t}\Psi_t(\nu^+,\nu^-)\bar\mu_{\Lambda_{t}(z)}^{\nu^+}(\mathtt L_t)\times\bar\mu_{\Lambda_{t}(z)}^{\nu^-}(\mathtt K_t)\times \oc{14}(\frac{d(z)}{t})^{-\frac{1}{4}{-\delta}}\nonumber~~~(\text{by Lemma}~\ref{lem: BK-type inequality for disagreement Ising model}),
    \end{align}
where in the last inequality we recalled that $\bar\mu_{\Lambda_{t,d(z)}(z)}^{\nu^+/\nu^-,\xi^+_z/\xi_z^-} (\cont(t,d(z)))=1$ if $t=d(z)$.
    Combined with the law of total probability and the change from $\bar \mu^{\xi_z^+}_{\Lambda_{d(z)}(z)}$ to $\phi^{\xi_z^+}_{\Lambda_{d(z)}(z)}$ (by Lemma~\ref{lem:extended to FK}), it yields that \begin{align}
        \frac{1}{2}\bar\mu^{\xi_z^+/\xi_z^-}_{\Lambda_{d(z)}(z)}(\cE_x^4\cup \cE_x^5)\le~&\sum_{t=1}^{d(z)}\phi_{\Lambda_{d(z)}(z)}^{\xi_z^+}(\mathtt L_t)\phi_{\Lambda_{d(z)}(z)}^{\xi_z^-}(\mathtt K_t)\times \oc{14}(\frac{d(z)}{t})^{-\frac{1}{4}{-\delta}}\nonumber\\
        \stackrel{\eqref{eq: FK one arm event exponent}}\leq&\sum_{t=1}^{d(z)}\phi_{\Lambda_{d(z)}(z)}^{\xi_z^+}(\mathtt L_t)\times \oc{fk exponent}t^{-\frac{1}{8}}\oc{14}(\frac{d(z)}{t})^{-\frac{1}{4}{-\delta}}.\label{eq: tough event bound for 6, 7: part 1}
    \end{align}
Applying \eqref{eq: Abel transform}, we get that
    \begin{align}
        &\sum_{t=1}^{d(z)}\phi_{\Lambda_{d(z)}(z)}^{\xi_z^+}(\mathtt L_t)\times \oc{fk exponent}t^{-\frac{1}{8}}\oc{14}(\frac{d(z)}{t})^{-\frac{1}{4}{-\delta}}\nonumber\\
        \stackrel{\eqref{eq: Abel transform}}{=}&\oc{fk exponent}\oc{14}\sum_{t=1}^{d(z)}\phi_{\Lambda_{d(z)}(z)}^{\xi_z^+}(\mathtt L_t\cup\cdots\cup \mathtt L_{d(z)})\times [t^{-\frac{1}{8}}(\frac{d(z)}{t})^{-\frac{1}{4}{-\delta}}-(t-1)^{-\frac{1}{8}}(\frac{d(z)}{t-1})^{-\frac{1}{4}{-\delta}}]\nonumber\\
        \stackrel{(*)}{\le} ~&\oc{fk exponent}\oc{14}d(z)^{-\frac{1}{4}{-\delta}}+\oc{fk exponent}\oc{14}\sum_{t=2}^{d(z)}\oc{fk exponent}(t-1)^{-\frac{1}{8}}\times [t^{-\frac{1}{8}}(\frac{d(z)}{t})^{-\frac{1}{4}{-\delta}}-(t-1)^{-\frac{1}{8}}(\frac{d(z)}{t-1})^{-\frac{1}{4}{-\delta}}]\nonumber\\
        % \stackrel{\eqref{eq: Abel transform}}{=}&C_1d(z)^{-\frac{1}{4}}+C_1\oc{fk exponent}\sum_{t=2}^{d(z)}[(t-1)^{-\frac{1}{8}}-t^{-\frac{1}{8}}]\times t^{-\frac{1}{8}}(\frac{d(z)}{t})^{-\frac{1}{4}}\nonumber\\
        \le ~&C(\oc{fk exponent},\oc{14},\delta) d(z)^{-\frac{1}{4}}
        ,\label{eq: tough event bound for 6, 7: part 2}
    \end{align}
    where $(*)$ follows from \eqref{eq: FK one arm event exponent} and the fact that $t^{-\frac{1}{8}}(\frac{d(x)}{t})^{-\frac{1}{4}{-\delta}}$ increases in $t$ and the last inequality comes from some basic analysis.

Choosing $\oc{13}^2>{4C}$ and
combining \eqref{eq: tough event bound for 6, 7: part 1} and \eqref{eq: tough event bound for 6, 7: part 2}, we complete the proof of \eqref{eq:noex-case45}.
\end{proof}

\begin{rmk}\label{rmk-ron}
     We point out that the $-\delta$ term in the exponent is of vital importance. The reason is that without this term, the final line of \eqref{eq: tough event bound for 6, 7: part 2} will include an extra $\ln d(z)$ term, which is insufficient for our applications. Meanwhile, \eqref{eq: tough event bound for 4, 5: part 2} still holds without this term and we just include it for {consistency}.
\end{rmk}

\section{Proof ingredients of Theorems~\ref{thm-critical-temperature} and~\ref{thm-critical-temperature-small-perturbation}}\label{sec:postponed}
In this section, we provide the postponed proofs for ingredients that were employed in the proofs for Theorems \ref{thm-critical-temperature} and \ref{thm-critical-temperature-small-perturbation}, that is, Lemmas \ref{lem: small perturbation for partition function}, \ref{lem: small perturbation for expectation}, \ref{lem: good external field probability 0}, \ref{lem: good external field probability 1}, \ref{lem: domination by product measure},~\ref{lem: perfect external field bound}, \ref{lem: dis crossing probability upper bound}, Theorem~\ref{thm:Fractality} and Proposition \ref{prop: strictly faster polynomial decay}.
\subsection{Proof ingredients of Theorem~\ref{thm-critical-temperature-small-perturbation}}\label{sec: small perturbation}
%\subsection{Supercritical disorder strength}

% and in the case that $\eps=N^{-7/8}$, it is an up to constant change.

%We expand the partition function with disorder with respect to the Ising measure without disorder and obtain that
%\begin{equation}\label{eq: partition function expansion}
%\begin{aligned}
%        \frac{\zmch+}{\zmc+}&= \langle ~\exp(\sum_{x\in \lamn}\eps h_v\sigma_v) ~ \rangle_{\lamn,0}^{+}\\ &= \langle ~\prod_{v\in\lamn}(\cosh(\eps h_v)+\sigma_v\sinh(\eps h_v))~\rangle_{\lamn,0}^{+}\\ &= \prod_{v\in\lamn} \cosh(\eps h_v) \langle ~\prod_{v\in\lamn}[1+\sigma_v\tanh(\eps h_v)]~\rangle_{\lamn,0}^{+}.
%\end{aligned}
%\end{equation}
In this subsection, we prove Lemmas~\ref{lem: small perturbation for partition function} and \ref{lem: small perturbation for expectation}.

\begin{proof}[Proof of Lemma~\ref{lem: small perturbation for partition function}]
    The proof of this lemma is very similar to that of Lemma~\ref{lem: small perturbation for crossing probability}. To show \eqref{eq: good external field for expectation1}, consider the following expansion:
    {$$ \langle ~\prod_{v\in\cR}[1+\sigma_v\tanh(\veps h_v)]~\rangle_{\mcc R,0}^{+}= 1+ \sum_{k=1}^{\infty}\Phi_{k}(h),$$
    where $$\Phi_k(h)= \sum_{\substack{I\subset\cR\\|I|=k}} \langle\sigma^{I}\rangle_{\mcc R,0}^{+} \prod_{v\in I}\tanh(\veps h_v).$$ Here we  recall that $\sigma^I=\prod_{x\in I}\sigma_x.$}
% \begin{equation*}
%     \langle ~\prod_{v\in\cR}[1+\sigma_v\tanh(\eps h_v)]~\rangle_{\mcc R,0}^{+}= 1+ \sum_{k=1}^{\infty}\sum_{\substack{I\subset\cR\\|I|=k}}\langle\sigma^{I}\rangle_{\mcc R,0}^{+} \prod_{v\in I}\tanh(\eps h_v).
% \end{equation*} 
% Recall that $\sigma^I=\prod_{x\in I}\sigma_x$. For each integer $k\ge 0$, let $$\Phi_k(h)= \sum_{\substack{I\subset\cR\\|I|=k}} \langle\sigma^{I}\rangle_{\mcc R,0}^{+} \prod_{v\in I}\tanh(\eps h_v).$$ 
Applying Lemmas~\ref{lem: upper-bound for the sum of squares of k point function} and \ref{lem: concentration for sum of products of tanh}, we obtain that \begin{equation}\label{eq: upper bound on psi_k to be big}
    \P\Big(|\Phi_k(h)|>(\eps M^{\frac{7}{8}})^{k/2}/2\Big)\le C_1^{-1}\exp\Big(-C_1\sqrt{\eps^{-1} M^{-\frac{7}{8}}}\times(k!)^{\frac{3}{8k}}\Big).
\end{equation} Note that $\sum_{k=1}^{\infty}(\eps M^{\frac{7}{8}})^{k/2}\le C_2\sqrt{\eps M^{\frac{7}{8}}}$ for some constant $C_2>0$ relying on $\eps M^{\frac{7}{8}}$, and we can assume $C_2<2$ by choosing $\oc{12}>0$ small enough. Thus we conclude by \eqref{eq: upper bound on psi_k to be big} that \begin{equation*}
\begin{aligned}
    &\P(|\sum_{k=1}^{\infty}\Phi_k(h)| > \sqrt{\eps M^{\frac{7}{8}}})\le \sum_{k=1}^{\infty}\P(|\Phi_k(h)|>(\eps M^{\frac{7}{8}})^{k/2}/2)\\ \stackrel{\eqref{eq: upper bound on psi_k to be big}}{\le}& \sum_{k=1}^{\infty}C_1^{-1}\exp\Big(-C_1\sqrt{\eps^{-1} M^{-\frac{7}{8}}}\times(k!)^{\frac{3}{8k}}\Big)\le C_3^{-1}\exp(-C_3\sqrt{\eps^{-1} M^{-\frac{7}{8}}}).
\end{aligned} 
\end{equation*}
% Integrating over $\Tilde{h}$ gives the desired result.
This completes the proof.
\end{proof}
 
\begin{proof}[Proof of Lemma~\ref{lem: small perturbation for expectation}]
    The proof of this lemma is similar to that of Lemma~\ref{lem: small perturbation for partition function} with an additional complication of using a product version of Lemma~\ref{lem: concentration for sum of products of tanh}. 
    % The only difference is that when showing an analog as \eqref{eq: upper bound on psi_k to be big}, we need to replace \eqref{eq: upper-bound for the sum of squares of k point function} by \eqref{eq: upper-bound for the sum of squares of k point function2} (with $y = o$). We omit the proof details here.
    Define 
$$\Phi^+_{k,m}(h)=\sum_{|I|=k,|J|=m}\langle ~\sigma_o\sigma^I~\rangle_{\lamn,0}^{+}\langle ~\sigma^J~\rangle_{\lamn,0}^{-}\prod_{x\in I}\tanh(\veps h)\prod_{y\in J}\tanh(\veps h)=\Phi^{+,o}_k(h)\Phi^-_m(h),$$
where 
\begin{align}
\Phi^{+,o}_k(h)=\sum_{|I|=k}\langle ~\sigma_o\sigma^I~\rangle_{\lamn,0}^{+}\prod_{x\in I}\tanh(\veps h),\mbox{  and  }
\Phi^-_m(h)=\sum_{|J|=m}\langle ~\sigma^J~\rangle_{\lamn,0}^{-}\prod_{y\in J}\tanh(\veps h).\nonumber
\end{align}
For any $A_{k,m} > 0$, we have that \begin{align*}
    &\mbb P\left(|\Phi^{+}_{k,m}(h)|>(\eps M^{\frac{7}{8}})^{\frac{m+k}{2}}\langle\sigma_o\rangle^+_{\lamn,0}/6\right)\\ \leq & \mbb P\left(|\Phi^{+,o}_{k}(h)|>A_{k,m}(\eps M^{\frac{7}{8}})^{\frac{k}{2}}\langle\sigma_o\rangle^+_{\lamn,0}\right)  +  \mbb P\left(|\Phi^{-}_{m}(h)|>A_{k,m}^{-1}(\eps M^{\frac{7}{8}})^{\frac{m}{2}}/6\right)\\
    \stackrel{(*)}\leq & C_1^{-1}\exp(-C_1 A_{k,m}^{\frac{1}{k}}\sqrt{\eps^{-1}M^{-\frac{7}{8}}}\times \frac{(k!)^{\frac{1}{2k}}}{[(k+1)!]^{\frac{1}{8k}}})+C_1^{-1}\exp(-C_1 A_{k,m}^{-\frac{1}{m}}\sqrt{\eps^{-1}M^{-\frac{7}{8}}}\times (m!)^{\frac{3}{8m}})\\
    \leq & C_1^{-1}\exp(-C_1 A_{k,m}^{\frac{1}{k}}\sqrt{\eps^{-1}M^{-\frac{7}{8}}}\times (k!)^{\frac{1}{4k}})+C_1^{-1}\exp(-C_1 A_{k,m}^{-\frac{1}{m}}\sqrt{\eps^{-1}M^{-\frac{7}{8}}}\times (m!)^{\frac{1}{4m}}),
\end{align*}
where (*) used Lemmas~\ref{lem: upper-bound for the sum of squares of k point function} and~\ref{lem: concentration for sum of products of tanh} twice.  In order to optimize the right-hand side above, we balance the two exponents by choosing $A_{k,m} = (m!)^{\frac{k}{4(k+m)}}(k!)^{-\frac{m}{4(k+m)}}$.
As a result, we have 
\begin{equation}
    \mbb P\left(|\Phi^{+}_{k,m}(h)|>(\eps M^{\frac{7}{8}})^{\frac{k+m}{2}}\langle\sigma_o\rangle^+_{\lamn,0}/6\right)\leq 2C_1^{-1}\exp(-C_1 \sqrt{\eps^{-1}M^{-\frac{7}{8}}}\times (k!m!)^{\frac{1}{4(k+m)}}). \label{eq:6}
\end{equation}
The same bound can be derived for 
\begin{equation*}
    \Phi^-_{k,m}(h)=\sum_{|I|=k,|J|=m}\langle ~\sigma^I~\rangle_{\lamn,0}^{+}\langle ~\sigma_o\sigma^J~\rangle_{\lamn,0}^{-}\prod_{x\in I}\tanh(\veps h)\prod_{y\in J}\tanh(\veps h).\label{eq:7}
\end{equation*}
With \eqref{eq:6} and its analog for $\Phi^-_{k,m}(h)$ in place of \eqref{eq: upper bound on psi_k to be big}, the rest is just a repetition of the proof of Lemma~\ref{lem: small perturbation for partition function}.
\end{proof}

\subsection{Fractality of disagreement percolation}\label{sec: fractality}

In this subsection, we prove Theorem \ref{thm:Fractality} using the following result of \cite{AB99}: In a percolation system, any crossing has dimension strictly larger than $1$ provided that the percolation system satisfies some tortuous condition (as formulated in \cite{AB99}). Naturally, our main task is to verify this tortuous condition for the disagreement percolation at criticality with the presence of any external field.

Recall that $\{B_i\}_{i\in I}$ is a partition of $\lamn$ into $M$-boxes and recall $\cE_{\alpha,N,M}$ from Definition \ref{def: dis crossing intersecting a large number of M-boxes}.
For a collection of rectangles $\mathfrak{R}$, we say it is {\em well-separated} if the distance from each rectangle $\cR\in \mathfrak R$ to any other rectangle in $\mathfrak R$ is at least $60$ times the diameter of $\cR$.
The set $\mathcal{D}\subset \bar{\mbb Z}^2$ can be naturally embedded into $\mbb R^2$ with line segments connecting a vertex $v$ and a midpoint of an edge $e$ as long as $v$ is an endpoint of $e$ and  both of them are contained in $\mathcal{D}$. With such an embedding, we are then under the same setting as in \cite{AB99} which considers continuous curves.

%Consider  a pair of configurations $(\bar{\sigma}^+,\bar{\sigma}^-)$ sampled from $\bar\mu_{\Lambda_{\ell,2 \ell}}^+\otimes\bar\mu_{\Lambda_{\ell,2 \ell}}^-$. 
For a rectangle $\cR$, 
we denote $\rc(\cR)$ the event that there exists a path of pre-disagreements in $\cR$ joining the two short sides of $\cR$. We point out that there is a slight difference between $\rc(\cR)$ and $\mathtt{Hcross}(\cR)$ in Definition \ref{def: horizontal crossing} which focuses on horizontal crossings. Then we only need to check the following condition.

\begin{lem}  \label{lem:criterion}
There exists an absolute constant $b >0$ such that the following holds. For any $N\geq 1$, for any collection of well-separated rectangles $\mathfrak{R}$ contained in $\Lambda_{N/2,N}$ and for any external field $h$,
  \begin{equation*} \label{eq:criterion}
    \left\langle \prod_{\mcc R\in\mathfrak{R}}\1_{\rc(\mcc R)}\right\rangle^{ +/-{,+/-}}_{\Lambda_{N/2,N},h}\le \left(1-b \right)^{|\mathfrak{R}|}.
  \end{equation*}
\end{lem}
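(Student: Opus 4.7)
The plan is to establish Lemma~\ref{lem:criterion} via inductive conditioning combined with a single-rectangle crossing bound that is uniform in the external field. The key observation is that, since the rectangles in $\mathfrak R$ are $60$-well-separated, one can enclose each $\mathcal R \in \mathfrak R$ in a surrounding box $\Gamma_{\mathcal R}$ whose diameter is, say, $50$ times that of $\mathcal R$, and the resulting boxes are pairwise disjoint. Exposing the configuration of the product measure $\bar\mu^{+/-}_{\Lambda_{N/8,N}, h}$ on the complement of $\bigcup_{\mathcal R} \Gamma_{\mathcal R}$ and applying the DMP, we obtain that conditionally the measures inside each $\Gamma_{\mathcal R}$ are independent, with random boundary conditions $\xi_{\mathcal R}^\pm$ on $\partial \Gamma_{\mathcal R}$. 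Since the existence of a pre-disagreement crossing $\rc(\mathcal R)$ is an increasing event under the partial order of Definition~\ref{def:partial-order} (as noted at the end of Section~\ref{FKG}), CBC (Lemma~\ref{lem:CBC}) gives $\bar\mu^{\xi_{\mathcal R}^+/\xi_{\mathcal R}^-}_{\Gamma_{\mathcal R}, h}(\rc(\mathcal R)) \leq \bar\mu^{+/-}_{\Gamma_{\mathcal R}, h}(\rc(\mathcal R))$. Integrating the exposure yields
\begin{equation*}
\left\langle \prod_{\mathcal R \in \mathfrak R} \1_{\rc(\mathcal R)} \right\rangle^{+/-}_{\Lambda_{N/8,N}, h} \leq \prod_{\mathcal R \in \mathfrak R} \bar\mu^{+/-}_{\Gamma_{\mathcal R}, h}(\rc(\mathcal R)).
\end{equation*}

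It then remains to establish a single-rectangle upper bound $\bar\mu^{+/-}_{\Gamma_{\mathcal R}, h}(\rc(\mathcal R)) \leq 1 - b$ for some absolute constant $b > 0$, uniformly in $h$ and in the aspect ratio of $\mathcal R$ (which can be taken bounded by the well-separation structure). By another application of CBC this reduces to proving the bound for the measure $\bar\mu^{+/-}_{\mathcal R, h}$ with $+/-$ boundary on $\partial \mathcal R$. For this I would adapt the contraction-swap-DSS technique of Lemma~\ref{lem:crossing} to rectangular geometry: contract the two short sides of $\mathcal R$ to single points $\fA$ and $\fB$, introduce auxiliary external fields $g_\fA, g_\fB$ at these points, and apply Proposition~\ref{prop:swap} to the pre-disagreement cluster $\mathcal D_\fA$ using $\{\fB\}$ as the anchor set. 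On $\rc(\mathcal R)^c$ the cluster $\mathcal D_\fA$ does not contain $\fB$, making the swap valid; identifying preimages as in the proof of Lemma~\ref{lem:crossing} and using the independence of the two copies, one obtains
\begin{equation*}
\bar\mu^{+/-}_{\mathcal R, h}(\rc(\mathcal R)^c) = \frac{\bar\mu^+_{G', h'}(\sigma_\fA = -, \sigma_\fB = +)\cdot\bar\mu^-_{G', h'}(\sigma_\fA = +, \sigma_\fB = -)}{\bar\mu^+_{G', h'}(\sigma_\fA = +, \sigma_\fB = +)\cdot\bar\mu^-_{G', h'}(\sigma_\fA = -, \sigma_\fB = -)},
\end{equation*}
where $G'$ is the contracted graph and $h'$ extends $h$ by $g_\fA, g_\fB$. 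Choosing $g_\fA, g_\fB$ to balance the marginals (as in the choice of $g_\star$ in Lemma~\ref{lem:crossing}) and invoking the DSS inequality \cite[Theorem 1.1]{DSS22}, one can compare each ratio factor with its zero-field analogue; the no-disorder bound then follows from the FK-Ising RSW theory combined with the agreed-boundary machinery of Theorem~\ref{thm: RSW for dis at 0 external field}, yielding the uniform separation from $1$.

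The hard part will be the single-rectangle step. Unlike Lemma~\ref{lem:crossing}, where a single contracted point $\fQ$ permits a clean reduction to a single-spin magnetization and a direct use of DSS, the rectangular setting produces two contracted points $\fA, \fB$ and the relevant quantities are joint probabilities $\bar\mu^\pm(\sigma_\fA = \cdot, \sigma_\fB = \cdot)$. Balancing two auxiliary fields $g_\fA, g_\fB$ simultaneously, and extracting a clean bound from DSS applied to joint (rather than marginal) spin events, will be the main technical difficulty; I expect this to require conditioning on $\sigma_\fB$ and iteratively applying DSS to the conditional magnetization at $\fA$, combined with FKG to control the conditional probabilities, and then an analogous step to handle $\sigma_\fB$. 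A possible alternative, if this direct adaptation becomes too delicate, is to bypass contraction entirely and instead invoke Lemma~\ref{lem:crossing} on a suitably chosen annulus inside $\Gamma_{\mathcal R}$ whose crossing probability is coupled with $\rc(\mathcal R)$ via an agreed-boundary argument in the spirit of Theorem~\ref{thm: RSW for dis at 0 external field}.
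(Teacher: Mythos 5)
Your factorization step is fine and matches what the paper intends: the $60$-fold separation lets you enclose each $\mcc R$ in a box $\Gamma_{\mcc R}$ so that the boxes are disjoint, and DMP plus CBC (Lemma~\ref{lem:CBC}), together with the fact that $\rc(\mcc R)$ is increasing for the partial order of Definition~\ref{def:partial-order}, reduces the lemma to a single-rectangle bound that is uniform in $h$ and in the boundary condition on $\partial\Gamma_{\mcc R}$. The gap is in how you then prove that single-rectangle bound. First, the reduction ``by another application of CBC \dots to the measure $\bar\mu^{+/-}_{\mcc R,h}$ with $+/-$ boundary on $\partial\mcc R$'' destroys the estimate rather than simplifying it: under the $+/-$ boundary condition imposed on all of $\partial\mcc R$, every vertex (and edge) of $\partial\mcc R$ is a pre-disagreement, so a long side of $\partial\mcc R$ is already a deterministic pre-disagreement path joining the two short sides, and $\rc(\mcc R)$ has probability $1$. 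CBC in the useful direction only takes you to the extremal boundary condition on $\partial\Gamma_{\mcc R}$; pushing it onto $\partial\mcc R$ itself yields the trivial bound $\le 1$. Second, the two-point contraction/swap/DSS argument that you propose as the main route is not actually carried out: Proposition~\ref{prop:swap} is used in Lemma~\ref{lem:crossing} with a single contracted point $\fQ$ precisely so that the resulting quantity is a single-site magnetization to which \cite[Theorem 1.1]{DSS22} applies, and you acknowledge yourself that the joint two-point analogue you would need is the ``main technical difficulty.'' As written, the central estimate of the lemma is therefore unproved.

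The one-sentence alternative at the end of your proposal is in fact the paper's argument, and it sidesteps both issues. If $\mcc R$ has short side $b$ and long side $a\ge b$, and $w$ is the midpoint of one short side, then any pre-disagreement path in $\mcc R$ joining the two short sides connects $\bar\Lambda_{\lceil b/2\rceil}(w)$ (which contains that short side) to $\partial\Lambda_{a}(w)$ (which contains the opposite short side), and hence realizes the translate to $w$ of the event $\con(\lceil b/2\rceil, a)$ for an annulus of modulus bounded away from $1$. Lemma~\ref{lem:crossing} bounds the probability of this annulus crossing away from $1$ uniformly in the external field and, crucially, uniformly in the boundary conditions on $\partial\Lambda_{\lceil b/2\rceil+1}(w)\cup\partial\Lambda_a(w)$ --- unlike a rectangle crossing, a radial crossing of an annulus cannot be supplied by the boundary itself. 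Feeding this into your DMP/CBC factorization over the disjoint boxes $\Gamma_{\mcc R}$ gives $(1-b)^{|\mathfrak R|}$. I would develop this route and drop the two-point swap entirely.
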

\begin{proof}
    Provided with Lemma~\ref{lem:crossing},
    %(and remark~\ref{rmk:RSW-ref}
    the proof only uses CBC and the fact that crossing a rectangle is harder than crossing an annulus with comparable size. We omit further details here.
\end{proof}
\begin{proof}[Proof of Theorem~\ref{thm:Fractality}]
    By Lemma~\ref{lem:criterion} and \cite[Theorem 1.3]{AB99}, we obtain the following uniform convergence over all instances of external field:
\begin{equation}\label{weak fractality}
    \lim_{N\to\infty}\langle \1_{\cE_{\alpha,N,M}}\rangle^{+/-{,+/-}}_{\Lambda_{N/2,N},\eps h} \to 0 \,.
\end{equation}
Then by a standard percolation argument, we can enhance the probability decay in \eqref{weak fractality} and prove Theorem~\ref{thm:Fractality}. This argument for enhancement was written in \cite[Theorem 5.5]{AHP20} and \cite[Proposition 3.1]{DX21}, and thus we omit further details.
\end{proof}

\subsection{Proof ingredients of Theorem \ref{thm:main thm-upper bound}}\label{sec: upper-bound for main theorem}

In this subsection, we prove Proposition~\ref{prop: strictly faster polynomial decay} and Lemmas~\ref{lem: domination by product measure}, \ref{lem: perfect external field bound}, \ref{lem: dis crossing probability upper bound}.

We define the following surface tension to analyze the influence of boundary conditions in an annulus, as in \cite{AP19}.  Note that the partition function of the Ising model equals to that of the extended Ising model (see  \cite[Lemma 2.1]{AHP20}).
\begin{defi}\label{def:surface tension} For a finite graph $G = (V, E)$, the surface tension between a pair of disjoint
subsets $A_1, A_2 \subset V$ with the external field $\eps h$ at temperature $T$ is defined as
\begin{equation}\label{eq:def_T}
  \mathcal{T}_{A_1,A_2}( \eps h)  =   T  \cdot \log\left(
    \frac{\cZ^{+,+} \cdot \cZ^{-,-} } {\cZ^{+,-} \cdot \cZ^{-,+} }\right)\,,
\end{equation} 
where $ \cZ^{s_1,s_2} \equiv \cZ_{T,\G,\eps h}^{A_1, A_2; s_1,s_2}$ (for  $s_1,s_2 \in\{+,-\}$)  is the partition function 
%for (extended) Ising model (since they are the same) 
at temperature $T$ with the boundary condition such that the spin values on $A_1$ are $s_1$ and the spin values on $A_2$ are $s_2$.
\end{defi}
In the rest of this subsection, we  continue to fix $T = T_c$. For $\Gamma_1 \subset \Gamma_2\subset\mathbb Z^2$,  define  
\begin{equation}\label{eq:D_l_def}
  D_{\Gamma_1,\Gamma_2}(\eps  h)  = \frac{1}{2} \sum_{v\in \Gamma_1} \left[\langle \sigma_v \rangle_{\Gamma_2,\eps h}^{+} - \langle \sigma_v \rangle_{\Gamma_2,\eps h}^{-} \right]\,.
\end{equation}
Recall that $\mathcal{D}_{\intB \Gamma_2} = \mathcal{D}_{\intB \Gamma_2} (\bar{\sigma}^+,\bar{\sigma}^-)$
is the connected component of $\intB \Gamma_2$ in the pre-disagreement set $\mathcal{D}$. 
By Proposition~\ref{prop:DisagreementRep}, we get that
 \begin{equation*}\label{eq:disagreement representation for D}
D_{\Gamma_1,\Gamma_2}(\eps  h) = \langle |\Gamma_1 \cap \mathcal{D}_{\intB\Gamma_2}|  \rangle_{\Gamma_{2},\eps h}^{\intB \Gamma_2,+/-}\,.
 \end{equation*}

The following lemma connects the aforementioned surface tension and the crossing probability in an annulus. We follow the notation in Lemma~\ref{lem:crossing}.

\begin{lem}\label{lem: upper-bound for the log partition function}
There exists a constant $\nc\label{surface}>0$ such that for any $\ell \geq 1$ the following holds for $G=\Lambda_{\ell, 2\ell}$ and for any external field $\eps h$ on $G$:\begin{equation}\label{eq: connection between free energy and crossing probability}
    \mathcal{T}_{{\intB\Lambda_{\ell+1},\intB\Lambda_{2\ell}}}(\eps h)=-T_c\cdot\log(1-\langle \1_{\con(\ell,2\ell)} \rangle_{G, \eps h}^{+/-, +/-})\le \oc{surface}.
\end{equation}
\end{lem}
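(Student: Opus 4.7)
The plan is to treat the two assertions of \eqref{eq: connection between free energy and crossing probability} separately: the exact identity between surface tension and the non-crossing probability, and the uniform upper bound.

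For the identity $\mathcal T_{\intB\Lambda_{2\ell},\intB\Lambda_{\ell+1}}(h) = -T_c \log\bigl(1 - \langle \1_{\con(\ell,2\ell)}\rangle^{+/-,+/-}_{G,\eps h}\bigr)$, I would repeat the contraction--swap derivation already carried out in the proof of Lemma~\ref{lem:crossing}. Contract the inner boundary $\intB\Lambda_{\ell+1}$ to a single vertex $\fQ$ to obtain $G'$, and extend $h$ to $h'$ on $G'$ by setting $h'_{\fQ}=g$ to an arbitrary value. Then Proposition~\ref{prop:swap} applied with $S=\{\fQ\}$ and $A=\intB\Lambda_{2\ell}$ gives, exactly as in \eqref{eq:crossing prime -/+},
\[
\langle \1_{\con^c}\rangle^{+/-,+/-}_{G',h'} = \frac{\langle \1_{\{\bar\sigma_{\fQ}=-1\}}\rangle^{+,0}_{G',h'}\cdot\langle \1_{\{\bar\sigma_{\fQ}=1\}}\rangle^{-,0}_{G',h'}}{\langle \1_{\{\bar\sigma_{\fQ}=1\}}\rangle^{+,0}_{G',h'}\cdot\langle \1_{\{\bar\sigma_{\fQ}=-1\}}\rangle^{-,0}_{G',h'}}.
\]
Each single-site marginal on the right equals $\cZ^{\xi,s}_{G,h}\,e^{sg/T_c}/\cZ^{\xi,0}_{G',h'}$, since conditioning $\bar\sigma_{\fQ}=s$ on $G'$ corresponds to fixing the inner boundary of $G$ to $s$ up to the single-site weight at $\fQ$. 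The factors $e^{\pm g/T_c}$ and the normalizers $\cZ^{\pm,0}_{G',h'}$ cancel pairwise in the ratio, yielding
\[
\langle \1_{\con^c(\ell,2\ell)}\rangle^{+/-,+/-}_{G,\eps h} = \frac{\cZ^{+,-}\cZ^{-,+}}{\cZ^{+,+}\cZ^{-,-}} = e^{-\mathcal T(h)/T_c},
\]
which is the desired equality (noting that $\bar{\cZ}=\cZ$ by \cite[Lemma 2.1]{AHP20}, so Definition~\ref{def:surface tension} gives the same quantity whether interpreted for the Ising or the extended Ising model).

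For the uniform upper bound $\mathcal T(h) \le \oc{surface}$, it suffices via the identity above to bound $\langle \1_{\con}\rangle^{+/-,+/-}_{G,\eps h}$ away from $1$ uniformly in $\ell\ge 1$ and in $h$. Applying Lemma~\ref{lem:crossing} with $M_1=\ell$, $M_2=2\ell$ gives the $h$-independent estimate
\[
\langle \1_{\con(\ell,2\ell)}\rangle^{+/-,+/-}_{G,\eps h} \le 1 - \left(\frac{1-\phi}{1+\phi}\right)^2, \qquad \phi := \phi^{\mathtt w/\mathtt w}_{G,0}\bigl(\intB\Lambda_{\ell+1}\longleftrightarrow\intB\Lambda_{2\ell}\bigr).
\]
By the RSW (box-crossing) theory for the critical FK-Ising model on $\mathbb Z^2$ (which underlies \eqref{0-field-annulus-crossing}; see e.g.\ \cite{DHN11,CDH16}), for the fixed aspect ratio $2$ the annulus-crossing probability $\phi$ is bounded above by an absolute constant strictly less than $1$, uniformly in $\ell\ge 1$. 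Hence $(1-\phi)/(1+\phi) \ge c_1$ for some absolute $c_1>0$, so $1 - \langle \1_\con\rangle^{+/-,+/-}_{G,\eps h} \ge c_1^2$, and thus $\mathcal T(h) \le -T_c\log c_1^2=:\oc{surface}$.

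The only delicate point is securing the $\ell$-uniform upper bound $\phi \le 1-\delta$ with $\delta>0$ absolute; this is the box-crossing property at criticality for the FK-Ising model, a standard consequence of the RSW arguments also underlying \eqref{0-field-annulus-crossing}, whereas that display itself only supplies the matching uniform lower bound.
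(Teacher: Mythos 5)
Your proposal is correct and follows essentially the same route as the paper: the identity is obtained by contracting $\intB\Lambda_{\ell+1}$ to a single vertex and invoking the swap identity \eqref{eq:crossing prime -/+}, and the uniform bound comes from Lemma~\ref{lem:crossing} together with the RSW fact that the wired annulus-crossing probability at aspect ratio $2$ is bounded away from $1$ (the point addressed in the remark following Lemma~\ref{lem:crossing}). Your closing caveat that \eqref{0-field-annulus-crossing} alone does not literally give $\phi\le 1-\delta$ is a fair and correctly resolved observation.
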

\begin{proof}
    By \eqref{eq:def_T}, in order to prove the equality in \eqref{eq: connection between free energy and crossing probability} it suffices to prove that $$\frac{\cZ^{+,+} \cdot \cZ^{-,-} } {\cZ^{+,-} \cdot \cZ^{-,+} }=\frac{1}{1-\langle \1_{\con(\ell,2\ell)} \rangle_{G, \eps h}^{+/-, +/-}}.$$
    Recall that $G^{\prime}=G/\intB\Lambda_{\ell+1}$ where we contract $\intB\Lambda_{\ell+1}$ into a single point denoted as $\fQ$ and recall that the partition function of the Ising model equals to that of the extended Ising model. Then we have $$\frac{\cZ^{+,+}}{\cZ^{{-,+}}}=\frac{\langle \1_{\{\bar \sigma_{\fQ}=1\}} \rangle_{G^{\prime},\eps h}^{{\mathbf{0},+}}}{\langle \1_{\{\bar \sigma_{\fQ}=-1\}} \rangle_{G^{\prime},\eps h}^{{\mathbf{0},+}}},~~~~~\frac{\cZ^{-,-}}{\cZ^{{+,-}}}=\frac{\langle \1_{\{\bar \sigma_{\fQ}=-1\}} \rangle_{G^{\prime},\eps h}^{{\mathbf{0},-}}}{\langle \1_{\{\bar \sigma_{\fQ}=1\}} \rangle_{G^{\prime},\eps h}^{{\mathbf{0},-}}}.$$
    Here we slightly abused the notation $h$ so that it serves as the external field on both $G$ and $G'$. Thanks to the independence, we can write \begin{equation*}\label{eq: expansion for free energy to disagreement percolation}
        \frac{\cZ^{+,+} \cdot \cZ^{-,-} } {\cZ^{+,-} \cdot \cZ^{-,+} }=\frac{\langle \1_{\{\bar \sigma_{\fQ}=1\}} \rangle_{G^{\prime},\eps h}^{{\mathbf{0},+}}}{\langle \1_{\{\bar \sigma_{\fQ}=-1\}} \rangle_{G^{\prime},\eps h}^{{\mathbf{0},+}}}\cdot \frac{\langle \1_{\{\bar \sigma_{\fQ}=-1\}} \rangle_{G^{\prime},\eps h}^{{\mathbf{0},-}}}{\langle \1_{\{\bar \sigma_{\fQ}=1\}} \rangle_{G^{\prime},\eps h}^{{\mathbf{0},-}}}=\frac{\langle \1_{\{\bar \sigma_{\fQ}^+=1,\bar \sigma_{\fQ}^-=-1\}} \rangle_{G^{\prime},\eps h}^{{\mathbf{0}/\mathbf{0},+/-}}}{\langle \1_{\{\bar \sigma_{\fQ}^+=-1,\bar \sigma_{\fQ}^-=1\}} \rangle_{G^{\prime},\eps h}^{{\mathbf{0}/\mathbf{0},+/-}}}.
    \end{equation*}
    Combined with \eqref{eq:crossing prime -/+}, it yields the desired equality in \eqref{eq: connection between free energy and crossing probability}.

    For the inequality in \eqref{eq: connection between free energy and crossing probability}, Lemma~\ref{lem:crossing} {and Remark~\ref{rmk:RSW-ref} show} directly that $\langle \1_{\con(\ell,2\ell)} \rangle_{G^{\prime}, \eps h}^{+/-, +/-}$ is bounded away from $1$ uniformly for all external field, yielding the desired bound.
\end{proof}
Provided with Lemma~\ref{lem: upper-bound for the log partition function}, we can prove Proposition \ref{prop: strictly faster polynomial decay}.
\begin{proof}[Proof of Proposition \ref{prop: strictly faster polynomial decay}]
    We start from the relation between the surface tension $\cT$ and the average number of disagreements. Applying \cite[Theorem 3.2] {AP19}, we get that \begin{equation}\label{eq: derivative of the log partition function}        \E\cT_{\intB\Lambda_{N+1},\intB\Lambda_{2N}}(\eps  h) = \frac{2 \eps  }{N}\, \E \left(  \frac{D_{\Lambda_N,\Lambda_{2N}}(\eps  h) }{ \varphi(\widehat h) } \right) \ge  \frac{C_1\eps}{N}\E D_{\Lambda_N,\Lambda_{2N}}(\eps  h)\,, 
    \end{equation} where $\varphi$ is the Gaussian density function $\varphi(s) = \frac{1}{\sqrt{2\pi}}e^{-s^2/2}$ and $\widehat h =\frac{\sum_{v\in \lamn }h_v}{{2}N}$. The inequality holds since $\varphi$ is bounded. We remark here that the expectation $\E$ is taken over the external field on $\Lambda_{2N}$ although $\cT_{\intB\Lambda_{N+1},\intB\Lambda_{2N}}$ is only a function of the external field in the annulus $\Lambda_{N,2N}$. By definition of $D$ in \eqref{eq:D_l_def} and CBC, we obtain that $\E D_{\Lambda_N,\Lambda_{2N}}(\eps  h)\ge 
    {4}N^2m(T_c,3N,\eps)$. Combined with \eqref{eq: derivative of the log partition function} and Lemma~\ref{lem: upper-bound for the log partition function}, it yields that $$m(T_c,3N,\eps)\le \frac{1}{{4}N^2}\E D_{\Lambda_N,\Lambda_{2N}}(\eps  h)\le \frac{\E\cT_{\intB\Lambda_{N+1},\intB\Lambda_{2N}}(\eps  h)}{{4}C_1\eps N}\le \frac{\oc{surface}}{{4}C_1\eps N}.$$ Thus we complete the proof.
\end{proof}

%To get ready for the proof of Proposition\ref{prop:fast power law}, we first prove Lemma~\ref{lem:perfect external field probability}.
% \huangcomment{I think the text in green is not needed.}
\begin{proof}[Proof of Lemma~\ref{lem: dis crossing probability upper bound}]
     % \textcolor{green}{Let $M=\lfloor \oc{perf1}^{{\frac{8}{7}}}\eps^{-\frac{8}{7}}\rfloor$. Let $\oc{fpl2}>0$ be big enough such that $N>M$.}
   In the following proof, we assume that $h\in\cH_{\mathtt {perf}}$. Let $\cA$ denote the event that there exists a disagreement crossing from $\intB\Lambda_{N}$ to $\intB\Lambda_{N/2+1}$ that intersects with at least $(\frac{N}{M})^{1+\alpha}$ $M$-boxes. Recalling Definition~\ref{def: dis crossing intersecting a large number of M-boxes}, we obtain that \begin{equation}\label{eq: crossing that intersects with lot of M-boxes}
   \mbb E\bar\mu_{\Lambda_{N/2,N},\eps h}^{+/-,+/-}(\con(N/2,N))\le \mbb E\bar\mu^{+/-,+/-}_{\Lambda_{N/2,N},\eps h}(\cA)+\mathbb{E}\left(\langle \1_{\cE_{\alpha,N,M}}\rangle^{+/-{,+/-}}_{\Lambda_{N/2,N},\eps h}\right).
    \end{equation} Let $\cA'$ denote the event that there exists a disagreement crossing from $\intB\Lambda_{N}$ to $\intB\Lambda_{N/2+1}$ that intersects with at least $\frac{1}{2}(\frac{N}{M})^{1+\alpha}$ good $M$-boxes. By Definition \ref{def: perfect external field} and our assumption that $h\in H_{\mathtt {perf}}$, we have $\cA\subset\cA'$. Furthermore, we define $\cA_i$ to be the event that there exists a disagreement crossing from $\intB\Lambda_{N}$ to $\intB\Lambda_{N/2+1}$ that intersects with $B_i$. Let $\{B_i\}_{i\in I_0}$ denote the collections of $M$-boxes in $\Lambda_{N/2,N}$. Then, on the one hand, we obtain that \begin{equation}\label{eq: lower bound on the number of good boxes traversed by dis crossing}
        \sum_{{i}\in I_0}\bar\mu^{+/-,+/-}_{\Lambda_{N/2,N},\eps h}(\cA_i){\1_{\{B_i\text{ is good}\}}}\ge \frac{1}{2}(\frac{N}{M})^{1+\alpha}\bar\mu^{+/-,+/-}_{\Lambda_{N/2,N},\eps h}(\cA')\ge\frac{1}{2}(\frac{N}{M})^{1+\alpha}\bar\mu^{+/-,+/-}_{\Lambda_{N/2,N},\eps h}(\cA).
    \end{equation}
    On the other hand, we can give a lower bound on the number of disagreements in each $M$-box $B_i$ in terms of the probability of $\cA_i$ using the FKG property as in Lemma \ref{lem:FKG} and the RSW estimate as in Theorem \ref{thm: RSW for dis with external field}. Recall that $\aro(u;M)$ is defined to be the event that there exists a pre-disagreement circuit in $\Lambda_{M,2M}(u)$. Let $\aro_i=\aro(u_i;M)$ where $u_i$ is the center of $B_i$. Then for a good $M$-box $B_i$, we have that $\bar\mu^{-/+}_{\Lambda_{5M}(u_i),\eps h}(\aro_i)\ge \oc{9}^4$. By CBC' (Lemma~\ref{lem:CBC prime}) and CBC (Corollary~\ref{cor: CBC}), we obtain that for a good $M$-box $B_i$\begin{align}
        \langle|\cD_{\partial}\cap B_i| ~\Big|~{\aro_i\cap\cA_i}\rangle^{+/-,+/-}_{\Lambda_{N/2,N},\eps h}&\stackrel{\mbox{CBC'}}\ge \langle|\cD_{\partial\Lambda_{2M}(u_i)}\cap B_i| \rangle^{+/-}_{\Lambda_{2M}(u_i),\eps h}\nonumber\\&\stackrel{\mbox{CBC}}\ge \sum_{u\in B_i}\langle\1_{u\in \cD_{\partial\Lambda_{4M}(u)}}\rangle^{+/-}_{\Lambda_{4M}(u),\eps h}.\label{eq: outmost disagreement boundary}
    \end{align}
    Recall the definition of (ii)-good boxes, and let $J_i$ denote the collection of vertices in $B_i$ satisfying \eqref{eq: def of (ii)-good boxes}. Thus, we have $|J_i|\ge 2M^2$. Applying \eqref{eq: def of (ii)-good boxes} and Proposition \ref{prop:DisagreementRep}, we obtain that \begin{align}
      \sum_{u\in B_i}\langle\1_{u\in \cD_{\partial\Lambda_{4M}(u)}}\rangle^{+/-}_{\Lambda_{4M}(u),\eps h}\ge~&\sum_{u\in J_i}\langle\1_{u\in \cD_{\partial\Lambda_{4M}(u)}}\rangle^{+/-}_{\Lambda_{4M}(u),\eps h}\nonumber\\\stackrel{\eqref{eq: def of (ii)-good boxes}}\ge& \sum_{u\in J_i}\frac{1}{2}\langle\1_{u\in \cD_{\partial\Lambda_{4M}(u)}}\rangle^{+/-}_{\Lambda_{4M}(u),0}\nonumber\\=~& \sum_{u\in J_i}\frac{1}{2}\Big(\langle\sigma_u\rangle^{+}_{\Lambda_{4M}(u),0}-\langle\sigma_u\rangle^{-}_{\Lambda_{4M}(u),0}\Big)~~~\text{(by Proposition \ref{prop:DisagreementRep})}\nonumber\\\ge~& M^2\Big(\langle\sigma_{o}\rangle^{+}_{\Lambda_{4M}(o),0}-\langle\sigma_{o}\rangle^{-}_{\Lambda_{4M}(o),0}\Big)~~~(\text{since }|J_i|\ge 2M^2)\nonumber\\ \stackrel{\eqref{eq: origin decay rate without disorder}}\ge& C_1 M^{\frac{15}{8}}.\label{eq: quenched thm1.2 application}
    \end{align}
    Thanks to FKG (Lemma~\ref{lem:FKG}), we obtain that for a good $M$-box $B_i$ 
        \begin{align*}
            \langle|\cD_{\partial}\cap B_i|\rangle^{+/-,+/-}_{\Lambda_{N/2,N},\eps h}\ge~& \langle|\cD_{\partial}\cap B_i|\cdot \1_{\cA_i\cap\aro_i}\rangle^{+/-,+/-}_{\Lambda_{N/2,N},\eps h}\nonumber\\ \ge~&C_1M^{\frac{15}{8}}\bar\mu^{+/-,+/-}_{\Lambda_{N/2,N},\eps h}(\aro_i\cap\cA_i)~~~(\text{by \eqref{eq: outmost disagreement boundary} and \eqref{eq: quenched thm1.2 application}})\nonumber\\\ge~&C_1M^{\frac{15}{8}}\bar\mu^{+/-,+/-}_{\Lambda_{N/2,N},\eps h}(\aro_i)\cdot\bar\mu^{+/-,+/-}_{\Lambda_{N/2,N},\eps h}(\cA_i)~~~(\text{by Lemma~\ref{lem:FKG}})\nonumber\\ \stackrel{\mbox{CBC}}\ge&C_1M^{\frac{15}{8}}\bar\mu^{-/+}_{\Lambda_{5M}(u_i),\eps h}(\aro_i)\cdot\bar\mu^{+/-,+/-}_{\Lambda_{N/2,N},\eps h}(\cA_i)\nonumber\\\ge~&C_2M^{\frac{15}{8}}\cdot\bar\mu^{+/-,+/-}_{\Lambda_{N/2,N},\eps h}(\cA_i)~~~(\text{since }B_i\text{ is good}).\label{eq: disagreement lower bound in a good box}
        \end{align*}
    Combined with \eqref{eq: lower bound on the number of good boxes traversed by dis crossing}, it yields that \begin{equation*}\label{eq: lower bound on the disagreement number in an annulus}
        \langle|\cD_{\partial}\cap \Lambda_{N/2,N}|\rangle^{+/-,+/-}_{\Lambda_{N/2,N},\eps h}\ge \frac{C_2}{2}M^{\frac{15}{8}}\cdot (\frac{N}{M})^{1+\alpha}\bar\mu^{+/-,+/-}_{\Lambda_{N/2,N},\eps h}(\cA).
    \end{equation*}
    Now averaging over $h\in \mathcal H_{\mathtt {perf}}$, we obtain that 
        \begin{align}
            \E\langle|\cD_{\partial}\cap \Lambda_{N/2,N}|\rangle^{+/-,+/-}_{\Lambda_{N/2,N},\eps h}&\ge \frac{C_2}{2}M^{\frac{15}{8}}\cdot (\frac{N}{M})^{1+\alpha}\E[\1_{h\in\cH_{\mathtt {perf}}}\bar\mu^{+/-,+/-}_{\Lambda_{N/2,N},\eps h}(\cA)]\nonumber\\&\ge \frac{C_2}{2}M^{\frac{15}{8}}\cdot (\frac{N}{M})^{1+\alpha}\Big(\E\bar\mu^{+/-,+/-}_{\Lambda_{N/2,N},\eps h}(\cA)-\P(\cH_{\mathtt {perf}}^c)\Big).\label{eq: lower bound in average on the disagreement number in an annulus}
        \end{align}
    By Proposition \ref{prop: strictly faster polynomial decay}, we have the following upper bound on the left-hand side above:\begin{equation}\label{eq: upper bound on the disagreement number in an annulus}
        \E\langle|\cD_{\partial}\cap \Lambda_{N/2,N}|\rangle^{+/-,+/-}_{\Lambda_{N/2,N},\eps h}\le \oc{prior}\frac{N}{\eps}.
    \end{equation}
    Combined with \eqref{eq: lower bound in average on the disagreement number in an annulus}, it yields that \begin{equation}\label{eq: Event A probability}
        \E\bar\mu^{+/-,+/-}_{\Lambda_{N/2,N},\eps h}(\cA)\le C_3\frac{N}{\eps M^{\frac{15}{8}}}\cdot(\frac{N}{M})^{-1-\alpha}+\P(\cH_{\mathtt {perf}}^c).
    \end{equation}
    Combining \eqref{eq: crossing that intersects with lot of M-boxes} and \eqref{eq: Event A probability}, we complete the proof of the lemma.
\end{proof}

\begin{proof}[Proof of Lemma~\ref{lem: domination by product measure}.]

Recall Definition~\ref{def: good external field for RSW}
and Theorem~\ref{thm: RSW for dis with external field}. Taking a union bound and using the FKG property for the disagreement Ising model, we obtain that for {$\eps\leq\oc{7}M^{-\frac{7}{8}}$},
\begin{equation}\label{eq:1-good-estimation}
    \mbb P(h|_{\Lambda_{5M}(u)} \mbox{ is (i)-good})\geq 1-4\oc{8}\exp(-\oc{8}^{-1}\sqrt{\eps^{-1} M^{-\frac{7}{8}}}).
\end{equation}

Now in order to estimate the probability of being (ii)-good, fix an arbitrary box $\Lambda_{5M}(u)$. Define
$$J=\Big\{x\in\Lambda_M(u):\langle\1_{x\stackrel{\mcc D}\longleftrightarrow\intB\Lambda_{4M}(x)}\rangle^{+/-}_{\Lambda_{4M}(x),\eps h}\geq \frac{1}{2}\langle\1_{x\stackrel{\mcc D}\longleftrightarrow\intB\Lambda_{4M}(x)}\rangle^{+/-}_{\Lambda_{4M}(x),0}\Big\}.$$
Then $J$ is a random set measurable with respect to $h|_{\Lambda_{5M}(u)}.$

By taking $\theta=\frac{1}{2}$ and $N = {4}M$ in Theorems~\ref{thm-critical-temperature-small-perturbation}, for $\eps\le\oldconstant{-3}{(4M)}^{-\frac{7}{8}}$ we have
$$\mbb P(x\in J)\geq 1-\oc{-4}\exp \left(-\oc{-4}^{-1}\sqrt{\eps^{-1} {M}^{-\frac{7}{8}}}\right ).$$
Combined with Markov's inequality, it yields that
\begin{equation}\label{eq:2-good-estimation}
   \mbb P(h|_{\Lambda_{5M}(u)} \mbox{ is (ii)-good})= \mbb P(|J|\geq 2M^2)\geq 1-2\oc{-4}\exp \left(-\oc{-4}^{-1}\sqrt{\eps^{-1} {M}^{-\frac{7}{8}}}\right ).
\end{equation}
Combining \eqref{eq:1-good-estimation} and \eqref{eq:2-good-estimation}, we have that for $\eps \leq \min\{\oc{7},{\frac{\oc{-3}}{4}}\}M^{-\frac{7}{8}}$
\begin{equation}\label{eq:good-estimation}
    \mbb P(h|_{\Lambda_{5M}(u)} \mbox{ is good})\geq 1-C\exp(-C^{-1}\sqrt{\eps^{-1} {M}^{-\frac{7}{8}}})
\end{equation}
for some constant $C>0$ depending on $\oc{8}$ and $\oc{-4}$.

Recalling Definition~\ref{def: domination}, we have
$$\mbb P_{G_M}(\zeta_u=1)\geq1-C\exp(-C^{-1}\sqrt{\eps^{-1} {M}^{-\frac{7}{8}}}).$$
Furthermore, the events $ \{h|_{\Lambda_{5M}(u)} \mbox{ is good}\}$ and $ \{h|_{\Lambda_{5M}(v)} \mbox{ is good}\}$ are independent once {we have} $dist(u,v)>10M.$ That is, $$\mbb P_{G_M}(\zeta_i=1\mid \{\zeta_j: |i-j|>10\})=\mbb P_{G_M}(\zeta_i=1).$$
By \cite[Theorem 0.0]{LSS97}, we complete the proof of Lemma~\ref{lem: domination by product measure} by choosing $\oc{dom}>0$ small enough.
%By \cite[Theorem 1.3]{LSS97}, we know for $\eps{\leq}\min\{\oc{7},\oc{-3}\}M^{-\frac{7}{8}},$
%$\mbb P_{G_M}$ dominate a Bernoulli percolation $\mbb P_\rho$ with 
%$$\rho>1-\oc{dom1}\exp(-\oc{dom1}^{-1}\sqrt{{\eps^{-1}M^{-\frac{7}{8}}}})$$
%where $\oc{dom1}>0$ is a constant depending on $C$.
% {In \cite[Theorem 0.0]{LSS97}, the authors proved the following result. For any random field $X = \{X_s: s\in \mathbb Z^2\} \in \{0, 1\}^{\mathbb Z^2}$, if for each $s\in \mathbb Z^2$, $$\P(X_v = 1\mid \{X_u: |u-v| \geq 10\}) \geq p, \mbox{ for each } s\in \mathbb Z^2,$$ then we have that $X$ stochastically dominates a Bernoulli site percolation on $\mathbb Z^2$ with a density $\rho = \rho(p)$, where we can choose $\rho \to 1$ as $p\to 1$. In our case, by \eqref{eq: good external field for RSW} and the fact that whether an $M$-box $\Lambda_M(u)$ is good or not only depends on the external field on $\Lambda_{5M}(u)$, we can let $\eps M^{\frac{7}{8}}$ be small enough such that the density $\rho$ for the Bernoulli site percolation is close to $1$. Therefore, we can apply \cite[Theorem 1.3]{LSS97} and obtain the following lemma.}
% \begin{equation*}
%     M^2\mbb P(x\in J)=\mbb E|J|\leq M^2\mbb P(|J|\geq \frac{M^2}{2})+\frac{M^2}{2}\mbb P(|J|<\frac{M^2}{2}).
% \end{equation*}    
\end{proof}

\begin{proof}[Proof of Lemma~\ref{lem: perfect external field bound}]
%Let $\oc{perf1}>0$ be small enough such that $1-\oc{dom1}\exp(\oc{dom1}^{-1}\oc{perf1}^{-1})\ge 0.9$ and thus Lemma~\ref{lem: domination by product measure} holds for $\rho=0.9$.
Recalling Definition \ref{def: domination}, we first specify the following notion of coarse graining: for each collection of $M$-boxes $\mathcal B$, we say $\mathcal B$ is a crossing if there is a disagreement crossing $\mathcal C$ connecting the outer and inner boundaries of $\Lambda_{N/2,N}$ such that an $M$-box $B_i\in \mathcal B$ if and only if $B_i \cap \mathcal C \neq \emptyset$. As a result, we see that any $M$-box crossing $\mathcal B$ is *-connected in the graph $G_M$, where $B_i$ is *-neighboring $B_j$ if their $\ell_\infty$-distance is at most $1$.  Denote by $\mathfrak B_k$ the collection of all $M$-box crossings with $k$ boxes.  

We now give an upper bound on $\mathfrak B_k$. Note that any crossing in $\mathfrak B_k$ is a 
*-connected subgraph on $G_M$ with at least one box intersecting $\partial \Lambda_{N}$ which we denote as the root box. Therefore, by considering a depth-first search procedure, each such crossing can be encoded by a *-connected contour starting from and ending at the root box, and the length of the contour is $2k-2$ (this is because the depth-first search procedure results in a subtree of $G_M$ and each edge in the tree is visited twice). Since each $M$-box has at most $8$ *-neighbors, we obtain that

%Recall Definition \ref{def: perfect external field}, let $\cC$ denote the collection of crossings that connect the outer and inner boundary of $\G_M$. Note in the definition of $M$-box partitions, adjacent $M$-boxes have intersections. Thus the crossings in $\cC$ are considered as *-connected paths, i.e., we say $x,y$ are connected if $|x-y|_{\infty}=1$. Let $\cC_k$ denote the collection of crossings in $\cC$ with length $k$.
    
%Here we give an upper bound of $|\mcc C_k|$. For any $S\in\mcc C_k$, it is a *-connected set with at most $k$ points. Let $G(S)$ be the graph induced by $S$, i.e. there eixsts an egde between two points $x,y\in S$ in $G(S)$ if and only if $|x-y|_{\infty}=1$. So $G(S)$ has no more than $4k$ edges since every point has at most 8 *-neighbours. List $\extB\Lambda_{N/M}=\{x_1,x_2,\cdots,x_{4N/M}\}$. For $1\leq s\leq 4N/M$, define $$\mcc C_{k,s}:=\{S\in \mcc C_{k},x_1,\cdots,x_{s-1}\not\in S, x_s\in S\}$$ to be the set of crossings which intersect $\extB\Lambda_{N/M}$ with smallest index $s$. Then for $S\in\mcc C_{k,s}$, we run a deep first search(DFS) on $G(S)$, the algorithm runs at most $8k$ steps and each step has no more than 8 possible edges to search, as a result 
    \begin{equation}\label{eq: number bound for crossing with length k}
           |\mathfrak B_k|\leq  \frac{2N}{M}\times 8^{2k-2}. 
    \end{equation}
    Now, for each $\mathcal B\in \mathfrak B_k$, we let $\mathcal E_\mathcal B$ be the event that at least half of the boxes in $\mathcal B$ are good. At this point, we choose $\oc{perf1}>0$ small enough such that 
    %{the right-hand side of \eqref{eq: quantitative bound for domination} is at least $0.99$ and thus we obtain that} 
    $\P_{G_M}$ is dominated below by $\P_{{\rho}}$ {for some $\rho\in(0,1)$ close to 1  such that} for any $\mathcal B\in \mathfrak B_k$ we have
\begin{equation}\label{eq: single bad crossing probability}
    \P(\mathcal E_\mathcal B^c) \leq \exp(-C_1k),
\end{equation} for some constant $C_1>10$.
Therefore, combining \eqref{eq: number bound for crossing with length k} and \eqref{eq: single bad crossing probability}, we obtain that
$$\sum_{k\geq (\frac{N}{M})^{1+\alpha}} \sum_{\mathcal B \in \mathfrak B_k} \P(\mathcal E_\mathcal B^c) \leq \sum_{k\geq (\frac{N}{M})^{1+\alpha}}\frac{2N}{M}\times 8^{2k-2}\cdot \exp(-C_1k)\le C_2^{-1}\exp(-C_2(\frac{N}{M})^{1+\alpha}),$$
completing the proof of Lemma~\ref{lem: perfect external field bound}.
\end{proof}

%\begin{proof}[Proof of lower-bound for Theorem \ref{thm-critical-temperature}]
%We first restrict $h\in \cH$.
%    Let $M=\lfloor\eps^{-\frac{8}{7}+\zeta}\rfloor$ with $\zeta>0$ small enough to be chosen later. Let $x_0\in \partial\lamn, x_1,\cdots,x_k=o$ be a sequence of points on the horizontal segment from the left-hand side of the boundary to the origin with consecutive distance $\lfloor M^{1-\zeta}\rfloor$ or $\lfloor M^{1-\zeta}-1\rfloor$. Then $k=\lceil\frac{N}{M^{1-\zeta}}\rceil$ Using Lemma~\ref{lem: disagreement connecting probability in an M-box}, we obtain that $\bar \mu^{+/-}_{\lamn,\eps h}(\cE_{x_i,x_{i+1}})\ge cM^{-\frac{1-\zeta}{4}}$. Thanks to FKG and CBC, we obtain that \begin{equation}
%        \bar \mu^{+/-}_{\lamn,\eps h}(\cE_{x_0,x_k})\ge \prod_{i=0}^{k-1}\bar \mu^{+/-}_{\lamn,\eps h}(\cE_{x_i,x_{i+1}})\ge c^{k}\exp\Big(-C_1\log(M)\cdot k\Big)\ge C_2\exp(-C_2\eps^{\frac{8}{7}-\frac{15\zeta}{7}}N).
%    \end{equation} which finishes the proof of the lower-bound.
%\end{proof}

\subsection{Proof ingredients of Theorem \ref{thm:main thm-lower bound}}\label{sec: lower-bound for main theorem}

In this subsection, we prove Lemmas~\ref{lem: good external field probability 0} and \ref{lem: good external field probability 1}.

\begin{proof}[Proof of  Lemma~\ref{lem: good external field probability 0}]
Let $\cH_{\star}^1$ and $\cH_{\star}^2$ be the collections of the external field $h$ satisfying the first and second items in Definition \ref{def: good external field 1}, respectively.
{By Lemma~\ref{lem: domination by product measure}, we let $\oc{gefp00}>0$ be small enough such that $\P_{G_M}$ is dominated below by $\P_{\rho}$} (where $\rho \geq 0.99$ is close to $1$ as specified below).
    Applying \cite[Lemma 4.4]{RY23}, we have that \begin{equation}\label{eq: bernoulli site percolation criterion 1}
        \P(\cH^2_{\star}(\oc{gefp01},N,N_1,M))\ge 1-C_1(\frac{N}{M})^4\exp(-\frac{C_2N}{M})-C_1(\frac{N}{M})^2\exp(-\frac{C_2N_1}{M}).
    \end{equation}
    The desired result follows since $N>N_1>M$. For $\cH^1_{\star}$, we consider a bond percolation model induced by $\P_{\rho}$, i.e., for each edge $e=(x,y)$, let $\zeta_e=1$ if and only if $ \zeta_x=\zeta_y=1$. Then applying \cite[Theorem 0.0]{LSS97} again and letting $\rho$ be sufficiently close to $1$, we obtain that $\P_{\rho}$ dominates $\Tilde\P_{0.99}$, where we denote by $ \tilde \P_{0.99}$ the Bernoulli bond percolation model with density $0.99$. Let $\cA$ denote the event that there exists an open path separating $\intB\Lambda_{2N_1}$ and $\intB\Lambda_{N_1{+1}}$. Then we have \begin{equation*}
        \P(\cH^1_{\star}(\oc{gefp01},N,N_1,M))\ge \Tilde\P_{0.9{9}}(\cA).
    \end{equation*} Combined with duality and a standard percolation estimate as in e.g. \cite[Theorem 3.3]{D18}, it yields that \begin{equation}\label{eq: bernoulli site percolation criterion 2}
         \P(\cH^1_{\star}(\oc{gefp01},N,N_1,M))\ge1-\Tilde\P_{0.{0}1}(\intB\Lambda_{2N_1}\longleftrightarrow\intB\Lambda_{N_1{+1}})\ge 1-C_3\exp(-C_3^{-1}\frac{N_1}{M}).
    \end{equation}
    Combining \eqref{eq: bernoulli site percolation criterion 1} and \eqref{eq: bernoulli site percolation criterion 2}, we finish the proof of this lemma.
\end{proof}

\begin{proof}[Proof of  Lemma~\ref{lem: good external field probability 1}]
Since $\eps^{-1}M^{-\frac{7}{8}}>(\eps^{\frac{8}{7}}N)^{\frac{1}{5}}$, by \eqref{eq:good-estimation} we have\begin{equation}\label{eq: good external field for RSW 2}
    \begin{aligned}
        &\P(\Lambda_M(u) \text{ is good})\ge 1-C\exp(-C^{-1}{\sqrt{\eps^{-1} M^{-\frac{7}{8}}}})\ge1-C\exp(-C^{-1}(\eps^{\frac{8}{7}}N)^{\frac{1}{10}}).
    \end{aligned}
\end{equation}

It is clear that $\Lambda_{2N_2}$ can be covered by a union of $(2N_2/M)^2$ many $M$-boxes, which we denote as $\mathfrak M$. Applying \eqref{eq: good external field for RSW 2}, we see that 
$$\P(\text{all boxes in } \mathfrak M \text{ are good}) \geq 1-\frac{4N_2^2}{M^2}\exp(-C_1^{-1}(\eps^{\frac{8}{7}}N)^{\frac{1}{10}})\ge 1-C_2\exp(-C_2^{-1}(\eps^{\frac{8}{7}}N)^{\frac{1}{10}})$$
where the last inequality follows from $N_2/M \leq \epsilon^{{\frac{8}{7}}} N$. Since both items in Definition \ref{def: good external field 2} hold if all boxes in $\mathfrak M$ are good, this completes the proof of the lemma.
%Let $\mathfrak{M}_1$ denote the collection of $M$-boxes on $\intB\Lambda_{N_2}$ and let $\mathfrak{M}_2$ denote the collection of $M$-boxes centered at $o=u_0,u_1,\cdots,u_k=(2N_2,0)$ (where $u_i=(Mi,0)$). Note that the boxes in $\mathfrak{M}_2$ form a path from the $M$-box containing the origin to the boundary $\intB\Lambda_{2N_2}$. Recalling Definitions \ref{def: good external field 2} and \ref{def: domination}, we see that both items in Definition \ref{def: good external field 2} hold if all $M$-boxes in $\mathfrak{M}_1$ and $\mathfrak{M}_2$ are good. In light of this, we apply \eqref{eq: good external field for RSW 2} and get that
%\begin{equation*}\label{eq: boundary M_2 all good1}
%    \mathbb P(\text{all boxes in } \mathfrak M_1\text{ and }\mathfrak M_2 \text{ are good}) \ge 1-\frac{C_2N_2}{M}\exp(C_1^{-1}(\eps^{\frac{8}{7}}N)^{\frac{1}{10}})\ge 1-C_3\exp(C_3^{-1}(\eps^{\frac{8}{7}}N)^{\frac{1}{10}}),
%\end{equation*}
%where the last inequality follows from $\frac{N_2}{M}\le \eps^{\frac{8}{7}}N$. This completes the proof of the lemma.
%The proof of this lemma is a little bit different from the two lemmas above. Recall in Lemma~\ref{lem: domination by product measure} (whose main input is Theorem \ref{thm: RSW for dis with external field}), we have shown that the measure $\P_{\G_M} $ is dominated from below by $\P_{\rho}$. We have $\rho>$ In order to get a good $M$-box circuit, it suffices to lower bound the hard good $M$-box crossing in rectangles with dimension $2N_1\times N_1$ and apply FKG property.  
\end{proof}

\section{Proof of Theorem~\ref{thm-supercritical-temperature}}\label{sect:correlation length}

In this section, we prove Theorem~\ref{thm-supercritical-temperature}, i.e., to compute the correlation length for $d = 2$ in the entire low temperature regime. The upper bound was shown in \cite{DW20}, and we included it in our theorem statement only for completeness.
Naturally, extending the lower bound on the correlation length for $d=2$ from the very low temperature regime (as proved in \cite{DW20,DZ21}) to the entire low temperature regime is highly similar to extending the long-range order for $d=3$ from the very low temperature regime (as proved in \cite{Imbrie85,BK88,DZ21})  to the entire low temperature regime as recently established in \cite{DLX22}. As a result, we will only provide a sketch of the proof with emphasis on the subtlety arising from $d=2$ (see Section~\ref{sec:2dsubtlety}). We will provide complete definitions necessary for presenting the sketch, but we refer to \cite{DLX22} for discussions on the underlying intuition of such definitions. Also, we omit details when proofs can be easily adapted.

\subsection{Coarse graining}

The coarse graining method in the context of the FK-Ising model was introduced in \cite{P96}, and in this subsection, we provide a brief review. We will employ the mathsf font to denote objects related to the coarse graining, in order for clarity of notations.
For an integer $\mathsf q\ge 1$ and a vertex $\mathsf v=(\mathsf v^1, \mathsf v^2)\in \mbb Z^2$, write $\mathsf q \mathsf v = (\mathsf q \mathsf v^1, \mathsf q \mathsf v^2)$ and  write  $\mathsf q\mbb Z^2 = \{\mathsf q \mathsf v: \mathsf v\in \mbb Z^2\}$. 
For a vertex $\mathsf v \in \mbb Z^2$,  define $\mathsf Q_{\mathsf v}$ to be the box of side length $6\mathsf q$ centered at $\mathsf q \mathsf v$, by
\begin{equation*}\label{eq-def-mathsf-Q}
\mathsf Q_{\mathsf v}=\mathsf q \mathsf v+[- 3\mathsf {q}, 3\mathsf {q}]^2\cap \mbb Z^2\,.
\end{equation*}
For  $\mathsf V\subset \mbb Z^2$, we define the following notations:
$$\mcc Q_{\mathsf V}=\{\mathsf Q_{\mathsf v}:  \mathsf v\in \mathsf V\} \mbox{ and } \mathsf Q_{\mathsf V} = \bigcup_{\mathsf v\in \mathsf V} \mathsf Q_{\mathsf v}\,.$$ Without loss of generality, for convenience of exposition we may assume that  $\mathsf q$ divides  $N$. Define $\mathsf N = \frac{N}{\mathsf q}-3$. We see that ${\mathsf Q}_{\Lambda_{\mathsf N}}=\Lambda_{N}$.
For an FK-Ising configuration $\omega$,  we say a box $\mathsf Q\in \mcc Q_{\Lambda_{\mathsf N}}$ is \textbf{good} in $\omega$ if $\omega|_{\mathsf Q}$ contains paths crossing the rightmost, leftmost, bottom and top $2\mathsf q\times 6\mathsf{q}$ (or $6\mathsf{q}\times 2\mathsf q$) boxes between the short sides. (See Figure~\ref{fig:good-box} for an illustration.) 
Here $\omega|_{\mathsf Q}$ is the restriction of $\omega$ on $\mathsf Q$ (or equivalently, the subgraph on $\mathsf Q$ induced by open edges in $\omega$). These four paths are connected in the box $\mathsf Q$, and they are contained in the same open cluster of $\omega$; we shall call it
the main cluster of the box. By planarity, it is not hard to see that the main cluster is unique for each good box. In addition, we say a vertex $\mathsf v \in \Lambda_{\mathsf N}$ is good if $\mathsf Q_{\mathsf v}$ is good.

\begin{figure}[htb]
    \centering
    \includegraphics[width=0.3\linewidth]{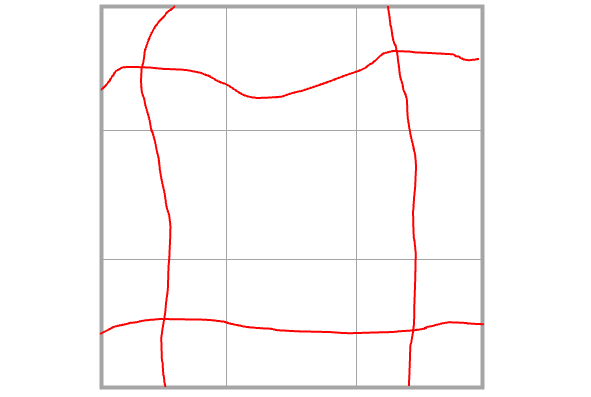}
    \caption{Good box. The red curves are open paths in $\omega|_{\mathsf Q}$.}
    \label{fig:good-box}
\end{figure}

As in \cite{DLX22}, we want to prove that a box $\mathsf{Q_v}$ is good with (conditional) probability close to 1, regardless of the configuration on edges outside a concentric box with larger and comparable size.  By the definition of good boxes, if  $\mathsf v_1, \ldots, \mathsf v_n$ is a neighboring sequence of good vertices, then the main clusters in 
$\mathsf Q_{\mathsf v_i}$ and $\mathsf Q_{\mathsf v_{i+1}}$ intersect for all $1\leq i < n$. Thus, this yields a cluster within $ \mathsf Q_{\{\mathsf v_1, \ldots, \mathsf v_n\}}$ that touches every side of every box in $\mcc Q_{\{\mathsf v_1, \ldots, \mathsf v_n\}}$.
\begin{prop}\label{prop-good box probability}
     For every $p>p_c$, there exists a constant $\mathsf c_{\mathrm g}>0$ such that for every $\mathsf q>0$ and for any FK-boundary condition $\upsilon$, 
\begin{equation*}\label{eq:goodboxprobability}
		\phi_{p, \Lambda_{4\mathsf q},{0}}^\upsilon[\Lambda_{3\mathsf q} \text{ is good }]\geq 1-e^{- \mathsf c_{\mathrm g} \mathsf q}.
	\end{equation*}
 \end{prop}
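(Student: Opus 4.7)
The plan is to combine three ingredients: CBC for the FK-Ising model, the FKG inequality, and the supercritical rectangle-crossing estimate. Since $\{\mathsf Q_{\mathsf 0} \text{ is good}\}$ is an increasing event in the FK configuration, CBC immediately gives
\[
\phi^{\xi}_{p,\Lambda_{2\mathsf q}}(\mathsf Q_{\mathsf 0}\text{ is good}) \;\ge\; \phi^{\mathsf f}_{p,\Lambda_{2\mathsf q}}(\mathsf Q_{\mathsf 0}\text{ is good})
\]
uniformly in $\xi$, so it suffices to prove the bound with the free boundary condition.

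Next I would decompose the good event as the intersection of four rectangle-crossing events $\mathtt{Cross}_1,\dots,\mathtt{Cross}_4$, one for each of the four boundary $\mathsf q\times 3\mathsf q$ strips of $\mathsf Q_{\mathsf 0}=\Lambda_{1.5\mathsf q}$: vertical (top-to-bottom) open crossings of the left strip $[-1.5\mathsf q,-0.5\mathsf q]\times[-1.5\mathsf q,1.5\mathsf q]$ and of the right strip $[0.5\mathsf q,1.5\mathsf q]\times[-1.5\mathsf q,1.5\mathsf q]$, and horizontal (left-to-right) open crossings of the top and bottom strips. A planarity/Jordan argument inside each corner sub-square of side $\mathsf q$ (e.g.\ the top-left corner $[-1.5\mathsf q,-0.5\mathsf q]\times[0.5\mathsf q,1.5\mathsf q]$, which any vertical crossing of the left strip must cross from top to bottom and any horizontal crossing of the top strip must cross from left to right) shows that whenever all four crossings occur they pairwise intersect, and chaining these four intersections places the four paths in a single open cluster, which then qualifies as the main cluster of $\mathsf Q_{\mathsf 0}$. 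Since each $\mathtt{Cross}_i$ is increasing, FKG yields
\[
\phi^{\mathsf f}_{p,\Lambda_{2\mathsf q}}(\mathsf Q_{\mathsf 0}\text{ is good}) \;\ge\; \prod_{i=1}^{4}\phi^{\mathsf f}_{p,\Lambda_{2\mathsf q}}(\mathtt{Cross}_i).
\]

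The final ingredient, and the main technical input, is the supercritical rectangle-crossing estimate: for every $p>p_c$ there exists $c=c(p)>0$ such that each $\phi^{\mathsf f}_{p,\Lambda_{2\mathsf q}}(\mathtt{Cross}_i)\ge 1-e^{-c\mathsf q}$. This is a classical consequence of the planar self-duality of FK-Ising together with exponential decay in the subcritical dual phase: absence of a long-side crossing of a $\mathsf q\times 3\mathsf q$ rectangle forces a dual short-side crossing of length $\ge \mathsf q$, whose probability is exponentially small at $p^\ast<p_c^\ast$, and this extends to the free measure in the slightly larger box $\Lambda_{2\mathsf q}$ by monotonicity; this is precisely the input used in Pisztora's coarse graining \cite{P96}, which I would cite directly. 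Combining the three steps and using $1-(1-e^{-c\mathsf q})^4 \le 4e^{-c\mathsf q}$ yields the proposition with $\mathsf c_{\mathrm g}$ any constant strictly less than $c(p)$, for all sufficiently large $\mathsf q$; the finitely many remaining small values of $\mathsf q$ are absorbed by shrinking $\mathsf c_{\mathrm g}$ further. The only genuinely geometric step is the planarity argument identifying the four crossings with a single main cluster; the remaining steps are routine applications of CBC, FKG, and a standard supercritical estimate.
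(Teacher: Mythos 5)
Your proof is correct, and it reaches the key crossing estimate by a genuinely different route than the paper. Both arguments share the same skeleton: reduce to the free boundary condition by CBC (the good event is increasing), decompose the good event into the four hard crossings of the $\mathsf q\times 3\mathsf q$ boundary strips, glue them into one main cluster by the standard planarity argument in the corner squares, and apply FKG. The divergence is in how the single hard-crossing estimate $\phi^{\mathrm f}_p(\mathtt{Cross}_i)\geq 1-e^{-c\mathsf q}$ is obtained. The paper imports a quantitative \emph{square} (easy) crossing bound with free boundary conditions from \cite{DT19} and then invokes the general RSW theorem of \cite{KT23} to upgrade it to hard crossings, explicitly remarking that this machinery is likely overkill. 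You instead argue by planar self-duality: failure of the long-direction crossing forces a dual crossing of the short direction, i.e.\ a dual connection over distance $\geq\mathsf q$ in the subcritical dual FK-Ising model, which a union bound over the $O(\mathsf q)$ starting points makes exponentially unlikely. This is the more elementary, Pisztora/Kesten-style argument the authors allude to, and it buys you independence from the RSW theory of \cite{KT23}. Two small points of care: the subcritical exponential decay must hold under the \emph{wired} dual boundary condition (the dual of the free primal measure), which is the boundary condition most favorable to dual connections — this is known for FK-Ising and is the content of, e.g., \cite{BDC12b} or the supercritical results of \cite{DT19}, and those are the references you should cite rather than \cite{P96}, whose crossing input concerns $d\geq 3$. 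Neither point is a gap, just a matter of attribution.
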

 
\begin{proof}
    For a rectangle $\cR$, denote $\cH(\cR),\cV(\cR)$ to be the event that there exists a horizontal crossing and a vertical crossing in $\cR$, respectively.
    By the FKG inequality and CBC, it suffices to prove an RSW type inequality
    \begin{equation*}
        \phi_{p, [-\mathsf q,3\mathsf q]\times[-\mathsf q, 7\mathsf q],{0}}^{\mathrm{f}}[\cV([0,2\mathsf q]\times[0,6\mathsf q])]\geq 1-e^{- \mathsf c_1 \mathsf q}.
    \end{equation*}
    In \cite{DT19}, a similar result was proved for square crossings for $p>p_c$. That is, 
    \begin{equation}\label{eq-easy crossing}
        \phi_{p, [-\mathsf q,3\mathsf q]\times[-\mathsf q, 3\mathsf q],{0}}^{\mathrm{f}}[\cV([0,2\mathsf q]\times[0,2\mathsf q])]\geq 1-e^{- \mathsf c_2 \mathsf q}.
    \end{equation}
    Applying \cite[Theorem 2]{KT23} (note here we may choose $\P^+=\P=\P^-$ to be the FK-measure induced by the free boundary condition), we can enhance \eqref{eq-easy crossing} to hard crossings, which completes the proof of Proposition~\ref{prop-good box probability}. Note that it is quite possible that this proof does not require the powerful RSW method as in \cite{KT23}, and we just applied it since it is convenient. 
\end{proof}

 \subsection{Proof of Theorem~\ref{thm-supercritical-temperature}}

 In this subsection, we define the outmost blue boundary following \cite[Section 3.1]{DLX22}. We fix a constant $\mathsf k\geq 1$ (we may simply take $\mathsf k = 4$, but we keep the notation $\mathsf k$ for conceptual clarity). We say two vertices are $\mathsf k$-neighboring with each other if their $\ell_{\infty}$-distance is at most $\mathsf k$. In addition, we say a set is $\mathsf k$-connected if it is connected with respect to the $\mathsf k$-neighboring relation, and we write $\mathrm{Ball}(\mathsf A; \mathsf k) = \{\mathsf v \in \Lambda_{\mathsf N}: d_{\infty}(\mathsf v, \mathsf A) \leq \mathsf k\}$ for $\mathsf k\geq 1$. For each set $\mathsf A$, denote by $\psi(\mathsf A)$ the collection of vertices $\mathsf v$ such that every path from $\mathsf v$ to $\partial \Lambda_{\mathsf N}$ intersects $\mathsf A$.  Furthermore, we follow a trick in \cite{P96} on controlling correlations and introduce auxiliary random variables $\mathrm{Aux} =  \{\mathrm {Aux}_{\mathsf v}: \mathsf v\in \Lambda_{\mathsf N}\}$ where $\mathrm {Aux}_{\mathsf v}$'s are i.i.d.\ Bernoulli variables so that they take value 1 with probability $\mathsf p_{\mathrm{aux}} = 1- e^{-\mathsf c_{\mathrm g} \mathsf q/{250} }$. 
 We say a vertex $\mathsf v$ is \textbf{blue} if either $\mathsf v$ is bad or $\mathrm {Aux}_{\mathsf v} = 0$; otherwise, we say $\mathsf v$ is \textbf{red}. We say $\mathsf B$ is a blue boundary if
$\mathsf B$ is blue (i.e., every vertex in $\mathsf B$ is blue), $\psi(\mathsf B)$ is a connected set containing the origin $\mathsf o$, and $\pari \psi(\mathsf B) = \mathsf B$. As an illustration, we have that $\{\mathsf o\}$ is a blue boundary if $\mathsf o$ is blue. We say a blue boundary $\mathsf B$ is the outmost blue boundary if $\psi(\mathsf B') \subset \psi(\mathsf B)$ for any other blue boundary $\mathsf B'$.  As a convention, we let the outmost blue boundary be $\emptyset$ if there is no blue boundary. For convenience, we let $\mathtt{Blue} \subset \Lambda_{\mathsf N}$ be the collection of blue vertices and define $\mathtt{Red} = \Lambda_{\mathsf N} \setminus \mathtt{Blue}$. For $\mathsf B, \mathsf B'\subset \Lambda_{\mathsf N}$, let $\Pi_{\mathsf B, \mathsf B'}$ be the collection of $\omega \times \mathrm{Aux}\in \Pi$ such that the following hold{:}
\begin{itemize}
\item $\mathsf B$ is the outmost blue boundary;
\item If $\mathsf B \neq \emptyset$, then $\mathsf B'$ is the collection of all blue vertices which can be $2\mathsf k$-connected to $\mathsf B$ via blue vertices; if $\mathsf B = \emptyset$, then $\mathsf B'$ is the collection of blue vertices which are $2\mathsf k$-connected to $\mathsf o$ in blue vertices. 
\end{itemize}
Let $\mathfrak B$ be the collection of pairs $(\mathsf B, \mathsf B')$ such that $\Pi_{\mathsf B, \mathsf B'} \neq \emptyset$, and recall that $\mathcal C_o$ is the FK-cluster of the origin. We denote by $\mathsf P_{\mathrm{aux}}$ the law for $\mathrm {Aux}$, and we let $\vfk+ = \fk+ \times\mathsf P_{\mathrm{aux}}$ and $\vfkh+ = \fkh+ \times \mathsf P_{\mathrm{aux}}$ be the product measures on configurations for $\omega \times \mathrm{Aux} \in \Pi = \Omega \times \{0, 1\}^{\Lambda_{\mathsf N}}$.

Provided with the definitions above, we next explain the main idea for proving Theorem~\ref{thm-supercritical-temperature}, which relies on a comparison between the FK-Ising measures without an external field and the FK-Ising measures with the external field. To this end, we write
\begin{equation}\label{eq-decomposition-0-epsilon-field}
    \mu^+_{T,\lamn,\eps h}(\sigma_o = -1)=\sum_{(\mathsf B, \mathsf B') \in \mathfrak B}\sum_{o\in C_o, C_o \subset \Lambda_{N-1}}\vfkh +(\Pi_{\mathsf B, \mathsf B'}\cap \{\mcc C_o=C_o\})\times \frac{\exp(-\frac{\eps}{T}h_{C_o})}{2\cosh(\frac{\eps}{T}h_{C_o})}\,.
\end{equation}
As in \cite{DLX22}, the following three results are of importance.
\begin{prop}\label{prop-compare-small-L}
For any $T<T_c$ and $\delta, \theta_1 > 0$, there exists $\mathsf q_0 = \mathsf q_0(T, \delta, \theta_1)$ such that for any $\mathsf q \geq \mathsf q_0$ the following holds. There exists $\eps_0=\eps_0(\mathsf q, \delta,\theta_1,T)$ such that for $\epsilon \leq \epsilon_0$, we have
 with $\mbb P$-probability at least $1-\delta$,
\begin{align*}
&\sum_{o\in C_o, C_o \subset \Lambda_{N-1}}\vfkh +(\Pi_{\emptyset, \emptyset}\cap \{\mcc C_o=C_o\})\frac{\exp(-\frac{\eps}{T}h_{C_o})}{2\cosh(\frac{\eps}{T}h_{C_o})} \\
&\leq e^{\theta_1}\sum_{o\in C_o, C_o \subset \Lambda_{N-1}}\vfk +(\Pi_{\emptyset, \emptyset}\cap \{\mcc C_o=C_o\})\times\frac{1}{2}\,.%\label{eq:L-is-small}.
\end{align*}
\end{prop}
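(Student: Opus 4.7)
The plan is to exploit the explicit Radon--Nikodym derivative between $\vfkh+$ and $\vfk+$. For a configuration $\omega$ with plus-cluster $\mcc C_+$ and remaining clusters $\{\mcc C_j\}$, formula \eqref{eq-def-FK-external-field} gives
$$\frac{d\vfkh+}{d\vfk+}(\omega) \;=\; \frac{Z_{\phi,0}}{Z_{\phi,\eps h}}\,\exp\!\Big(\tfrac{\eps h_{\mcc C_+}}{T}\Big)\prod_j 2\cosh\!\Big(\tfrac{\eps h_{\mcc C_j}}{T}\Big).$$
Substituting this into the LHS of the claimed inequality (noting that $C_o\ne \mcc C_+$ since $C_o\subset \lamn$, and that the $2\cosh(\eps h_{C_o}/T)$ in the RN derivative cancels the corresponding factor in $\tfrac{\exp(-\eps h_{C_o}/T)}{2\cosh(\eps h_{C_o}/T)}$), and separately expanding $\tfrac{Z_{\phi,\eps h}}{Z_{\phi,0}}=\mathbb E_{\vfk+}\!\bigl[e^{\eps h_{\mcc C_+}/T}\prod_j 2\cosh(\eps h_{\mcc C_j}/T)\bigr]$, the claim reduces (term by term in $C_o$, since the partition function ratio is independent of $C_o$) to showing the pointwise inequality
$$\frac{\mathbb E_{\vfk+}\!\bigl[\1_{\Pi_{\emptyset,\emptyset}\cap\{\mcc C_o=C_o\}}\,e^{\eps(h_{\mcc C_+}-h_{C_o})/T}\prod_{j:\mcc C_j\ne C_o}2\cosh(\eps h_{\mcc C_j}/T)\bigr]}{\vfk+(\Pi_{\emptyset,\emptyset}\cap\{\mcc C_o=C_o\})\cdot \mathbb E_{\vfk+}\!\bigl[e^{\eps h_{\mcc C_+}/T}\prod_j 2\cosh(\eps h_{\mcc C_j}/T)\bigr]} \;\le\; \tfrac{e^{\theta_1}}{2}\,.$$

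Next I will exploit the structural picture afforded by $\Pi_{\emptyset,\emptyset}$. Under this event, the main clusters of every good box in a $\mathsf k$-neighborhood of the origin coalesce and reach the boundary, so $\mcc C_+$ is macroscopic with $|\mcc C_+|\sim N^2$, while every other cluster (including $C_o$) is confined to a single good box and hence has size at most $C\mathsf q^2$. Since $h_{\mcc C_j}$ is a centered Gaussian of variance $|\mcc C_j|\le C\mathsf q^2$, taking $\eps\ll \mathsf q^{-1}$ makes $\eps h_{\mcc C_j}/T$ small with overwhelming $\mbb P$-probability; using $\log(2\cosh x)=\log 2+O(x^2)$ and Gaussian concentration over $\lesssim (N/\mathsf q)^2$ small clusters, I will show that the (normalized) product $\prod_j 2\cosh(\eps h_{\mcc C_j}/T)/2^{\#\{\mcc C_j\}}$ differs from a deterministic (in $h$) quantity by a factor in $[e^{-\theta_1/3},e^{\theta_1/3}]$ with $\mbb P$-probability at least $1-\delta/3$, uniformly on $\Pi_{\emptyset,\emptyset}$; this common deterministic quantity then cancels between numerator and denominator. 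The same Gaussian-tail argument yields $e^{-\eps h_{C_o}/T}\in [e^{-\theta_1/3},e^{\theta_1/3}]$ for $|C_o|\le C\mathsf q^2$.

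The main obstacle is the factor $\exp(\eps h_{\mcc C_+}/T)$: since $|h_{\mcc C_+}|$ has typical size $\sqrt{|\mcc C_+|}\sim N$, this factor can be as large as $\exp(\Theta(\eps N))$ and therefore must cancel essentially exactly between numerator and denominator. The key observation is that on $\Pi_{\emptyset,\emptyset}$, the cluster $\mcc C_+$ coincides with a ``skeleton'' $\mcc C_+^\star$ (the union of main clusters of the typical good-box pattern, minus $C_o$) up to a symmetric difference concentrated on the deviant boxes: by Proposition~\ref{prop-good box probability} each box fails to be good with probability at most $e^{-\mathsf c_\mathrm{g}\mathsf q}$, so $|\mcc C_+\triangle \mcc C_+^\star|$ has expectation $O(N^2 e^{-\mathsf c_\mathrm{g}\mathsf q})$ with exponential tails. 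The deterministic-in-$h$ factor $e^{\eps h_{\mcc C_+^\star}/T}$ cancels exactly, leaving the residual $e^{\eps(h_{\mcc C_+}-h_{\mcc C_+^\star})/T}$ whose exponent is a centered Gaussian of variance $\eps^2|\mcc C_+\triangle \mcc C_+^\star|/T^2$; for $\mathsf q$ large this variance is tiny, and the residual lies in $[e^{-\theta_1/3},e^{\theta_1/3}]$ with $\mbb P$-probability $1-\delta/3$ provided $\eps\le\eps_0(\mathsf q,\delta,\theta_1,T)$. Combining the three controls (on the $\cosh$ product, on $e^{-\eps h_{C_o}/T}$, and on the $\mcc C_+$-residual) yields the desired bound.
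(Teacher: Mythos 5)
Your reduction to the ratio of $\vfk+$-expectations is correct algebra, and your observation that on $\Pi_{\emptyset,\emptyset}$ the cluster $C_o$ is trapped near the origin and has size $O(\mathsf q^2)$ is right. The fatal gap is in your treatment of the factor $e^{\eps h_{\mcc C_+}/T}$ via a ``skeleton'' $\mcc C_+^\star$. For the cancellation you invoke to take place, $\mcc C_+^\star$ must be deterministic (independent of $\omega$), since only then does $e^{\eps h_{\mcc C_+^\star}/T}$ factor out of both the restricted and the unrestricted expectations. But no deterministic set approximates $\mcc C_+$ to within $o(N^2)$ symmetric difference: for $p_c<p<1$ the boundary cluster has density $\theta(p)\in(0,1)$ strictly, each deep vertex fails to lie in $\mcc C_+$ with probability bounded away from $0$ and $1$, and these indicators decorrelate at $O(1)$ distances, so $|\mcc C_+\triangle S|=\Theta(N^2)$ for every deterministic $S$, with overwhelming probability — not $O(N^2e^{-\mathsf c_{\mathrm g}\mathsf q})$ as you claim (being good does not pin down the main cluster as a set; it only pins down its crossing topology). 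Consequently, for fixed typical $h$ the residual $\eps(h_{\mcc C_+}-h_{\mcc C_+^\star})/T$ fluctuates over $\omega$ at scale $\eps N$, and your factor does not lie in $[e^{-\theta_1/3},e^{\theta_1/3}]$. This is fatal because the proposition must hold for all $N$ with $\eps_0=\eps_0(\mathsf q,\delta,\theta_1,T)$ \emph{independent of $N$}; any error that grows with $N$ cannot be absorbed. The same objection applies to your treatment of $\prod_j\cosh(\eps h_{\mcc C_j}/T)$: the event $\Pi_{\emptyset,\emptyset}$ constrains only a neighborhood of the origin, so far from $\mathsf o$ there are bad boxes and non-boundary clusters of unbounded size, and even granting small clusters, $\sum_j h_{\mcc C_j}^2$ depends on the random partition of $\lamn\setminus\mcc C_+$ and fluctuates (over $\omega$, for fixed $h$) by amounts whose contribution to the exponent again diverges with $N$.

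The point is that the global factors cannot be made nearly deterministic; they must cancel \emph{exactly} as ratios of partition functions, with only a \emph{local} discrepancy surviving. The paper does not spell this out — it states that the proof is identical to \cite[Proposition 3.1]{DLX22} — but the route there (following \cite{DZ21, Chalker83, FFS84}) is: rewrite each summand as $\hat Z_{C_o}(\eps h)/\cZ(\eps h)$, where $\hat Z_{C_o}$ is the joint FK--Ising partition function restricted to $\{\omega\in\Pi_{\emptyset,\emptyset},\,\mcc C_o=C_o,\,\sigma|_{C_o}=-1\}$, compare with $\hat Z_{C_o}(0)/\cZ(0)$, and control $\log\frac{\hat Z_{C_o}(\eps h)}{\cZ(\eps h)}-\log\frac{\hat Z_{C_o}(0)}{\cZ(0)}$ by Gaussian concentration applied to a free-energy \emph{difference} whose gradient in $h$ is supported (up to exponentially decaying tails coming from the $h=0$ mixing at $T<T_c$, or via a sign-flipping of $h$ on $C_o$ \`a la Chalker--FFS) on an $O(\mathsf q)$-neighborhood of the origin; the resulting Lipschitz constant is $O(\eps\mathsf q)$ rather than $O(\eps N)$, which is why $\eps_0$ can be chosen depending on $\mathsf q$ but not on $N$. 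You would also need a union bound over the $e^{O(\mathsf q^2)}$ admissible sets $C_o$, which your term-by-term reduction leaves unaddressed. As written, your proof does not establish the proposition.
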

\begin{lem}\label{lem:coarse-graining}
For any $T<T_c$, there exist $\mathsf q_0 = \mathsf q_0(T)$ such that for any $\mathsf q \geq \mathsf q_0$ the following holds for a constant $b=b(T, \mathsf q_0)>0$ \emph{not} depending on $N$ or $\mathsf q$:
    \begin{equation*}\label{eq:coarse-graining}
       \vfk+(\bigcup_{(\mathsf B, \mathsf B')\in \mathfrak B: |\mathsf B \cup \mathsf B'| = L} \Pi_{\mathsf B, \mathsf B'})\leq \exp(-b \mathsf q L). 
    \end{equation*}
\end{lem}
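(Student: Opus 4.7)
The plan is to execute a Peierls-type argument on the coarse-grained lattice, combining a combinatorial enumeration of possible outmost blue boundaries with a probabilistic bound coming from Proposition~\ref{prop-good box probability} and from the auxiliary Bernoulli variables $\mathrm{Aux}$. The role of the variables $\mathrm{Aux}_{\mathsf v}$ is precisely to supply an independent Bernoulli factor at each vertex that breaks the local dependence created by the fact that the event $\{\mathsf Q_{\mathsf v}\text{ is good}\}$ is measurable with respect to edges inside $\mathsf Q_{\mathsf v}$.

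First, I would reduce the problem to a union bound. Fix $(\mathsf B, \mathsf B')\in\mathfrak B$ with $|\mathsf B\cup \mathsf B'| = L$. On the event $\Pi_{\mathsf B,\mathsf B'}$, every vertex of the set $\mathsf S := \mathsf B\cup \mathsf B'$ is blue, meaning it is either bad or has $\mathrm{Aux}_{\mathsf v}=0$. Split $\mathsf S = \mathsf S_{\mathrm{bad}} \sqcup \mathsf S_{\mathrm{aux}}$ according to which alternative holds (in case both hold, put $\mathsf v$ in $\mathsf S_{\mathrm{bad}}$), so that
\[
\vfk +(\Pi_{\mathsf B,\mathsf B'}) \;\le\; \sum_{\mathsf S = \mathsf S_{\mathrm{bad}} \sqcup \mathsf S_{\mathrm{aux}}} \fk +(\forall\,\mathsf v\in \mathsf S_{\mathrm{bad}},\ \mathsf Q_{\mathsf v}\text{ is bad})\cdot \mathsf p_{\mathrm{aux}}^{|\mathsf S_{\mathrm{aux}}|},
\]
where I used independence of $\mathrm{Aux}$ from $\omega$. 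Since $|\mathsf S_{\mathrm{bad}}| + |\mathsf S_{\mathrm{aux}}| = L$, summing over the $2^L$ partitions only costs a factor $2^L$, and it suffices to bound each factor.

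The auxiliary factor is trivially $\mathsf p_{\mathrm{aux}}^{|\mathsf S_{\mathrm{aux}}|} = \exp(-\mathsf c_{\mathrm g}\mathsf q\,|\mathsf S_{\mathrm{aux}}|/250)$ by definition. The bad-box factor is the step that requires real work: the events $\{\mathsf Q_{\mathsf v}\text{ is bad}\}$ are not independent, and each depends on edges outside $\mathsf Q_{\mathsf v}$ through the boundary condition induced by $\fk+$ conditioned on $\omega|_{\mathsf Q_{\mathsf v}^c}$. To handle this, I would extract from $\mathsf S_{\mathrm{bad}}$ a subset $\mathsf T\subset \mathsf S_{\mathrm{bad}}$ whose vertices are pairwise at $\ell_\infty$-distance at least $4$ in the coarse-grained lattice, so that the concentric boxes $\Lambda_{2\mathsf q}(\mathsf q\mathsf v)$ used in Proposition~\ref{prop-good box probability} are disjoint. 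A standard volume argument yields $|\mathsf T|\ge |\mathsf S_{\mathrm{bad}}|/C_0$ for a universal $C_0$. Then, revealing the configuration outside $\bigsqcup_{\mathsf v\in\mathsf T}\Lambda_{2\mathsf q}(\mathsf q\mathsf v)$, exploring the boxes one by one and applying DMP together with Proposition~\ref{prop-good box probability}, one obtains
\[
\fk +(\forall\,\mathsf v\in \mathsf S_{\mathrm{bad}},\ \mathsf Q_{\mathsf v}\text{ is bad}) \;\le\; \fk +(\forall\,\mathsf v\in \mathsf T,\ \mathsf Q_{\mathsf v}\text{ is bad}) \;\le\; e^{-\mathsf c_{\mathrm g}\mathsf q\,|\mathsf T|} \;\le\; e^{-\mathsf c_{\mathrm g}\mathsf q\,|\mathsf S_{\mathrm{bad}}|/C_0}.
\]
Combining the two estimates, $\vfk +(\Pi_{\mathsf B,\mathsf B'}) \le e^{-c_1 \mathsf q L}$ for some $c_1 = c_1(T) > 0$.

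Finally, the combinatorial count. By construction, $\mathsf B$ is either empty or is the inner boundary of a simply connected region containing $\mathsf o$, and $\mathsf B\cup\mathsf B'$ is a $2\mathsf k$-connected blue cluster attached to $\mathsf B$ (or to $\mathsf o$ if $\mathsf B=\emptyset$). The number of such $2\mathsf k$-connected subsets of $\Lambda_{\mathsf N}$ of cardinality $L$ containing a fixed starting vertex (either $\mathsf o$ or the innermost vertex of $\mathsf B$, which lies at distance at most $L$ from $\mathsf o$) is at most $L\cdot C_1^{L}$ for a constant $C_1=C_1(\mathsf k)$; this is the classical Kesten--Peierls lattice-animal bound. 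Collecting everything,
\[
\vfk+\Bigl(\bigcup_{|\mathsf B\cup\mathsf B'|=L}\Pi_{\mathsf B,\mathsf B'}\Bigr) \;\le\; L\cdot C_1^{L}\cdot 2^{L}\cdot e^{-c_1\mathsf q L} \;\le\; e^{-b\mathsf q L}
\]
for any $b< c_1$ once $\mathsf q$ is large enough that $\mathsf q \cdot (c_1 - b) \ge \log(2 C_1) + 1$; this is where the threshold $\mathsf q_0 = \mathsf q_0(T)$ is chosen. The main obstacle is the sparsification step: one must carefully verify that the DMP/CBC application to the sparsified set $\mathsf T$ is legitimate under the free or plus FK boundary condition and that the exploration order does not spoil the independence. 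This is routine and parallels the corresponding arguments in \cite{DLX22}.
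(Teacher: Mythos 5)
Your proposal is correct and is essentially the argument the paper relies on: the paper simply defers to \cite[Proposition 3.2]{DLX22}, whose proof is exactly this Peierls-type scheme (split blue vertices into bad ones and $\mathrm{Aux}=0$ ones, sparsify the bad set so that Proposition~\ref{prop-good box probability} applies to disjoint boxes via DMP, and beat the lattice-animal count by taking $\mathsf q_0$ large). One notational slip: the factor for the auxiliary vertices should be $(1-\mathsf p_{\mathrm{aux}})^{|\mathsf S_{\mathrm{aux}}|}=e^{-\mathsf c_{\mathrm g}\mathsf q|\mathsf S_{\mathrm{aux}}|/250}$, not $\mathsf p_{\mathrm{aux}}^{|\mathsf S_{\mathrm{aux}}|}$ (and the $\ell_\infty$-separation should be $5$ rather than $4$ so that the boxes of side $4\mathsf q$ from Proposition~\ref{prop-good box probability} are disjoint), but these do not affect the argument.
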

\begin{prop}\label{prop-large-surface-estimation}
For any $0<T<T_c$, there exist constants $c>0,\ \mathsf q_0 = \mathsf q_0(T,\delta)$ and $\eps_0=\eps_0(\mathsf q,\delta,T)$ with $\mathsf q \geq \mathsf q_0$ such that for any $\epsilon \leq \epsilon_0$ and $N\le e^{c\epsilon^{-4/3}}$, the following holds
 with $\mbb P$-probability at least $1- \delta$:
\begin{align*}
    \vfkh +(\Pi_{\mathsf B, \mathsf B'})\leq e^{200 \mathsf k^2 L}\times \vfk+(\Pi_{\mathsf B, \mathsf B'}) \mbox { for all } (\mathsf B, \mathsf B') \in \mathfrak B \mbox{ with } |\mathsf B \cup \mathsf B'| = L \geq 1\,.% \label{eq:total-compare-for-large-L}
\end{align*}
\end{prop}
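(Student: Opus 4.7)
The plan is to adapt the framework of \cite[Section~4]{DLX22} to two dimensions, with the critical new input coming from the 2D surface-tension estimates of \cite{DW20, DZ21} (also used in Section~\ref{sec: upper-bound for main theorem}) in place of the 3D surface tension. Since the auxiliary process $\mathrm{Aux}$ is independent of $\omega$ and identically distributed under $\vfk+$ and $\vfkh+$, the comparison reduces to a bound on the ratio of the FK measures $\fkh+$ and $\fk+$. Writing
$$\tilde W(\omega,\epsilon h) \;=\; \prod_{\mcc C\in\mathfrak C(\omega)}\cosh(\epsilon h_{\mcc C}/T)\cdot\exp\!\Big(\tfrac{\epsilon h_{\mcc C_+(\omega)}}{T}\Big)$$
for the Radon--Nikodym weight induced by \eqref{eq-def-FK-external-field}, one has
$$\frac{\fkh+(\Pi_{\mathsf B,\mathsf B'})}{\fk+(\Pi_{\mathsf B,\mathsf B'})}\;=\;\frac{\mathbb E_{\fk+}\!\big[\tilde W(\omega,\epsilon h)\,\mathbf 1_{\Pi_{\mathsf B,\mathsf B'}}\big]}{\mathbb E_{\fk+}\!\big[\tilde W(\omega,\epsilon h)\big]\,\fk+(\Pi_{\mathsf B,\mathsf B'})},$$
and it suffices to bound this ratio by $e^{200\mathsf k^2 L}$ uniformly in $(\mathsf B,\mathsf B')\in\mathfrak B$.

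Next I would decompose $\log\tilde W-\log\mathbb E_{\fk+}[\tilde W]$ according to the cluster geometry on $\Pi_{\mathsf B,\mathsf B'}$. By the definition of red vertices and Proposition~\ref{prop-good box probability}, a unique giant FK-cluster $\mcc C_\star$ fills the red region outside $\mathrm{Ball}(\mathsf B\cup\mathsf B';2\mathsf k)$ and coincides with $\mcc C_+$ up to modifications supported in a tubular region of volume $O(\mathsf q^2 L)$. Contributions then split into (i) the giant cluster $\mcc C_+$; (ii) near-boundary clusters contained in the tube; (iii) deep interior clusters in $\psi(\mathsf B)$ disjoint from the tube. For (ii), Gaussian concentration applied to $h$ restricted to a region of volume $O(\mathsf q^2 L)$ yields a fluctuation of order $\epsilon\mathsf q\sqrt L$, which is much smaller than $L$ once $\epsilon\leq\eps_0(\mathsf q,\delta,T)$. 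For (iii), I would invoke a chaos-expansion bound in the spirit of Lemma~\ref{lem: small perturbation for partition function}, applied at the coarse-graining scale $\mathsf q$ rather than the critical scale: interior clusters are subcritical inside $\psi(\mathsf B)$ with $+$ boundary on $\mathsf B$, and the expansion yields an $e^{O(\epsilon^{4/3}|\psi(\mathsf B)|)}$ bound on the interior log-weight with $\mathbb P$-probability at least $1-e^{-cL}$ (the exponent $4/3$ is the 2D analog of the exponent $2$ used in \cite{DLX22}).

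The main obstacle is contribution (i): the Gaussian standard deviation of $\epsilon h_{\mcc C_+}/T$ is of order $\epsilon N$, which dominates $L$ for most $L$. The resolution, exactly as in \cite[Section~4]{DLX22}, is that on $\Pi_{\mathsf B,\mathsf B'}$ the giant cluster $\mcc C_+$ differs from its unconditional counterpart only by modifications supported in the tube of volume $O(\mathsf q^2 L)$, so the leading Gaussian contribution of $h_{\mcc C_+}$ cancels between the numerator and the denominator of the ratio, leaving a residual again of order $\epsilon\mathsf q\sqrt L$. This cancellation requires that $|\mcc C_+|$ be sufficiently concentrated under $\fk+$, and the 2D surface-tension input from \cite{DW20, DZ21} delivers the concentration precisely as long as $N\leq e^{c\epsilon^{-4/3}}$; this is the only place where the 2D correlation-length scale $\epsilon^{-4/3}$ enters, and it is the substantive new ingredient compared with the 3D case (where the corresponding scale is $\epsilon^{-2}$).

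Combining the three bounds gives $\log(\tilde W/\mathbb E[\tilde W])\leq 200\mathsf k^2 L$ on $\Pi_{\mathsf B,\mathsf B'}$ with $\mathbb P$-probability at least $1-e^{-cL}$ for each fixed $(\mathsf B,\mathsf B')$. A union bound over $(\mathsf B,\mathsf B')\in\mathfrak B$ with $|\mathsf B\cup\mathsf B'|=L$, whose cardinality is at most $e^{O(L)}$ by the standard contour counting used in the proof of Lemma~\ref{lem:coarse-graining}, followed by a sum over $L\geq 1$, yields the proposition with total $\mathbb P$-probability at least $1-\delta$ provided $\eps\leq\eps_0(\mathsf q,\delta,T)$ and $\mathsf q\geq\mathsf q_0(T,\delta)$ are chosen appropriately.
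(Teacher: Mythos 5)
Your high-level reading of where the condition $N\le e^{c\epsilon^{-4/3}}$ must enter (the greedy-lattice-animal/majorizing-measure input behind \cite{DW20, DZ21}) is correct, but the core mechanism of your argument does not hold up, and it is not the one the paper uses. The paper reduces the comparison to a sign-flipping argument in the spirit of \cite{DZ21, DLX22}: one flips the disorder on a connected set $A$ essentially equal to the region enclosed by the blue boundary, uses that the weights $\cosh(\epsilon h_{\mathcal C}/T)$ of clusters contained in $A$ are even functions of $h|_A$, and then the entire estimate hinges on Lemma~\ref{lemma:upper partition}, which bounds the partition-function ratio $\mathcal Z_\phi^+(\epsilon h^A)/\mathcal Z_\phi^+(\epsilon h)$ by $e^{\oc{6-00}|\intB A|/T}$ \emph{simultaneously for all connected sets} $A$, with high $\mathbb P$-probability, precisely under the hypothesis $N\le e^{c\epsilon^{-4/3}}$ (adapting \cite[Section 3.3]{DZ21}). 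Nothing playing the role of this lemma appears in your proposal.

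Two of your three contributions are bounded by arguments that fail. For (i), the claim that on $\Pi_{\mathsf B,\mathsf B'}$ the cluster $\mathcal C_+$ differs from its unconditional counterpart only by modifications supported in a tube of volume $O(\mathsf q^2 L)$ is false: the whole enclosed region $\psi(\mathsf B)$, of volume up to order $\mathsf q^2L^2$, may or may not belong to $\mathcal C_+$ on this event (severing the origin from the plus boundary inside $\psi(\mathsf B)$ is exactly the scenario being estimated), so there is no such localization and no termwise cancellation of $h_{\mathcal C_+}$ between numerator and denominator. For (iii), Lemma~\ref{lem: small perturbation for partition function} is a critical-temperature chaos-expansion estimate resting on the $7/8$-exponent decay of $k$-point functions at $T_c$; it has no counterpart at $T<T_c$ of the form you invoke, and even granting the claimed bound $e^{O(\epsilon^{4/3}|\psi(\mathsf B)|)}$, it is quantitatively useless because $|\psi(\mathsf B)|$ can be of order $\mathsf q^2L^2$, making the exponent quadratic rather than linear in $L$. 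More fundamentally, any cluster-by-cluster control of the weights $\epsilon h_{\mathcal C}$ must be uniform over the $e^{\Theta(|\psi(\mathsf B)|)}$ possible cluster decompositions of the enclosed region; a fixed-region Gaussian concentration bound combined with a union bound over the $e^{O(L)}$ pairs $(\mathsf B,\mathsf B')$ does not achieve this. The uniform-over-connected-sets statement of Lemma~\ref{lemma:upper partition}, together with the evenness of the interior weights under flipping, is exactly what replaces this missing uniformity.
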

The proofs of Proposition~\ref{prop-compare-small-L} and Lemma~\ref{lem:coarse-graining} are identical to that for \cite[Propositions 3.1 and 3.2]{DLX22}, since dimension does not play a role in these proofs. In contrast, while Proposition~\ref{prop-large-surface-estimation} is an analog of \cite[Proposition 3.3]{DLX22}, the proof in \cite{DLX22} cannot be extended trivially since dimension does play an important role here (note that we have an additional assumption of $N\leq e^{c\epsilon^{-4/3}}$). We  will discuss the additional subtleties for proving Proposition~\ref{prop-large-surface-estimation} in the next subsection.

\begin{proof}[Proof of Theorem~\ref{thm-supercritical-temperature}]
    Provided with Proposition \ref{prop-compare-small-L}, Lemma \ref{lem:coarse-graining} and Proposition \ref{prop-large-surface-estimation}, the proof of Theorem \ref{thm-supercritical-temperature} is essentially the same as that of \cite[Proposition 2.2]{DLX22}, and thus we omit further details.
\end{proof}

\subsection{Proof of Proposition~\ref{prop-large-surface-estimation}}\label{sec:2dsubtlety}

The proof of  Proposition~\ref{prop-large-surface-estimation} is an extension of that for \cite[Proposition 3.3]{DLX22}. As a crucial difference, for $d=2$ without an assumption on the relation between $N$ and $\epsilon$, the change of the log-partition function after flipping the external field on a set $A$ cannot be uniformly bounded by $O(|\intB(A)|)$. This is exactly what leads to an interesting behavior for the correlation length, and this is why we posed an additional assumption of $N\leq e^{c\epsilon^{-4/3}}$ in  Proposition~\ref{prop-large-surface-estimation}. The key ingredient for $d = 2$ is the following lemma.
\begin{lem}\label{lemma:upper partition}
    For any constant $\nc\label{6-00}>0$, there exist constants $\nc\label{6-3}=\oc{6-3}(\oc{6-00}),\nc\label{6-4}=\oc{6-4}(\oc{6-00})>0$ such that
    if $N\leq e^{\oc{6-3}\epsilon^{-4/3}}$, then with $\P$-probability at least $1 - \exp(-\frac{\oc{6-4}}{\epsilon^2})$ the following holds. For all connected set $A \in \lamn$,
    \begin{equation*}
        \mathcal{Z}_{p,\epsilon h^A}^{+,\phi}\le e^{\frac{\oc{6-00}|\intB A|}{T}}\cdot\mathcal{Z}_{p,\epsilon h}^{+,\phi}
    \end{equation*}
%\begin{equation*}
%    \frac{\mathcal{Z}_{\phi}^+(\epsilon h^A)}{\mathcal{Z}_{\phi}^+(\epsilon h)} \leq e^{\frac{\oc{6-00}|\intB A|}{T}}
%\end{equation*}
where $h^A$ denotes the external field obtained from $h$ by flipping the signs of external field on $A$.
\end{lem}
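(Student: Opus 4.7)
The plan is to express the partition-function ratio as an Ising average and then control the resulting Gaussian chaos uniformly over all connected subsets $A$ by combining moment bounds on low-temperature correlations with a union bound that exploits the combinatorial entropy of connected shapes. The starting identity is
\begin{equation*}
\frac{\mathcal{Z}_\phi^+(\epsilon h^A)}{\mathcal{Z}_\phi^+(\epsilon h)}
= \Big\langle \exp\Big(-\tfrac{2\epsilon}{T}\sum_{v\in A} h_v\sigma_v\Big)\Big\rangle^{+}_{\Lambda_N,\epsilon h},
\end{equation*}
which follows from the Edwards--Sokal coupling (Lemma~\ref{lem:extended to FK}) together with the fact that flipping $h$ on $A$ in the Hamiltonian is equivalent to flipping $\sigma|_A$ in the numerator. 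Using $e^{-2\epsilon h_v\sigma_v/T}=\cosh(2\epsilon h_v/T)\bigl[1-\sigma_v\tanh(2\epsilon h_v/T)\bigr]$ as in the chaos expansion of \cite{FSZ16,BS22}, I would rewrite the right-hand side as $\prod_{v\in A}\cosh(2\epsilon h_v/T)\cdot\Psi_A(h)$, where $\Psi_A(h)=\sum_{I\subset A}(-1)^{|I|}\prod_{v\in I}\tanh(2\epsilon h_v/T)\,\langle\prod_{v\in I}\sigma_v\rangle^{+}_{\Lambda_N,\epsilon h}$.

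Taking logarithms splits the task into two pieces. The ``diagonal'' piece $\sum_{v\in A}\log\cosh(2\epsilon h_v/T)\le \frac{2\epsilon^2}{T^2}\sum_{v\in A}h_v^2$ is handled by the Bernstein inequality for $\chi^2$-sums: for any fixed connected $A$, $\P(\sum_{v\in A}h_v^2\ge 2|A|+t)\le e^{-ct}$. Since the number of connected subsets of $\Lambda_N$ of boundary size $L$ rooted at a given vertex is bounded by $e^{c_1 L}$, a union bound over roots (at most $N^2$ many) and over $L\le N$ gives $\sum_{v\in A}h_v^2\le C|A|$ uniformly for all connected $A$ with $\P$-failure probability at most $N^2e^{-c|A|_{\min}}$; here $|A|\ge |\intB A|/C'$, so on the good event the diagonal contribution is at most $C\epsilon^2|\intB A|^2/T^2$. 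For the ``off-diagonal'' chaos $\Psi_A(h)$, I would bound each order $k$ using $|\langle\prod_I\sigma_v\rangle^{+}_{\Lambda_N,\epsilon h}|\le 1$ and the hypercontractive $L^p$-bounds on Gaussian polynomials (as in \cite[Cor.~3.3]{AL12}), obtaining that $\log|1+\Psi_A(h)|\le C\epsilon\,|\sum_{v\in A}h_v|+O(\epsilon^2|A|)$ up to a Gaussian tail, and then invoke a greedy-lattice-animal-type estimate: a union bound combining the $e^{c_1 L}$ connected-set entropy with the Gaussian tail $e^{-t^2/2|A|}$ for $\sum_{v\in A}h_v$ gives $\sup_{A\text{ conn},\,|\intB A|=L}|\sum_{v\in A}h_v|\le C L^{3/2}$ with $\P$-probability $1-e^{-cL}$.

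Combining, with failure probability $\le\sum_{L\le N}e^{-cL}\le e^{-c/\epsilon^2}$ (the last inequality requires that the tails from all scales down to $L\sim 1/\epsilon^2$ can be bounded by $e^{-c/\epsilon^2}$, which is the origin of the probability bound in the statement), the partition-function ratio satisfies
\begin{equation*}
\log\frac{\mathcal{Z}_\phi^+(\epsilon h^A)}{\mathcal{Z}_\phi^+(\epsilon h)}\le \frac{C\epsilon^2|A|}{T^2}+\frac{C\epsilon L^{3/2}}{T}\quad\text{uniformly in connected }A,\ L=|\intB A|.
\end{equation*}
For ball-like $A$ we have $|A|\asymp L^2$, so the two terms are $O(L/T)$ precisely when $\epsilon^2 L\lesssim 1$, i.e.\ $L\lesssim\epsilon^{-2}$. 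The main obstacle is therefore to handle larger scales $L\in[\epsilon^{-2},N]$: here the naive union bound fails and one must use the low-temperature Ising structure. My plan is to tile $A$ by blocks of size $\ell_0\sim\epsilon^{-2/3}$ and use the exponential decay of truncated correlations at $T<T_c$ (combined with the coarse graining of Proposition~\ref{prop-good box probability}) to reduce the chaos on $A$ to a sum of essentially independent block contributions whose magnitudes are $O(\epsilon\ell_0^{3/2}\cdot(L/\ell_0))=O(\epsilon L\ell_0^{1/2})$; the assumption $N\le e^{c_{6-3}\epsilon^{-4/3}}$ is exactly what is needed so that the combinatorial entropy $\log(\#\{A\text{ conn}\})\le c_1 N\le c_1 e^{c_{6-3}\epsilon^{-4/3}}$ can be absorbed into this block concentration at the cost $c_{6-00}L/T$, yielding the desired uniform bound.
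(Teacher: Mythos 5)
Your starting identity and the observation that the naive argument only covers $|\intB A|\lesssim \epsilon^{-2}$ are both correct, but the route you take afterwards has a structural flaw, and the step you defer (``the main obstacle'') is in fact the entire content of the lemma. The paper does not prove this statement by a chaos expansion at all: it defers to the argument of \cite[Section 3.3]{DZ21}, whose two ingredients are (i) the exact identity $\E_h\log\mathcal Z^+_\phi(\epsilon h^A)=\E_h\log\mathcal Z^+_\phi(\epsilon h)$ (since $h^A$ and $h$ are equidistributed), so that one only needs \emph{concentration} of $F_A(h):=\log\mathcal Z^+_\phi(\epsilon h^A)-\log\mathcal Z^+_\phi(\epsilon h)$ around $0$ --- which follows from Gaussian concentration because $\|\nabla_h F_A\|_2\le \tfrac{2\epsilon}{T}\sqrt{|A|}+(\text{exp.\ small})\lesssim \tfrac{\epsilon}{T}|\intB A|$, giving a failure probability $e^{-c/\epsilon^2}$ \emph{per set, uniformly in the scale} --- and (ii) the multiscale chaining bound of \cite[Proposition 2.2]{DW20} (Talagrand's generic chaining over lattice animals) to beat the entropy $N^2 e^{C|\intB A|}$ of connected sets; this chaining is precisely where the threshold $N\le e^{c\epsilon^{-4/3}}$ is produced. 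Neither ingredient appears in your proposal.

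The concrete gap in your approach is that splitting $\log$ of the Gibbs average into the ``diagonal'' piece $\sum_{v\in A}\log\cosh(2\epsilon h_v/T)$ and the chaos $\Psi_A$, and bounding each in absolute value, destroys an essential cancellation. The diagonal piece equals $(1+o(1))\tfrac{2\epsilon^2}{T^2}|A|$ deterministically, and for ball-like $A$ this is $\asymp\epsilon^2|\intB A|^2\gg|\intB A|$ as soon as $|\intB A|\gg\epsilon^{-2}$; since the lemma must hold for $|\intB A|$ up to order $N\le e^{\oc{6-3}\epsilon^{-4/3}}$, your bound fails on essentially the whole range of scales unless the compensating negative contribution hidden inside $\Psi_A$ is extracted --- and it cannot be extracted from $|\langle\sigma^I\rangle|\le 1$ plus hypercontractivity, which are one-sided bounds (note also that at $T<T_c$ the $k$-point functions do not decay, $\langle\sigma^I\rangle\approx (m^*)^{|I|}$, so the \cite{FSZ16}-type estimates used elsewhere in the paper at $T_c$ are unavailable and the trivial bound is essentially sharp). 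Your final block-decomposition paragraph asserts rather than proves the key concentration; moreover the entropy count $\log\#\{A\ \text{connected}\}\le c_1N$ is wrong (the correct count is $N^2e^{C L}$ at fixed $L=|\intB A|$, and $e^{\Theta(N^2)}$ overall), and the probability accounting $\sum_{L\le N}e^{-cL}\le e^{-c/\epsilon^2}$ does not hold --- the left side is $\Theta(1)$; the failure probability $e^{-c/\epsilon^2}$ must instead come from the scale-independent ratio between the fluctuation $\epsilon|\intB A|$ of $F_A$ and the target $\oc{6-00}|\intB A|/T$, which is the concentration mechanism above. To repair the proof you should abandon the separate treatment of the two pieces of the chaos expansion and follow the concentration-plus-chaining route of \cite{DZ21,DW20}.
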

Lemma~\ref{lemma:upper partition} follows by adapting the proof in \cite[Section 3.3]{DZ21}. Note that Lemma~\ref{lemma:upper partition} removes a $\log(1/\epsilon)$ term (for the bound on $N$) in \cite{DZ21}; this is because the new version of \cite[Proposition 2.2]{DW20} (which follows essentially from \cite[Theorem 4.4.2]{Talagrand14}) is sharper than its older version, which was the one used in \cite{DZ21}. Provided with Lemma~\ref{lemma:upper partition}, the proof of Proposition~\ref{prop-large-surface-estimation} is a straightforward adaption for the proof of \cite[Proposition 3.3]{DLX22}. Note that in \cite[Lemma 3.6]{DLX22} we obtained and applied a bound on the ratio between the partition functions (before and after a carefully designed flipping for the disorder), where the bound is $e^{\frac{81\mathsf{k}^3 \mathsf{q}^3 \sqrt{\epsilon}}{T} |\mathsf B \cup \mathsf B'|}$. But a slightly more careful checking reveals that one does not need the factor of $\sqrt{\epsilon}$ in the exponent as in \cite[Lemma 3.6]{DLX22}; instead a bound of $e^{\oc{6-00} |\mathsf B \cup \mathsf B'|}$ for some small enough but fixed constant $\oc{6-00}>0$ as in Lemma \ref{lemma:upper partition} is sufficient. Once we proved an analog of \cite[Lemma 3.6]{DLX22} (despite the fact that the bound in the current context is weaker as commented earlier), it is straightforward to adapt the proof of \cite[Proposition 3.3]{DLX22} and obtain a proof of Proposition~\ref{prop-large-surface-estimation} as required.
%Since the proof is a mere adaption, we omit further details.
%With Lemma~\ref{lem:partition upper-bound} in place of \cite[Lemma 3.6]{DLX22}, it is straightforward to adapt the proof of \cite[Proposition 3.3]{DLX22} and obtain a proof of Proposition~\ref{prop-large-surface-estimation} as required.

\appendix
\renewcommand{\appendixname}{Appendix~\Alph{section}}
\section{Truncated Ising correlation at criticality}
In this appendix, we continue to consider $T = T_c$. In addition, all discussions in this appendix refer to the case without disorder, and thus for simplicity, we drop the subscript $0$ (which indicates an $0$-disorder) in notations.

For two points away from the boundary, we first provide a lower bound on the connectivity probability  in the FK-Ising model, and our bound holds uniformly for all boundary conditions. 
%\begin{defi}
%    In the FK-Ising model, we write $u\longleftrightarrow v$ (in $\omega$) if and only if there is a path $u=x_1,x_2,\cdots,x_n=v$ connecting $u$ and $v$ such that $\{x_i,x_{i+1}\}$ is an open edge in $\omega$. In the extended Ising model, we write $u\longleftrightarrow v$ (in $\bar\sigma$) if and only if there is a path $u=x_1,x_2,\cdots,x_n=v$ connecting $u$ and $v$ such that all the spin values along the path (including the edge spins) are either all plus or all minus. Note that by Lemma \ref{lem:extended to FK}, this coincidence of notations is well-justified. 
%\end{defi}
\begin{defi}
For $\mcc P, \mcc M \subset \Gamma$, recall that $\phi^{\mcc P, \mcc M}_\Gamma$ is the FK-Ising measure on $\Gamma$ with wired boundary conditions on each of $\mcc P$ and $\mcc M$ (but $\mcc P$ is not wired to $\mcc M$). In this appendix, we will also often refer to the Ising boundary condition $\xi\in \{-1, 1\}^{\mcc P\cup \mcc M}$ where $\xi_u = 1$ for $u\in \mcc P$ and $\xi_u = -1$ for $u \in \mcc M$. We remark that throughout the appendix the relation between $\xi$ and $(\mcc P, \mcc M$) is fixed as such.
\end{defi}
Recalling the definition of $\phi^{\xi}_{\Gamma}$ as in \eqref{eq-def-FK-external-field}, we can see that (since there is no disorder)
\begin{equation}\label{eq:definition for P and M}
    \phi^{\xi}_\Gamma = \phi^{\mcc P, \mcc M}_\Gamma (\cdot \mid \mcc P \centernot\longleftrightarrow \mcc M)\,.
\end{equation}

Before stating Lemma~\ref{lem: FK connecting probability with Ising boundary}, we review the dual theory for the FK-Ising model initiated in \cite{KG41}, and our presentation follows that in \cite{BDC12}. Define $(\mbb Z^2)^{\diamond}=\mbb Z^2+(\frac{1}{2},\frac{1}{2})$ to be the dual of $\mbb Z^2$, which can be viewed as a translation of $\mbb Z^2$ in $\mbb R^2$. We see that every vertex in $(\mbb Z^2)^\diamond$ is the center of a unit square in $\mbb Z^2$ and vice versa.  In other words, every edge $e\in \mbb Z^2$ intersects with a unique edge  $e^{\diamond} \in (\mbb Z^2)^{\diamond}$. For a configuration $\omega\in\{0,1\}^{\edge(\mbb Z^2)}$, we define its dual configuration $\omega^{\diamond}$ by setting
$$\omega^{\diamond}(e^\diamond)=1-\omega(e), \quad \mbox{ for all } e\in \edge(\mbb Z^2).$$
Importantly, the measure of $\omega^{\diamond}$ is also an FK-Ising measure with a dual boundary condition and dual parameter $p^{\diamond}$ (see \cite{BDC12} for more details), where by \cite{Onsager44} (see also \cite{ABF87, BDC12, BDC12b})
$$(p_c)^{\diamond}=p_c=\frac{\sqrt{2}}{1+\sqrt 2}.$$
Now for a rectangle $\cR=[a,b]\times[c,d]$, let $\mcc H^\diamond(\cR)$ denote the event that there exists an open dual path crossing $\cR$ horizontally.
 This is a slight abuse of notation since the dual path lives in $[a-\frac{1}{2},b+\frac{1}{2}]\times [c+\frac{1}{2},d-\frac{1}{2}]\subset (\mbb Z^2)^{\diamond}$. Similarly, we  define the event of vertical dual crossing $\mcc V^\diamond(\cR)$, and we  also define the event $\mcc H(\cR)$ and $\mcc V(\cR)$ which correspond to crossings by $\omega$-paths. By duality, we have $\mcc V(\cR)$ happens if and only if $\mcc H^\diamond(\cR)$ fails, and $\mcc H(\cR)$ happens if and only if $\mcc V^\diamond(\cR)$ fails.
\begin{lem}\label{lem: FK connecting probability with Ising boundary}
     Let $\cR$ be a rectangle with side lengths at least $M$ and at most $10M$. Let $\cR\subset\Gamma\subset \mathbb Z^2$. Let $\xi$ be an arbitrary Ising boundary condition on $\intB\Gamma$. For any $u,v\in \cR$ satisfying $dist(u,\intB \cR)>M/10$ and $dist(v,\intB \cR
)>M/10$, we have \begin{equation*}
        \phi^{\mcc P,\mcc M}_{\Gamma}(u\longleftrightarrow v\centernot\longleftrightarrow \mcc P\cup \mcc M\mid \mcc P\centernot\longleftrightarrow \mcc M)
        \ge \oc{A3}\cdot dist(u,v)^{-\frac{1}{4}},
    \end{equation*}for some $\nc\label{A3}>0$ not depending on the location of $u,v$ or the boundary condition $\xi$.
\end{lem}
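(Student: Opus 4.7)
By \eqref{eq:definition for P and M}, it suffices to show
$\phi^\xi_\Gamma(u\leftrightarrow v,\;\{u,v\}\centernot\longleftrightarrow \mathcal P\cup\mathcal M)\geq c\cdot r^{-\frac{1}{4}}$ with $r:=dist(u,v)$. Since $u,v$ lie at distance $\geq M/10$ from $\intB\mathcal R$ and $\mathcal R$ has side lengths between $M$ and $10M$, one can choose a box $B\subset\mathcal R$ containing $\{u,v\}$ with $dist(u,\intB B)\wedge dist(v,\intB B)\geq r/4$ and such that each side of the annulus $\mathcal R\setminus B$ has width comparable to $M$. Let $\mathcal A$ denote the event that there is a dual-open circuit in $\mathcal R\setminus B$ encircling $B$. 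Since $\mathcal A$ is decreasing in $\omega$, CBC gives $\phi^{\mathcal P,\mathcal M}_\Gamma(\mathcal A)\geq \phi^{\mathrm w}_\Gamma(\mathcal A)\geq c_1$, where the last inequality is the classical RSW estimate for the critical FK-Ising model in an annulus of bounded aspect ratio \cite{DHN11}. Since the conditioning event $\{\mathcal P\centernot\longleftrightarrow\mathcal M\}$ is also decreasing, FKG yields
$\phi^\xi_\Gamma(\mathcal A)=\phi^{\mathcal P,\mathcal M}_\Gamma(\mathcal A\mid\mathcal P\centernot\longleftrightarrow\mathcal M)\geq \phi^{\mathcal P,\mathcal M}_\Gamma(\mathcal A)\geq c_1$, uniformly in the Ising boundary condition $\xi$.

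\textbf{Outermost dual circuit and DMP.}
On $\mathcal A$, let $\gamma_\omega$ be the outermost dual-open circuit in $\mathcal R\setminus B$ encircling $B$, discovered by exploration from $\intB\mathcal R$ inwards; for each admissible deterministic circuit $\gamma$, let $U_\gamma$ denote the primal region it encloses. On $\{\gamma_\omega=\gamma\}$ every edge crossing $\gamma$ is closed, so the DMP gives that the configuration inside $U_\gamma$ is distributed as the free FK-Ising measure $\phi^{\mathrm f}_{U_\gamma}$, independently of the configuration outside. Moreover, on $\mathcal A\cap\{u\leftrightarrow v\}$ the circuit $\gamma_\omega\subset\mathcal R\setminus B\subset\Gamma\setminus\intB\Gamma$ separates $\{u,v\}$ from $\mathcal P\cup\mathcal M\subset\intB\Gamma$, so that $\{u,v\}\centernot\longleftrightarrow\mathcal P\cup\mathcal M$ holds automatically. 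Summing over $\gamma$,
\begin{equation*}
\phi^\xi_\Gamma\big(u\leftrightarrow v,\;\{u,v\}\centernot\longleftrightarrow\mathcal P\cup\mathcal M\big)\;\geq\;\sum_\gamma \phi^\xi_\Gamma(\gamma_\omega=\gamma)\,\phi^{\mathrm f}_{U_\gamma}(u\leftrightarrow v)\;\geq\;c_1\cdot\inf_{\gamma}\,\phi^{\mathrm f}_{U_\gamma}(u\leftrightarrow v).
\end{equation*}

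\textbf{Two-point function lower bound (main obstacle).}
It remains to prove $\phi^{\mathrm f}_{U_\gamma}(u\leftrightarrow v)\geq c_2\, r^{-\frac{1}{4}}$, uniformly over admissible $\gamma$. Since $U_\gamma\supset B$ and $dist(u,\intB U_\gamma)\wedge dist(v,\intB U_\gamma)\geq r/4$, one can place two disjoint boxes $B_u,B_v$ of side $r/8$ around $u,v$, joined by a rectangle of aspect ratio $O(1)$ and side of order $r$, all contained in $B\subset U_\gamma$. By FKG, $\phi^{\mathrm f}_{U_\gamma}(u\leftrightarrow v)$ is bounded below by the product of the two one-arm probabilities $\phi^{\mathrm f}_{U_\gamma}(u\leftrightarrow \intB B_u)$ and $\phi^{\mathrm f}_{U_\gamma}(v\leftrightarrow \intB B_v)$ (each $\gtrsim r^{-\frac{1}{8}}$) together with the probability of a primal-open crossing of the connecting rectangle ($\gtrsim 1$ by RSW). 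This gives $\phi^{\mathrm f}_{U_\gamma}(u\leftrightarrow v)\gtrsim r^{-\frac{1}{4}}$, which combined with the previous display proves the lemma with $\oc{A3}=c_1 c_2$. The main technical subtlety is establishing the one-arm lower bound under the free boundary condition inside the generic domain $U_\gamma$: this follows by chaining RSW crossings across successive dyadic annuli that sit well inside $U_\gamma$ (which is possible because $U_\gamma\supset B$ is regular on the scale $M\gtrsim r$), and applying quasi-multiplicativity as in \cite{CDH16} to match the one-arm exponent $1/8$ from \eqref{eq: FK one arm event exponent}.
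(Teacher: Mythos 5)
Your overall architecture is the same as the paper's: use RSW together with FKG/CBC to produce, with probability bounded below uniformly in $\xi$, a dual circuit separating $u,v$ from $\mathcal P\cup\mathcal M$; condition on (the outermost) such circuit so that the DMP reduces the problem to the free-boundary two-point function in the enclosed region; and bound that two-point function below by $dist(u,v)^{-1/4}$. The paper places the dual circuit in four strips of width $0.05M$ just inside $\intB\mathcal R$ and then simply invokes \cite[Proposition 27]{DHN11} for $\phi^{\mathrm f}_{\mathcal R'}(u\longleftrightarrow v)\ge C\,dist(u,v)^{-1/4}$, whereas you re-derive that last bound from one-arm estimates and RSW gluing; both routes are acceptable.

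There is, however, a step that fails as written. You require a box $B\subset\mathcal R$ containing $u,v$ with $dist(u,\intB B)\wedge dist(v,\intB B)\ge r/4$ (where $r=dist(u,v)$) \emph{and} with $\mathcal R\setminus B$ of width $\asymp M$ on every side. Since the hypothesis only guarantees $dist(u,\intB\mathcal R)>M/10$, any $B\subset\mathcal R$ containing $u$ has $dist(u,\intB B)$ bounded by a constant times $dist(u,\intB\mathcal R)$, which may be as small as roughly $M/10$; meanwhile $r$ can be as large as about $10M$. So for $r\gtrsim M$ and $u,v$ near $\intB\mathcal R$ no such $B$ exists. The repair is to weaken the margin from $r/4$ to $c\,r$ for a small absolute constant $c$: taking $B=\mathcal R'$ (the rectangle shrunk by $0.05M$ on each side, as in the paper) gives $dist(u,\intB B)\ge M/20\ge r/200$, and a margin of $c\,r$ is all the two-point lower bound requires, since the constant in $r^{-1/4}$ depends only on the ratio of that margin to $r$. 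Two smaller points: the event $\{u\longleftrightarrow\intB B_u\}\cap\{v\longleftrightarrow\intB B_v\}\cap\{\text{crossing of the connecting rectangle}\}$ does not by itself imply $u\longleftrightarrow v$ — you need open circuits in annuli around $B_u$ and $B_v$ to glue the arms to the crossing (all increasing, so FKG still applies) — and the one-arm lower bound of order $r^{-1/8}$ must be transferred from the wired bound \eqref{eq: FK one arm event exponent} to the free/generic boundary condition of $U_\gamma$ via quasi-multiplicativity; you allude to both, and they are standard. With these repairs your argument coincides with the paper's proof.
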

\begin{proof}
Suppose $\cR$ is a rectangle with dimension $M_1\times M_2$.
Let $\cRp$ denote the rectangle with the same center as $\cR$ but has dimension $(M_1-0.1M)\times (M_2-{0.1}M)$.
 Let $\cR_L,\cR_B,\cR_R,\cR_T$ be the left-most, bottom, right-most, top rectangles in $\cR$ with width $0.05M$ such that $\cR^{\prime}=\cR\setminus(\cR_L\cup\cR_B\cup\cR_R\cup\cR_T)$. Let $\cB$ be the event that there is a dual path connecting each of the two short sides of $\cR_L,\cR_B,\cR_R,\cR_T$ respectively.
 See figure
 \ref{fig:TBLR}
 for illustration.

\begin{figure}[htb]
    \centering
    \begin{tikzpicture}    
    \draw [very thick] (-4,-2)rectangle (4,2);
    \draw [very thick] (-3.5,-1.5) rectangle (3.5,1.5);
    \draw [ultra thin] (-4,-1.5) --(-3.5,-1.5);
    \draw [ultra thin] (-4,1.5) --(-3.5,1.5);
    \draw [ultra thin] (4,-1.5) --(3.5,-1.5);
    \draw [ultra thin] (4,1.5) --(3.5,1.5);
    \draw [ultra thin] (-3.5,-2) --(-3.5,-1.5);
    \draw [ultra thin] (3.5,-2) --(3.5,-1.5);
    \draw [ultra thin] (3.5,2) --(3.5,1.5);
    \draw [ultra thin] (-3.5,2) --(-3.5,1.5);
    \node at (0,0) {$\cR'$};
    \node at (-3.745,0) {$\cR_L$ };
    \node at (3.745,0) {$\cR_R$ };
    \node at (0,1.75){$\cR_T$ };
    \node at (0,-1.75){$\cR_B $ };
    \filldraw[gray] (-3.1,-1) circle [radius=1pt]
    (2.85,0.3) circle [radius=1pt];
    \node at (-2.9,-0.8){$u$ };
    \node at (3.05,0.5){$v$ };
    \node [name=start] at (3.75,1) { };
    \node [name=end] at (5.5,1) {dual path };
    \draw [->] (start)--(end);
    \draw (-3.7,2) .. controls (-3.5,1) and (-4,0) .. (-3.8,-2);
    \draw (-4,-1.7) .. controls (-2,-1.99) and (1,-2) .. (4,-1.7);
    \draw (3.95,-2) .. controls (3.5,-0.5) and (4,1) .. (3.95,2);
    \draw (4,1.85) .. controls (0,1.6) .. (-4,1.8);
    \end{tikzpicture}
    \caption{An illustration for $\mcc R_T, \mcc R_B, \mcc R_L, \mcc R_R$ and $\mcc R'$.}
    \label{fig:TBLR}
\end{figure}

By the law of total probability, we have 
\begin{equation*}
    \begin{aligned}
        &\phi^{\mcc P,\mcc M}_{\Gamma}(u\longleftrightarrow v\centernot\longleftrightarrow \mcc P\cup \mcc M\mid\{\mcc P\centernot\longleftrightarrow \mcc M\})\\\geq~& \phi^{\mcc P,\mcc M}_{\Gamma}(u\longleftrightarrow v\centernot\longleftrightarrow \mcc P\cup \mcc M\mid \{\mcc P\centernot\longleftrightarrow \mcc M\}\cap \mcc B)\times \phi^{\mcc P,\mcc M}_{\Gamma}(\mcc B\mid \mcc P\centernot\longleftrightarrow \mcc M)\,.
    \end{aligned}
\end{equation*}
On the event $\mcc B$, there is a dual path disconnecting the boundary $\mcc P\cup \mcc M$ with $u,v$. Thus, by DMP and CBC, we obtain that
    \begin{align}
        \phi^{\mcc P,\mcc M}_{\Gamma}(u\longleftrightarrow v\centernot\longleftrightarrow \mcc P\cup \mcc M\mid \{\mcc P\centernot\longleftrightarrow \mcc M\}\cap \mcc B)&=\phi^{\mcc P,\mcc M}_{\Gamma}(u\longleftrightarrow v\mid\{\mcc P\centernot\longleftrightarrow \mcc M\}\cap\mcc B)\nonumber
        \\&\geq \phi^{\mathrm f}_{\cR'}(u\longleftrightarrow v)\geq C_2 dist(u,v)^{-\frac{1}{4}},\label{eq-otoB}
    \end{align}
where the last inequality follows from \cite[Proposition 27]{DHN11}.
Applying the RSW theory (\cite[Theorem 1.1]{DHN11})
we get that
$$\phi^{\mathrm w}_{\mss R_B}(\mcc H^{\diamond}( {\mss R}_B))\geq C_3\,, $$ and so are the other three terms on the right-hand side of \eqref{eq: dual path circuit} below. Combined with FKG, it yields that
   \begin{align}
    \phi^{\mcc P,\mcc M}_{\Gamma}(\mcc B)\geq \phi^{\mathrm w}_{\cR_B}(\mcc H^\diamond(\cR_B))\times \phi^{\mathrm w}_{\cR_T}(\mcc H^{\diamond}(\cR_T))\times \phi^{\mathrm w}_{\cR_L}(\mcc V^{\diamond}(\cR_L))\times\phi^{\mathrm w}_{\cR_R}(\mcc V^{\diamond}(\cR_R))\ge C_4.\label{eq: dual path circuit}
   \end{align}   Since $\{\mcc P\centernot\longleftrightarrow \mcc M\}$ and $\mcc B$ are decreasing events, by FKG, CBC and \eqref{eq: dual path circuit} we have \begin{equation}\label{eq-dual-path}
   \phi^{\mcc P,\mcc M}_{\Gamma}(\mcc B\mid  
   \mcc P\centernot\longleftrightarrow \mcc M)\stackrel{\mbox{FKG}}\ge\phi^{\mcc P,\mcc M}_{\Gamma}(\mcc B)\geq C_4.  
\end{equation}
Combining \eqref{eq-otoB} and \eqref{eq-dual-path} we obtain that \begin{equation*}\label{eq: first term in FK representation}
    \phi^{\mcc P,\mcc M}_{\Gamma}(u\longleftrightarrow v\mid \mcc P\centernot\longleftrightarrow \mcc M)\ge C_5dist(u,v)^{-\frac{1}{4}}.
\end{equation*}
Thus we complete the proof of Lemma~\ref{lem: FK connecting probability with Ising boundary}.
\end{proof}
\begin{lem}\label{lem: truncated correlation lower-bound uniformly for boundary condition}
Let $\cR$ be a rectangle with side lengths at least $M$ and at most $10M$. Let $\cR\subset\Gamma\subset \mathbb Z^2$. For any $u,v\in \cR$ satisfying $dist(u,\intB \cR)>M/10$ and $dist(v,\intB \cR
)>M/10$, we have (at $T_c$) that \begin{equation}\label{eq: truncated correlation lower-bound uniformly for boundary condition}
        \langle\sigma_u\sigma_v\rangle^{\xi}_{\Gamma}-\langle\sigma_u\rangle^{\xi}_{\Gamma}\cdot\langle\sigma_v\rangle^{\xi}_{\Gamma}\ge {\oc{A4}}dist(u,v)^{-\frac{1}{4}}
    \end{equation} for some $\nc\label{A4}>0$ not depending on the location of $u,v$ or the boundary condition $\xi$.
\end{lem}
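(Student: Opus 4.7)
The plan is to reduce the truncated correlation on the left-hand side of (A.5) to the FK free-cluster connectivity probability already handled in Lemma~\ref{lem: FK connecting probability with Ising boundary}, and then invoke that bound. Using the Edwards--Sokal coupling between $\mu^\xi_\Gamma$ and $\phi^\xi_\Gamma$ (the Ising-level analogue of Lemma~\ref{lem:extended to FK}), I would sample $\omega\sim\phi^\xi_\Gamma$ and then assign each cluster wired to $\mcc P$ the signal $+1$, each cluster wired to $\mcc M$ the signal $-1$, and each free cluster an independent uniform $\pm1$ signal. Writing $Z_w := \1_{\{w\longleftrightarrow \mcc P\}} - \1_{\{w\longleftrightarrow \mcc M\}}$, which coincides with the conditional mean of $\sigma_w$ given $\omega$, and $F := \{u\longleftrightarrow v \centernot\longleftrightarrow \mcc P\cup\mcc M\}$, the law of total covariance conditioned on $\omega$ yields the identity
$$\langle\sigma_u\sigma_v\rangle^\xi_\Gamma - \langle\sigma_u\rangle^\xi_\Gamma\langle\sigma_v\rangle^\xi_\Gamma = \phi^\xi_\Gamma(F) + \mathrm{Cov}_{\phi^\xi_\Gamma}(Z_u, Z_v),$$
since on $F$ the two spins lie in a common free cluster and share one uniform random sign (so the conditional covariance equals $1$), while off $F$ they lie in different clusters and are assigned independently (so the conditional covariance vanishes).

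Given this decomposition, the proof reduces to two points. First, the geometric hypotheses of Lemma~\ref{lem: FK connecting probability with Ising boundary} on $u,v,\mcc R,\Gamma$ match exactly those of Lemma~\ref{lem: truncated correlation lower-bound uniformly for boundary condition}, so that lemma provides $\phi^\xi_\Gamma(F) \ge c\cdot dist(u,v)^{-1/4}$. Second, one must establish $\mathrm{Cov}_{\phi^\xi_\Gamma}(Z_u,Z_v)\ge 0$. For pure boundary conditions ($\mcc M=\emptyset$ or $\mcc P=\emptyset$) this is immediate, since $Z_w$ then reduces to the indicator of a monotone FK event and the FKG inequality for FK--Ising applies directly. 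For a genuinely mixed boundary, I would expand
$$\mathrm{Cov}(Z_u,Z_v) = \mathrm{Cov}(\1_{u\to \mcc P},\1_{v\to \mcc P}) + \mathrm{Cov}(\1_{u\to \mcc M},\1_{v\to \mcc M}) - \mathrm{Cov}(\1_{u\to \mcc P},\1_{v\to \mcc M}) - \mathrm{Cov}(\1_{u\to \mcc M},\1_{v\to \mcc P})$$
and close by combining FKG-nonnegativity of each summand with the $\mcc P\leftrightarrow\mcc M$ invariance of $\phi^\xi_\Gamma$ at $h=0$ via a swap argument in the spirit of Proposition~\ref{prop:swap}. A parallel route that avoids the covariance analysis altogether is to apply Proposition~\ref{prop:DisagreementRep}(2) with $V_0=\{u,v\}$ under the agreed boundary $\xi/\xi$: since no disagreements survive on the boundary, pre- and anti-disagreements are exchangeable under the swap of the two copies, producing $\langle\sigma_u\sigma_v\rangle^\xi - \langle\sigma_u\rangle^\xi\langle\sigma_v\rangle^\xi = 4\,\bar\mu^{\xi/\xi}_\Gamma(u \stackrel{\mathcal{D}}{\longleftrightarrow} v)$, which one lower-bounds by $c\,\phi^\xi_\Gamma(F)$ through a two-copy FK construction.

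The main obstacle is precisely the mixed-boundary nonnegativity of $\mathrm{Cov}_{\phi^\xi_\Gamma}(Z_u,Z_v)$, equivalently the inequality $\langle\sigma_u\sigma_v\rangle^\xi \ge \langle\sigma_u\rangle^\xi\langle\sigma_v\rangle^\xi + \phi^\xi_\Gamma(F)$. FKG supplies signed covariances of the right individual signs, but the mixing of $\mcc P$- and $\mcc M$-targets forces two of the four contributions to enter with a minus sign, so a more global input is required — either an FK symmetry plus a swap in the spirit of Proposition~\ref{prop:swap}, or the two-copy disagreement representation. The remaining steps — the Edwards--Sokal decomposition, the verification of hypotheses, and the final appeal to Lemma~\ref{lem: FK connecting probability with Ising boundary} — are routine.
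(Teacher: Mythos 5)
Your decomposition is the same as the paper's: the identity
\begin{equation*}
\langle\sigma_u\sigma_v\rangle^\xi_\Gamma-\langle\sigma_u\rangle^\xi_\Gamma\langle\sigma_v\rangle^\xi_\Gamma=\phi^{\mcc P,\mcc M}_\Gamma\big(u\longleftrightarrow v\centernot\longleftrightarrow \mcc P\cup\mcc M\mid \mcc P\centernot\longleftrightarrow\mcc M\big)+\mathrm{Cov}(Z_u,Z_v)
\end{equation*}
is exactly what one gets by subtracting \eqref{eq: spin average into FK-configurations} from \eqref{eq: spin spin correlation into FK-configurations}, and the final appeal to Lemma~\ref{lem: FK connecting probability with Ising boundary} is identical. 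However, you leave the key step, $\mathrm{Cov}(Z_u,Z_v)\ge 0$, unproved: you correctly observe that FKG for the edge marginal cannot be applied naively (the conditioning on the decreasing event $\{\mcc P\centernot\longleftrightarrow\mcc M\}$ destroys the lattice condition), but the two escapes you sketch --- a $\mcc P\leftrightarrow\mcc M$ swap argument, or the two-copy disagreement identity followed by a lower bound of $\bar\mu^{\xi/\xi}_\Gamma(u\stackrel{\mcc D}{\longleftrightarrow}v)$ by $c\,\phi^\xi_\Gamma(F)$ "through a two-copy FK construction" --- are not carried out, and the second one is itself nontrivial (two independently sampled free clusters each containing $u$ and $v$ need not share a path from $u$ to $v$, so producing a pre-disagreement connection requires an argument).

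The paper's resolution is simpler than either of your proposals and you should record it: by Lemma~\ref{lem:extended to FK}, for any intersection of connecting events one has $\bar\mu^\xi_\Gamma(\cdot)=\phi^{\mcc P,\mcc M}_\Gamma(\cdot\mid\mcc P\centernot\longleftrightarrow\mcc M)$, and in the \emph{extended Ising} measure the event $\{w\longleftrightarrow\mcc P\}$ (a monochromatic plus path to $\mcc P$) is increasing in the single-copy spin order while $\{w\longleftrightarrow\mcc M\}$ is decreasing. Single-copy FKG for $\bar\mu^\xi_\Gamma$ then gives all four inequalities of \eqref{eq: FKG for FK model under Ising boundary} with exactly the signs needed, i.e.\ the two same-target covariances are nonnegative and the two cross-target covariances are nonpositive, so that $\mathrm{Cov}(Z_u,Z_v)\ge0$ with no swap or two-copy input. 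With that substitution your argument closes; as written, the central inequality is asserted rather than established.
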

\begin{proof}
    We first write the left-hand side of \eqref{eq: truncated correlation lower-bound uniformly for boundary condition} in terms of FK-configurations. By \eqref{eq:definition for P and M}, we have \begin{equation}\label{eq: spin spin correlation into FK-configurations}
    \begin{aligned}
        \langle\sigma_u \sigma_v\rangle^{\xi}_{\Gamma}=& \phi^{\mcc P,\mcc M}_{\Gamma}(u\longleftrightarrow v\longleftrightarrow \mcc P\mid \mcc P\centernot\longleftrightarrow \mcc M)+\phi^{\mcc P,\mcc M}_{\Gamma}(u\longleftrightarrow v\longleftrightarrow \mcc M\mid \mcc P\centernot\longleftrightarrow \mcc M)\\&-\phi^{\mcc P,\mcc M}_{\Gamma}(u\longleftrightarrow \mcc P, v\longleftrightarrow \mcc M\mid \mcc P\centernot\longleftrightarrow \mcc M)\\&-\phi^{\mcc P,\mcc M}_{\Gamma}(u\longleftrightarrow \mcc M, v\longleftrightarrow \mcc P\mid \mcc P\centernot\longleftrightarrow \mcc M)\\&+\phi^{\mcc P,\mcc M}_{\Gamma}(u\longleftrightarrow v\centernot\longleftrightarrow \mcc P\cup \mcc M\mid \mcc P\centernot\longleftrightarrow \mcc M).
    \end{aligned}
    \end{equation} In addition, we have \begin{equation}\label{eq: spin average into FK-configurations}
    \begin{aligned}
        \langle\sigma_u \rangle^{\xi}_{\Gamma}\cdot\langle\sigma_v \rangle^{\xi}_{\Gamma}=& \phi^{\mcc P,\mcc M}_{\Gamma}(u\longleftrightarrow \mcc P\mid \mcc P\centernot\longleftrightarrow \mcc M)\cdot\phi^{\mcc P,\mcc M}_{\Gamma}(v\longleftrightarrow \mcc P\mid \mcc P\centernot\longleftrightarrow \mcc M)\\&+\phi^{\mcc P,\mcc M}_{\Gamma}(u\longleftrightarrow \mcc M\mid \mcc P\centernot\longleftrightarrow \mcc M)\cdot\phi^{\mcc P,\mcc M}_{\Gamma}(v\longleftrightarrow \mcc M\mid \mcc P\centernot\longleftrightarrow \mcc M)\\&-\phi^{\mcc P,\mcc M}_{\Gamma}(u\longleftrightarrow \mcc P\mid \mcc P\centernot\longleftrightarrow \mcc M)\cdot\phi^{\mcc P,\mcc M}_{\Gamma}(v\longleftrightarrow \mcc M\mid \mcc P\centernot\longleftrightarrow \mcc M)\\&-\phi^{\mcc P,\mcc M}_{\Gamma}(u\longleftrightarrow \mcc M\mid \mcc P\centernot\longleftrightarrow \mcc M)\cdot\phi^{\mcc P,\mcc M}_{\Gamma}(v\longleftrightarrow \mcc P\mid \mcc P\centernot\longleftrightarrow \mcc M).
    \end{aligned}
    \end{equation}
%Applying Lemma~\ref{lem:extended to FK}, we get that $\bar \mu_\Gamma^\xi (\mathcal A) = \phi_\Gamma^{\mathcal P, \mathcal M}(\mathcal A \mid \mathcal P \centernot\longleftrightarrow \mathcal M)$ for any event $\mathcal A$ that is the intersection of connecting events, where a connecting event is of the form $A_1 \longleftrightarrow A_2$ for two subsets $A_1$ and $A_2$. In light of this,
 We apply the FKG inequality to $\bar\mu^\xi_\Gamma$ and obtain that {(recall our notations in Remark \ref{rmk:connecting events for extended Ising})}
%{(note that the connection events above and below are for connection in FK clusters or its counterparts given in Lemma \ref{lem:extended to FK})}
 \begin{equation}\label{eq: FKG for FK model under Ising boundary}
    \begin{aligned}
        &\bar\mu^{\xi}_{\Gamma}(u\longleftrightarrow v\longleftrightarrow \mcc P)\ge \bar\mu^{\xi}_{\Gamma}(u\longleftrightarrow \mcc P)\cdot \bar\mu^{\xi}_{\Gamma}(v\longleftrightarrow \mcc P),\\
        &\bar\mu^{\xi}_{\Gamma}(u\longleftrightarrow v\longleftrightarrow \mcc M)\ge \bar\mu^{\xi}_{\Gamma}(u\longleftrightarrow \mcc M)\cdot \bar\mu^{\xi}_{\Gamma}(v\longleftrightarrow \mcc M),\\
        &\bar\mu^{\xi}_{\Gamma}(u\longleftrightarrow \mcc P,v\longleftrightarrow \mcc M)\le \bar\mu^{\xi}_{\Gamma}(u\longleftrightarrow \mcc P)\cdot \bar\mu^{\xi}_{\Gamma}(v\longleftrightarrow \mcc M),\\
        &\bar\mu^{\xi}_{\Gamma}(u\longleftrightarrow \mcc M,v\longleftrightarrow \mcc P)\le \bar\mu^{\xi}_{\Gamma}(u\longleftrightarrow \mcc M)\cdot \bar\mu^{\xi}_{\Gamma}(v\longleftrightarrow \mcc P).
    \end{aligned}
\end{equation}
Converting the inequalities in \eqref{eq: FKG for FK model under Ising boundary} to those for $\phi_\Gamma^{\mathcal P, \mathcal M}(\cdot \mid \mathcal P \centernot\longleftrightarrow \mathcal M)$ and combining them with \eqref{eq: spin spin correlation into FK-configurations} and \eqref{eq: spin average into FK-configurations}, we obtain that \begin{equation}\label{eq: reduction from truncated correlation function to FK connectivity}
    \langle\sigma_u\sigma_v\rangle^{\xi}_{\Gamma}-\langle\sigma_u\rangle^{\xi}_{\Gamma}\cdot\langle\sigma_v\rangle^{\xi}_{\Gamma}\ge \phi^{\mcc P,\mcc M}_{\Gamma}(u\longleftrightarrow v\centernot\longleftrightarrow \mcc P\cup \mcc M\mid \mcc P\centernot\longleftrightarrow \mcc M).
\end{equation}
Thus the desired bound follows directly from Lemma~\ref{lem: FK connecting probability with Ising boundary}.
\end{proof}

\section{Concentration inequalities for Gaussian variables}

In this appendix, we show two concentration inequalities for Gaussian variables. Our inequalities are in the same spirit of the well-studied concentration bounds for polynomials of Gaussian variables (see \cite{AL12,AW15,Lat06}). However, it seems the bounds in the literature did not provide explicit dependence on the order of the polynomials as we need in this paper. As such, we prove some concentration bounds for some specific quantities of interest with explicit control for the dependence on the order.  
%As such, we prove some sharper concentration bounds than available in the literature for some specific quantities of interests (i.e., our bounds are much less general).
%Although there is a large number of literature on Guassian concentration (see \cite{AL12,AW15,Lat06}), we do not find one that is perfectly suitable to our settings.
\begin{lem}\label{lem: concentration for sum of products of tanh}
    Let $X_1,X_2,\cdots,X_n$ be i.i.d. Gaussian variables with mean $0$ and variance $1$. Let $Y_i=\tanh(\eps X_i)$. For any subset $I\subset \{1,2,\cdots,n\}$, define $Y_I=\prod_{i\in I}Y_i$, and let $a_I$ be some fixed real numbers only depending on $I$.
    Then there exists an absolute constant $\nc\label{appendixB1}>0$ such that for any $x>0$ and any positive integer $k$, we have \begin{equation*}\label{eq: concentration for sum of products of tanh}
        \P(\mid\sum_{|I|=k}a_IY_I\mid\ge x)\le \oc{appendixB1}^{-1}\exp(-\frac{\oc{appendixB1}x^{\frac{1}{k}}}{\eps(\sum_{|I|=k}a_I^2)^{\frac{1}{2k}}}).
    \end{equation*}
\end{lem}
\begin{proof}
%Without loss of generality, we assume $k>1$ since $k=1$ follows from a standard Gaussian concentration argument. 
We start by controlling the moments for $\mid\sum_{|I|=k}a_IY_I\mid$. For any even integer $p\ge 2$, we compute the $p$-th moment \begin{equation} \label{eq: moment expansion for general wiener chaos}
 \E \mid\sum_{|I|=k}a_IY_I\mid^p=\E \Big(\sum_{\substack{I_1,\cdots,I_p\\|I_j|=k}} \prod_{j=1}^p a_{I_j} \prod_{j=1}^p Y_{I_j}\Big)\le \sum_{\substack{I_1,\cdots,I_p\\|I_j|=k}}| \prod_{j=1}^p a_{I_j}| \cdot|\E\prod_{j=1}^p Y_{I_j}|.
\end{equation}
We denote $\mathcal I$ as the collection of $(I_1,\cdots,I_p)$ such that $|I_j|=k$ and $\sum_j\1_{i\in I_j}$ is even for all $1\le i\le n$. Then the expectation term on the right-hand side of \eqref{eq: moment expansion for general wiener chaos} is $0$ unless $(I_1, \ldots, I_p) \in \mathcal I$. Therefore, we get that (note that $\prod_{j=1}^p Y_{I_j} \geq 0$ for each $(I_1, \ldots, I_p)\in \mathcal I$) \begin{equation}\label{eq: reduction for expansion of mth moment}
    \sum_{\substack{I_1,\cdots,I_p\\|I_j|=k}}| \prod_{j=1}^p a_{I_j}| \cdot|\E\prod_{j=1}^p Y_{I_j}|=\sum_{(I_1,\cdots,I_p)\in \mathcal I} \prod_{j=1}^p |a_{I_j}| \cdot\E\prod_{j=1}^p  |Y_{I_j}|.
\end{equation} Then we obtain from the inequality $|Y_i|\le \eps |X_i|$ that \begin{equation}\label{eq: moment bound from tanh polynomials to gaussian polynomials}
\sum_{\substack{I_1,\cdots,I_p\\|I_j|=k}}| \prod_{j=1}^p a_{I_j}|\cdot |\E\prod_{j=1}^p Y_{I_j}|\le \sum_{(I_1,\cdots,I_p)\in \mathcal I} \prod_{j=1}^p |a_{I_j}| \E\Big(\prod_{j=1}^p \eps^k |X_{I_j}|\Big) =\eps^{kp}\cdot \E (\sum_{|I|=k}|a_I|X_I)^p.\end{equation}
Here, as for $Y_I$, we denote $X_I=\prod_{i\in I}X_i$. The last equality comes from the same reason as in \eqref{eq: reduction for expansion of mth moment}. We remark that the reason we have equality in \eqref{eq: moment bound from tanh polynomials to gaussian polynomials} instead of inequality in \eqref{eq: moment expansion for general wiener chaos} is that in \eqref{eq: moment bound from tanh polynomials to gaussian polynomials} we have taken absolute values for $a_I$s. Since $\sum_{|I|=k}|a_I|X_I$ is in Wiener chaos of order $k$ (for an explicit definition of Wiener chaos, see e.g. \cite[Section 1.1.1]{Nua06}), we obtain from hypercontractivity for Guassians \cite{Nel73} (see also \cite[Theorem 1.4.1]{Nua06}) that \begin{equation}\label{eq: hypercontractivity for gaussian polynomials} \E\Big|\sum_{|I|=k}|a_I|X_I\Big|^p \le (p-1)^{kp/2} \Big(\E(\sum_{|I|=k}|a_I|X_I)^2\Big)^{\frac{p}{2}}=(p-1)^{kp/2} \Big(\sum_{|I|=k}a_I^2\Big)^{\frac{p}{2}}.
\end{equation}
Combining \eqref{eq: moment expansion for general wiener chaos}, \eqref{eq: moment bound from tanh polynomials to gaussian polynomials} and \eqref{eq: hypercontractivity for gaussian polynomials}, we obtain that \begin{equation}\label{eq: moment bound for gaussian polynomials} \E \mid\sum_{|I|=k}a_IY_I\mid^p\le \eps^{kp}(p-1)^{kp/2} (\sum_{|I|=k}a_I^2)^{\frac{p}{2}}.
\end{equation}
Now we compute the exponential moment for $|\sum_{|I|=k}a_IY_I|^{\frac{1}{k}}$, via computing its moments of all orders. For $p\le 2k$, we apply the Jensen inequality and obtain that \begin{equation}\label{eq: low degree moment bound for wiener chaos}
    \E|\sum_{|I|=k}a_IY_I|^{\frac{p}{k}}\le (\E|\sum_{|I|=k}a_IY_I|^2)^{\frac{p}{2k}}\le(\sum_{|I|=k}a_I^2)^{\frac{p}{2k}}\cdot \eps^p.
\end{equation}
For $p>2k$, we assume $p=2bk+c$ with $b\ge 1$ and $0\le c<2k$. Applying H\"{o}lder inequality and \eqref{eq: moment bound for gaussian polynomials}, we get that (by viewing $|\sum_{|I| = k} a_I Y_I|^{2b+\frac{c}{k}} =  |\sum_{|I| = k} a_I Y_I|^{\frac{2b(2k-c)}{2k}}\cdot |\sum_{|I| = k} a_I Y_I|^{\frac{(2b+2)c)}{2k}}$) \begin{equation}\label{eq: high degree moment bound for wiener chaos}
    \E|\sum_{|I|=k}a_IY_I|^{2b+\frac{c}{k}}\le (\E|\sum_{|I|=k}a_IY_I|^{2b})^{\frac{2k-c}{2k}}\cdot (\E|\sum_{|I|=k}a_IY_I|^{2b+2})^{\frac{c}{2k}}\stackrel{\eqref{eq: moment bound for gaussian polynomials}}\le(\sum_{|I|=k}a_I^2)^{\frac{p}{2k}}\eps^p\cdot (2b+1)^{p/2}.
\end{equation}
%For $p>2k$, we assume $p=bk+c$ with $b\ge 2$ and $0\le c<k$. Applying H\"{o}lder inequality and \eqref{eq: moment bound for gaussian polynomials}, we get that (by viewing $|\sum_{|I| = k} a_I Y_I|^{2b+2c/k} =  |\sum_{|I| = k} a_I Y_I|^{\frac{2b(k-c)}{k}}\cdot |\sum_{|I| = k} a_I Y_I|^{\frac{(2b+2)c)}{k}}$) \begin{align}
%    \E|\sum_{|I|=k}a_IY_I|^{b+\frac{c}{k}}\le~& \Big[\E|\sum_{|I|=k}a_IY_I|^{2b+\frac{2c}{k}}\Big]^{\frac{1}{2}}\le(\E|\sum_{|I|=k}a_IY_I|^{2b})^{\frac{k-c}{2k}}\cdot (\E|\sum_{|I|=k}a_IY_I|^{2b+2})^{\frac{c}{2k}} \nonumber\\\stackrel{\eqref{eq: moment bound for gaussian polynomials}}\le&(\sum_{|I|=k}a_I^2)^{\frac{p}{2k}}\eps^p\cdot (2b+1)^{p/2}.\label{eq: high degree moment bound for wiener chaos}
%\end{align}
Combining \eqref{eq: low degree moment bound for wiener chaos} and \eqref{eq: high degree moment bound for wiener chaos}, we obtain that for all $p\geq 1$ \begin{equation}\label{eq: full moment bound for general gaussian polynomials}
    \E|\sum_{|I|=k}a_IY_I|^{\frac{p}{k}}\le  p!\cdot (\sum_{|I|=k}a_I^2)^{\frac{p}{2k}}\eps^p
\end{equation} since $(2b+1)^{\frac{p}{2}}\le (p+1)^{\frac{p}{2}}\le p!$. For any parameter $t\in(0,\frac{1}{\eps(\sum_{|I|=k}a_I^2)^{\frac{1}{2k}}})$, we can now compute the exponential moment by applying \eqref{eq: full moment bound for general gaussian polynomials} and obtain that \begin{equation*}\label{eq: exponential moment for general wiener chaos}\begin{aligned}
    \exp(t|\sum_{|I|=k}a_IY_I|^{\frac{1}{k}}) = \sum_{p=0}^{\infty}\E|\sum_{|I|=k}a_IY_I|^{\frac{p}{k}}\cdot  t^p/p!&\le  \sum_{p=0}^{\infty} (\sum_{|I|=k}a_I^2)^{\frac{p}{2k}} (\eps t)^p\\&=\frac{1}{1-(\sum_{|I|=k}a_I^2)^{\frac{1}{2k}}\cdot \eps t}.
\end{aligned}
\end{equation*}
Applying (the exponential version of) Markov's inequality with $t=\frac{1}{2\eps(\sum_{|I|=k}a_I^2)^{\frac{1}{2k}}}$, we get that $$\P(\mid\sum_{|I|=k}a_IY_I\mid\ge x)\le \exp(-tx^{\frac{1}{k}})\frac{1}{1-(\sum_{|I|=k}a_I^2)^{\frac{1}{2k}}\cdot \eps t}=2\exp(-\frac{x^{\frac{1}{k}}}{2\eps(\sum_{|I|=k}a_I^2)^{\frac{1}{2k}}}).$$
Thus we complete the proof of Lemma~\ref{lem: concentration for sum of products of tanh}.
\end{proof}
  
As an analog to Lemma~\ref{lem: concentration for sum of products of tanh}, we can also prove the following lemma.
\begin{lem}\label{lem: concentration for sum of products of tanh2}
    %Let $X_1,X_2,\cdots,X_n$ be i.i.d. Gaussian variables with mean $0$ and variance $1$. Let $Y_i=\tanh(\eps X_i)$.
    We continue to use the notations in Lemma \ref{lem: concentration for sum of products of tanh}. For $I, J \subset \{1, \ldots, n\}$, let $a_{I,J}$ be some fixed real numbers only depending on $I$ and $J$. Then there exists an absolute constant $\nc\label{appendixB}>0$ such that  for any $x>0$ and non-negative integers $m,k$ such that $m+k\ge 1$, we have \begin{equation*}\label{eq: concentration for sum of products of tanh2}
        \P\Big(\mid\sum_{\substack{|I|=k,|J|=m}}a_{I,J}Y_IY_J\mid\ge x\Big)\le \oc{appendixB}^{-1}\exp\Big(-\frac{\oc{appendixB}x^{\frac{1}{k+m}}}{\eps(\max\limits_{0\le r\le \min\{m,k\}}A_r)^{\frac{1}{k+m}}}\Big)
    \end{equation*}where $A_r=n^{\frac{r}{2}}\Big[\sum_{|I|=k,|J|=m,|I\cap J|\ge r}|a_{I,J}|^2\Big]^{\frac{1}{2}}$.
\end{lem}
%\begin{rmk}
%    We point out that $A_2=0$ if $k\neq m$.
%\end{rmk}
\begin{proof}
%[Proof of Lemma~\ref{lem: concentration for sum of products of tanh2}]
    The proof is highly similar to that of Lemma~\ref{lem: concentration for sum of products of tanh}. As a result, we only provide a sketch emphasizing the additional subtleties. Without loss of generality, we assume that $m\le k$ and thus $\min\{m,k\}=m$.
    
    %Without loss of generality, we assume that $1\le m\le k$. 
    The only difference is that $\sum_{|I|=k,|J|=m }|a_{I,J}|X_IX_J$ is no longer in Wiener chaos of order $m+k$. Nevertheless, we still have that $\sum_{|I|=k,|J|=m }|a_{I,J}|X_{I\Delta J}\cdot \prod_{i\in I\cap J}(X_i^2-1)$ is in Wiener chaos of order $m+k$. To this end, we define $Z_i=X_i^2-1$ and we expand the summation as follows: \begin{equation}
        \sum_{|I|=k,|J|=m }|a_{I,J}|X_IX_J=\sum_{|I|=k,|J|=m }|a_{I,J}|X_{I\Delta J}\cdot \prod_{i\in I\cap J}(Z_i+1)=\sum_{r=0}^k\Phi_r
    \end{equation} where $$\Phi_r=\sum_{|I|=k,|J|=m }|a_{I,J}|X_{I\Delta J}\cdot \sum_{U\subset I\cap J,|U|=|I\cap J|-r}Z_U\,.$$ Here we recall our convention that $Z_U=\prod_{i\in U}Z_i$. Since $\Phi_r$ is in Wiener chaos of order $m+k-2r$, for any even integer $p\ge 2$ we get from hypercontractivity for Guassians \cite[Theorem 1.4.1]{Nua06} \begin{equation*}
        \Big[\E(\Phi_r)^p\Big]^{\frac{1}{p}}\le (p-1)^{\frac{m+k-2r}{2}}\Big[\E(\Phi_r)^2\Big]^{\frac{1}{2}}.
    \end{equation*}
Combined with Minkowski inequaility, it yields that \begin{equation}\label{eq:1}
\Big[\E(\sum_{|I|=k,|J|=m }|a_{I,J}|X_IX_J)^p\Big]^{\frac{1}{p}}=\Big[\E(\sum_{r=0}^m\Phi_r)^p\Big]^{\frac{1}{p}}\le \sum_{r=0}^m(p-1)^{\frac{m+k-2r}{2}}\Big[\E(\Phi_r)^2\Big]^{\frac{1}{2}}.
\end{equation}
In order to get a bound similar to \eqref{eq: hypercontractivity for gaussian polynomials}, it suffices to upper-bound $\E(\Phi_r)^2$ for each $0\le r\le m$. Next we fix $0\le r\le m$. In order to make $\{U\subset I\cap J,|U|=|I\cap J|-r\}$ nonempty, we must have that $|I\cap J|\ge r$ and we denote the collection of such pairs $(I,J)$ as $\mathfrak{B}_r$. Noting that $X_i$ has mean $0$ and variance $1$ and $Z_i$ has mean $0$ and variance $2$ and $\E X_iZ_j=0$ for any $1\le i,j\le n$, we obtain that \begin{align}
    \E(\Phi_r)^2&=\sum_{(I_1,J_1),(I_2,J_2)\in \mathfrak{B}_r}\sum_{\substack{U_1\subset I_1\cap J_1,\\|U_1|=|I_1\cap J_1|-r}}\sum_{\substack{U_2\subset I_2\cap J_2,\\|U_2|=|I_2\cap J_2|-r}} |a_{I_1,J_1}||a_{I_2,J_2}|\E\big( X_{I_1\Delta J_1}X_{I_2\Delta J_2}\cdot Z_{U_1} Z_{U_2}\big)\nonumber\\&=\sum_{(I_1,J_1),(I_2,J_2)\in \mathfrak{B}_r}\sum_{\substack{U_1\subset I_1\cap J_1,\\|U_1|=|I_1\cap J_1|-r}}\sum_{\substack{U_2\subset I_2\cap J_2,\\|U_2|=|I_2\cap J_2|-r}}|a_{I_1,J_1}||a_{I_2,J_2}|\1_{\{I_1\Delta J_1=I_2\Delta J_2\}}\1_{\{U_1=U_2\}}\cdot 2^{|U_1|}\nonumber\\&\le\sum_{(I_1,J_1),(I_2,J_2)\in \mathfrak{B}_r}\sum_{\substack{U\subset I_1\cap J_1\cap I_2\cap J_2,\\|U|=|I_1\cap J_1|-r\\=|I_2\cap J_2|-r}}\frac{|a_{I_1,J_1}|^2+|a_{I_2,J_2}|^2}{2}\cdot\1_{\{I_1\Delta J_1=I_2\Delta J_2\}} 2^{|I_2\cap J_2|-r}\,.\label{eq:2}
\end{align}
Next, viewing the right-hand side of \eqref{eq:2} as linear combinations of $\{a_{I, J}^2\}$, we compute the coefficient for $|a_{I_1,J_1}|^2$. Since $I_1\Delta J_1=I_2\Delta J_2$ and $|I_1\setminus J_1|=|I_2\setminus J_2|$, the number of choices for $(I_2\setminus J_2, J_2\setminus I_2)$ is $\binom{|I_1\Delta J_1|}{|I_1\setminus J_1|}\le 2^{|I_1\Delta J_1|}$. Further, the number of choices for $U$ is $\binom{|I_1\cap J_1|}{r}\le 2^{|I_1\cap J_1|}$. For any fixed $U$, the number of choices for $I_2\cap J_2\setminus U$ is at most $n^{r}$. As a result, the coefficient for $|a_{I_1,J_1}|^2$ is at most $$2^{|I_1\Delta J_1|}\cdot 2^{|I_1\cap J_1|}\cdot n^r\cdot 2^{|I_1\cap J_1|-r}\le 4^{m+k}n^r.$$ Combined with \eqref{eq:1} and \eqref{eq:2}, it yields that \begin{align}
    \Big[\E(\sum_{r=0}^m\Phi_r)^p\Big]^{\frac{1}{p}}&\le \sum_{r=0}^m(p-1)^{\frac{m+k-2r}{2}}\Big[\sum_{(I_1,J_1)\in \mathfrak{B}_r}4^{m+k}n^r|a_{I_1,J_1}|^2\Big]^{\frac{1}{2}}\nonumber\\&= \sum_{r=0}^m(p-1)^{\frac{m+k-2r}{2}}2^{m+k}A_r\le (m+1)(p-1)^{\frac{m+k}{2}}2^{m+k}\max_{0\le r\le m} A_r.\label{eq: hypercontractivity for gaussian polynomials2}
\end{align}
With \eqref{eq: hypercontractivity for gaussian polynomials2} in place of \eqref{eq: hypercontractivity for gaussian polynomials}, it is straightforward to adapt the proof of Lemma~\ref{lem: concentration for sum of products of tanh} and obtain a proof of Lemma~\ref{lem: concentration for sum of products of tanh2} as required.
\end{proof}
\medskip

\noindent {\bf Acknowledgement.} We thank Rongfeng Sun and Hao Wu for helpful discussions, and in particular, Rongfeng's explanation of \cite{BS22} was very inspiring to us. We thank Ron Peled for helpful discussions and in particular for pointing out a mistake in an earlier version of the manuscript (which is now incorporated in Remark~\ref{rmk-ron}). J. Ding is partially supported by NSFC Tianyuan Key Program Project No. 12226001, NSFC Key Program Project No. 12231002 and an Explorer Prize.

\small

\end{document}